\theoremstyle{plain}
\newtheorem{theorem}{Theorem}[section]
\newtheorem{lemma}[theorem]{Lemma}
\newtheorem{lemma-definition}[theorem]{Lemma-Definition}
\newtheorem{definition-lemma}[theorem]{Definition-Lemma}
\newtheorem{proposition}[theorem]{Proposition}
\newtheorem{corollary}[theorem]{Corollary}
\newtheorem*{theorem-A}{Theorem A}
\newtheorem*{theorem-B}{Theorem B}
\newtheorem*{theorem-C}{Theorem C}
\newtheorem*{theorem-D}{Theorem D}
\newtheorem*{conjecture-A}{Conjecture A}
\newtheorem*{conjecture-B}{Conjecture B}
\newtheorem*{conjecture-C}{Conjecture C}
\newtheorem*{conjecture-D}{Conjecture D}
\theoremstyle{definition}
\theoremstyle{remark}
\newtheorem{remark}[theorem]{Remark}
\numberwithin{equation}{section}
\def\C{\mathrm{C}}
\def\D{\mathrm{D}}
\def\O{\mathrm{O}}
\def\SS{\mathrm{SS}}
\def\U{\mathrm{U}}
\def\X{\mathrm{X}}
\def\bbA{\mathbb{A}}
\def\bbC{\mathbb{C}}
\def\bbD{\mathbb{D}}
\def\bbN{\mathbb{N}}
\def\bbZ{\mathbb{Z}}
\def\frakg{\mathfrak{G}}
\def\frakL{\mathfrak{L}}
\def\frakM{\mathfrak{M}}
\def\frakP{\mathfrak{P}}
\def\frakZ{\mathfrak{Z}}
\def\calA{\mathcal{A}}
\def\calC{\mathcal{C}}
\def\calD{\mathcal{D}}
\def\calE{\mathcal{E}}
\def\calF{\mathcal{F}}
\def\calH{\mathcal{H}}
\def\calL{\mathcal{L}}
\def\calM{\mathcal{M}}
\def\calO{\mathcal{O}}
\def\calP{\mathcal{P}}
\def\calS{\mathcal{S}}
\def\calU{\mathcal{U}}
\def\calV{\mathcal{V}}
\def\calW{\mathcal{W}}
\def\calZ{\mathcal{Z}}
\def\frakg{\mathfrak{g}}
\def\frakl{\mathfrak{l}}
\def\frakp{\mathfrak{p}}
\def\fraks{\mathfrak{s}}
\def\bfc{\mathbf{c}}
\def\bfv{\mathbf{v}}
\def\bfw{\mathbf{w}}
\def\bfC{\mathbf{C}}
\def\bfK{\mathbf{K}}
\def\bfO{\mathbf{O}}
\def\bfP{\mathbf{P}}
\def\ua{{\underline a}}
\def\ux{{\underline x}}
\def\k{{\operatorname{k}\nolimits}}
\def\Isom{\operatorname{Isom}\nolimits}
\def\tor{\operatorname{tor}\nolimits}
\def\CC{\operatorname{CC}\nolimits}
\def\Rep{\frak{R}}
\def\Coh{\operatorname{Coh}\nolimits}
\def\DCoh{{\operatorname{DCoh}\nolimits}}
\def\Perf{{\operatorname{Perf}\nolimits}}
\def\sg{{\operatorname{sg}\nolimits}}
\def\soc{{\operatorname{soc}\nolimits}}
\def\top{{\operatorname{top}\nolimits}}
\def\alg{{\operatorname{alg}\nolimits}}
\def\top{{\operatorname{top}\nolimits}}
\def\top{{\operatorname{top}\nolimits}}
\def\op{{{\operatorname{op}\nolimits}}}
\def\Ad{{{\operatorname{Ad}\nolimits}}}
\def\-{{\operatorname{-}\!}}
\def\IC{\operatorname{IC}\nolimits}
\def\Im{\operatorname{Im}\nolimits}
\def\Ker{\operatorname{Ker}\nolimits}
\def\Hom{\operatorname{Hom}\nolimits}
\def\End{\operatorname{End}\nolimits}
\def\Hilb{\operatorname{Hilb}\nolimits}
\def\Aut{\operatorname{Aut}\nolimits}
\def\Res{\operatorname{Res}\nolimits}
\def\reg{{\operatorname{reg}\nolimits}}
\def\Ext{\operatorname{Ext}\nolimits}
\def\id{\operatorname{id}\nolimits}
\def\supp{{\operatorname{supp}\nolimits}}
\def\Tr{\operatorname{Tr}\nolimits}
\def\Gr{{\operatorname{Gr}\nolimits}}
\def\crit{\operatorname{crit}\nolimits}
\def\Quot{{{\operatorname{Quot}\nolimits}}}
\def\Db{\operatorname{D^b\!}\nolimits}
\def\D{{\operatorname{D}\nolimits}}
\def\b{{\operatorname{b}\nolimits}}
\def\Acyclic{\operatorname{Acyclic}\nolimits}
\def\boxplus{{\oplus}}
\def\ev{\operatorname{ev}\nolimits}
\def\Tr{\operatorname{Tr}\nolimits}
\def\can{\operatorname{can}\nolimits}
\def\nil{{\operatorname{nil}\nolimits}}
\def\ch{\operatorname{ch}\nolimits}
\def\circlestar{{\Circle\!\!\!\!\star\;}}
\def\SG{{D}}
\def\sgn{{o}}
\numberwithin{itemcounter}{subsection}
\numberwithin{equation}{section}
\appto\appendix{\addtocontents{toc}{\protect\setcounter{tocdepth}{1}}}
\title[Quantum loop groups and critical convolution algebras]
{Quantum loop groups and critical convolution algebras}
\author{M. Varagnolo$^1$} 
\address{\scriptsize{$^1$~CY Cergy Paris Universit\'e,  95302 Cergy-Pontoise, France,
UMR8088 (CNRS), ANR-18-CE40-0024 (ANR).}}
\author{E. Vasserot$^2$} 
\address{\scriptsize{$^2$~Universit\'e Paris Cit\'e, 75013 Paris, France, UMR7586 (CNRS), 
ANR-18-CE40-0024 (ANR), Institut Universitaire de France (IUF).
}}
\begin{document}
\maketitle

\begin{abstract}
We realize geometrically a family of simple modules of 
(shifted) quantum loop groups including Kirillov-Reshetikhin and prefundamental representations.
To do this, we introduce a new family of algebras attached to quivers with potentials, 
using critical K-theory and critical Borel-Moore homology, which
generalizes the convolution algebras attached to quivers defined by Nakajima. 
\end{abstract}

\section{Introduction and notation}
\subsection{Introduction}
Quiver varieties were introduced by Nakajima in \cite{N94}, \cite{N98} and \cite{N00}.
The equivariant K-theory of Steinberg varieties
attached with quiver varieties, equipped with a convolution product,
yields a family of algebras closely related
to symmetric quantum loop groups which
is important for the finite dimensional modules and
their $q$-characters, see \cite{N03}, \cite{N04}.
In this work we introduce a new family of convolution algebras attached to quiver varieties
with potentials. Here the K-theory is replaced by the critical K-theory.
We'll call them K-theoretical critical convolution algebras.
By critical K-theory we mean the Grothendieck group of derived factorization categories, or singularity 
categories, attached to equivariant LG-models. 
We'll also consider a cohomological analog of these convolution algebras, 
where the critical K-theory
is replaced by the cohomology of some vanishing cycle sheaves. The corresponding algebras are called
the cohomological critical convolution algebras.

The main motivation comes from the representation theory of (shifted) quantum loop groups.
Nakajima realized quantum loop groups via convolution algebras of quiver varieties. 
This construction permits to recover the classification of the simple finite dimensional 
modules of quantum loop groups, but it does not give a geometric construction of those. 
More precisely, the cohomology or K-theory of quiver varieties yields a geometric realization 
of the standard modules,
and the simple modules are the Jordan H\"older constituents of the standards.
In a similar way, we are able to realize the quantum loop groups and the shifted quantum loop 
groups of symmetric types via critical convolution algebras.
Remarkably, the critical cohomology or K-theory also gives a realization of the 
simple modules in several settings :
we realize both a family of simple modules of quantum loop groups containing
all Kirillov-Reshetikhin modules and a family of simple modules of shifted quantum loop groups containing
all tensor products of negative prefundamental modules
as the critical cohomology or K-theory of LG-models attached to quivers.
This construction is partly motivated by the work of Liu in \cite{L20} where some representations
of shifted quantum loop groups are constructed via the cohomology of quasi-maps spaces using
some limit procedure similar to the limit procedure of Hernandez-Jimbo in \cite{HJ12}.
This limit procedure admits also a natural interpretation in critical cohomology or K-theory.
In the work \cite{VV23} we give another achievement of critical convolution algebras :
they yield a geometric realization of all
quantum loop groups and shifted quantum loop groups, not necessarily of symmetric types, which generalize Nakajima's construction in \cite{N00}.
Moreover, using critical convolution algebras we get a geometric realization of the Kirillov-Reshetikhin and prefundamental modules of arbitrary types.

Another motivation comes from cluster theory.
Using cluster algebras, Hernandez-Leclerc give in \cite{HL16} 
a $q$-character formula for  
prefundamental and Kirillov-Reshetikhin representations in terms of Euler characteristic of quiver grassmannians. 
Their character formula does not give any geometric realization of the (shifted) quantum loop group action.
It is surprising that our construction yields indeed a representation of the (shifted) quantum loop group 
in the cohomology of the same quiver grassmannians, with coefficients in some constructible sheaves.
The Kirillov-Reshetikhin modules are particular cases of reachable
modules for the cluster algebra structure on the Grothendieck ring of the quantum loop group considered
in \cite{KKOP20}. 
The Euler characteristic description of the $q$-characters extends to all reachable modules.
We expect that all reachable modules admit also 
a realization in critical cohomology or K-theory.

A third motivation comes from the K-theoretical Hall algebras and cohomological Hall algebras.
We'll define an algebra homomorphism from 
K-theoretical Hall algebras to K-theoretical critical convolution algebras  using Hecke correspondences.
As a consequence, the K-theoretical critical convolution algebras may be viewed as some doubles 
of the K-theoretical Hall algebras introduced by Padurariu.
These doubles are the good setting for representation theory. 
Note that, depending on how the potential is chosen, different doubles
of the same K-theoretical Hall algebras can be realized via different 
K-theoretical critical convolution algebras.
We'll consider two examples.
The first one is isomorphic to Nakajima's convolution algebra via dimensional reduction,
and is related to quantum loop groups.
The second one is obtained with a different potential and is related to 
shifted quantum loop groups with antidominant shifts. 
Recall that K-theoretical Hall algebras of preprojective algebras have been introduced  
by Schiffmann-Vasserot in \cite{SV13a}, in the case of the Jordan quiver. The
case of a general quiver was considered by Varagnolo-Vasserot in \cite{VV22} where it
is proved that, modulo twisting the Hall multiplication, 
K-theoretical Hall algebras of preprojective algebras of quivers of finite or affine type are isomorphic to 
affine quantum groups or toroidal quantum groups in the sense of \cite{GKV}. 
The K-theoretical Hall algebras of a quiver with potential was introduced by Padurariu in \cite{P21}. 
It was proved there that Isik's Koszul duality (=dimensional reduction) implies
that the K-theoretical Hall algebras of triple quivers with some particular potential 
$\bfw$ coincides with the K-theoretical Hall algebras of preprojective algebras.
In parallel, cohomological Hall algebras were introduced in two versions.
Cohomological Hall algebras of preprojective algebras have been introduced 
by Schiffmann-Vasserot in \cite{SV13b}, in the case of the Jordan quiver. 
The case of a general quiver was considered in  \cite{SV18} and \cite{YZ20}.
Cohomological Hall algebras of quivers with potential
were introduced by Kontsevich-Soibelman in \cite{KS11}.
The coincidence of cohomological Hall algebras of triple quivers with the potential 
$\bfw$ and of
cohomological Hall algebras of preprojective algebras was established by Davison \cite{RS17} and 
Yang-Zhao \cite{YZ20}.

The contents of the paper may be summarized as follows. 
Section 2 recalls the singularity categories and derived factorization categories associated with
$G$-equivariant LG-models and their algebraic and topological K-theories.
Then, given a $G$-equivariant LG-model $(X,\chi,f)$ with a $G$-equivariant projective morphism
to an affine $G$-variety $X\to X_0$, we define
a monoidal category $\DCoh_G(X^2,f^{(2)})_Z$ 
of Steinberg correspondences supported in the fiber product $Z=X\times_{X_0}X$.
Taking the algebraic or topological Grothendieck groups yields
the K-theoretical critical convolution algebra in Corollary \ref{cor:critalg1}. 
Using vanishing cycles instead we define similarly
the cohomological critical convolution algebra in Proposition \ref{prop:critalg3}.
More precisely, we prove the following :

\begin{enumerate}[label=$\mathrm{(\alph*)}$,leftmargin=8mm,itemsep=1.5mm]
\item[-]
$K_G(X^2,f^{(2)})_Z$ and
$K^\top_G(X^2,f^{(2)})_Z$ are associative $R_G$-algebras,
\item[-]
 $H^\bullet_G(X^2,f^{(2)})_Z$ is an associative $H^\bullet_G$-algebra.
\end{enumerate}

In Section 3 we consider some particular equivariant LG-models associated with quivers
with potentials. We first give a reminder on Nakajima's quiver varieties and on triple quiver varieties.
The corresponding critical convolution algebras depend on the choice of a potential $\bfw$.
This potential also gives rise to a nilpotent K-theoretical Hall algebra $\calU^+$
in the sense of \cite{P21}, \cite{VV22}.
We expect the K-theoretical critical convolution algebra
to be equipped with an algebra homomorphism from a double $\calU$ of $\calU^+$,
i.e., an algebra with a triangular decomposition 
$\calU=\calU^+\otimes\calU^0\otimes\calU^-$ where
 $\calU^-$ is the opposite of $\calU^+$ and
$\calU^0$ is commutative.
Different choices of the potential may yield different doubles
of the same algebra.
We illustrate this for two explicit potentials attached to the same Dynkin quiver in 
Theorems \ref{thm:Nakajima} and \ref{thm: Nakajima shifted}.
In these cases  $\calU$ is 
either the quantum loop group $\U_\zeta(L\frakg)$ or the negatively shifted quantum loop group
$\U_\zeta^{-w}(L\frakg)$. In the first case the construction reduces to Nakajima's one.
The second one is new. In both case, we get new applications to representation theory.
More precisely, we prove the following :

\begin{theorem}\label{thm:A}
Let $Q$ be a Dynkin quiver. 
\hfill
\begin{enumerate}[label=$\mathrm{(\alph*)}$,leftmargin=8mm]
\item
There is an algebra homomorphism
$\U_\zeta(L\frakg)\to 
K\big(\widetilde\frakM^\bullet(W)^2,(\tilde f_1^\bullet)^{(2)}\big)_{\widetilde\calZ^\bullet(W)}.$
\item
There is an algebra homomorphism
$\U_\zeta^{-w}(L\frakg)\to 
K\big(\widetilde\frakM^\bullet(W)^2,(\tilde f^\bullet_2)^{(2)}\big)_{\widetilde\calZ^\bullet(W)}$.
\end{enumerate}
\end{theorem}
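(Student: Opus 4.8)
The plan is the standard Nakajima-type one: realize the Drinfeld generators of $\U_\zeta(L\frakg)$ (resp. $\U_\zeta^{-w}(L\frakg)$) by explicit Steinberg classes in the convolution algebra — which is an associative algebra by Corollary \ref{cor:critalg1}, the required projective morphism being the affinization map of the triple quiver variety — and then to check the defining relations. In both parts the positive, resp. negative, currents will be the images of the generators of the nilpotent KHA $\calU^+$, resp. of its opposite $\calU^-$, under the algebra homomorphism from the KHA to the critical convolution algebra furnished by the Hecke correspondences of Section 3 (for the potential $\bfw$ underlying $\tilde f_1^\bullet$ in (a), and for the potential underlying $\tilde f_2^\bullet$ in (b)); the Cartan currents $\psi_i^\pm(z)$ will act by multiplication by generating series of tautological K-theory classes on $\widetilde\frakM^\bullet(W)$. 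So what must be shown is: (i) these operators are supported on $\widetilde\calZ^\bullet(W)$, which follows from the standard support bounds for Hecke correspondences; and (ii) they satisfy the relations.

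For part (a) I would obtain (ii) for free by dimensional reduction. Since $\tilde f_1^\bullet$ is the triple-quiver potential $\bfw$, Isik's Koszul duality gives a monoidal equivalence between $\DCoh_G\big(\widetilde\frakM^\bullet(W)^2,(\tilde f_1^\bullet)^{(2)}\big)_{\widetilde\calZ^\bullet(W)}$ and the category of Steinberg correspondences on the Nakajima quiver variety cut out by the moment map, compatibly with convolution. On Grothendieck groups this becomes an algebra isomorphism of $K\big(\widetilde\frakM^\bullet(W)^2,(\tilde f_1^\bullet)^{(2)}\big)_{\widetilde\calZ^\bullet(W)}$ with Nakajima's K-theoretic Steinberg algebra, and the desired homomorphism is then the one of \cite{N00}, \cite{N04}. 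It remains only to check that the dimensional reduction isomorphism carries the tautological classes and the Hecke correspondences defined here to Nakajima's — a bookkeeping verification.

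For part (b) there is no such reduction, and the relations of $\U_\zeta^{-w}(L\frakg)$ must be checked directly. The relations internal to $\calU^\pm$ are inherited from the presentation of the KHA of triple quivers in \cite{VV22}, \cite{P21}; the relations among the Cartan currents are immediate since they are commuting multiplication operators; and the essential computation is the mixed relation $[E_i,F_j]=\delta_{ij}\,(\psi_i^+-\psi_i^-)/(\zeta-\zeta^{-1})$ in its shifted form. By localization and reduction to rank one this reduces to a computation on the Hecke correspondence between dimension vectors differing by one simple root, where one has to account for the excess (Koszul) bundle produced by the factorization structure of $(\tilde f_2^\bullet)^{(2)}$; it is precisely the Euler class of this bundle, which involves the framing $W$, that shifts the Cartan currents and produces the antidominant shift, with $w$ equal to the dimension vector of $W$. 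One then checks that the leading and trailing coefficients of the resulting series $\psi_i^\pm(z)$ agree with those prescribed by the shift $-w$, and that the shifted Drinfeld–Serre relations hold — the latter again by transporting the corresponding identities from the $\calU^+$ side through the KHA homomorphism.

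The hard part will be step (ii) for part (b): the explicit computation of the convolution of a Hecke correspondence with its transpose in critical K-theory. Unlike in Nakajima's setting, the relevant self-intersection is not a smooth quiver-Grassmannian bundle with a transparent normal bundle; the difference potential contributes an extra Koszul factor whose class — including the $\zeta$-twists coming from the $\bbC^\times$-action scaling the potential — must be identified exactly in order to recover the $\zeta$-commutator with the correct shift. Pinning down this excess term and the normalization of the Cartan currents on the nose, rather than up to an overall invertible scalar, is where the care is needed; the rank-one reduction together with a careful choice of orientation and twist data should make it manageable.
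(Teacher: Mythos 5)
Your outline for part (a) matches the paper's proof of Theorem \ref{thm:Nakajima} exactly: Proposition \ref{prop:crit1} identifies $\crit(\tilde f_1)$ with $\frakM(W)$, and then Isik's dimensional reduction gives an algebra isomorphism $K_{G_W\times\bbC^\times}\big(\widetilde\frakM(W)^2,(\tilde f_1)^{(2)}\big)_{\widetilde\calZ(W)}\cong K^{G_W\times\bbC^\times}(\calZ(W))$, after which Nakajima's homomorphism \eqref{nak1} is invoked.

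For part (b), your top-level scheme (triangular decomposition, $x_i^\pm$ coming from the KHA via $\Omega^\pm$, Cartan currents as multiplication operators, rank-one reduction plus localization to verify the commutator) agrees with the paper's proof of Theorem \ref{thm: Nakajima shifted}, but you misidentify where the shift and the essential computational input come from, and the way you propose to carry out the rank-one check would not match what actually happens. The paper first applies the Thom isomorphism (Proposition \ref{prop:Thom}) along the bundle $\rho_2$ forgetting $a^*$, replacing $(\widetilde\frakM(W),\tilde f_2)$ by $(\widehat\frakM(W),\hat f_2)$. In the type-$A_1$ case — to which the general case is reduced using Nakajima's local argument around a vertex $i$ — there are no $\alpha$-arrows, so $\hat f_2\equiv 0$ and the critical K-theory collapses to ordinary equivariant K-theory of the Quot scheme $\widehat\frakM(W)\cong\Quot_\bbC(W\otimes\calO,\bullet)$. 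The commutator $[A_n^+,A_m^-]$ is then a classical fixed-point residue computation in ordinary K-theory, supported by Lemma \ref{lem:(T)} (property $(T)$ via a Byalinicki--Birula cell decomposition of the Quot scheme). There is no ``excess Koszul factor produced by the factorization structure'' to pin down: the shift is produced by the classical tangent-bundle formula \eqref{C1}, which differs from the Nakajima case precisely because of the extra $\varepsilon$-variable, and the resulting $\psi_i^\pm(u)$ in \eqref{psipmi} and \eqref{CT} carry the antidominant shift $-w$ for that reason. For the $i\neq j$ commutation the paper does not touch the potential at all either: it lifts $x_{i,m}^+$ and $x_{j,n}^-$ to $K^A(\widehat\calZ(W))$ (possible since $h$ vanishes on $\Rep_{\delta_i}$, $\Rep_{\delta_j}$) and uses the transversality Lemma \ref{lem:transverse}. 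Finally, you should add that the graded/$\zeta$-specialized statement of Theorem \ref{thm:A}(b) is deduced from the $A$-equivariant $R$-form via the Thomason localization of Proposition \ref{prop:Thomason}, a step your proposal does not mention. Your proposed route — computing a Hecke self-intersection in genuine critical K-theory and identifying a Koszul excess class — is not obviously impossible, but it is not what the paper does, and it would require machinery (e.g.\ a localization theorem for derived factorization categories on the Hecke correspondence) that the paper avoids by killing the potential before the hard computation.
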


The graded triple quiver variety  $\widetilde\frakM^\bullet(W)$ is defined in Section \ref{sec:triple quiver}.
We consider also the case of the Jordan quiver
in Theorem \ref{thm:Jshifted}.

The geometric construction of the quantum loop groups given above gives rise to some some 
geometrically defined representations which are considered in Section 4.
Before to do that, we consider in Theorem \ref{thm:moduleK}
a deformation of the map (a) in Theorem \ref{thm:A} associated with 
a choice of an $\fraks\frakl_2$-triple called an admissible triple, see
\S\ref{sec:admissible}. 
In Theorems \ref{thm:HL1} and Proposition \ref{prop:TPKR}, 
we give a geometric realization of a family of simple finite dimensional 
modules of quantum loops groups containing all Kirillov-Reshetikhin modules using
the critical cohomology and K-theory of graded quiver varieties. 
In the shifted case, we realize in Theorems \ref{thm:PF1}, \ref{thm:PF2}
all tensor products of negative prefundamental modules 
as the critical K-theory or cohomology of a graded triple quiver variety.
A special case of our results yields the following.

\begin{theorem}
\hfill
\begin{enumerate}[label=$\mathrm{(\alph*)}$,leftmargin=8mm]
\item The Kirillov-Reshetikhin modules of the quantum loop group
are realized in the critical K-theory or cohomology
of Nakajima's quiver varieties.
\item The negative prefundamental modules of the shifted quantum loop group 
are realized in the critical K-theory or cohomology
of triple quiver varieties.
\end{enumerate}
\end{theorem}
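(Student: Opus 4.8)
The plan is to obtain both parts as special cases of the realization theorems of Section~4, the only real work being to fix the correct framing data and to identify the ambient geometry with the classical quiver varieties.

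For part~(a) I would start from Theorem~\ref{thm:HL1} together with Proposition~\ref{prop:TPKR}: these realize a family of simple finite dimensional $\U_\zeta(L\frakg)$-modules --- for the action coming from Theorem~\ref{thm:A}(a), suitably deformed along an admissible $\fraks\frakl_2$-triple as in Theorem~\ref{thm:moduleK} --- inside the critical K-theory, respectively the vanishing cycle cohomology, of graded quiver varieties $\widetilde\frakM^\bullet(W)$, and they establish that this family contains every Kirillov--Reshetikhin module. So the first step is, given $W^{(i)}_{k,a}$, to choose the framing $W$ that realizes it in that family. The second step is to recognize the ambient space as a genuine Nakajima quiver variety: for the potential $\tilde f_1^\bullet$ of Theorem~\ref{thm:A}(a), dimensional reduction --- the very input used to prove Theorem~\ref{thm:Nakajima} --- identifies the critical K-theory (resp.\ critical cohomology) of the LG-model on $\widetilde\frakM^\bullet(W)$ with the ordinary K-theory (resp.\ Borel--Moore homology) of Nakajima's graded quiver variety, compatibly with all Steinberg correspondences and hence with the $\U_\zeta(L\frakg)$-action. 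Composing the two steps gives~(a).

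For part~(b) I would observe that a single negative prefundamental module is the one-factor case of a tensor product, and then invoke Theorems~\ref{thm:PF1} and~\ref{thm:PF2}: these realize every tensor product of negative prefundamental $\U_\zeta^{-w}(L\frakg)$-modules inside the critical K-theory, respectively cohomology, of a graded triple quiver variety $\widetilde\frakM^\bullet(W)$, for the framing $W$ and the antidominant shift $-w$ attached to the chosen node. Since $\widetilde\frakM^\bullet(W)$ is by the definition of Section~\ref{sec:triple quiver} a triple quiver variety, and since no dimensional reduction intervenes here --- the triple quiver variety being the native ambient space for the potential $\tilde f_2^\bullet$ of Theorem~\ref{thm:A}(b) --- this already yields the assertion.

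The non-formal points, all settled inside the cited results rather than here, are: (i) that the Kirillov--Reshetikhin, resp.\ negative prefundamental, modules genuinely occur among the geometrically realized ones --- a matching of their highest $\ell$-weights / $q$-characters with torus-fixed-point data on the relevant quiver varieties --- and (ii) for part~(a), the compatibility of dimensional reduction with the monoidal structure on the category $\DCoh_G$ of Steinberg correspondences. I expect the genuine obstacle, already overcome inside Theorem~\ref{thm:HL1}, to be the proof that the critical ``standard'' module attached to such a framing $W$ is \emph{already} irreducible: the critical construction collapses the Jordan--Hölder filtration present in Nakajima's K-theoretic standard modules, so that the simple modules --- in particular the Kirillov--Reshetikhin and negative prefundamental ones --- become visible directly rather than as subquotients.
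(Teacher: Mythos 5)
Your part~(b) is correct and matches the paper: Theorems~\ref{thm:PF1} and~\ref{thm:PF2} realize $L^-(w)$, for $W\in\bfC^\bullet$ arbitrary, directly inside $K(\widetilde\frakM^\bullet(W),\tilde f_2^\bullet)$ and $H^\bullet(\widetilde\frakM^\bullet(W),\tilde f_2^\bullet)$, and $\widetilde\frakM^\bullet(W)$ is a graded triple quiver variety; taking a single factor recovers the negative prefundamental representations.

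Your part~(a), however, contains a genuine error in the ``second step''. Theorem~\ref{thm:HL1} does not live on $\widetilde\frakM^\bullet(W)$ with the potential $\tilde f_1^\bullet$; it realizes $KR_{i,k}^l$ as $K(\frakM^\bullet(W),f_\gamma^\bullet)$ and $H^\bullet(\frakM^\bullet(W),f_\gamma^\bullet)$, i.e.\ on the Nakajima graded quiver variety $\frakM^\bullet(W)$ \emph{with the deformed potential} $f_\gamma^\bullet$ coming from the admissible triple. Since $\frakM^\bullet(W)$ is a Nakajima quiver variety, the assertion is already proved at that point and no dimensional reduction is needed. The step you propose instead --- dimensionally reducing $(\widetilde\frakM^\bullet(W),\tilde f_1^\bullet)$ as in Proposition~\ref{prop:crit1} and Remark~\ref{rem:VV}(b) --- lands in the \emph{ordinary} K-theory (resp.\ Borel--Moore homology) of $\frakM^\bullet(W)$, which is Nakajima's standard module, not the KR module. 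This directly undercuts your own closing remark that the critical construction is precisely what collapses the Jordan--Hölder filtration. The correct dimensional reduction relating the tilde and non-tilde pictures here is Lemma~\ref{lem:crit4}, which matches $\tilde f_\gamma^\bullet=\tilde f_1^\bullet-f_\gamma^\bullet$ on $\widetilde\frakM^\bullet(W)$ with $f_\gamma^\bullet$ on $\frakM^\bullet(W)$ --- both sides carry a nontrivial potential. In short: either invoke Theorem~\ref{thm:HL1} directly on $(\frakM^\bullet(W),f_\gamma^\bullet)$, or, if you insist on starting from $\widetilde\frakM^\bullet(W)$, replace $\tilde f_1^\bullet$ by $\tilde f_\gamma^\bullet$ and apply Lemma~\ref{lem:crit4}; using $\tilde f_1^\bullet$ gives the wrong object.
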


The proof is based on the following facts.
\begin{enumerate}[label=$\mathrm{(\alph*)}$,leftmargin=8mm,itemsep=1.5mm]
\item[-]
In the non-shifted case,
the critical locus of the potential is identified in Proposition \ref{prop:HL1} with the quiver grassmannian 
used by Hernandez-Leclerc in \cite{HL16} to relate
the $q$-characters of Kirillov-Reshetikhin modules with cluster algebras.
\item[-]
In the shifted case,
the critical locus of the potential is also identified in Proposition \ref{prop:HL3}
with a quiver grassmannian which is
used in \cite{HL16}.
\item[-]
In Theorem \ref{thm:limH} the critical K-theory or cohomology yields a geometric realization 
of the limit procedure in \cite{HJ12}.
\end{enumerate}

\noindent As mentioned above, we expect all reachable simple modules
to admit a realization similar to the one in Theorem \ref{thm:A}.
A different geometric realization of some Kirillov-Reshetikhin modules appears in the work of Liu in \cite{L20},
using critical K-theory of quasi-maps spaces.

The appendix A is a reminder of basic facts on representations of (possibly shifted) quantum loop groups
which are used throughout the paper.
In our theory we mainly consider quiver of finite types.
We might as well have considered general quivers and, e.g., toroidal quantum groups in the sense of
\cite{GKV}. 
In the appendix B we give analogues of Theorems \ref{thm:Nakajima} and \ref{thm: Nakajima shifted}
for the toroidal quantum group of $\frakg\frakl_1$ and its shifted version.
The appendix C contains a second proof of some version of Theorem \ref{thm:HL1} using microlocal geometry.
The appendix D is a reminder on algebraic and topological critical K-theory.

\subsection{Notation and conventions}
All schemes are assumed to be separated schemes, locally of finite type, over the field $\bbC$.
We may allow an infinite number of connected components, but each of them
is assumed to be of finite type.
Given a scheme $X$ with an action of an affine group $G$, let
$\Db\Coh_G(X)$ be the bounded derived category of the category $\Coh_G(X)$ of $G$-equivariant coherent 
sheaves $X$ and let $\Perf_G(X)$ be the full subcategory of perfect complexes.
For each $G$-invariant closed subscheme $Z$ let $\Coh_G(X)_Z$
be the category of coherent sheaves with set-theoretic support in $Z$, and let
$\Db\Coh_G(X)_Z$ be the full triangulated subcategory of $\Db\Coh_G(X)$ 
consisting of the complexes with cohomology supported on $Z$.
We'll say that a $G$-invariant morphism $\phi:Y\to X$ of $G$-schemes is of finite $G$-flat 
dimension if the pull-back functor
$L\phi^*:\D^-\Coh_G(X)\to\D^-\Coh_G(Y)$ 
takes $\Db\Coh_G(X)$ to $\Db\Coh_G(Y)$.
Similarly, a $G$-equivariant quasi-coherent sheaf has a finite $G$-flat dimension if it admits a finite resolution
by $G$-equivariant flat quasi-coherent sheaves.
We'll say that a $\bbC^\times$-action on $X$ is circle compact if
the fixed points locus in each connected component is compact
and the limit $\lim_{t\to 0}\lambda(t)\cdot x$ exists for each closed point $x$.

Let $K_0(\calC)$ be the complexified Grothendieck group of an Abelian or triangulated category $\calC$.
Let $R_G$ be the complexified Grothendieck ring of the group $G$,
and $F_G$ be the fraction field of $R_G$.
We'll abbreviate
$R=R_{\bbC^\times}=\bbC[q,q^{-1}]$ and $F=F_{\bbC^\times}=\bbC(q)$.
We'll also set
$K_G(X)=K_0(\Perf_G(X)),$
$K^G(X)=K^G(\Db\Coh_G(X))$ and
$K^G(X)_Z=K_0(\Db\Coh_G(X)_Z).$
Note that $K^G(X)_Z=K^G(Z)$. 
If $G=\{1\}$ we abbreviate $K(X)=K^G(X)$.
We'll write
$$\Lambda_a(\calE)=\sum_{i\geqslant 0}a^i\Lambda^i(\calE)\in K_G(X)
 ,\quad
\calE\in K_G(X)
 ,\quad
a\in R_G^{\times}.$$
Let $H_G^\bullet(X,\calE)$
denote the equivariant cohomology of a $G$-equivariant sheaf $\calE$ on $X$.
We abbreviate 
$H^\bullet_G=H_G^\bullet(\{\text{pt}\},\bbC).$
Let $H_\bullet^G(X,\bbC)$ denote the $G$-equivariant Borel-Moore homology over $\bbC$.

A derived scheme is a pair $X=(|X|,\calO_X)$ where $|X|$ is a topological space and $\calO_X$
is a sheaf on $|X|$ with values in the $\infty$-category of simplicial commutative rings such that
the ringed space $(|X|,\pi_0\calO_X)$ 
is a scheme and the sheaf $\pi_n\calO_X$ is a quasi-coherent 
$\pi_0\calO_X$-module over this scheme for each $n>0$.
Here, all derived schemes will be defined over $\bbC$, hence derived schemes can be
modeled locally by dg-algebras rather than simplicial ones.
Let $M$ be a smooth quasi-affine $G$-scheme
and $\sigma$ a $G$-invariant section of a $G$-equivariant vector bundle $E$ over $M$.
The derived zero locus of $\sigma$
is the derived $G$-scheme $X=R(E\to M,\sigma)$ given by the derived fiber product
$M\times_E^RM$ relative to the maps $\sigma,0:M\to E$.
The derived scheme $X$ is quasi-smooth, i.e., it is finitely presented and its cotangent 
complex is of cohomological amplitude $[-1,0]$.
%The classical truncation of $X$ is the fiber product
%$M\times_E M$ in the category of schemes.
%We call $M$ the ambient space of the derived scheme $N$.
For any derived $G$-scheme $X$, let $\Db\Coh_G(X)$ be the derived category
of modules over $\calO_X$ with bounded coherent cohomology.
A $G$-invariant morphism $\phi:Y\to X$ of derived $G$-schemes has finite $G$-flat dimension
if the functor $L\phi^*$ takes bounded complexes to bounded ones.

Given two schemes $X_1$, $X_2$ and functions $f_a:X_a\to\bbC$ with $a=1,2$, 
we define $f_1\oplus f_2:X_1\times X_2\to\bbC$ to be the function
$f_1\oplus f_2=f_1\otimes 1+1\otimes f_2.$ If $X_1=X_2=X$, and $f_1=f_2=f$ we abbreviate
$f^{\oplus 2}=f\oplus f$ and
$f^{(2)}=f\oplus(-f).$

All categories will be assumed to be essentially small, i.e., equivalent to a small category.
Let $\calC^\op$ denote the opposite of a category $\calC$.
Let $\overline\calC$ denote the idempotent completion of an additive category $\calC$.
If $\calC$ is either a triangulated category or a dg category, then so is also $\overline\calC$.
For any dg category $\calC$, let $H^0(\calC)$ denote its homotopy category.
A dg-enhancement of a triangulated category $\calD$ is a dg-category $\calC$ whose homotopy category
$H^0(\calC)$ is equivalent to $\calD$ as a triangulated category. 
All the additive categories we'll encounter are indeed $\bbC$-linear.
By the symbol $\otimes$ we'll mean a tensor product of $\bbC$-linear objects.
If the category is $\bbZ$-graded or $\bbZ/2\bbZ$-graded we'll write $\otimes_\bbZ$ for the tensor product
of $\bbZ$-graded or $\bbZ/2\bbZ$-graded ($\bbC$-linear) objects.

We'll abbreviate KHA, CCA and KCA for K-theoretical Hall algebra,
cohomological critical convolution algebra and K-theoretical critical convolution algebra.

 \smallskip

\textbf{Acknowledgements.} It is a pleasure to thank D. Hernandez and 
H. Nakajima for inspiring discussions concerning this paper.

\section{Critical convolution algebras}

This section contains a reminder on singularity categories and derived factorization categories.
We'll follow \cite{BFK13}, \cite{EP15} (in the non equivariant case), 
\cite{H17a} and \cite{H17b} to which we refer for more details.

\subsection{Singularity categories}\label{sec:SC}

\subsubsection{Definition}\label{sec:defsing}
Let $G$ be an affine group.
Let $Y$ be a quasi-projective $G$-scheme with 
a $G$-equivariant ample line bundle.
The equivariant triangulated category of singularities of $Y$ is the Verdier quotient 
$$\DCoh_G^\sg(Y)=\Db\Coh_G(Y)\,/\,\Perf_G(Y).$$
Given a closed $G$-invariant subset $Z\subset Y$, let 
$\Perf_G(Y)_Z\subset\Perf_G(Y)$ be the full subcategory of perfect complexes
with cohomology sheaves set-theoretically supported in $Z$, and define
\begin{align}\label{sing1}\DCoh_G^\sg(Y)_Z=\Db\Coh_G(Y)_Z\,/\,\Perf_G(Y)_Z.\end{align}
The forgetful functor $\DCoh_G^\sg(Y)_Z\to\DCoh_G^\sg(Y)$
is fully faithful, see, e.g., \cite[lem.~3.1]{EP15}.
This allows us to see $\DCoh_G^\sg(Y)_Z$ as a full triangulated subcategory
of $\DCoh_G^\sg(Y)$.

When dealing with factorizations on singular varieties, 
or for functoriality reasons, one may
need relative categories of singularities. Let us briefly recall this.
Let $i:Y\to X$ be a $G$-invariant closed embedding of finite $G$-flat dimension
of quasi-projective $G$-schemes with $G$-equivariant ample line bundles.
Let
$\Perf_G(Y/X)\subset \Db\Coh_G(Y)$ be the thick subcategory
generated by $Li^*(\Db\Coh_G(X))$ and
$\Perf_G(Y/X)_Z\subset\Perf_G(Y/X)$ be the full subcategory of complexes
with cohomology sheaves set-theoretically supported in $Z$.
Following \cite{EP15}, 
we define the equivariant triangulated category of singularities of $Y$
relative to $X$ and supported on $Z$ to be the Verdier quotient
%$$\DCoh_G^\sg(Y/X)=\Db\Coh_G(Y)\,/\,\Perf_G(Y/X).$$
%Similarly, we set
\begin{align}\label{sing2}\DCoh_G^\sg(Y/X)_Z=\Db\Coh_G(Y)_Z\,/\,\Perf_G(Y/X)_Z
\end{align}
If $X$ is smooth, then $\Perf_G(Y/X)=\Perf_G(Y)$, hence
\begin{align}\label{sgXsm}\DCoh_G^\sg(Y/X)=\DCoh_G^\sg(Y).\end{align}

\subsubsection{Functoriality}
Let $X_1,$ $X_2$ be quasi-projective schemes with actions of an affine group $G$ and
$G$-equivariant ample line bundles.
Let $i_1:Y_1\to X_1$ be a $G$-invariant closed embedding of finite $G$-flat dimension.
Let $\phi:X_2\to X_1$ be a $G$-invariant morphism and
$Y_2$ be the fiber product $Y_2=Y_1\times_{X_1}X_2$.
We have an obvious closed embedding $i_2:Y_2\to X_2$.
Assume that the morphisms $\phi$, $i_2$ and the restriction of $\phi$ to a morphism $Y_2\to Y_1$ 
have a finite $G$-flat dimension.
Let $Z_1,$ $Z_2$ be closed $G$-invariant subsets of $X_1,$ $X_2$.
By \cite[\S 3.4]{EP15} and \cite[\S 3.2]{H17b} the following hold.

Assume that $\phi^{-1}(Z_1)\subset Z_2$.
The pull-back functor 
$L\phi^*:\Db\Coh_G(Y_1)_{Z_1}\to\Db\Coh_G(Y_2)_{Z_2}$
yields a triangulated functor
\begin{align}\label{funct1}
L\phi^*:\DCoh_G^\sg(Y_1/X_1)_{Z_1}\to\DCoh_G^\sg(Y_2/X_2)_{Z_2}.
\end{align}
Assume that $\phi(Z_2)\subset Z_1$ and that the restriction $\phi|_{Z_2}$ is proper.
The pushforward functor 
$R\phi_*:\Db\Coh_G(Y_2)_{Z_2}\to\Db\Coh_G(Y_1)_{Z_1}$
yields a triangulated functor
\begin{align}\label{funct2}
R\phi_*:\DCoh_G^\sg(Y_2/X_2)_{Z_2}\to\DCoh_G^\sg(Y_1/X_1)_{Z_1}.
\end{align}

\begin{remark}\label{rem:derived1}
For any derived $G$-scheme $Y$ with a closed $G$-invariant subset $Z$
we define the singularity category $\DCoh_G^\sg(Y)_Z$ as in \eqref{sing1}.
The pull-back and pushforward functors are defined similarly.
\end{remark}

\subsection{Derived factorization categories}

\subsubsection{Definition}
A $G$-equivariant LG-model is a triple $(X,\chi,f)$ such that
\hfill
\begin{enumerate}[label=$\mathrm{(\alph*)}$,leftmargin=8mm]
\item $X$ is a quasi-projective scheme with a $G$-equivariant ample line bundle and $G$ is an affine group,
\item $\chi:G\to\bbC^\times$ is a character of $G$ and 
$f:X\to\bbC$ is a $\chi$-semi-invariant regular function on $X$,
\item the critical set of $f$ is contained into its zero locus.
\end{enumerate}

A morphism of $G$-equivariant LG-models $\phi:(X_2,\chi,f_2)\to (X_1,\chi,f_1)$ is a $G$-invariant morphism
$\phi:X_2\to X_1$ such that $f_2=\phi^*f_1$.
We'll say that the $G$-equivariant LG-model $(X,\chi,f)$ is smooth if $X$ is smooth.
If $\chi=1$ we'll say that $(X,f)$ is a $G$-invariant LG-model, and if $G=\{1\}$ that $(X,f)$ is an LG-model.

Let $\Coh_G(X,f)$ be the dg-category of all $G$-equivariant coherent factorizations of $f$ on $X$.
An object of $\Coh_G(X,f)$ is called a factorization. It is a sequence
$$\calE=(\xymatrix{\calE_1\ar[r]^-{\phi_1}&\calE_0\ar[r]^-{\phi_0}&\calE_1\otimes\chi})$$
where $\calE_0,\calE_1\in\Coh_G(X)$ and $\phi_0,\phi_1$ are $G$-invariant homomorphisms such that
$\phi_0\circ\phi_1=f\cdot\id_{\calE_1}$ and
$(\phi_1\otimes\chi)\circ\phi_0=f\cdot\id_{\calE_0}.$
The $G$-equivariant coherent sheaves $\calE_0$ and $\calE_1$ are the components of $\calE$, and
the maps $\phi_0$, $\phi_1$ are its differentials.

The homotopy category of $\Coh_G(X,f)$ is a triangulated category. 
The category of acyclic objects is
the thick subcategory  of $H^0(\Coh_G(X,f))$
generated by the totalization of the exact triangles.
The derived factorization category is the Verdier quotient
\begin{align}\label{DFACT}
\DCoh_G(X,f)=H^0(\Coh_G(X,f))\,/\,\Acyclic.
\end{align}

Let $Z\subset X$ be a closed $G$-invariant subset. 
A factorization
in $\Coh_G(X,f)$ is set-theoretically supported on $Z$ if its components
are set-theoretically supported on $Z$. Let  $\Coh_G(X,f)_Z\subset\Coh_G(X,f)$
be the full dg-subcategory  of all factorizations set-theoretically supported on $Z$.
Let $\DCoh_G(X,f)_Z$ be the Verdier quotient of the homotopy category of $\Coh_G(X,f)_Z$ 
by the thick subcategory of acyclic objects.
Forgetting the support yields a triangulated functor
$\DCoh_G(X,f)_Z\to\DCoh_G(X,f).$ 
This functor is fully faithful
and allows us to view $\DCoh_G(X,f)_Z$ as a full triangulated subcategory
of $\DCoh_G(X,f)$. See, e.g., \cite[\S 3.1]{EP15}, \cite[\S 2.4]{H17b}.

To define derived functors of derived factorization categories
we may need injective or locally free $G$-equivariant factorizations or
$G$-equivariant factorizations of finite $G$-flat dimensions.
There are defined similarly as above, with the components beeing injective quasi-coherent sheaves,
or coherent locally free sheaves, or coherent sheaves of finite $G$-flat dimension,
see, e.g., \cite[\S 1]{EP15}, \cite[\S 2.1]{H17b}.

\begin{remark} 
We'll use factorizations over a smooth quasi-projective $G$-scheme $X$.
If $X$ is affine, then the quotient by acyclic objects in \eqref{DFACT}
can be omitted, see, e.g., \cite[prop.~3.4]{BFK13}.
\end{remark}

\subsubsection{Functoriality and tensor product}
Let $\phi:(X_2,\chi,f_2)\to (X_1,\chi,f_1)$ be a morphism of $G$-equivariant LG-models.
Let $Z_1,$ $Z_2$ be closed $G$-invariant subsets of $X_1,$ $X_2$.

Assume that $\phi^{-1}(Z_1)\subset Z_2$.
Then, we have a pull-back dg-functor 
$\phi^*:\Coh_G(X_1,f_1)_{Z_1}\to\Coh_G(X_2,f_2)_{Z_2}$ which takes a factorization $\calE$ to
the factorization  $\phi^*\calE$ with the components
$\phi^*\calE_0$,  $\phi^*\calE_1$ and the differentials
$\phi^*d_0$, $\phi^*d_1$.
Assume further that the map $\phi$ has finite $G$-flat dimension.
By \cite[\S 3.6]{EP15} and \cite[\S 2.3.1]{H17b}, deriving the functor $\phi^*$
with $G$-equivariant factorizations of finite $G$-flat dimension yields a triangulated functor
\begin{align}\label{funct1}
L\phi^*:\DCoh_G(X_1,f_1)_{Z_1}\to\DCoh_G(X_2,f_2)_{Z_2}.
\end{align}

Assume that $\phi(Z_2)\subset Z_1$ and that the restriction $\phi|_{Z_2}$ is proper.
Then, we have a pushforward dg-functor 
$\phi_*:\Coh_G(X_2,f_2)_{Z_2}\to\Coh_G(X_1,f_1)_{Z_1}$ which takes a factorization $\calE$ to
the factorization  $\phi_*\calE$ with the components
$\phi_*\calE_0$,  $\phi_*\calE_1$ and the differentials
$\phi_*d_0$, $\phi_*d_1$.
By \cite[lem.~3.5]{EP15} and \cite[\S 2.3.1]{H17b}, 
deriving this functor with injective $G$-equivariant factorizations yields a triangulated functor
\begin{align}\label{funct3}
R\phi_*:\DCoh_G(X_2,f_2)_{Z_2}\to\DCoh_G(X_1,f_1)_{Z_1}.
\end{align}

Assume that the map $f_1\boxplus f_2$ on $X_1\times X_2$ is regular.
%By \cite[def.~3.51]{BFK13}, 
There is a dg-functor
$$\boxtimes:\Coh_G(X_1,f_1)\otimes\Coh_G(X_2,f_2)\to\Coh_G(X_1\times X_2,f_1\boxplus f_2)$$
which takes the pair of factorizations $(\calE,\calF)$ to the factorization
with components
$$(\calE\boxtimes\calF)_0=(\calE_0\boxtimes\calF_0)\oplus(\calE_1\boxtimes\calF_1)
 ,\quad
(\calE\boxtimes\calF)_1=(\calE_0\boxtimes\calF_1)\oplus(\calE_1\boxtimes\calF_0)$$
and the obvious differentials.
The functor $\boxtimes$ yields a triangulated functor
$$\boxtimes:\DCoh_G(X,f_1)\otimes\DCoh_G(X,f_2)\to\DCoh_G(X_1\times X_2,f_1\oplus f_2).$$
Assume that $X_1=X_2=X$ and $f_1+f_2$ is regular.
There is a dg-functor
$$\otimes:\Coh_G(X,f_1)\otimes\Coh_G(X,f_2)\to\Coh_G(X,f_1+f_2)$$
which takes the pair of factorizations $(\calE,\calF)$ to the factorization
with components
$$(\calE\otimes\calF)_0=(\calE_0\otimes\calF_0)\oplus(\calE_1\otimes\calF_1)
 ,\quad
(\calE\otimes\calF)_1=(\calE_0\otimes\calF_1)\oplus(\calE_1\otimes\calF_0)$$
and the obvious differentials. 
Assume further that $X$ is smooth.
The class in $\DCoh_G(X,f_1)$ of any $G$-equivariant factorization 
can be represented by a locally free one by \cite[prop.~3.14]{BFK13}.
Hence, deriving the functor $\otimes$ we get a triangulated functor,
see \cite[\S 2.3.2]{H17b},
$$\otimes^L:\DCoh_G(X,f_1)\otimes\DCoh_G(X,f_2)\to\DCoh_G(X,f_1+f_2).$$

\begin{remark}\label{rem:base change 1} 
\hfill
\begin{enumerate}[label=$\mathrm{(\alph*)}$,leftmargin=8mm]
%\item Assume that $X$ is smooth. We have $\calE\otimes\calF=L\Delta^*(\calE\boxtimes\calF)$
%where $\Delta$ is the diagonal. 
\item
The derived pushforward and pulback satisfy the projection formula
and the flat base change property, see \cite[prop.~4.32, lem.~4.34]{H17a}.
\item
The triangulated categories
$\Db\Coh_G(Y)_Z$,  $\Perf_G(Y)_Z$, $\DCoh^\sg_G(Y)_Z$, $\DCoh_G(X,f)_Z$
admit compatible dg-enhancements that we will use when needed.
For instance, as a dg-category $\DCoh^\sg_G(Y)_Z$ 
is the Drinfeld quotient of the dg-category $\Db\Coh_G(Y)_Z$
by the dg-subcategory $\Perf_G(Y)_Z$. 
See, e.g., \cite[\S 5]{BFK13} or \cite[\S 1.1]{PS21}.
All derived functors above, between equivariant triangulated category of singularities or
derived factorization categories, admit dg-enhancements.
\end{enumerate}
\end{remark}

\subsubsection{Comparison with singularity categories}\label{sec:fact-sing}
Let $(X,\chi,f)$ be a $G$-equivariant LG-model.
Let $Y$ be the zero locus of $f$,
$i$ be the closed embedding $Y\subset X$,
and $Z\subset Y$ be a closed $G$-invariant subset.
We have a triangulated functor
\begin{align}\label{Upsilon1}\Upsilon:\Db\Coh_G(Y)_Z\to \DCoh_G(X,f)_Z\end{align}
taking a complex $(\calE^\bullet,d)$ to
\begin{align*}
\xymatrix{
{\displaystyle\bigoplus_{m\in\bbZ}i_*\calE^{2m-1}\otimes\chi^{-m}}\ar[r]^-{d}&
{\displaystyle\bigoplus_{m\in\bbZ}i_*\calE^{2m}\otimes\chi^{-m}}\ar[r]^-{d}&
{\displaystyle\bigoplus_{m\in\bbZ}i_*\calE^{2m-1}\otimes\chi^{1-m}}}
\end{align*}
The functor $\Upsilon$ annihilates the image of $Li^*$, 
yielding a commutative triangle
\begin{align}\label{triangle0}
\begin{split}
\xymatrix{\DCoh_G^\sg(Y/X)_Z\ar[r]^-{\Gamma}& \DCoh_G(X,f)_Z\\
\Db\Coh_G(Y)_Z\ar[u]\ar[ur]_-\Upsilon&}
\end{split}
\end{align}
%where $\nu$ is the obvious quotient functor.
The functor $\Gamma$ is an equivalence of triangulated categories by
\cite[thm.~3.6]{H17b}.
The functor $\Upsilon$ has the following functoriality properties.

\begin{lemma}\label{lem:Upsilon5}
Let $\phi:(X_2,\chi,f_2)\to (X_1,\chi,f_1)$ be a morphism of smooth $G$-equivariant LG-models.
Assume that $\phi$ is of finite $G$-flat dimension. 
\hfill
\begin{enumerate}[label=$\mathrm{(\alph*)}$,leftmargin=8mm]
\item
There is an isomorphism of functors
\begin{align*}
L\phi^*\circ\Upsilon=\Upsilon\circ L\phi^*:\Db\Coh_G(Y_1)\to\DCoh_G(X_2,f_2).
\end{align*}
\item
Assume that the map $\phi$ is proper.
There is an isomorphism of functors
\begin{align*}
R\phi_*\circ\Upsilon=\Upsilon\circ R\phi_*:\Db\Coh_G(Y_2)\to\DCoh_G(X_1,f_1).
\end{align*}

\end{enumerate}
\end{lemma}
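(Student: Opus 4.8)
The plan is to reduce everything to the corresponding statements at the level of complexes and to the definition of $\Upsilon$, using that $\Gamma$ is an equivalence so it suffices to check the identities after composing with the structural functors. The key point is that $\Upsilon$ is built from the closed embedding $i$ (pushforward along $i$) and the twist by the character $\chi$, so the two claims amount to base change / projection compatibilities that are already recorded in Remark~\ref{rem:base change 1}(a).

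First I would set up notation: write $i_a:Y_a\to X_a$ for the zero-locus embeddings and note that since $\phi$ is a morphism of LG-models we have $f_2=\phi^*f_1$, hence $\phi$ restricts to a morphism $\psi:Y_2\to Y_1$ fitting into a Cartesian square $Y_2\to X_2$, $Y_1\to X_1$ with vertical maps $i_a$ (this uses $Y_2=Y_1\times_{X_1}X_2$, which holds because the zero locus of $\phi^*f_1$ is the preimage of the zero locus of $f_1$); moreover $\psi$ is of finite $G$-flat dimension because $\phi$ is and $i_1$ is a regular (lci) embedding of smooth schemes. For part (a): given a complex $(\calE^\bullet,d)\in\Db\Coh_G(Y_1)$, both $L\phi^*\circ\Upsilon(\calE^\bullet)$ and $\Upsilon\circ L\psi^*(\calE^\bullet)$ are explicit factorizations on $X_2$; unwinding the definition of $\Upsilon$ in \eqref{Upsilon1}, each component of $\Upsilon(\calE^\bullet)$ is $\bigoplus_m (i_1)_*\calE^{2m-\epsilon}\otimes\chi^{?}$, and applying $L\phi^*$ commutes with the finite direct sums and with the $\chi$-twist (as $\chi$ is pulled back to the same character on $G$), so it remains to identify $L\phi^*\circ (i_1)_*$ with $(i_2)_*\circ L\psi^*$ on the Cartesian square — this is exactly flat (derived) base change from Remark~\ref{rem:base change 1}(a), valid on $\Db\Coh_G$ because the square is Tor-independent (smoothness of the $X_a$ makes $i_1$ a perfect morphism). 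Chasing the differentials through shows the two factorizations agree on the nose, giving the isomorphism of functors $\Db\Coh_G(Y_1)\to\DCoh_G(X_2,f_2)$.

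For part (b): with $\phi$ now proper (hence $\psi$ proper too), given $(\calF^\bullet,d)\in\Db\Coh_G(Y_2)$ one compares $R\phi_*\circ\Upsilon(\calF^\bullet)$ with $\Upsilon\circ R\psi_*(\calF^\bullet)$. Again $\Upsilon$ commutes with the $\chi$-twist and with finite direct sums, and $R\phi_*$ does too (up to the usual convergence, which is fine here since everything is bounded), so the claim reduces to the identity $R\phi_*\circ(i_2)_* = (i_1)_*\circ R\psi_*$, which is just functoriality of pushforward applied to the commuting square $i_1\circ\psi=\phi\circ i_2$ (no base change needed, only $(-)_*$ being a $2$-functor). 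The compatibility of these equalities with the Verdier/Drinfeld quotients defining $\DCoh_G(X_a,f_a)$ follows because $\Upsilon$, $L\phi^*$ and $R\phi_*$ are all induced from dg-functors (Remark~\ref{rem:base change 1}(b)), so the identities of dg-functors descend to the derived factorization categories.

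\textbf{Main obstacle.} The one genuine subtlety is the derived base-change identity $L\phi^*\circ (i_1)_* \simeq (i_2)_*\circ L\psi^*$ in part (a): this is flat base change only on the \emph{nose} when the relevant square is Tor-independent, so I would need to invoke that $i_1:Y_1\hookrightarrow X_1$ is a closed immersion into a smooth scheme cut out (locally) by a single non-zero-divisor $f_1$, which together with $\phi$ having finite $G$-flat dimension guarantees Tor-independence; the derived zero locus and the classical zero locus then agree, and the square is Cartesian in the derived sense. Once that is in hand everything else is bookkeeping with the explicit formula for $\Upsilon$ and the fact that $L\phi^*$, $R\phi_*$ are additive and commute with twisting by characters. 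I would also need to make sure the isomorphisms of part (a) and (b) are stated at the level of the unbounded-below or bounded categories as appropriate so that $L\phi^*$ is defined (via factorizations of finite $G$-flat dimension, as in \eqref{funct1}), but since all complexes involved are bounded and the schemes are smooth this causes no trouble.
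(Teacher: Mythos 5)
Your approach is genuinely different from the paper's and, in outline, correct. The paper does not argue directly from the explicit formula \eqref{Upsilon1}; instead it factors $\Upsilon$ as $Lj^*\circ\Phi$, where $\Phi:\Db\Coh_G(Y)\to\DCoh_{G'}(X',f')$ is the Isik--Koszul (dimensional reduction) equivalence associated with the auxiliary LG-model $X'=X\times\bbC$, $f'(x,z)=f(x)z$, and $j:X\to X'$, $x\mapsto(x,1)$. Both parts of the lemma are then reduced to the functoriality of $\Phi$ under $L\phi^*$ and $R\phi_*$, which is cited from Toda's lemmas 2.4.4 and 2.4.7 in \cite{T19}. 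So the paper outsources the derived-level bookkeeping to the literature, whereas you attempt to do it by hand.

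That hands-on route can be made to work, but it is subtler than you indicate in one place. For part (a), the identity $L\phi^*\circ R(i_1)_*\simeq R(i_2)_*\circ L\psi^*$ requires Tor-independence of the square, and you correctly identify that this comes from $f_1$ being a non-zero-divisor. However, finite $G$-flat dimension of $\phi$ is not the right hypothesis to invoke here: what you actually need is that $f_2=\phi^*f_1$ is again a non-zero-divisor on $X_2$, which is exactly the requirement that $(X_2,\chi,f_2)$ be an LG-model (see Remark \ref{rem:derived}, where the regularity of $f$ on each LG-model is what makes $Y$ agree with the derived zero locus). Once this is said correctly, your Tor-independence claim is fine. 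The remaining gap --- which your write-up does not address --- is why the pointwise comparison of factorizations is compatible with passage to derived functors on both sides: $\Upsilon$ is a dg-functor defined on complexes, and one must check that the base-change isomorphism of complexes you construct is compatible with the resolutions used to compute $L\phi^*$ and with the Verdier/Drinfeld quotients defining $\DCoh_G(X_a,f_a)$. This is precisely the technicality the paper avoids by using $\Upsilon=Lj^*\circ\Phi$: the functoriality of $\Phi$ is already established at the triangulated level in \cite{T19}, and $Lj^*$ is a derived pullback whose base-change behavior is standard. Your argument is plausible and the needed compatibilities are true, but they are not automatic and deserve a sentence justifying them (e.g., by noting that $\Upsilon$ admits a dg-enhancement and the base-change map is a natural transformation of dg-functors, as in Remark \ref{rem:base change 1}(b)).
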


\begin{proof}
The functor $\Upsilon$ can be described in the following way.
To simplify assume that $Z=Y$.
Let $\chi_1:\bbC^\times\to\bbC^\times$ be the linear character.
We consider the $G'$-equivariant LG-model $(X',\chi',f')$
such that 
$$X'=X\times\bbC
 ,\quad
G'=G\times\bbC^\times
 ,\quad
\chi'=\chi\boxtimes\chi_1
 ,\quad
f'(x,z)=f(x)z.$$
%We view $0$ as a $1\boxtimes\chi_1$-semi-invariant function on the $G\times\bbC^\times$-scheme $Y$.
%By \cite[prop.~2.14]{H17b}, \cite[cor.~2.3.12]{BFK19}, 
Let $\pi:X'\to X$ be the projection $(x,z)\mapsto x$ and
$j:X\to X'$ be the embedding $x\mapsto(x,1)$.
By \cite{I12}, the functor
$$\Coh_G(Y)\to \Coh_{G'}(X',f')
 ,\quad
\calE\mapsto 
\big(0\to\pi^*i_*\calE\to0\big)$$
extends to an equivalence of triangulated categories
\begin{align}\label{Koszul}\Phi:\Db\Coh_G(Y)\to\DCoh_{G'}(X',f').\end{align} 
See \cite[thm.~2.3.11]{BFK19} or \cite[thm.~3.3.3]{T19} for a formulation closer to our setting.
Composing the derived pull-back with the forgetful functor, we get a functor
\begin{align}\label{functor2}
Lj^*:\DCoh_{G'}(X',f')\to\DCoh_G(X,f).
\end{align}
Then, we have 
\begin{align}\label{Upsilon4}
\Upsilon=Lj^*\circ\Phi.
\end{align}
The morphism $\phi$ restricts to a morphism $Y_2\to Y_1$. Further, it lifts to a morphism
$(X'_2,\chi',f'_2)\to (X'_1,\chi',f'_1)$. 
Both are denoted by the symbol $\phi$.

We first consider the following diagram of functors
\begin{align*}
\xymatrix{
\DCoh_G(X_1,f_1)\ar[r]^-{L\phi^*}&\DCoh_G(X_2,f_2)\\
\DCoh_{G'}(X'_1,f'_1)\ar[r]^-{L\phi^*}\ar[u]^-{Lj^*}&\DCoh_{G'}(X'_2,f'_2)\ar[u]_-{Lj^*}\\
\Db\Coh_G(Y_1)\ar[r]^-{L\phi^*}\ar[u]^-\Phi&\Db\Coh_G(Y_2)\ar[u]_-\Phi.
}
\end{align*}
To prove (a) we are reduced to give an isomorphism of functors
\begin{align}\label{isom4}
L\phi^*\circ\Phi=\Phi\circ L\phi^*:\Db\Coh_G(Y_1)\to\DCoh_{G'}(X'_2,f'_2),
\end{align}
because \eqref{Upsilon4} yields
$$\Upsilon\circ L\phi^*=Lj^*\circ\Phi\circ L\phi^*=
Lj^*\circ L\phi^*\circ\Phi=L\phi^*\circ Lj^*\circ\Phi=L\phi^*\circ\Upsilon.$$
The isomorphism \eqref{isom4} follows from the contravariant
functoriality properties of the Koszul equivalence $\Phi$
proved in \cite[lem.~2.4.7]{T19}.

Next, assume that the map $\phi$ is proper.
Consider the diagram of functors
\begin{align*}
\xymatrix{
\DCoh_G(X_2,f_2)\ar[r]^{R\phi_*}&\DCoh_G(X_1,f_1)\\
\DCoh_{G'}(X'_2,f'_2)\ar[r]^{R\phi_*}\ar[u]^-{Lj^*}&\DCoh_{G'}(X'_1,f'_1)\ar[u]_-{Lj^*}\\
\Db\Coh_G(Y_2)\ar[r]^{R\phi_*}\ar[u]^-\Phi&\Db\Coh_G(Y_1).\ar[u]_-\Phi
}
\end{align*}
To prove (b) we are reduced to give an isomorphism of functors
\begin{align}\label{isom5}
R\phi_*\circ\Phi=\Phi\circ R\phi_*:\Db\Coh_G(X_2)\to\DCoh_{G'}(X'_1,f'_1).
\end{align}
because \eqref{Upsilon4} and base change yield
$$\Upsilon\circ R\phi_*=Lj^*\circ\Phi\circ R\phi_*=Lj^*\circ R\phi_*\circ\Phi=
R\phi_*\circ Lj^*\circ\Phi=R\phi_*\circ\Upsilon.$$
The isomorphism \eqref{isom5} follows from the functoriality properties of the Koszul equivalence $\Phi$
proved in \cite[lem.~2.4.4]{T19}.
\end{proof}

\begin{remark}\label{rem:derived}
By definition, for any LG-model $(X,\chi,f)$ the function $f$ is regular.
Hence, the closed embedding $j:Y\to RY$ 
into the derived zero locus
$RY=R(X\times\bbC\to X\,,f)$ 
is a quasi-isomorphism and
the functors $Rj_*$ and $Lj^*$
are mutually inverses equivalences of categories 
$\Db\Coh_G(Y)_Z=\Db\Coh_G(RY)_Z.$
One may need triples $(X,\chi,f)$ such that the function $f$ is not regular. 
%Then $f$ is not flat.
According to Remark \ref{rem:derived1}, 
we define the singularity category of  the derived scheme $RY$
to be
\begin{align*}\DCoh_G^\sg(RY)_Z=\Db\Coh_G(RY)_Z\,/\,\Perf_G(RY)_Z.\end{align*}
Note that, if $X$ is smooth, then the derived scheme $RY$ is quasi-smooth.
In particular, if $f=0$ then the category of singularities of 
the zero locus of $f$ is $\DCoh_G^\sg(RY)_Z$ and, taking the $K$-theory, we get
$K_G(X,0)_Z=K_G(Z)$.
\end{remark}

\subsection{K-theoretical critical convolution algebras}\label{sec:critalg1}

Fix a $G$-equivariant LG-model $(X,\chi,f)$.
Let $Y\subset X$ be the zero locus of $f$, 
$i$ be the closed embedding $Y\to X$, and
$Z\subset Y$ a closed $G$-invariant subset.
We'll use both the algebraic and topological equivariant K-theory.
See \S\ref{sec:Ktheory} for a reminder on K-theory.
The critical (algebraic) K-theory group is
$$K_G(X,f)_Z=K_0(\DCoh_G(X,f)_Z).$$

\subsubsection{First properties of the critical K-theory}
Assume that $(X,\chi,f)$ is smooth.
The functor $\Upsilon$ in 
\eqref{Upsilon1} yields a map
\begin{align}\label{Upsilon2}
\Upsilon:\xymatrix{K^G(Z)\ar[r]& K_G(X,f)_Z
.}
\end{align}

\begin{proposition}\label{prop:Upsilon3} 
The map $\Upsilon$ is surjective.
\end{proposition}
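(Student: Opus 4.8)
The plan is to show that the triangulated functor $\Upsilon:\Db\Coh_G(Y)_Z\to\DCoh_G(X,f)_Z$ is essentially surjective up to direct summands, since essential surjectivity (on objects, up to isomorphism and shift) of a triangulated functor implies surjectivity of the induced map on $K_0$. First I would reduce to the relative singularity category: by the commutative triangle \eqref{triangle0} and the fact that $\Gamma:\DCoh_G^\sg(Y/X)_Z\to\DCoh_G(X,f)_Z$ is an equivalence (by \cite[thm.~3.6]{H17b}), it suffices to prove that the quotient functor $\Db\Coh_G(Y)_Z\to\DCoh_G^\sg(Y/X)_Z$ is essentially surjective. But this is immediate from the very definition of the Verdier quotient \eqref{sing2}: every object of $\DCoh_G^\sg(Y/X)_Z$ is the image of an object of $\Db\Coh_G(Y)_Z$. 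Hence $\Upsilon$ is essentially surjective, and therefore $\Upsilon:K^G(Z)=K_0(\Db\Coh_G(Y)_Z)\to K_0(\DCoh_G(X,f)_Z)=K_G(X,f)_Z$ is surjective.

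The only subtlety is the identification $K^G(Z)=K_0(\Db\Coh_G(Y)_Z)$, i.e.\ that the source of the map \eqref{Upsilon2} is correctly named. Here I would invoke the identity $K^G(X)_Z=K^G(Z)$ noted in the notation section (devissage / the localization sequence for $\Db\Coh_G$), applied with $X$ replaced by $Y$: since $Z\subset Y$ is closed and $G$-invariant, $K^G(Y)_Z=K_0(\Db\Coh_G(Y)_Z)=K^G(Z)$. Then \eqref{Upsilon1} induces exactly the map displayed in \eqref{Upsilon2}.

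I expect the main (and essentially only) obstacle to be purely bookkeeping: making sure the functor $\Upsilon$ from \eqref{Upsilon1} really does descend through the Verdier quotient defining $\DCoh_G^\sg(Y/X)_Z$ — that is, that $\Upsilon$ kills $\Perf_G(Y/X)_Z$, equivalently that it kills the image of $Li^*(\Db\Coh_G(X))$. This is precisely the assertion recorded right after \eqref{Upsilon1} that "$\Upsilon$ annihilates the image of $Li^*$", which yields the factorization through $\Gamma$ in \eqref{triangle0}; with $X$ smooth we moreover have $\DCoh_G^\sg(Y/X)_Z=\DCoh_G^\sg(Y)_Z$ by \eqref{sgXsm}. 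Since all of this is already in place in the excerpt, the proof is short: essential surjectivity of a Verdier quotient functor plus the equivalence $\Gamma$ gives essential surjectivity of $\Upsilon$, which gives surjectivity on Grothendieck groups. One should perhaps remark that $\Upsilon$ need not be an equivalence — the map on $K_0$ typically has a kernel coming from $\Perf_G(Y)_Z$ — so surjectivity is the best one can hope for at this level of generality, and that is exactly what is claimed.
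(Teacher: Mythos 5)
Your proof is correct and follows essentially the same route as the paper: both reduce to the surjectivity of $K_0(\Db\Coh_G(Y)_Z)\to K_0(\DCoh_G^\sg(Y/X)_Z)$ and then compose with the equivalence $\Gamma$. The paper simply quotes \cite[prop.~VIII.3.1]{SGA5} at this step, whereas you observe directly that a Verdier quotient functor is surjective on objects and hence on $K_0$; the two are interchangeable here.
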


\begin{proof}
We must check that the obvious functor
$\Db\Coh_G(Y)_Z\to\DCoh_G^\sg(Y/X)_Z$
yields a surjective morphism of Grothendieck groups.
By \eqref{sing2} this surjectivity follows from \cite[prop.~VIII.3.1]{SGA5}.
\end{proof}

\begin{proposition}\label{prop:Thom}
Let $\rho:V\to X$ be a $G$-equivariant vector bundle.
The pull-back yields an isomorphism
$L\rho^*:K_G(X,f)_Z\to K_G(V,f\circ\rho)_{\rho^{-1}(Z)}$.
\end{proposition}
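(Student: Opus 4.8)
The plan is to prove this Thom-isomorphism–type statement by reducing, via the comparison of derived factorization categories with relative singularity categories, to the analogous statement for ordinary equivariant K-theory of the zero loci. First I would apply the equivalence $\Gamma:\DCoh_G^\sg(Y/X)_Z\simeq\DCoh_G(X,f)_Z$ of \eqref{triangle0} (and its analogue on $V$), so that it suffices to show that the pull-back $L\rho^*$ induces an isomorphism on the level of the relative singularity categories, compatibly with the surjections from $\Db\Coh_G(Y)_Z$ and $\Db\Coh_G(Y_V)_Z$ furnished by Proposition \ref{prop:Upsilon3}. Here $Y_V=\rho^{-1}(Y)$ is the zero locus of $f\circ\rho$, which is itself the total space of the restricted vector bundle $\rho|_Y:Y_V\to Y$, and $\rho^{-1}(Z)$ sits inside it as $Z\times_Y Y_V$.

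Next I would use the functoriality of $\Upsilon$ in Lemma \ref{lem:Upsilon5}(a), which gives a commutative square relating $\Upsilon$ on $Z$ and on $\rho^{-1}(Z)$ with the pull-backs $L\rho^*$. By Proposition \ref{prop:Upsilon3} the two vertical maps $\Upsilon$ are surjective, so $L\rho^*:K_G(X,f)_Z\to K_G(V,f\circ\rho)_{\rho^{-1}(Z)}$ is identified with a map induced by the honest pull-back $L\rho^*:K^G(Z)\to K^G(\rho^{-1}(Z))$ on the subquotients defining critical K-theory. For a vector bundle the pull-back $L\rho^*:K^G(Z)\to K^G(\rho^{-1}(Z))$ is an isomorphism — this is the classical Thom isomorphism in equivariant G-theory, with inverse given by (derived) restriction to the zero section, since $\rho$ is flat and $\rho^{-1}(Z)\to Z$ is an affine bundle. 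So surjectivity of $L\rho^*$ on critical K-theory is immediate; the content is injectivity.

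For injectivity, the key point is to identify the kernel of $\Upsilon$ precisely, or better, to produce an inverse directly. The cleanest route is to exhibit the pull-back along the zero section $s:X\hookrightarrow V$ (which is a closed embedding of smooth $G$-equivariant LG-models, $f\circ\rho\circ s=f$, of finite $G$-flat dimension since $s$ is regular) and show that $Ls^*:K_G(V,f\circ\rho)_{\rho^{-1}(Z)}\to K_G(X,f)_{s^{-1}(\rho^{-1}(Z))}=K_G(X,f)_Z$ is a two-sided inverse to $L\rho^*$. One direction, $Ls^*\circ L\rho^*=(L(\rho\circ s))^*=\id$, is formal. For the other direction, $L\rho^*\circ Ls^*=\id$, I would argue that on critical K-theory, pull-back along the affine bundle $\rho^{-1}(Z)\to Z$ (through which everything factors, after the $\Gamma$-identifications and the surjectivity of $\Upsilon$) is already known to be an isomorphism with inverse $Ls^*$ — i.e., transport the classical fact that $Ls^*$ and $L\rho^*$ are mutually inverse on $K^G$ through the commuting squares provided by Lemma \ref{lem:Upsilon5}(a) applied to both $\rho$ and $s$.

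The main obstacle is the injectivity/inverse step: one must check that the classical Thom isomorphism on $G$-theory of the zero loci descends to the subquotients cutting out critical K-theory, i.e., that $L\rho^*$ and $Ls^*$ carry the relevant perfect-complex subcategories (the $\Perf_G(Y/X)_Z$ and $\Perf_G(Y_V/V)_{\rho^{-1}(Z)}$, equivalently the ideals generated by perfect complexes in the Grothendieck groups) into one another, so that the induced maps on Verdier quotients are inverse to each other. This is where Lemma \ref{lem:Upsilon5}(a), the commutation of $L\rho^*$ with $\Upsilon$, does the real work: it guarantees compatibility of the pull-backs with the functors $\Upsilon$, hence with the quotient presentations, and reduces the whole statement to the known flat-base-change and Thom-isomorphism properties of equivariant coherent-sheaf categories recalled in Remark \ref{rem:base change 1}(a).
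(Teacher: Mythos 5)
Your proposal contains the paper's proof, and the two key steps coincide with the paper's: injectivity of $L\rho^*$ follows formally from $Ls^*\circ L\rho^*=\id$ where $s$ is the zero section, and surjectivity follows by chasing the commutative square of Lemma \ref{lem:Upsilon5}(a) using the surjectivity of $\Upsilon$ (Proposition \ref{prop:Upsilon3}) together with the ordinary equivariant Thom isomorphism $K^G(Z)\simeq K^G(\rho^{-1}(Z))$.

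The extra material you add around that core, however, is a detour that creates a spurious difficulty. Once you have injectivity (from $Ls^*\circ L\rho^*=\id$) and surjectivity (from the square), the proposition is proved; there is no need to establish the ``other direction'' $L\rho^*\circ Ls^*=\id$, and your attempt to do so is circular — you invoke that pull-back along the affine bundle is ``already known to be an isomorphism'' on critical K-theory, which is precisely what is being proved. Likewise the ``main obstacle'' you identify — checking that $L\rho^*$, $Ls^*$ carry the perfect-complex subcategories into one another — does not arise: the paper works directly with the pull-back functors on derived factorization categories, which are already well defined by the functoriality recalled in the section on derived factorization categories, so no passage through the Verdier-quotient presentation of $\DCoh^{\sg}_G$ is required. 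Trimming those two paragraphs leaves exactly the paper's argument.
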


\begin{proof}
The map $L\rho^*$ is well defined because $\rho$ is flat.
Let $i:X\to V$ be the zero section. 
The map $Li^*$ is well defined because $i$ is of finite $G$-flat dimension.
The composed map $Li^*\circ L\rho^*$ is an isomorphism, hence $L\rho^*$ is injective.
Let $U=\rho^{-1}(Z)$. 
The square
\begin{align*}
\xymatrix{
K^G(Z)\ar[r]^-\Upsilon\ar[d]_-{L\rho^*}&K_G(X,f)_Z\ar[d]^-{L\rho^*}\\
K^G(U)\ar[r]^-\Upsilon&K_G(V,f\circ\rho)_U}
\end{align*}
is commutative by Lemma \ref{lem:Upsilon5}.
Thus the surjectivity of $L\rho^*$ follows from the Thom isomorphism
and the surjectivity of $\Upsilon$ proved in Proposition \ref{prop:Upsilon3}.
\end{proof}

%The following version of the Thomason concentration theorem holds.

\begin{proposition}\label{prop:Thomason} 
Let $j:X^G\to X$ be the inclusion of the fixed points locus.
Assume that $G$ is a torus and that the function $f\circ j$ on $X^G$ is regular.
Then $Rj_*$ and $Lj^*$
are isomorphisms between the $F_G$-vector spaces
$K_G(X^G,f\circ j)_{Z^G}\otimes_{R_G}F_G$ and $K_G(X,f)_Z\otimes_{R_G}F_G$.
The composed map $Lj^*\circ Rj_*$ is the tensor product with the class 
$\Lambda_{-1}(T^*_{X^G}X)$.
\end{proposition}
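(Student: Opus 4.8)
The plan is to reduce the statement to the classical localization theorem of Thomason for equivariant $K$-theory by means of the surjection $\Upsilon$ from Proposition~\ref{prop:Upsilon3}. First I would observe that $Rj_*$ and $Lj^*$ are well defined on the critical $K$-theory groups: the function $f\circ j$ on $X^G$ is regular by hypothesis, so $(X^G,\chi,f\circ j)$ is again a smooth $G$-equivariant LG-model (with trivial $G$-action), and the closed embedding $j:X^G\to X$ is of finite $G$-flat dimension since $X$ is smooth and $X^G$ is a smooth closed subscheme; thus $Lj^*$ exists by \eqref{funct1} and $Rj_*$ exists by \eqref{funct3} because $j$ is proper (indeed a closed embedding). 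By Lemma~\ref{lem:Upsilon5}, both $Lj^*$ and $Rj_*$ intertwine the functor $\Upsilon$ with the corresponding pull-back and pushforward on the categories $\Db\Coh_G(-)$; passing to Grothendieck groups we obtain commutative squares
\begin{align*}
\xymatrix{
K^G(Z)\ar[r]^-\Upsilon\ar[d]_-{Lj^*}&K_G(X,f)_Z\ar[d]^-{Lj^*}\\
K^G(Z^G)\ar[r]^-\Upsilon&K_G(X^G,f\circ j)_{Z^G}
}
\qquad
\xymatrix{
K^G(Z^G)\ar[r]^-\Upsilon\ar[d]_-{Rj_*}&K_G(X^G,f\circ j)_{Z^G}\ar[d]^-{Rj_*}\\
K^G(Z)\ar[r]^-\Upsilon&K_G(X,f)_Z
}
\end{align*}
where I use that $Z^G=Z\cap X^G$ is the fixed locus of $Z$ and $Y^G$ is the zero locus of $f\circ j$ (which contains $Z^G$), and that $K^G(Z)_{Z}=K^G(Z)$.

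Next I would invoke Thomason's localization theorem (see \S\ref{sec:Ktheory}): for $G$ a torus acting on the scheme $Z$ (resp.\ $X$), the maps $j_*:K^G(Z^G)\to K^G(Z)$ and $j^*:K^G(Z)\to K^G(Z^G)$ become mutually inverse isomorphisms after tensoring with $F_G$ over $R_G$, and $j^*\circ j_*$ is multiplication by $\Lambda_{-1}(N^*)$ where $N$ is the normal bundle of $Z^G$ in $Z$ — or rather, since we are working inside the ambient smooth $X$, the relevant self-intersection class on the smooth locus is $\Lambda_{-1}(T^*_{X^G}X)$, the $\Lambda_{-1}$ of the conormal bundle of $X^G$ in $X$. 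Combining this with the two commutative squares above and the surjectivity of $\Upsilon$ (Proposition~\ref{prop:Upsilon3}), one gets immediately that $Lj^*\otimes F_G$ is surjective and that $(Lj^*\circ Rj_*)\otimes F_G$ is the automorphism given by multiplication by the invertible element $\Lambda_{-1}(T^*_{X^G}X)\in R_G\otimes F_G$ (invertibility of $\Lambda_{-1}(T^*_{X^G}X)$ in the localized ring is exactly the content of the localization theorem for the torus, since $T^*_{X^G}X$ has no $G$-fixed part). Therefore $Rj_*\otimes F_G$ is injective. To conclude I would run the symmetric argument with the roles exchanged, deducing from the second square that $Lj^*\otimes F_G$ is injective as well (because $Rj_*\circ Lj^*$ on the fixed-point side is, by the same computation applied on $X^G$ where $G$ acts trivially, multiplication by $\Lambda_{-1}(T^*_{X^G}X^G)=\Lambda_{-1}(0)=1$, hence the identity — here one uses that $(X^G)^G=X^G$). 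Thus both maps are bijections and mutually inverse up to the explicit invertible twist, which is precisely the assertion.

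The main obstacle I anticipate is not the localization input itself, which is classical, but making sure the $\Upsilon$-compatibility squares are legitimate: one must check that the hypotheses of Lemma~\ref{lem:Upsilon5} genuinely apply, i.e.\ that $j:(X^G,\chi,f\circ j)\to(X,\chi,f)$ is a morphism of \emph{smooth} $G$-equivariant LG-models of finite $G$-flat dimension, and that the relevant supports ($Z$, $Z^G$) and zero loci ($Y$, $Y^G$) are correctly matched so that the pull-back and pushforward of Lemma~\ref{lem:Upsilon5} descend to the supported categories via \eqref{funct1}, \eqref{funct3} and the identification $\DCoh_G^\sg(Y/X)_Z\simeq\DCoh_G(X,f)_Z$ of \eqref{triangle0}. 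A secondary subtlety is the identification of the self-intersection class: one should verify that the excess/Koszul bundle appearing in $Lj^*\circ Rj_*$ on the critical side matches $\Lambda_{-1}(T^*_{X^G}X)$ and not some virtual correction coming from the LG-model structure; this follows because $\Upsilon$ is a module map over $K^G(X)$ and the composition on $K^G(Z)$ is the classical one, so the twist is transported verbatim through the surjection $\Upsilon$.
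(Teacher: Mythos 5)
Your overall strategy is the same as the paper's: transport the classical Thomason localization statement for $j:X^G\to X$ across the surjection $\Upsilon$ of Proposition~\ref{prop:Upsilon3}, using the commutative squares from Lemma~\ref{lem:Upsilon5}. The set-up, the commutativity of the two squares, the identification of $Lj^*\circ Rj_*$ on the critical side with multiplication by $\Lambda_{-1}(T^*_{X^G}X)$, the resulting injectivity of $Rj_*$, and the surjectivity of $Lj^*$ are all correct and match the paper's reasoning.

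The error is in the final step, where you try to deduce injectivity of $Lj^*$ by claiming that $Rj_*\circ Lj^*$ is the identity. First, $Rj_*\circ Lj^*$ is an endomorphism of $K_G(X,f)_Z\otimes_{R_G}F_G$, not of the fixed-point side. Second, it is not computed by "running the argument on $X^G$": applying the localization argument with $X$ replaced by $X^G$ tells you something about the trivial inclusion $(X^G)^G=X^G\hookrightarrow X^G$, which has nothing to do with $j$. The honest computation, via the projection formula, gives $Rj_*(Lj^*\alpha)=\alpha\otimes Rj_*\calO_{X^G}$, i.e.\ multiplication by the pushforward Koszul class of $X^G$ in $X$; after localization this is an invertible element but certainly not $1$. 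So what you have genuinely established is: $Rj_*$ injective, $Lj^*$ surjective, and $Lj^*\circ Rj_*$ invertible. For (possibly infinite-dimensional) $F_G$-vector spaces this does not force either map to be a bijection — $Lj^*$ could kill a complement of the image of $Rj_*$.

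The repair is exactly what the paper supplies: show $Rj_*$ is also \emph{surjective} onto $K_G(X,f)_Z\otimes_{R_G}F_G$. This follows from the $Rj_*$-square, the surjectivity of $\Upsilon$ on both rows (Proposition~\ref{prop:Upsilon3}), and the classical fact (Thomason) that $Rj_*$ is surjective onto $K^G(Z)\otimes_{R_G}F_G$. Together with the injectivity you already proved, $Rj_*$ is a bijection, and since $Lj^*\circ Rj_*$ is multiplication by an invertible class, $Lj^*$ is a bijection as well.
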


\begin{proof} By Lemma \ref{lem:Upsilon5} we have the commutative diagram
\begin{align*}
\xymatrix{
K^G(Z^G)\otimes_{R_G}F_G\ar[r]^-\Upsilon\ar[d]_-{Rj_*}&
K_G(X^G,f\circ j)_{Z^G}\otimes_{R_G}F_G\ar[d]^-{Rj_*}\\
K^G(Z)\otimes_{R_G}F_G\ar[r]^-\Upsilon\ar[d]_-{Lj^*}&K_G(X,f)_Z\otimes_{R_G}F_G\ar[d]^-{Lj^*}\\
K^G(Z^G)\otimes_{R_G}F_G\ar[r]^-\Upsilon&K_G(X^G,f\circ j)_{Z^G}\otimes_{R_G}F_G}
\end{align*}
The composed map $Lj^*\circ Rj_*$ is the tensor product with the class 
$\Lambda_{-1}(T^*_{X^G}X)$ in $K_G(X^G,f\circ j)_{Z^G}\otimes_{R_G}F_G$, 
because it is so in $K^G(Z^G)\otimes_{R_G}F_G$ and the map $\Upsilon$ is surjective by
Proposition \ref{prop:Upsilon3}. In particular, the map $Rj_*$ is injective
on  $K_G(X^G,f\circ j)_{Z^G}\otimes_{R_G}F_G$. 
It is also surjective, because the upper square commutes, $\Upsilon$ is surjective, and
$Rj_*$ is surjective onto $K^G(Z)\otimes_{R_G}F_G$.
\end{proof}

\subsubsection{Critical convolution algebras}\label{sec:Kcrit}
Let $(X_a,\chi,f_a)$, $a=1,2,3$,  be smooth $G$-equivariant LG-models.
We abbreviate $X_{123}=X_1\times X_2\times X_3$ and
$X_{ab}=X_a\times X_b$ for $a<b$.
Let $\pi_{ab}:X_{123}\to X_{ab}$ be the obvious projection.
Let $f_{ab}=f_a\boxplus(-f_b)$, $Y_a=f_a^{-1}(0)$ and $Y_{ab}=f_{ab}^{-1}(0)$.
Let $Z_{ab}\subset Y_{ab}$ be a $G$-invariant closed subset for the diagonal $G$-action.
We define 
$\widetilde Z_{13}=\pi_{12}^{-1}(Z_{12})\cap\pi_{23}^{-1}(Z_{23}).$
We'll assume that the function $f_{ab}$ is regular for each $a<b$, 
and that the map $\pi_{13}|_{\widetilde Z_{13}}$
is proper and maps into  $Z_{13}$.
Then, there is a convolution functor 
\begin{align}\label{conv1}
\Db\Coh_G(X_{12})_{Z_{12}}\otimes\Db\Coh_G(X_{23})_{Z_{23}}\to
\Db\Coh_G(X_{13})_{Z_{13}}
\end{align}
such that 
$\calE\star\calF=R(\pi_{13})_*(L(\pi_{12})^*(\calE)\otimes^LL(\pi_{23})^*(\calF)).$
In a similar way,  since we have
$(\pi_{12}\times\pi_{23})^*(f_{12}\oplus f_{23})=(\pi_{13})^*f_{13},$
we can define a convolution functor of derived factorization categories 
\begin{align}\label{conv2}
\DCoh_G(X_{12},f_{12})_{Z_{12}}\otimes\DCoh_G(X_{23},f_{23})_{Z_{23}}\to
\DCoh_G(X_{13},f_{13})_{Z_{13}}
\end{align}
such that 
$\calE\star\calF=R(\pi_{13})_*(L(\pi_{12})^*(\calE)\otimes^LL(\pi_{23})^*(\calF))$.
This functor is compatible with both the triangulated structures and their dg-enhancements.

Now, we consider the following particular case.
Let $(X,\chi,f)$ be a smooth $G$-equivariant LG-model with
a proper $G$-equivariant map $\pi:X\to X_0$ to an affine $G$-scheme.
Let $f=f_0\circ\pi$ where $f_0:X_0\to\bbC$ is a $\chi$-semi-invariant function.
Let $Y$ and $Y_0$ be the zero loci of the maps $f$ and $f_0$.
We define 
$$Z=X\times_{X_0} X
 ,\quad
L=X\times_{X_0}\{x_0\}
 ,\quad
x_0\in(Y_0)^G.
$$
We set $X_a=X$ for each $a$, and $Z_{ab}=Z$ for each $a<b$.
The convolution functor \eqref{conv1} yields a monoidal structure on the triangulated category
$\Db\Coh_G(X^2)_Z$ such that $\Db\Coh_G(X)_L$ and $\Db\Coh_G(X)$ are modules 
over $\Db\Coh_G(X^2)_Z$.
Taking the Grothendieck groups, this yields an associative $R_G$-algebra structure on
$K^G(X^2)_Z=K^G(Z)$ and $K^G(Z)$-representations in $K^G(L)$ and $K^G(X)$.
Now, we set $f_a=f$ for each $a$, $f_{ab}=f^{(2)}$ for each $a<b$, 
and we consider the factorization categories.
Note that $Z\subset Y_{ab}$.
From \eqref{conv2} we get the following.

\begin{proposition}
\hfill
\begin{enumerate}[label=$\mathrm{(\alph*)}$,leftmargin=8mm]
\item 
$\DCoh_G(X^2,f^{(2)})_Z$
is a monoidal category.
\item
$\DCoh_G(X,f)_L$ and $\DCoh_G(X,f)$  are modules over  
$\DCoh_G(X^2,f^{(2)})_Z$.
\end{enumerate}
\end{proposition}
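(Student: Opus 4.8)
The plan is to deduce everything from the general convolution functor \eqref{conv2} together with the basic associativity and module-structure formalism, applied to the special situation just set up. First I would verify that the hypotheses needed for \eqref{conv2} and for the three-fold version of the convolution are satisfied in our case. We have taken $X_a=X$, $f_a=f$ and $f_{ab}=f^{(2)}=f\oplus(-f)$; since $f$ is regular (being a morphism to $\bbC$), the function $f^{(2)}$ on $X^2$ is regular, and more generally $f_{12}\oplus f_{23}=f\oplus(-f)\oplus f\oplus(-f)$ is regular on $X^3$. The identity $(\pi_{12}\times\pi_{23})^*(f_{12}\oplus f_{23})=\pi_{13}^*f_{13}$ holds because the middle $f$ and $-f$ cancel. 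It remains to check the properness and support conditions: with $Z_{ab}=Z=X\times_{X_0}X$ for all $a<b$, the set $\widetilde Z_{13}=\pi_{12}^{-1}(Z)\cap\pi_{23}^{-1}(Z)$ consists of triples $(x_1,x_2,x_3)$ with $\pi(x_1)=\pi(x_2)=\pi(x_3)$, so $\pi_{13}$ maps $\widetilde Z_{13}$ into $Z$, and this map is proper because it is the base change along $X\times_{X_0}X\to X_0$ of the proper map $\pi\colon X\to X_0$ (here one uses that $\pi$ is proper and $X_0$ is affine, so $\widetilde Z_{13}\to Z$ is proper). Also $Z\subset Y^{(2)}:=(f^{(2)})^{-1}(0)$ since $\pi(x_1)=\pi(x_2)$ forces $f(x_1)=f(x_2)$, so the factorization categories $\DCoh_G(X^2,f^{(2)})_Z$ make sense.

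Next I would establish part (a). Applying \eqref{conv2} with all $X_a=X$ gives a bifunctor
\[
\star\colon \DCoh_G(X^2,f^{(2)})_Z\otimes\DCoh_G(X^2,f^{(2)})_Z\to\DCoh_G(X^2,f^{(2)})_Z,
\]
and I would check associativity by the standard argument: on $X^4$ both $(\calE\star\calF)\star\calG$ and $\calE\star(\calF\star\calG)$ are computed as $R(\pi_{14})_*$ of a triple tensor product of pullbacks along the four projections $\pi_{12},\pi_{23},\pi_{34}$, using flat base change and the projection formula (Remark \ref{rem:base change 1}(a)) to rearrange the order of pushforward and pullback, together with the properness of the relevant maps on the supports, which again reduces to iterated base change of $\pi$. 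The unit object is $\Upsilon$ (equivalently $\Gamma$ applied to) the structure sheaf of the diagonal $\Delta_X\subset X^2$, which lies in $\DCoh_G(X^2,f^{(2)})_Z$ since $\Delta_X\subset Z$; the unit axioms follow because $\star$ against the diagonal implements pull-push along an isomorphism. This gives the monoidal structure, and the associativity constraint is the obvious one coming from the associativity of tensor product and the base-change isomorphisms, with the pentagon and triangle axioms holding by coherence of those isomorphisms.

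For part (b) I would run the same machinery with $X_1=X_2=X$ and $X_3$ replaced by a point-like model. Concretely, take the three-fold convolution \eqref{conv2} with $X_1=X_2=X$, $X_3=\Spec\bbC$ (with $f_3=0$), so that $X_{13}=X_{23}=X$ and the convolution specializes to an action functor
\[
\DCoh_G(X^2,f^{(2)})_Z\otimes\DCoh_G(X,f)_L\to\DCoh_G(X,f)_L,
\]
and similarly $\DCoh_G(X^2,f^{(2)})_Z\otimes\DCoh_G(X,f)\to\DCoh_G(X,f)$ with no support condition on the last factor. One checks the support and properness conditions exactly as before: for the $L$-module, $L=X\times_{X_0}\{x_0\}$ is preserved because $\pi_{13}$ restricted to $\pi_{12}^{-1}(Z)\cap\pi_{23}^{-1}(L)$ lands in $L$ and is proper over it; for the module $\DCoh_G(X,f)$ with no support, $\pi_{13}$ is already proper since $\pi$ is proper. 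The associativity of these actions with respect to $\star$ is the same $X^3$- (resp.\ $X^4$-)diagram chase using flat base change and the projection formula. The main obstacle, as in all such convolution arguments, is bookkeeping: one must be careful that every application of base change and the projection formula is legitimate, i.e.\ that the relevant morphisms have finite $G$-flat dimension and that the proper-pushforward functors are defined on the supported subcategories — but all of this is guaranteed by the standing smoothness of the $X_a$, the affineness of $X_0$, and the properness of $\pi$, together with the functoriality statements \eqref{funct1} and \eqref{funct3} and Remark \ref{rem:base change 1}. Hence no genuinely new input is needed beyond \eqref{conv2} and the cited base-change formalism.
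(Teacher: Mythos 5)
Your proposal is correct and follows essentially the same route as the paper: apply the convolution functor \eqref{conv2}, take the unit to be the (factorization associated to the) structure sheaf of the diagonal, derive associativity from flat base change and the projection formula as in \cite[prop.~5.13]{BDFIK16}, and for part (b) specialize the three-fold convolution to $X_3 = \{o\}$, $f_3 = 0$, $Z_{23}=Z_{13}=L\times\{o\}$ or $X\times\{o\}$. You simply spell out more of the routine hypothesis-checking (regularity of the box-sums, properness of $\pi_{13}$ on the support as a base change of $\pi$) that the paper leaves implicit.
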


\begin{proof} 
To prove Part (a) we must define an associativity constraint and a unit 
satisfying the pentagon and the unit axioms. 
The associativity constraint follows from the flat base change and the projection formula as in
\cite[prop.~5.13]{BDFIK16}.
The unit is the factorization $\Upsilon\Delta_*\calO_X$.
Note that $\Delta(X)\subset Z$ by hypothesis.
To prove Part (b) we choose $X_1=X_2=X$, $X_3=\{o\}$, $f_1=f_2=f$,
$f_3=0$, $Z_{12}=Z$, and $Z_{23}=Z_{13}=L\times\{o\}$ or $Z_{23}=Z_{13}=X\times\{o\}$
and we apply \eqref{conv2}.
\end{proof}

\begin{corollary}\label{cor:critalg1}
\hfill
\begin{enumerate}[label=$\mathrm{(\alph*)}$,leftmargin=8mm]
\item
$K_G(X^2,f^{(2)})_Z$ is an $R_G$-algebra which acts on
$K_G(X,f)_L$ and $K_G(X,f)$.

\item
The map $\Upsilon$ yields a surjective algebra map 
$K^G(Z)\to K_G(X^2,f^{(2)})_Z$.
\end{enumerate}
\end{corollary}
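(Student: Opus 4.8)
The plan is to deduce Corollary~\ref{cor:critalg1} from the Proposition immediately preceding it by passing to Grothendieck groups, using the fact that $K_0$ of a monoidal triangulated category carrying a module structure is a (possibly non-unital a priori, but here unital) ring acting on the corresponding module $K_0$-groups. First I would recall that by Proposition~\ref{prop:Upsilon3} applied to the smooth $G$-equivariant LG-model $(X^2,\chi,f^{(2)})$ with support $Z$, the map $\Upsilon\colon K^G(Z)\to K_G(X^2,f^{(2)})_Z$ is surjective; note that $Z=X^2\times_{X_0}\{x_0\}$-type reasoning is not needed here, only that $Z$ is a closed $G$-invariant subset of the zero locus of $f^{(2)}$, which holds since $f^{(2)}|_Z=0$ by construction of $Z=X\times_{X_0}X$ and $f=f_0\circ\pi$. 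This gives Part~(b) as a statement of abelian groups; the content is that it is an \emph{algebra} map.

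The key steps, in order: (1) The convolution bifunctor \eqref{conv2} specialized to $X_a=X$, $f_a=f$, $Z_{ab}=Z$ is exact in each variable and compatible with the triangulated structure, hence descends to a biadditive pairing on Grothendieck groups $K_G(X^2,f^{(2)})_Z\otimes_{R_G}K_G(X^2,f^{(2)})_Z\to K_G(X^2,f^{(2)})_Z$; associativity of this pairing follows from the associativity constraint of the monoidal structure established in Part~(a) of the Proposition (via flat base change and the projection formula, cf.\ \cite[prop.~5.13]{BDFIK16}), and the class $[\Upsilon\Delta_*\calO_X]$ is a two-sided unit because $\Upsilon\Delta_*\calO_X$ is the monoidal unit. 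This proves Part~(a)'s algebra assertion. (2) The same argument applied to the module structures from Part~(b) of the Proposition (with $X_3=\{o\}$, $f_3=0$, $Z_{23}=Z_{13}=L\times\{o\}$ resp.\ $X\times\{o\}$) shows $K_G(X,f)_L$ and $K_G(X,f)$ are modules over $K_G(X^2,f^{(2)})_Z$. (3) For Part~(b) of the Corollary: $\Upsilon$ is surjective by Proposition~\ref{prop:Upsilon3}; that it is multiplicative amounts to checking $\Upsilon(\calE)\star\Upsilon(\calF)=\Upsilon(\calE\star\calF)$ on $K^G(Z)$, where on the left $\star$ is critical convolution and on the right $\star$ is ordinary convolution \eqref{conv1}. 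This is exactly the commutation of $\Upsilon$ with $L\pi_{12}^*$, $L\pi_{23}^*$, $\otimes^L$, and $R\pi_{13,*}$, which is provided by Lemma~\ref{lem:Upsilon5} (for pull-back and push-forward along the smooth LG-model morphisms $\pi_{12},\pi_{23},\pi_{13}$) together with the monoidal compatibility of $\Upsilon$ under $\otimes^L$; unitality of this algebra map follows since $\Upsilon$ sends $[\Delta_*\calO_X]$, the unit of $K^G(Z)$ as a convolution algebra, to $[\Upsilon\Delta_*\calO_X]$.

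The main obstacle I anticipate is step~(3): verifying that $\Upsilon$ intertwines the \emph{ordinary} convolution product on $K^G(Z)=K^G(X^2)_Z$ with the \emph{critical} convolution product on $K_G(X^2,f^{(2)})_Z$. The subtle point is that $\Upsilon$ is built from the Koszul (Isik) equivalence $\Phi$ followed by $Lj^*$ as in \eqref{Upsilon4}, and convolution mixes pull-backs, a derived tensor product, and a proper push-forward; one must check compatibility of $\Phi$ and of $Lj^*$ with \emph{each} of these operations. The pull-back/push-forward compatibilities are Lemma~\ref{lem:Upsilon5}(a),(b); the tensor compatibility requires knowing that $\Upsilon$ (equivalently the equivalence $\Gamma$ of \eqref{triangle0}) is monoidal for $\otimes^L$, which should follow from the description of $\Phi$ via \cite{I12} and the functoriality in \cite{T19} as used in the proof of Lemma~\ref{lem:Upsilon5}. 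Once these are in place, the diagram
\begin{align*}
\xymatrix{
K^G(Z)\otimes_{R_G}K^G(Z)\ar[r]^-{\star}\ar[d]_-{\Upsilon\otimes\Upsilon}&K^G(Z)\ar[d]^-{\Upsilon}\\
K_G(X^2,f^{(2)})_Z\otimes_{R_G}K_G(X^2,f^{(2)})_Z\ar[r]^-{\star}&K_G(X^2,f^{(2)})_Z}
\end{align*}
commutes, and surjectivity of $\Upsilon$ from Proposition~\ref{prop:Upsilon3} then upgrades Part~(a): associativity and the unit axiom for $\star$ on $K_G(X^2,f^{(2)})_Z$ can alternatively be \emph{transported} from $K^G(Z)$ through the surjection $\Upsilon$, which is a clean way to finish and avoids re-deriving the monoidal axioms at the level of Grothendieck groups.
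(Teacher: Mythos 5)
Your overall strategy matches the paper's: pass to Grothendieck groups, get the ring structure from the monoidal structure of the preceding Proposition, and obtain Part~(b) from Proposition~\ref{prop:Upsilon3} for surjectivity together with a multiplicativity check for $\Upsilon$. You also correctly identify, and are honest about, the fact that Lemma~\ref{lem:Upsilon5} only covers $L\phi^*$ and $R\phi_*$, so compatibility of $\Upsilon$ with the external (or internal derived) tensor product is an extra ingredient. The final ``transport'' remark is sound once the intertwining is in hand.

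However, there is a genuine gap that the paper's proof handles and yours does not. The functor $\Upsilon$ lives on $\Db\Coh_G(Y_{12})_{Z_{12}}$, the zero locus of $f^{(2)}$, not on $\Db\Coh_G(X^2)_{Z_{12}}$, so the ``ordinary convolution'' you want to intertwine must first be expressed as a functor on $\Db\Coh_G(Y_{ab})_{Z_{ab}}$. This is not automatic: the fiber-product formula on the zero loci may fail transversality, and identifying the resulting $K_0$-level product with the standard Steinberg product on $K^G(Z_{ab})$ is precisely what the paper's detour through the \emph{derived} zero loci $RY_{ab}$ accomplishes (the Cartesian diagram of derived schemes, the base-change argument showing $Ri_*$ intertwines \eqref{star2} and \eqref{star1}, and the d\'evissage step showing $Ri_*\circ Rj_*$ induces the identity of $K^G(Z_{ab})$). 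In your write-up the phrase ``$K^G(Z)=K^G(X^2)_Z$'' treats the group-level d\'evissage isomorphism as if it were manifestly a ring isomorphism between the two candidate convolution structures, which is exactly what needs proof. Relatedly, your appeal to Lemma~\ref{lem:Upsilon5} for the pull-back $L\pi_{12}^*$ (and similarly $L\pi_{23}^*$) is not literally licit: $\pi_{12}\colon X_{123}\to X_{12}$ is \emph{not} a morphism of LG-models, since the function carried by $X_{123}$ in the convolution is $\pi_{13}^*f_{13}$, not $\pi_{12}^*f_{12}$. The lemma applies to the genuine LG-morphism $\pi_{12}\times\pi_{23}\colon (X_{123},\pi_{13}^*f_{13})\to(X_{12}\times X_{23},f_{12}\oplus f_{23})$ and to $\pi_{13}$, which forces you to keep $\boxtimes$ bundled with the pull-back rather than splitting it off as a separate $\otimes^L$; this in turn makes the missing $\boxtimes$-compatibility of $\Upsilon$ unavoidable. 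Both of these (the derived-scheme d\'evissage and the correct bundling of the pull-back) should be spelled out before the argument is complete.
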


\begin{proof}
The convolution functors 
\begin{align}\label{star1}
\begin{split}
&\star:\Db\Coh_G(X_{12},f_{12})_{Z_{12}}\otimes\Db\Coh_G(X_{23},f_{23})_{Z_{23}}\to\Db\Coh_G(X_{13},f_{13})_{Z_{13}}\\
&\star:\Db\Coh_G(X_{12})_{Z_{12}}\times\Db\Coh_G(X_{23})_{Z_{23}}\to\Db\Coh_G(X_{13})_{Z_{13}}
\end{split}
\end{align}
are both given by
\begin{align}\label{star}
\calE\star\calF=R(\pi_{13})_*L(\pi_{12}\times\pi_{23})^*(\calE\boxtimes\calF).
\end{align}
We must compare the functors \eqref{star1}.
To do this, we first consider the derived scheme 
$RY_{ab}=R(X_{ab}\times\bbC\to X_{ab}\,,\,f_{ab}).$
We have the following obvious embeddings of derived schemes
$$\xymatrix{Y_{ab}\ar[r]^-j&RY_{ab}\ar[r]^-i& X_{ab}}.$$ 
%Let $RY_{123}$ be the derived fiber product of $X_{123}$ and $RY_{12}\times RY_{23}$ over
%$X_{12}\times X_{23}$, and let $Y_{123}$ be the classical truncation of $RY_{123}$.
%The closed embedding of $Y_{12}\times Y_{23}$ into $X_{12}\times X_{23}$ is regular,
%We have
%$$Y_{123}=(Y_{12}\times Y_{23})\times_{X_{12}\times X_{23}}X_{123}.$$
%The derived scheme $RY_{123}$ is quasi-isomorphic to $Y_{123}$.
%Thus $Y_{123}$ is given by the pull-back to $X_{123}$ of the Koszul resolution of 
%$\calO_{Y_{12}\times Y_{23}}$ over $\calO_{X_{12}\times X_{23}}$. 
We consider the following commutative diagram of derived schemes
\begin{align*}
\xymatrix{
X_{12}\times X_{23}&&\ar[ll]_-{\pi_{12}\times \pi_{23}} X_{123}\ar[r]^-{\pi_{13}}&X_{13}\\
RY_{12}\times RY_{23}\ar[u]^-i&&\ar[ll]_-{\pi_{12}\times \pi_{23}} RY_{123}\ar[r]^-{\pi_{13}}\ar[u]_-i&RY_{13}\ar[u]_-i
%Y_{12}\times Y_{23}\ar[u]^-j&&\ar[ll]_-{\pi_{12}\times \pi_{23}} Y_{123}\ar[r]^-{\pi_{13}}\ar[u]_-j&Y_{13}\ar[u]_-j
}
\end{align*}
The left square is Cartesian.
%The vertical maps are regular closed embeddings, hence they have finite $G$-flat dimensions.
The upper left horizontal map has finite $G$-flat dimension because $X_1$, $X_2$, $X_3$ are smooth. 
The lower one either because it is quasi-smooth, see \cite[lem.~1.15]{K22}.
Thus, we have a convolution functor
\begin{align}\label{star2}
\star:\Db\Coh_G(RY_{12})_{Z_{12}}\times\Db\Coh_G(RY_{23})_{Z_{23}}\to\Db\Coh_G(RY_{13})_{Z_{13}}
\end{align}
given by the formula \eqref{star}.
The left square is Cartesian. 
The base change
$$L(\pi_{12}\times\pi_{23})^*\circ Ri_*\to Ri_*\circ L(\pi_{12}\times\pi_{23})^*$$
is invertible by \cite[cor.~3.4.2.2]{L18}. 
Hence the direct image 
\begin{align*}
Ri_*:\Db\Coh_G(RY_{ab})_{Z_{ab}}\to\Db\Coh_G(X_{ab})_{Z_{ab}}
\end{align*}
intertwines the convolution functors \eqref{star2} and \eqref{star1}.
The morphism $j$ is a quasi-isomorphism
because the function $f_{ab}$ is regular.
Hence, the pushforward and pull-back functors $Rj_*$ and $Lj^*$
are mutually inverse equivalences of categories 
$$\Db\Coh_G(Y_{ab})_{Z_{ab}}=\Db\Coh_G(RY_{ab})_{Z_{ab}}$$
Hence \eqref{star2} yields a convolution functor
\begin{align}\label{star3}
\star:\Db\Coh_G(Y_{12})_{Z_{12}}\times\Db\Coh_G(Y_{23})_{Z_{23}}\to\Db\Coh_G(Y_{13})_{Z_{13}}.
\end{align}
such that the direct image 
\begin{align*}
Rj_*:\Db\Coh_G(Y_{ab})_{Z_{ab}}\to\Db\Coh_G(RY_{ab})_{Z_{ab}}
\end{align*}
intertwines the convolution functors \eqref{star3} and \eqref{star2}.
Since the K-theory satisfies the equivariant d\'evissage, the functor 
\begin{align}\label{functor1}
Ri_*\circ Rj_*:\Db\Coh_G(Y_{ab})_{Z_{ab}}\to\Db\Coh_G(X_{ab})_{Z_{ab}}
\end{align}
yields an isomorphism of Grothendieck groups.
Both Grothendieck groups are canonically identified with $K^G(Z_{ab})$, 
so that \eqref{functor1} induces the identity map of $K^G(Z_{ab})$.
Now, we consider the functor
\begin{align*}
\Upsilon&:\Db\Coh_G(Y_{ab})_{Z_{ab}}\to\Db\Coh_G(X_{ab},f_{ab})_{Z_{ab}}.
\end{align*}
By Lemma \ref{lem:Upsilon5}, it intertwines the functors \eqref{star3} and \eqref{star1}.
It gives a map
\begin{align}\label{map2}
\Upsilon:K^G(Z_{ab})\to K^G(X_{ab},f_{ab})_{Z_{ab}}.
\end{align}
which intertwines the convolution products on both sides.
The surjectivity in Part (b) follows from Proposition \ref{prop:Upsilon3}.
\end{proof}

\subsection{Cohomological critical convolution algebras}\label{sec:critalg2}
\subsubsection{Vanishing cycles and LG-models}
Let $G$ be an affine group acting on a smooth manifold $X$.
Let $\D^\b_G(X)$ be the $G$-equivariant derived category of
constructible complexes with complex coefficients on $X$.
Given a function $f:X\to\bbC$ with zero locus $Y=f^{-1}(0)$, 
we have the vanishing cycle and nearby cycle functors 
$\phi_f,\psi_f:\D^\b_G(X)\to \D^\b_G(Y).$ 
Let $i:Y\to X$ be the obvious embedding. 
Set
$\phi^p_{\!f}=i_*\phi_f[-1]$ and
$\psi^p_{\!f}=i_*\psi_f[-1].$
The functors $\phi^p_{\!f}$, $\psi^p_{\!f}$ commute with the Verdier duality $\bbD$.
They take perverse sheaves to perverse sheaves.
We have a distinguished triangle
\begin{align}\label{triangle}
\xymatrix{\psi^p_{\!f}\calE\ar[r]^{\can}&\phi^p_{\!f}\calE\ar[r]&i_*i^*\calE\ar[r]^{+1}& }
\end{align}
Let $(X,f)$ be a smooth $G$-invariant LG-model.
Let $i:Y\to X$ be the embedding of the zero locus of $f$,
and $j:Z\to X$ the embedding of a closed $G$-invariant subset of $Y$.
For any constructible complex $\calE\in\D^\b_G(X)$ we set
$H^\bullet_{Z}(X,\calE)=H^\bullet_G(Z,j^!\calE).$
Let 
$\calC_X=\bbC_X[\dim X]$
and
$$H^\bullet_G(X,f)_Z=
H^\bullet_{Z}(X,\phi^p_{\!f}\calC_X).$$
%Note that the isomorphism of functors $\bbD\circ a_!\circ\bbD=a_*$ implies that
%$$H^\bullet_G(X,f)_Z=H^\bullet_{c,G}(Z,j^*\phi^p_{\!f}\calC_X)^\vee.$$
Let $\phi:(X_2,f_2)\to (X_1,f_1)$ be a morphism of smooth $G$-invariant LG-models.
Let $Y_1=(f_1)^{-1}(0)$ and $Y_2=(f_2)^{-1}(0)$.
Let $Z_1,$ $Z_2$ be closed $G$-invariant subsets of $Y_1,$ $Y_2$.
By \cite[\S 2.17]{D17} we have the following functoriality maps.
If $\phi^{-1}(Z_1)\subset Z_2$ then we have a pull-back map
$\phi^*:H^\bullet_G(X_1,f_1)_{Z_1}\to H^\bullet_G(X_2,f_2)_{Z_2}$ which
is an isomorphism if $\phi$ is an affine fibration.
If $\phi(Z_2)\subset Z_1$ and $\phi|_{Z_2}$ is proper
then we have a push-forward map
$\phi_*:H^\bullet_G(X_2,f_2)_{Z_2}\to H^\bullet_G(X_1,f_1)_{Z_1}.$

\subsubsection{Cohomological critical convolution algebras}

Let $(X_a,f_a)$ be a smooth $G$-invariant LG-model for $a=1,2,3$.
We define $X_{ab}$, $Y_{ab}$, $Z_{ab}$, $f_{ab}$, $\pi_{ab}$  as in \S\ref{sec:Kcrit}.
%By \cite[(33)]{D17} 
There is a Thom-Sebastiani isomorphism
$$\boxtimes:H^\bullet_G(X_{12},f_{12})_{Z_{12}}\otimes H^\bullet_G(X_{23},f_{23})_{Z_{23}}\to
H^\bullet_G(X_{12}\times X_{23},f_{12}\oplus f_{23})_{\widetilde Z_{13}}.$$
We now define a convolution product in critical cohomology
\begin{align}\label{conv3}
\star:H^\bullet_G(X_{12},f_{12})_{Z_{12}}\otimes H^\bullet_G(X_{23},f_{23})_{Z_{23}}\to
H^\bullet_G(X_{13},f_{13})_{Z_{13}}
\end{align}
such that
$\alpha\otimes\beta\mapsto(\pi_{13})_*(\pi_{12}\times\pi_{23})^*(\alpha\boxtimes\beta).$

We consider the following particular setting where
$\pi:X\to X_0$ is a proper morphism of $G$-schemes with 
$X$ smooth quasi-projective and $X_0$ affine,
$f_0:X_0\to\bbC$ is a invariant function, and $f=f_0\circ\pi$ is regular.
Let $Y$, $Y_0$, $Z$, $L$ and $f^{\boxplus 2}$ be as in \S\ref{sec:Kcrit}.
We set $X_a=X$ and $f_a=f$ for each $a=1,2,3$. 
We equip the $H^\bullet_G$-module
$\Ext^\bullet_{\D^\b_G(X_0)}(\phi^p_{\!f_0}\pi_*\calC_X, \phi^p_{\!f_0}\pi_*\calC_X)$
with the Yoneda product.

\begin{proposition}\label{prop:critalg3}
\hfill
\begin{enumerate}[label=$\mathrm{(\alph*)}$,leftmargin=8mm]
\item 
There is an isomorphism
$H^\bullet_G(X^2,f^{(2)})_Z=
\Ext^\bullet_{\D^\b_G(X_0)}(\phi^p_{\!f_0}\pi_*\calC_X, \phi^p_{\!f_0}\pi_*\calC_X)$
which intertwines the convolution product and the Yoneda product.
\item
The convolution product equips $H^\bullet_G(X^2,f^{(2)})_Z$ with 
an $H^\bullet_G$-algebra structure.
\item 
The $H^\bullet_G$-algebra $H^\bullet_G(X^2,f^{(2)})_Z$ acts on the $H^\bullet_G$-modules 
$H^\bullet_G(X,f)_L$ and $H^\bullet_G(X,f)$.
\end{enumerate}
\end{proposition}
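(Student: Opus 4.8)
The plan is to reduce everything to a statement about vanishing cycle sheaves on the affine base $X_0$ via the properness of $\pi$. First I would recall that, because $\pi:X\to X_0$ is proper and $f=f_0\circ\pi$, the base change and the compatibility of vanishing cycles with proper pushforward (as in \cite[\S 2.17]{D17}) give a canonical isomorphism $\pi_*\phi^p_{\!f}\calC_X\cong\phi^p_{\!f_0}\pi_*\calC_X$ in $\D^\b_G(Y_0)$. Applying this on each factor of $X^2=X\times X$, with the function $f^{(2)}=f\oplus(-f)$ and the map $\pi\times\pi:X^2\to X_0^2$, together with the Thom--Sebastiani isomorphism $\phi^p_{\!f\oplus(-f)}\calC_{X^2}\cong\phi^p_{\!f}\calC_X\boxtimes\phi^p_{\!-f}\calC_X$, identifies $(\pi\times\pi)_*\phi^p_{\!f^{(2)}}\calC_{X^2}$ with the external product on $X_0^2$ of $\phi^p_{\!f_0}\pi_*\calC_X$ with its Verdier dual twisted appropriately (using $\phi^p_{\!-f}\calC_X\cong\bbD\,\phi^p_{\!f}\bbD\,\calC_X$ and self-duality of $\calC_X$ up to shift).

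Next, for Part (a), I would observe that by definition $H^\bullet_G(X^2,f^{(2)})_Z=H^\bullet_{Z}(X^2,\phi^p_{\!f^{(2)}}\calC_{X^2})$, and that $Z=X\times_{X_0}X=(\pi\times\pi)^{-1}(\Delta X_0)$. Since $\pi\times\pi$ restricted to $Z$ maps properly onto the diagonal copy of $X_0$ inside $X_0^2$, the cohomology with support in $Z$ of $\phi^p_{\!f^{(2)}}\calC_{X^2}$ is computed as the cohomology with support along the diagonal of $(\pi\times\pi)_*\phi^p_{\!f^{(2)}}\calC_{X^2}$ on $X_0^2$; by the standard identity $H^\bullet_{\Delta}(X_0^2,\calF_1\boxtimes\bbD\calF_2)=\Ext^\bullet_{\D^\b_G(X_0)}(\calF_2,\calF_1)$ (adjunction between $\Delta^!$ and external product, i.e. the formula $\Delta^!(\calF_1\boxtimes\bbD\calF_2)=\RHom(\calF_2,\calF_1)$), this yields the desired isomorphism $H^\bullet_G(X^2,f^{(2)})_Z\cong\Ext^\bullet_{\D^\b_G(X_0)}(\phi^p_{\!f_0}\pi_*\calC_X,\phi^p_{\!f_0}\pi_*\calC_X)$. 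I would then check that the convolution product \eqref{conv3}, unwound through these identifications, becomes exactly composition of $\Ext$-classes: the pull-back $(\pi_{12}\times\pi_{23})^*$ followed by pushforward $(\pi_{13})_*$ corresponds, after pushing to $X_0^3$, to the Yoneda splicing of homomorphisms, the middle factor being contracted by the trace/counit on $\phi^p_{\!f_0}\pi_*\calC_X$. This is the formal "Steinberg-type" computation in the style of the convolution-algebra formalism and it is where one must be careful with the Thom--Sebastiani signs and the shifts coming from $\calC_X=\bbC_X[\dim X]$.

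Once (a) is in hand, Parts (b) and (c) are essentially formal. For (b): the Yoneda product on $\Ext^\bullet_{\D^\b_G(X_0)}(\calK,\calK)$, with $\calK=\phi^p_{\!f_0}\pi_*\calC_X$, is manifestly associative and unital (the unit being the identity endomorphism of $\calK$), so transporting it across the isomorphism of (a) gives an associative unital $H^\bullet_G$-algebra structure on $H^\bullet_G(X^2,f^{(2)})_Z$; alternatively one argues directly as in the proof of the preceding proposition that the convolution \eqref{conv3} is associative by flat base change and the projection formula for the constructible six-functor formalism, and that the unit is the class of the diagonal $\Delta_*\calC_X$, noting $\Delta(X)\subset Z$. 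For (c): the module structures on $H^\bullet_G(X,f)_L=H^\bullet_L(X,\phi^p_{\!f}\calC_X)$ and $H^\bullet_G(X,f)=H^\bullet(X,\phi^p_{\!f}\calC_X)$ come from the same convolution recipe with $X_3$ replaced by a point and the support $Z_{13}$ taken to be $L$ (resp.\ all of $X$); equivalently, under the $\Ext$-description these are the evident $\Ext^\bullet(\calK,\calK)$-module structures on $H^\bullet(X_0,\calK)$ and on $H^\bullet_{\{x_0\}}(X_0,\calK)$ obtained by applying global sections (resp.\ costalk at $x_0$) to endomorphisms of $\calK$, and associativity of the action follows from functoriality just as for the algebra structure itself.

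The main obstacle I expect is Part (a): matching the geometric convolution product of \eqref{conv3} with the Yoneda product requires carefully tracking the proper base change isomorphisms for vanishing cycles, the Thom--Sebastiani identification on products, and the duality identity $\phi^p_{\!-f}\circ\bbD=\bbD\circ\phi^p_{\!f}$, all while keeping the degree shifts from $\calC_X$ consistent across the three factors $X_0^3$; the diagonal-pushforward step $\Delta^!(\calF_1\boxtimes\bbD\calF_2)=\RHom(\calF_2,\calF_1)$ must be applied with $G$-equivariance and with the correct twist so that the middle contraction in the triple product produces precisely composition and not composition up to a sign or shift. Everything else — associativity, unitality, the module axioms — is routine six-functor bookkeeping, entirely parallel to the K-theoretic case treated in Corollary \ref{cor:critalg1} and to the classical Borel--Moore homology convolution algebras.
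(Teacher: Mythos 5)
Your proposal follows essentially the same route as the paper: Thom--Sebastiani for $f\oplus(-f)$, commutation of $\phi^p$ with proper pushforward to transfer everything to $X_0$, Verdier self-duality of $\phi^p_{\!f}\calC_X$, and the identity $H^\bullet_\Delta(X_0^2,\calF\boxtimes\bbD\calF)=\Ext^\bullet_{\D^\b_G(X_0)}(\calF,\calF)$ from \cite[(8.6.4)]{CG}, with Parts (b) and (c) then following formally from (a). The only cosmetic difference is that you keep explicit track of $\phi^p_{\!-f}$ and reduce it to $\phi^p_{\!f}$ via duality, whereas the paper suppresses the sign and appeals to $\crit(f)\subset f^{-1}(0)$; both proofs defer the verification that convolution matches Yoneda composition to the convolution-algebra formalism of \cite[\S 8.6]{CG}.
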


\begin{proof}
Parts (b), (c) follow from (a).
The isomorphism in Part (a) is 
\begin{align*}
H^\bullet_G(X^2,f^{(2)})_Z
&=H^\bullet(Z,j^!\phi^p_{\!f^{(2)}}\calC_{X^2})\\
&=H^\bullet(Z,j^!(\phi^p_{\!f}\calC_X\boxtimes\phi^p_{\!f}\calC_X))\\
&=H^\bullet(Z,j^!(\bbD\phi^p_{\!f}\calC_X\boxtimes\phi^p_{\!f}\calC_X))\\
&=\Ext^\bullet_{\D^\b_G(X_0)}(\pi_*\phi^p_{\!f}\calC_X, \pi_*\phi^p_{\!f}\calC_X)\\
&=\Ext^\bullet_{\D^\b_G(X_0)}(\phi^p_{\!f_0}\pi_*\calC_X, \phi^p_{\!f_0}\pi_*\calC_X)
\end{align*}
where the second isomorphism follows from the Thom-Sebastiani theorem and the inclusion
$\crit(f)\subset f^{-1}(0)$, the third one follows from the self-duality of the complex $\phi^p_{\!f}\calC_X$,
the fourth equality is as in \cite[(8.6.4)]{CG}, and the last one is the commutation of 
proper direct image and vanishing cycles.
The compatibility under the isomorphism in (b) of the convolution product in $H^\bullet_G(X^2,f^{(2)})_Z$
and the Yoneda composition in 
$\Ext^\bullet_{\D^\b_G(X_0)}(\phi^p_{\!f_0}\pi_*\calC_X, \phi^p_{\!f_0}\pi_*\calC_X)$
follows from \cite[\S 8.6.27]{CG}, modulo observing that the convolution product \cite[(8.6.27)]{CG}
is the same as the convolution product \eqref{conv3}.
\end{proof}

The functoriality of $\phi^p_{\!f_0}$ yields the following
analog of the algebra homomorphism 
$\Upsilon:K^G(Z)\to K_G(X^2,f^{(2)})_Z$ in Corollary \ref{cor:critalg1}.

\begin{corollary}\label{cor:Upsilon5}
There is an algebra map 
$\Upsilon :H_\bullet^G(Z,\bbC)\to H^\bullet_G(X^2,f^{(2)})_Z$.
\qed
\end{corollary}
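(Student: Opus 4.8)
The plan is to construct $\Upsilon$ cohomologically by mimicking the K-theoretic argument in the proof of Corollary \ref{cor:critalg1}, replacing singularity categories and their Grothendieck groups by vanishing cycle sheaves and their cohomology. First I would recall that, by Proposition \ref{prop:critalg3}(a), we have the identification
\begin{align*}
H^\bullet_G(X^2,f^{(2)})_Z=\Ext^\bullet_{\D^\b_G(X_0)}(\phi^p_{\!f_0}\pi_*\calC_X,\phi^p_{\!f_0}\pi_*\calC_X)
\end{align*}
as an $H^\bullet_G$-algebra under Yoneda composition. So it suffices to produce a map from $H_\bullet^G(Z,\bbC)$ into this $\Ext$-algebra compatible with the two multiplicative structures. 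The source $H_\bullet^G(Z,\bbC)=H^\bullet_G(Z,j^!\calC_{X^2})$ carries a convolution product coming from the correspondence $Z=X\times_{X_0}X$ together with $L(\pi_{12}\times\pi_{23})^*$ and $R(\pi_{13})_*$ — this is the usual Borel–Moore homology convolution algebra of \cite[\S 8.6]{CG}, which under the analog of Proposition \ref{prop:critalg3}(a) with $f_0=0$ is $\Ext^\bullet_{\D^\b_G(X_0)}(\pi_*\calC_X,\pi_*\calC_X)$.

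The key step is then to exhibit a morphism of complexes (or rather of sheaves on $Y_0=f_0^{-1}(0)$) realizing the passage from $\pi_*\calC_X$ to $\phi^p_{\!f_0}\pi_*\calC_X$. The distinguished triangle \eqref{triangle} applied to $\calE=\pi_*\calC_X$ gives the comparison map $\psi^p_{\!f_0}\pi_*\calC_X\to\phi^p_{\!f_0}\pi_*\calC_X$, but the relevant functoriality is rather the one used to define the functoriality maps for $H^\bullet_G(-,f)_Z$ quoted from \cite[\S 2.17]{D17}: the pair $(\pi,f_0)$ being a morphism of LG-models with target $(X_0,f_0)$, pushforward by $\pi$ commutes with $\phi^p$, and applying $\phi^p_{\!f_0}$ to a class in $H_\bullet^G(Z,\bbC)=\Ext^\bullet(\pi_*\calC_X,\pi_*\calC_X)$ yields the desired class in $\Ext^\bullet(\phi^p_{\!f_0}\pi_*\calC_X,\phi^p_{\!f_0}\pi_*\calC_X)$. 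Concretely, $\Upsilon$ is the functor $\phi^p_{\!f_0}$ applied on $\Ext$-groups, composed with the identifications of Proposition \ref{prop:critalg3}(a) (with and without potential). That $\Upsilon$ intertwines the two convolution products is then the statement that $\phi^p_{\!f_0}$, being a functor, respects composition of morphisms, combined with the compatibility of the convolution product \eqref{conv3} with Yoneda composition established in the proof of Proposition \ref{prop:critalg3}(a); the Thom–Sebastiani isomorphism is exactly what guarantees that the external product used to build the convolution is carried along correctly.

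I expect the main obstacle to be bookkeeping rather than conceptual: one has to check that the map induced on $\Ext$-groups by the functor $\phi^p_{\!f_0}$ is genuinely compatible with the geometric convolution formula $\alpha\otimes\beta\mapsto(\pi_{13})_*(\pi_{12}\times\pi_{23})^*(\alpha\boxtimes\beta)$ after identifying $H_\bullet^G(Z,\bbC)$-classes with morphisms $\pi_*\calC_X\to\pi_*\calC_X[\ast]$ in $\D^\b_G(X_0)$ — this requires tracking through the base change and proper pushforward compatibilities (commutation of $\phi^p$ with proper direct image, Thom–Sebastiani for $\phi^p$ of a box product) just as the proof of Proposition \ref{prop:critalg3} did, and verifying that $\phi^p_{\!f_0}$ sends the unit class (the fundamental class of the diagonal $\Delta X\subset Z$, i.e.\ $\id_{\pi_*\calC_X}$) to the unit $\id_{\phi^p_{\!f_0}\pi_*\calC_X}$. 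Since all of these compatibilities are already invoked in the proof of Proposition \ref{prop:critalg3}, and the functoriality of vanishing cycles for morphisms of LG-models is quoted from \cite[\S 2.17]{D17}, the argument reduces to assembling these ingredients; hence the one-line proof "The functoriality of $\phi^p_{\!f_0}$ together with Proposition \ref{prop:critalg3}" is essentially complete, and I would write it out at roughly that level of detail.
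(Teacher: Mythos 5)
Your proposal is correct and matches the paper's intended argument: the paper simply invokes "the functoriality of $\phi^p_{\!f_0}$" (with the $\Ext$-algebra identifications of Proposition \ref{prop:critalg3}(a) and its $f_0=0$ analogue noted in the following Remark implicit), and you unpack exactly that — $\Upsilon$ is the map induced by the functor $\phi^p_{\!f_0}$ on endomorphism $\Ext$-algebras of $\pi_*\calC_X$, which is automatically multiplicative. The intermediate remark about the triangle \eqref{triangle} is a small detour you correctly abandon; the rest is the paper's proof spelled out.
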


\begin{remark}
If $f_{ab}=0$, then there is an $H^\bullet_G$-module isomorphism
$$H^\bullet_G(X_{ab},f_{ab})_{Z_{ab}}
=H^\bullet_G(Z_{ab},\bbD_{Z_{ab}})[-\dim X_{ab}]
=H^G_{-\bullet}(Z_{ab},\bbC)[-\dim X_{ab}]$$
where $\bbD_{Z_{ab}}$ is the dualizing complex.
Under this isomorphism the convolution product \eqref{conv3} is the same as the
convolution product in equivariant Borel-Moore homology used in \cite[\S 2.7]{CG}.
In particular, if $f=0$ then there is an algebra isomorphism
$H^\bullet_G(X^2,f)_Z=H_\bullet^G(Z,\bbC)$, up to a grading renormalization.
The algebra isomorphism in Proposition \ref{prop:critalg3}(b) is the algebra isomorphism
$H_\bullet^G(Z,\bbC)=\Ext^\bullet_{\D^\b_G(X_0)}(\pi_*\calC_X, \pi_*\calC_X)$
in \cite [thm.~8.6.7]{CG}.
\end{remark}

\section{Quiver varieties and critical convolution algebras}\label{sec:quiver varieties}

\subsection{Basics on quiver varieties}\label{sec:quiver basic}

 \subsubsection{Quiver representations}
Let $Q$ be a finite quiver with sets of vertices and of arrows $Q_0$ and $Q_1$.
Let $s,t:Q_1\to Q_0$ be the source and target.
Let $\alpha^*$ be the arrow opposite to the arrow $\alpha\in Q_1$.
Fix a grading $\deg:Q_1\to\bbZ$.
We'll use the auxiliary sets 
$$Q_1^*=\{\alpha^*\,;\,\alpha\in Q_1\}
,\quad
Q'_0=\{i'\,;\,i\in Q_0\}
,\quad
Q'_1=\{a_i:i\to i'\,;\,i\in Q_0\}
,\quad
\Omega=\{\varepsilon_i:i\to i\,;\,i\in Q_0\}.$$
From $Q$ we construct new quivers as follows :
\begin{itemize}[leftmargin=3mm]
\item[-]
$\overline Q$ is the double quiver :
$\overline Q_0=Q_0$, 
$\overline Q_1=Q_1\cup Q_1^*$,
\item[-] 
$\widetilde Q$ is the triple quiver :
$\widetilde Q_0=Q_0$, $\widetilde Q_1=\overline Q_1\cup\Omega$,
\item[-] 
$Q_f$ is the framed quiver :
$Q_{f,0}=Q_0\sqcup Q'_0$,
$Q_{f,1}=Q_1\sqcup Q'_1$,
\item[-]
$\overline Q_f=\overline{(Q_f)}$ is the framed double quiver,
\item[-] 
$\widetilde Q_f$ is the framed triple quiver :
$\widetilde Q_{f,0}=Q_{f,0}$,
$\widetilde Q_{f,1}=\overline{(Q_f)}_1\cup \Omega$,
\item[-] 
$\widehat Q_f=(\widetilde Q)_f$ is the simply framed triple quiver,

\item[-]
$Q^\bullet$ is the graded quiver :
$Q^\bullet_0=Q_0\times\bbZ$,
$Q^\bullet_1=Q_1\times\bbZ$ with $s(\alpha,k)=(s(\alpha),k)$ and $t(\alpha,k)=(t(\alpha),\deg(\alpha)+k).$
\end{itemize}
We abbreviate $I=Q_0$, $I^\bullet=Q_0^\bullet$ and
$\widetilde Q_f^\bullet=(\widetilde Q_f)^\bullet$,
$\overline Q_f^\bullet=(\overline Q_f)^\bullet.$
Let $\bfC$ and $\bfC^\bullet$ be the categories of finite dimensional $I$-graded and 
$I^\bullet$-graded vector spaces.
For any $V$ in $\bfC$ or $\bfC^\bullet$ we write
$V=\bigoplus_{i\in I}V_i$ or
$V=\bigoplus_{(i,k)\in I^\bullet}V_{i,k}$
respectively.
Let $\delta_i$ and $\delta_{i,k}$ be the Dirac functions at $i$ and $(i,k)$.
The dimension vectors are
$v=\sum_{i\in I}v_i\delta_i$
and
$v=\sum_{(i,k)\in I^\bullet}v_{i,k}\delta_{i,k}$
respectively.
Given $V,W\in\bfC$ the representation varieties of $Q$ and $Q_f$ are
$$\X_Q(V)=\prod_{x\in Q_1}\Hom(V_{s(x)},V_{t(x)})
 ,\quad
\X_{Q_f}(V,W)=\prod_{x\in Q_1}\Hom(V_{s(x)},V_{t(x)})\times
\prod_{i\in Q_0}\Hom(V_i,W_i).$$
A representation of $\widetilde Q_f$  is a tuple 
$x=(x_\alpha,x_a,x_{a^*},x_\varepsilon)$ with
$\alpha\in \overline Q_1$, $a\in Q'_1$ and $\varepsilon\in\Omega$.
We'll abbreviate $h=x_h$ for each arrow $h$ and we write
$x=(\alpha\,,\,a\,,\,a^*\,,\,\varepsilon)$.
We'll abbreviate 
$$\overline\X=\X_{\overline Q_f}=\X_{\overline Q}
 ,\quad
\widetilde\X=\X_{\widetilde Q_f}=\X_{\widetilde Q}
 ,\quad
\widehat\X=\X_{\widehat Q_f}=\X_{\widehat Q}
 ,\quad
\overline\X^\bullet=\X_{\overline Q_f^\bullet}=\X_{\overline Q^\bullet}
 ,\quad
\widetilde\X^\bullet=\X_{\widetilde Q_f^\bullet}=\X_{\widetilde Q^\bullet}.$$
 We define
$G_V=\prod_{i\in I}GL(V_i)$ and
$T=(\bbC^\times)^{Q_1}\times\bbC^\times.$
The representation ring of the torus $T$ is
$R_T=\bbC[t_\alpha^{\pm 1}\,,\,q^{\pm 1}]$ where $\alpha$ runs in $Q_1.$
Let $\frakg_V$ be the Lie algebra of $G_V$ and
$\frakg_V^\nil$ be the set of all nilpotent elements in $\frakg_V$.
We'll abbreviate $G_v=G_{\bbC^v}$ and $\frakg_v=\frakg_{\bbC^v}$.

\subsubsection{Nakajima's quiver varieties}\label{sec:quiver variety}

The group $G_V\times G_W\times T$ acts on $\overline\X(V,W)$ in the following way : 
the groups $G_V$, $G_W$ act by conjugaison, and the torus element $(z_\alpha,z)\in T$ 
takes the representation $x$ to
\begin{align}\label{action1}
(zz_\alpha\alpha,zz_{\alpha^*}\alpha^*,za,za^*\,;\,
\alpha\in Q_1\,,\,i\in I)
 ,\quad
z_{\alpha^*}=z_\alpha^{-1}.
\end{align}
In particular, we have $\xi(z)\cdot x=(z\alpha,za,za^*)$, where $\xi$
is the cocharacter
\begin{align}\label{xi}\xi:\bbC^\times\to T,\quad z\mapsto (1,z).\end{align}
We'll abbreviate $\bbC^\times=\xi(\bbC^\times)$. 
The representation variety $\overline\X(V,W)$ is holomorphic symplectic with an Hamiltonian action of
the groups $G_V$ and $G_W$.
The moment maps are
$\mu_V:\overline\X(V,W)\to\frakg^\vee_V$ and
$\mu_W:\overline\X(V,W)\to\frakg^\vee_W.$
A representation in $\overline\X(V,W)$ is stable if it has no non-zero 
subrepresentations supported on $V$.
Set
\begin{align*}
\overline\X(V,W)_s=\{x\in\overline\X(V,W)\,;\,x\ \text{is\ stable}\}
,\quad
\mu_V^{-1}(0)_s=\overline\X(V,W)_s\cap\mu_V^{-1}(0).
\end{align*}
The Nakajima quiver varieties are the categorical quotients 
$$\frakM(v,W)=\mu_V^{-1}(0)_s/G_V
 ,\quad
\frakM_0(v,W)=\mu_V^{-1}(0)/G_V$$
We have an obvious projective map
$\pi:\frakM(v,W)\to\frakM_0(v,W).$
The $G_W\times T$-variety $\frakM(v,W)$ is smooth, quasi-projective,
connected, holomorphic symplectic with Hamiltonian $G_W$-action.
The map $\mu_W$ descends to a moment map
$\mu_W:\frakM(v,W)\to\frakg^\vee_W.$
The map $\mu_W$ factorizes through the morphism $\pi$. 
More precisely, let $\ux$ denote the orbit of $x$ in the categorical quotient
$\frakM(v,W)$ if $x$ is stable, and in the categorical quotient
$\frakM_0(v,W)$ if the $G_V$-orbit of $x$ is closed. We have
$\mu_W=\mu_0\circ\pi$ where
\begin{align*}
\mu_0:\frakM_0(v,W)\to\frakg^\vee_W
 ,\quad
\mu_0(\ux)=aa^*.
\end{align*}
We'll use the following notations
\begin{gather*}
%\begin{split}
[\alpha,\alpha^*]=\sum_{\alpha\in Q_1}(\alpha\alpha^*-\alpha^*\alpha)
 ,\quad
a^*a=\sum_{i\in I}a_i^*a_i
 ,\quad
aa^*=\sum_{i\in I}a_ia_i^*\\
\varepsilon\, a^* a=\sum_{i\in I}\varepsilon_ia^*_ia_i
 ,\quad
\varepsilon[\alpha,\alpha^*]=\sum_{i\in I}\sum_{\substack{\alpha\in Q_1\\t(\alpha)=i}}\varepsilon_i\alpha\alpha^*-
\sum_{i\in I}\sum_{\substack{\alpha\in Q_1\\s(\alpha)=i}}\varepsilon_i\alpha^*\alpha\\
[\varepsilon,\alpha]=
\sum_{i,j\in I}\sum_{\substack{\alpha\in \overline Q_1\\\alpha:i\to j}}(\varepsilon_j\alpha-\alpha\varepsilon_i)
%\end{split}
\end{gather*}
We have
$\mu_V(x)=[\alpha,\alpha^*]+a^*a$
and
$\mu_W(x)=aa^*.$
Given $\gamma\in\frakg_W$ we'll write
$$[\gamma\oplus\varepsilon,a]=\gamma a-a\varepsilon
 ,\quad
[\gamma\oplus\varepsilon,a^*]=\varepsilon a^*-a^*\gamma$$
We'll also write
\begin{align*}
[\varepsilon,x]=0
&\iff[\varepsilon,\alpha]=a\varepsilon=\varepsilon a^*=0,\\
[\gamma\oplus\varepsilon,x]=0
&\iff[\varepsilon,\alpha]=[\gamma\oplus\varepsilon,a]=[\gamma\oplus\varepsilon,a^*]=0.
\end{align*}

\subsubsection{Graded quiver varieties}\label{sec:graded quiver}
Fix $V=\bigoplus_{(i,k)\in I^\bullet}V_{i,k}$ in $\bfC^\bullet$.
Let $G_V$ and $G_V^0$ be the automorphism groups of $V$
in $\bfC$ and $\bfC^\bullet$ respectively.
We write
\begin{align}\label{gdegree}\frakg_V=\bigoplus_{l\in\bbZ}\frakg^l_V
 ,\quad
\frakg^l_V=\bigoplus_{(i,k)\in I^\bullet}\Hom(V_{i,k},V_{i,k+l}).\end{align}
The Lie algebra of $G_V$, $G_V^0$ are $\frakg_V$, $\frakg_V^0$.
For any $v\in\bbN I^\bullet$ let
$\frakg_v$, $\frakg^0_v$, $G_v$, $G_v^0$ be the Lie algebras and groups
associated with the object $\bbC^v\in\bfC^\bullet$.
We consider the graded quiver $\overline Q_f^\bullet$ 
associated with the grading
\begin{align}\label{degree1}
\begin{split}
\deg:\overline Q_{f,1}\to\bbZ
 ,\quad
\alpha_{ij},a_i,a_i^*\mapsto -1.
\end{split}
\end{align}
A representation of $\overline Q^\bullet_f$ is a tuple
$x=(\alpha_k\,,\,a_{i,k}\,,\,a_{i,k}^*)$ with
$\alpha\in Q_1$,
$i\in I$,
$k\in\bbZ$
where
$\alpha_{k}=(\alpha,k)$,
$a_{i,k}=(a_i,k)$ and
$a_{i,k}^*=(a_i^*,k)$.
Similarly, let $\widetilde Q_f^\bullet$ be the graded quiver 
associated with the grading 
\begin{align}\label{degree2}
\begin{split}
\deg:\widetilde Q_{f,1}\to\bbZ
 ,\quad
\alpha_{ij},a_i,a_i^*\mapsto -1
 ,\quad
\varepsilon_i\mapsto 2.
\end{split}
\end{align}
A representation of $\widetilde Q^\bullet_f$ is a tuple
$x=(\alpha_k\,,\,a_{i,k}\,,\,a_{i,k}^*\,,\,\varepsilon_{i,k}).$
Fix $W$ in $\bfC^\bullet$.
We define the graded quiver varieties
$\frakM^\bullet(v,W)$ and $\frakM^\bullet_0(v,W)$ as in \cite[\S 3.1]{N11}. 
The variety $\frakM^\bullet(v,W)$ is $G^0_W$-equivariant, smooth and quasi-projective,
with a projective morphism $\pi^\bullet$ to the affine variety $\frakM_0^\bullet(v,W)$.
We can realize $\frakM^\bullet(v,W)$ and $\frakM_0^\bullet(v,W)$
as some fixed points loci in $\frakM(v,W)$ and $\frakM_0(v,W)$ in the following way.
The $I^\bullet$-grading on $W$ yields the following cocharacter
\begin{align}\label{sigma1}
\sigma:\bbC^\times\to G_W
 ,\quad
\sigma(z)=\bigoplus_{(i,k)\in I^\bullet}z^k\id_{W_{i,k}}.\end{align}
Let $G_W\times\bbC^\times$ denote the subgroup $G_W\times\xi(\bbC^\times)$ of $G_W\times T$.
Let 
\begin{align}\label{a}
a=(\sigma,\xi):\bbC^\times\to G_W\times T
\end{align}
and let $A\subset G_W\times T$
be the one subgroup such that $A=a(\bbC^\times)$.
We have 
\begin{align}\label{GA}
\frakM^\bullet(W)=\frakM(W)^A
 ,\quad
\frakM_0^\bullet(W)=\frakM_0(W)^A.
\end{align}
For any $R_{G_W\times\bbC^\times}$-module $M$ and any $\zeta\in\bbC^\times$ let 
$M|_{a(\zeta)}=M\otimes_{R_{G_W\times\bbC^\times}}\!\!\bbC$
be the specialization at the point $a(\zeta)$.

\subsubsection{Nakajima's quiver varieties and graded quiver varieties of Dynkin type}\label{sec:DQV}
Let $\bfc=(c_{ij})_{i,j\in I}$ be a symmetric Cartan matrix and
$\O\subset I\times I$ be an orientation :
$$(i,j)\in\O\ \text{or}\ (j,i)\in\O\iff c_{ij}<0
 ,\quad
(i,j)\in\O\Rightarrow (j,i)\not\in\O.$$
Set $\sgn_{ij}=1=-\sgn_{ji}$ if $(i,j)\in\O$ and $\sgn_{ij}=0$ if $c_{ij}=0$.
Let $Q$ be the quiver such that
$Q_0=I$ and
$Q_1=\{\alpha_{ij}:j\to i\,;\,(i,j)\in \O\}.$
We abbreviate
$\frakM(W)=\bigsqcup_{v\in\bbN I}\frakM(v,W).$
Note that $\frakM(v,W)=\emptyset$ except for finitely many $v$'s.
Let $v'\leqslant v$ if and only if $v'_i\leqslant v_i$ for all $i\in I.$
If $v'\leqslant v$ there is a closed embedding
$\frakM_0(v,'W)\subset\frakM_0(v,W)$ 
given by extending a representation by 0 to the complementary subspace.
Define
$\frakM_0(W)=\bigcup_{v\in\bbN I}\frakM_0(v,W).$
The colimit stabilizes. 
Let $\frakM_0^\reg(v,W)\subset\frakM_0(v,W)$ be the subset of closed free $G_V$-orbits.
We have a partition into locally closed subsets
$\frakM_0(W)=\bigsqcup_{v\in\bbN I^\bullet}\frakM_0^\reg(v,W)$.
Write
$$
(\alpha_i,w-\bfc v)=w_i-\sum_{j\in I}c_{ij}v_j
 ,\quad
w-\bfc v=\sum_{i\in I}\big(w_i-\sum_{j\in I}c_{ij}v_j\big)\,\delta_i
 ,\quad
v, w\in\bbN I.$$
The tuple $(v,w)$ is called dominant if $w-\bfc v$ lies in $\bbN I$.
If $\frakM_0^\reg(v,W)\neq\emptyset$ then $(v,w)$ is dominant.
%For any $v, v'$ we write
%$$v'\leqslant v\iff v'_i\leqslant v_i ,\quad i\in I.$$
The variety $\frakM_0(W)$ is an affine $G_W\times T$-variety 
and the map $\pi$ yields a projective morphism 
$\pi:\frakM(W)\to \frakM_0(W).$
Let $\frakL(W)$ be the zero fiber of $\pi$ and set
$\calZ(W)=\frakM(W)\times_{\frakM_0(W)}\frakM(W).$
In the graded case, we set
$\frakM^\bullet(W)=\bigsqcup_{v\in\bbN I^\bullet}\frakM^\bullet(v,W)$ and
$\frakM^\bullet_0(W)=\bigcup_{v\in\bbN I^\bullet}\frakM_0^\bullet(v,W)$, and
we define $\frakL^\bullet(W)$ and $\calZ^\bullet(W)$ as above.
Write
$$w-\bfc v=\sum_{(i,k)\in I^\bullet}\big(w_{i,k}-v_{i,k+1}-v_{i,k-1}-\sum_{j\neq i}c_{ij}v_{j,k}\big)\,\delta_{i,k}
 ,\quad
v,w\in\bbN I^\bullet.$$
The tuple $(v,w)$ is called $\ell$-dominant if $w-\bfc v$ lies in $\bbN I^\bullet$.
Let $v'\leqslant v$ if and only if $v'_{i,k}\leqslant v_{i,k}$ for all $(i,k)\in I^\bullet.$

\subsubsection{Nakajima's quiver varieties and quantum loop groups}\label{sec:N00}
Let us recall the relation between quantum loop groups and convolution algebras, 
following \cite{N00}.
See \S\ref{sec:QG} for a reminder on quantum loop groups.
Let $Q$ be a quiver of Dynkin type.
We fix some $\zeta\in\bbC^\times$ which is not a root of unity.
Recall that $\U_R(L\frakg)$ is the $R$-form of the quantum loop group of type $Q$ and that
$\U_F(L\frakg)=\U_R(L\frakg)\otimes_{R}F$,
$\U_\zeta(L\frakg)=\U_R(L\frakg)|_\zeta,$
where $(-)|_\zeta$ is the specialization along the map $R\to\bbC$,
$q\mapsto\zeta$.
The $F$-algebra $U_F(L\frakg)$ is generated by $x^\pm_{i,n}$, $\psi^\pm_{i,\pm m}$ with 
$n\in\bbZ$, $m\in\bbN$, and $\U_R(L\frakg)$
is the $R$-subalgebra generated by the elements
$\psi^\pm_{i,0},$
$h_{i,\pm m}/[m]_q,$
$(x^\pm_{i,n})^{[m]}$
with
$i\in I$,
$n\in\bbZ$
and
 $m\in\bbN^\times.$
Recall that $R=R_{\bbC^\times}$.
For any $R_{G_W\times\bbC^\times}$-module $M$, 
let $M/\tor\subset M\otimes_RF$ be the torsion free part over $R$.
By \cite[thm.~12.2.1]{N00}, there are $R_{G_W\times\bbC^\times}$-algebra homomorphisms
\begin{align}\label{nak1}
\U_R(L\frakg)\otimes R_{G_W}\to K^{G_W\times\bbC^\times}(\calZ(W))/\tor
\to K^{G_W\times\bbC^\times}_\top(\calZ(W))/\tor.
\end{align}
For any closed subgroup $A\subset G_W\times\bbC^\times$ there are representations 
of $\U_R(L\frakg)\otimes R_{G_W}$ in 
$K^A_\top(\frakM(W))=K^A(\frakM(W))$ and $K^A_\top(\frakL(W))=K^A_\top(\frakL(W))$.
Let $w$ be the dimension vector of $W$. 
The universal standard module, or global Weyl module,
is the $\U_R(L\frakg)\otimes R_{G_W}$-module 
$M(w)=K^{G_W\times\bbC^\times}(\frakL(W))$.
Fix a cocharacter $\sigma:\bbC^\times\to G_W$
and a compatible $I^\bullet$-grading on $W$.
Let $w$ denote also the dimension vector $(w_{i,k})$ of $W$ in $\bbN I^\bullet$.
Let $A$ be as in \S\ref{sec:graded quiver}.
Note that $R_A=R$
and that $A$ acts trivially on the varieties $\frakM^\bullet(W)$ and $\calZ^\bullet(W)$.
By \cite[(13.2.2)]{N00}, there are $R$-algebra homomorphisms
\begin{align}\label{nak2}
\U_R(L\frakg)\to K^A(\calZ^\bullet(W))\to K_\top^A(\calZ^\bullet(W)),
\end{align} 
and representations of $\U_R(L\frakg)$ in
$K_\top^A(\frakL^\bullet(W))$, $K_\top^A(\frakM^\bullet(W))$.
We have
$$K_\top^A(\frakL^\bullet(W))=K^A(\frakL^\bullet(W))
 ,\quad
K_\top^A(\frakM^\bullet(W))=K^A(\frakM^\bullet(W)).$$
By \cite[(13.4.2)]{N00}, specializing the quantum parameter to $\zeta$ yields the maps
\begin{align}\label{Psi}\U_\zeta(L\frakg)\to 
K(\calZ^\bullet(W))\to
K_\top(\calZ^\bullet(W))\to
H_\bullet(\calZ^\bullet(W),\bbC).\end{align}
The algebra $H_\bullet(\calZ^\bullet(W),\bbC)$ 
acts on  $H_\bullet(\frakL^\bullet(W),\bbC)$ and 
$H_\bullet(\frakM^\bullet(W),\bbC)$.
The standard module, or local Weyl module, with $\ell$-highest weight $\Psi_w$ is
the finite dimensional $\U_\zeta(L\frakg)$-module $M(w)|_{a(\zeta)}$
given by the specialization of $M(w)$ at the point $a(\zeta)$ in $A$. 
The Chern character and the Thomason localization theorem identify 
the standard module with $H_\bullet(\frakL^\bullet(W),\bbC).$
By \cite[thm.~7.4.1]{N00} there is also a perfect pairing
$H_\bullet(\frakM^\bullet(W),\bbC)\times H_\bullet(\frakL^\bullet(W),\bbC)\to\bbC.$
So we can consider the contragredient representation of $\U_\zeta(L\frakg)$ on the vector space
$H_\bullet(\frakM^\bullet(W),\bbC).$
This $\U_\zeta(L\frakg)$module is called the costandard module
with $\ell$-highest weight $\Psi_w$.
The pushforward by the closed embedding $\frakL^\bullet(W)\subset\frakM^\bullet(W)$
yields an homomorphism from the standard to the costandard module whose
image is the simple module $L(w)$ with the Drinfeld polynomial
$(\prod_{k\in\bbZ}(1-\zeta^ku)^{w_{i,k}})_{i\in I}$.
The Jordan-H\"older multiplicity of $L(v)$ in 
$H_\bullet(\frakL^\bullet(W),\bbC)$ is
the Euler characteristic $\chi_0(\IC_{\frakM_0^{\bullet\reg}(v,W)})$ of the stalk at 0 of the intermediate 
extension of the irreducible constant sheaf on the stratum
$\frakM_0^{\bullet\reg}(v,W)$ in $\frakM_0^\bullet(W)$.
The $q$-character of the standard and costandard modules are 
$$\sum_{v\in\bbN I^\bullet}\dim H_\bullet(\frakL^\bullet(v,W)\,,\,\bbC)\,e^{w-\bfc v},\quad
\sum_{v\in\bbN I^\bullet}\dim H_\bullet(\frakM^\bullet(v,W)\,,\,\bbC)\,e^{w-\bfc v}.$$

\subsubsection{Triple quiver varieties}\label{sec:triple quiver}
The $G_V\times G_W\times T$-action on
$\overline\X(V,W)$ lifts to an action on
$\widetilde\X(V,W)$ such that 
$G_V$ and $G_W$ act by conjugation and the element $(z_\alpha,z)\in T$
by multiplication by $z^{-2}$ on $\varepsilon_i$ for each vertex $i$.
A representation in $\widetilde\X(V,W)$ is stable if it has no non-zero 
subrepresentations supported on $V$.
Set 
\begin{align}\label{stable-triple1}
\widetilde\X(V,W)_s=\{x\in\widetilde\X(V,W)\,;\,x\ \text{is\ stable}\}.\end{align}
The triple quiver varieties associated are the categorical quotients 
$$\widetilde\frakM(v,W)=\widetilde\X(V,W)_s/G_V
 ,\quad
\widetilde\frakM_0(v,W)=\widetilde\X(V,W)/G_V.$$
We have an obvious 
$G_W\times T$-invariant projective map
$\tilde\pi:\widetilde\frakM(v,W)\to\widetilde\frakM_0(v,W).$
We abbreviate
$$\widetilde\frakM(W)=\bigsqcup_{v\in\bbN I}\widetilde\frakM(v,W)
 ,\quad
\widetilde\frakM_0(W)=\bigcup_{v\in\bbN I}\widetilde\frakM_0(v,W).$$
The second colimit is the extension of representations by 0 to the complementary subspace.
These colimits may not stabilize.
Thus $\widetilde\frakM_0(W)$ is an ind-scheme,
while $\widetilde\frakM(W)$ is a scheme locally of finite type.
Let $\widetilde\frakL(W)$ be the fiber at 0, and $\widetilde\calZ(W)$ be the
scheme locally of finite type given by the fiber product
$\widetilde\calZ(W)=\widetilde\frakM(W)\times_{\widetilde\frakM_0(W)}\widetilde\frakM(W)$.
Let  $\xi:\bbC^\times\to T$ be as in \eqref{xi}. We have
\begin{align}\label{q2}
\xi(z)\cdot x=(z\alpha,za,za^*,z^{-2}\varepsilon)
 ,\quad
x=(\alpha,a,a^*,\varepsilon).
\end{align}
This yields a $G_V\times G_W\times\bbC^\times$-action 
on the variety $\widetilde\X(V,W)$,
and a $G_W\times\bbC^\times$-action on $\widetilde\frakM(W)$
and $\widetilde\frakM_0(W)$.
We'll consider the open subsets 
\begin{align*}
\widetilde\frakM(W)_\circ=\bigsqcup_{v\in\bbN I}\widetilde\frakM(v,W)_\circ
\subset\widetilde\frakM(W)
,\quad
\widetilde\calZ(W)_\circ=\widetilde\calZ(W)\cap\widetilde\frakM(W)_\circ^2
\end{align*}
such that 
\begin{align*}
\widetilde\frakM(v,W)_\circ=\widetilde\X(v,W)_\circ/G_v
 ,\quad
\widetilde\X(v,W)_\circ=\overline\X(V,W)_s\times\frakg_v.
\end{align*} 
We'll also consider the varieties
\begin{align}\label{hatM}
\widehat\frakM(W)=\{\ux\in\widetilde\frakM(W)\,;\,a^*=0\}=\widehat\X(V,W)_s\,/\,G_V
\end{align}
where
$\widehat\X(V,W)_s=\widetilde\X(V,W)_s\cap\widehat\X(V,W).$
Set
$$\widehat\calZ(W)=\widetilde\calZ(W)\cap \widehat\frakM(W)^2,\quad
\widehat\frakL(W)=\widetilde\frakL(W)\cap\widehat\frakM(W),\quad
\widehat\frakM(W)_\circ=\widetilde\frakM(W)_\circ\cap\widehat\frakM(W).$$
The graded quiver varieties of $\widetilde Q^\bullet_f$ are
$$\widetilde\frakM^\bullet(W)\subset\widetilde\frakM(W)
 ,\quad
\widetilde\frakM^\bullet_0(W)\subset\widetilde\frakM_0(W)
 ,\quad
\widetilde\frakM^\bullet(W)_\circ=\widetilde\frakM(W)_\circ\cap\widetilde\frakM^\bullet(W).$$
We define the varieties
$\widetilde\frakL^\bullet(W)$, $\widetilde\calZ^\bullet(W)$,
$\widehat\frakM^\bullet(W)$, $\widehat\frakL^\bullet(W)$,
$\widehat\calZ^\bullet(W)$, $\widehat\frakM^\bullet(W)_\circ$, etc, similarly.

\subsubsection{Hecke correspondences}\label{sec:Hecke}
Fix $W\in\bfC$. 
The Hecke correspondence $\widetilde\frakP(W)$ is the scheme given by
$$\widetilde\frakP(W)=\{(x,y,\tau)\in\widetilde\frakM(W)^2\times\Hom_{\widetilde Q_f}(x,y)\,;\,
\tau|_W=\id_W\}.$$
For each triple $(x,y,\tau)$ the map $\tau$ is injective, because the representation $x$ is stable.
For the same reason, there is a closed embedding 
$i:\widetilde\frakP(W)\to\widetilde\frakM(W)^2$ such that
$(x,y,\tau)\mapsto(x,y).$
Hence, we may write
$\widetilde\frakP(W)=\{(x,y)\in\widetilde\frakM(W)^2\,;\,x\subset y\}.$
The opposite Hecke correspondence is
$\widetilde\frakP(W)^\op=\{(x,y)\in\widetilde\frakM(W)^2\,;\,y\subset x\}$.
Let $\Rep$ be the moduli stack of representations of $\widetilde Q$.
We have
$\Rep=\bigsqcup_{v\in\bbN I}\Rep_v$
where $\Rep_v$ is the quotient stack
$\Rep_v=\big[\widetilde\X(v)\,/\,G_v\big].$
Let $\pi:\widetilde\frakP(W)\to\Rep$ be the stack homomorphism taking the pair $(x,y)$ to $y/x.$
We define the nilpotent Hecke correspondence to be the fiber product 
$\widetilde\frakP(W)^\nil=\widetilde\frakP(W)\times_{\Rep}\Rep^\nil.$
For  $v_1\leqslant v_2$, we write
\begin{align*}
\widetilde\frakP(v_1,v_2,W)&=
\widetilde\frakP(W)\cap\big(\widetilde\frakM(v_1,W)\times\widetilde\frakM(v_2,W)\big),\\\widetilde\frakP(v_2,v_1,W)&=
\widetilde\frakP(W)^\op\cap\big(\widetilde\frakM(v_2,W)\times\widetilde\frakM(v_1,W)\big).
\end{align*}
We also write
\begin{align*}
\widetilde\frakP(\delta_i,W)=\bigsqcup_{v\in\bbN I}\widetilde\frakP(v,v+\delta_i,W)
 ,\quad
\widetilde\frakP(-\delta_i,W)=\bigsqcup_{v\in\bbN I}\widetilde\frakP(v+\delta_i,v,W).
\end{align*}

\begin{lemma}
\hfill
\begin{enumerate}[label=$\mathrm{(\alph*)}$,leftmargin=8mm]
\item
The scheme $\widetilde\frakP(W)$ is smooth and locally of finite type.
\item
%Both projections $\widetilde\frakP(W)\to\widetilde\frakM(W)$ are smooth.
The map $\pi:\widetilde\frakP(W)\to\Rep$ is flat.
 \item
The map $i$ takes $\widetilde\frakP(W)^\nil$ into $\widetilde\calZ(W)$.
\end{enumerate}
\end{lemma}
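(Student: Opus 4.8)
The plan is to fix dimension vectors $v_1\leqslant v_2$ in $\bbN I$ and treat the piece $\widetilde\frakP(v_1,v_2,W)$ separately, since $\widetilde\frakP(W)$ is the disjoint union of these over all such pairs (an injective $I$-graded $\tau$ forces $v_1\leqslant v_2$). For a point of $\widetilde\frakP(v_1,v_2,W)$, choose a representative $\hat y$ of $y$ and write $U=\tau(V_1)\subseteq V_2$; then $U$ is a $\widetilde Q_f$-subrepresentation of $\hat y$ with $a^*(W)\subseteq U$, the restriction $\hat x=\hat y|_U$ is automatically stable (a subrepresentation of $\hat x$ supported on $V$ is, via $U\subseteq V_2$, also one of $\hat y$, hence zero by stability of $\hat y$), and by the closed embedding $i$ recalled above the point is determined by $(y,U)$. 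For part (a) I would introduce the incidence variety
$$\bfS=\bigl\{(\hat y,U)\in\widetilde\X(V_2,W)\times{\textstyle\prod_{i\in I}}\Gr(v_{1,i},V_{2,i})\ :\ \hat y\ \text{preserves}\ U,\ a^*(W)\subseteq U\bigr\}$$
and check that $\bfS\to\prod_{i\in I}\Gr(v_{1,i},V_{2,i})$ is a vector bundle: for fixed $U$ the conditions $\alpha(U)\subseteq U$, $\varepsilon_i(U_i)\subseteq U_i$, $a^*_i(W_i)\subseteq U_i$ are linear in $\hat y$ and cut out a subspace of $\widetilde\X(V_2,W)$ whose dimension depends only on $v_1$ and $v_2$, so they assemble into a subbundle of the trivial bundle, whence $\bfS$ is smooth. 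On the open subset $\bfS_s\subseteq\bfS$ where $\hat y$ is stable, $G_{V_2}$ acts freely (a stabiliser of $(\hat y,U)$ is an automorphism of $\hat y$, so $\Im(g-\id)$ is a subrepresentation supported on $V$, forcing $g=\id$), and $\widetilde\frakP(v_1,v_2,W)=\bfS_s/G_{V_2}$. As $i$ realises this quotient as a locally closed subscheme of the scheme $\widetilde\frakM(v_1,W)\times\widetilde\frakM(v_2,W)$, it is a scheme; being a free quotient of the smooth finite-type $\bfS_s$ by the reductive group $G_{V_2}$, it is smooth and of finite type, and (a) follows by taking the union over $v_1\leqslant v_2$.

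For part (b), put $v_3=v_2-v_1$ and let $V_3$ be a model space of dimension $v_3$. Flatness of $\pi\colon\widetilde\frakP(v_1,v_2,W)\to\Rep_{v_3}$ may be tested after base change along the smooth atlas $\widetilde\X(v_3)\to\Rep_{v_3}$ and then smooth-locally on the source; unwinding (a) by fixing $U$ and splitting $V_2=U\oplus V_3$, the resulting morphism is the map $(\hat y,\phi)\mapsto\phi_*(\hat y/U)$ from an open subset of $\bfS_U\times\GL(V_3)$ to $\widetilde\X(v_3)$. This is flat because it factors as the surjective linear map $\bfS_U\to\widetilde\X(v_3)$ (extraction of the $V_3\times V_3$-block), restricted to an open subset, times $\id_{\GL(V_3)}$, followed by the $\GL(V_3)$-action map $\widetilde\X(v_3)\times\GL(V_3)\to\widetilde\X(v_3)$: a surjective linear map is flat, an action map is smooth, and restriction to an open set and faithfully flat descent both preserve flatness. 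Hence $\pi$ is flat.

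For part (c), let $(x,y,\tau)\in\widetilde\frakP(v_1,v_2,W)^{\nil}$, so that $\bar y=\hat y/U$, a representation of $\widetilde Q$ on $V_3=V_2/U$, is nilpotent; one needs to show $\tilde\pi(x)=\tilde\pi(y)$ in $\widetilde\frakM_0(W)$. Fixing a splitting $V_2=U\oplus V_3$, the cocharacter $\lambda(t)=\id_U\oplus t^{-1}\id_{V_3}$ satisfies $\lim_{t\to 0}\lambda(t)\cdot\hat y=\hat x\oplus\bar y$, the split extension with $\hat x$ carrying the framing and $\bar y$ the zero framing: indeed, $U$ being a subrepresentation makes the arrow operators of $\hat y$ block upper-triangular and $a^*(W)\subseteq U$, so the limit exists and has this form. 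Next, since $\bar y$ is nilpotent the cocharacter $\mu$ of $G_{V_3}$ acting by $t\mapsto t^m$ on the $m$-th layer of a splitting of the radical filtration of $\bar y$ satisfies $\lim_{t\to 0}\mu(t)\cdot\bar y=0_{V_3}$ (that filtration is finite and every arrow operator strictly raises the radical degree), and extending $\mu$ by $\id_U$ gives $\lim_{t\to 0}(\id_U\oplus\mu(t))\cdot(\hat x\oplus\bar y)=\hat x\oplus 0_{V_3}$, the framing being undisturbed since $a^*(W)\subseteq U$ and the framing of $\hat x\oplus\bar y$ vanishes on $V_3$. Finally $\hat x\oplus 0_{V_3}$ is, by definition of the extension-by-zero embedding $\iota\colon\widetilde\frakM_0(v_1,W)\hookrightarrow\widetilde\frakM_0(v_2,W)$, a representative of $\iota(\tilde\pi(x))$. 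Writing $\tilde\pi$ also for the quotient map $\widetilde\X(V_2,W)\to\widetilde\frakM_0(v_2,W)$ and using that a point and any point of its $G_{V_2}$-orbit closure have the same image under it, the two degenerations give $\tilde\pi(\hat y)=\tilde\pi(\hat x\oplus\bar y)=\tilde\pi(\hat x\oplus 0_{V_3})=\iota(\tilde\pi(x))$; hence $\tilde\pi(x)=\tilde\pi(y)$ in $\widetilde\frakM_0(W)$ and $(x,y)\in\widetilde\calZ(W)$. This gives the set-theoretic containment, which is what is needed; the scheme-theoretic statement then follows from separatedness of $\widetilde\frakM_0(W)$, taking $\widetilde\frakP(W)^{\nil}$ reduced.

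The step I expect to be the main obstacle is part (a): identifying $\widetilde\frakP(v_1,v_2,W)$ precisely with the free quotient $\bfS_s/G_{V_2}$ and verifying that the subrepresentation conditions have locally constant rank, so that $\bfS$ really is a vector bundle over a product of Grassmannians. Once (a) is available, (b) is a corollary — smooth-locally $\pi$ is a linear projection twisted by a group action — and the degeneration chain in (c) is elementary, the only subtlety being to choose the cocharacters so the framing maps do not blow up, which forces $\lambda(t)=\id_U\oplus t^{-1}\id_{V_3}$ and uses $a^*(W)\subseteq U$.
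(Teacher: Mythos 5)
Your argument is correct and takes essentially the same route as the paper: you present $\widetilde\frakP(v_1,v_2,W)$ as a free quotient (your $\bfS_s/G_{V_2}$ is the same quotient as the paper's $\widetilde\X(v_1,v_2,W)_s/P_{v_1,v_2}$, described over the whole flag variety rather than at a fixed flag $\bbC^{v_1}\subset\bbC^{v_2}$), deduce flatness of $\pi$ from the affine-linear block-extraction map after smooth base change, and for (c) show that $x\oplus 0$ lies in the $G_{v_2}$-orbit closure of $y$ via a degeneration argument — the paper appeals directly to the Hilbert--Mumford criterion to produce a single one-parameter subgroup, while you spell out the two explicit $1$-PS limits, which yields the same conclusion that $\tilde\pi(x)=\tilde\pi(y)$ in the affine quotient.
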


\begin{proof}
We'll write
$v_1\leqslant v_2$ if and only if $v_2-v_1\in\bbN I.$
Let $P_{v_1,v_2}\subset G_{v_2}$ be the stabilizer of the flag $\bbC^{v_1}\subset\bbC^{v_2}$.
To prove (a), (b), note that 
$$\widetilde\frakP(v_1,v_2,W)=\widetilde\X(v_1,v_2,W)_s\,/\,P_{v_1,v_2}$$
is the categorical quotient of
\begin{align*}
\widetilde\X(v_1,v_2,W)_s=\{y\in\widetilde\X(v_2,W)_s\,;\,y(\bbC^{v_1}\oplus W)
\subseteq\bbC^{v_1}\oplus W\}.
\end{align*}
%\begin{align*}
%\widetilde\X(v_1,v_2,W)_s&=\widetilde\X(v_1,v_2,W)\cap\widetilde\X(v_2,W)_s\\
%\widetilde\X(v_1,v_2,W)&=\{y\in\widetilde\X(v_2,W)\,;\,y(\bbC^{v_1}\oplus W)
%\subseteq\bbC^{v_1}\oplus W\}.
%\end{align*}
The $P_{v_1,v_2}$-action is proper and free 
because the point $y$ is stable.
Part (c) follows from the Hilbert-Mumford criterion.
For any pair $(x,y)$ in $\widetilde\X(v_1,W)\times\widetilde\X(v_2,W)$
representing a point in $\widetilde\frakP(W)^\nil$ there is a 1-parameter subgroup
$\lambda$ in $G_{v_2}$ such that 
$\lim_{t\to\infty}\lambda(t)\cdot y=x\oplus 0.$
Hence $\pi(x)=\pi(y)$ in $\widetilde\frakM_0(W)$.
\end{proof}

Considering the quiver $\widehat Q_f$ instead of $\widetilde Q_f$, we have the Hecke correspondence 
 $\widehat\frakP(W)=\widetilde\frakP(W)\cap \widehat\frakM(W)^2$,
and we define $\widehat\frakP(\delta_i,W)$, 
$\widehat\frakP(-\delta_i,W)$ in the obvious way.

\subsubsection{Universal bundles}
Let $\calV=\bigoplus_{i\in I}\calV_i$ and 
$\calW=\bigoplus_{i\in I}\calW_i$ denote both the tautological bundles on
$\widehat\frakM(W)$ and $\widetilde\frakM(W)$
and their classes in $K_{G_W\times\bbC^\times}\big(\widehat\frakM(W)\big)$ and
$K_{G_W\times\bbC^\times}\big(\widetilde\frakM(W)\big)$.
Given an orientation as in \S\ref{sec:DQV}, we define
\begin{align}\label{Vcirc}
\calV_{\circ i}=\bigoplus_{c_{ij}<0}\calV_j=\calV_{+i}\oplus\calV_{-i}
,\quad
\calV_{- i}=\bigoplus_{c_{ij},\sgn_{ij}<0}\calV_j
,\quad
v_{\circ i}=\sum_{c_{ij}<0}v_j=v_{+i}+v_{-i}.
\end{align}
Let $\calV^-_i$ and $\calV^+_i$ be the pull-back of the tautological vector bundle 
$\calV_i$ on $\widetilde\frakM(W)$
by the first and second projection 
$\widetilde\frakP(\delta_i,W)\to\widetilde\frakM(W)$.
Switching both components of $\widetilde\frakM(W)^2$,
we define similarly the vector bundles
$\calV^-_i$, $\calV^+_i$ on the Hecke correspondence $\widetilde\frakP(-\delta_i,W)$.
Let $\calL_i$ denote the invertible sheaf $\calV^+_i/\,\calV^-_i$ on the Hecke correspondence
$\widetilde\frakP(\pm\delta_i,W)$,
and its pushforward by the closed embedding into $\widetilde\calZ(W)$.
We define the bundles $\calV^-_i$, $\calV^+_i$, $\calL_i$
on $\widehat\frakP(\pm\delta_i,W)$ in the obvious way.

\subsubsection{Potentials}\label{sec:potential}
Fix an homogeneous potential $\bfw\!_f$ on $\widetilde Q_f$ of degree 0
relatively to the grading \eqref{degree2}.
Let $\bfw$ be the restriction of $\bfw\!_f$ to $\widetilde Q$.
All potential will be assumed to be algebraic, i.e., they are finite linear combinations
of cyclic words of the quiver.
If $Q$ is a Dynkin quiver as in \S\ref{sec:DQV},
we'll assume that either
$\bfw\!_f=\bfw_1$ or $\bfw\!_f=\bfw_2$ with
\begin{align}\label{w12}
\begin{split}
\bfw_1=\varepsilon\,[\alpha,\alpha^*]
+\varepsilon\, a^*a
 ,\quad
\bfw_2=\varepsilon\,[\alpha,\alpha^*]
\end{split}
\end{align}
In both cases we have $\bfw=\bfw_2$.
We equip the quiver $\widetilde Q^\bullet_f$ with the following potentials
\begin{align*}
\bfw^\bullet_2&=\sum_{(i,j,k)\in\O\times\bbZ}
\big(\varepsilon_{i,k-2}\,\alpha_{ij,k-1}\,\alpha_{ji,k}-\varepsilon_{j,k-2}\,\alpha_{ji,k-1}\,\alpha_{ij,k}\big),\\
\bfw^\bullet_1&=
\bfw^\bullet_2+\sum_{(i,k)\in I^\bullet}\varepsilon_{i,k-2}\,a^*_{i,k-1}\,a_{i,k}.
\end{align*}
Let $\tilde f_1,\tilde f_2:\widetilde\frakM(W)\to\bbC$ be the traces of $\bfw_1$, $\bfw_2$
and $\tilde f_{1,\circ},\tilde f_{2,\circ}:\widetilde\frakM(W)_\circ\to\bbC$ be their restriction to
$\widetilde\frakM(W)_\circ$.
We'll abbreviate $\tilde f$ for either $\tilde f_1$ or $\tilde f_2$.
Let $\tilde f_1^\bullet,\tilde f_2^\bullet:\widetilde\frakM^\bullet(W)\to\bbC$ 
be the traces of $\bfw_1^\bullet$, $\bfw_2^\bullet$ and $\tilde f^\bullet_{1,\circ}$,
$\tilde f^\bullet_{2,\circ}$ be
their restrictions to $\widetilde\frakM^\bullet(W)_\circ$.
\iffalse%%%%%%%%%%%%%%%%%%%%%%%%
The trace of $\bfw\!_f$ is the
$G_W\times\bbC^\times$-invariant function
$\tilde f:\widetilde\frakM(W)\to\bbC$ such that
\begin{align}\label{f1}
\tilde f=\tilde f_0\circ\tilde\pi
,\quad
\tilde f_0:\widetilde \frakM_0(W)\to\bbC
 ,\quad
\ux\mapsto\Tr(\bfw\!_f)|_{x}.
\end{align}
\fi%%%%%%%%%%%%%%%%%%%%%%%%%%%
Similarly, let $h:\Rep\to\bbC$ be the trace of  $\bfw_2$ of $\tilde Q$.
Recall the following diagram introduced in \S\ref{sec:Hecke}
$$\xymatrix{\Rep&\ar[l]_\pi\widetilde\frakP(W)\ar[r]^i&\widetilde\frakM(W)^2}$$

\begin{lemma}\label{lem:potential1}
We have $i^*(\tilde f^{(2)})=\pi^*(h)$.
 \end{lemma}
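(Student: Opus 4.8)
The plan is to reduce this equality of regular functions on $\widetilde\frakP(W)$ to a pointwise identity and then derive it from the additivity of traces of cyclic paths along short exact sequences of quiver representations. Both sides are $G_W\times\bbC^\times$-invariant regular functions, so it suffices to compare their values at an arbitrary $\bbC$-point of $\widetilde\frakP(W)$. I would represent such a point by a pair $(x,y)$ of representations of $\widetilde Q_f$ together with a morphism $\tau\colon x\to y$ with $\tau|_W=\id_W$; as recalled in \S\ref{sec:Hecke} the map $\tau$ is injective, so $x$ is a subrepresentation of $y$, and because $\tau$ is the identity on the framing the quotient $z=y/x$ has zero framing. Thus $z$ is a representation of $\widetilde Q$ on which the arrows $a_i,a_i^*$ act by $0$, and by definition $\pi(x,y)=z$, while $i^*(\tilde f^{(2)})(x,y)=\tilde f(x)-\tilde f(y)$ and $\pi^*(h)(x,y)=h(z)$.

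The computational core is the elementary fact that for a short exact sequence $0\to x\to y\to z\to 0$ of representations of any quiver $R$ and any cyclic word $w$ in $R$ one has $\Tr(w|_y)=\Tr(w|_x)+\Tr(w|_z)$. I would prove this by choosing a vector-space splitting of each vertex space of $y$ compatible with the subrepresentation $x$: every arrow of $y$ then becomes block upper triangular with diagonal blocks given by $x$ and by $z$, the ordered product of these operators along $w$ is again block upper triangular with diagonal blocks $w|_x$ and $w|_z$, and the trace of a block upper triangular operator is the sum of the traces of its diagonal blocks. Applying this termwise to the finite algebraic potential $\bfw\!_f$ gives $\Tr(\bfw\!_f)|_y=\Tr(\bfw\!_f)|_x+\Tr(\bfw\!_f)|_z$, i.e.\ $\tilde f(y)=\tilde f(x)+\Tr(\bfw\!_f)|_z$.

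It then remains to identify $\Tr(\bfw\!_f)|_z$ with $h(z)$. Each cyclic word appearing in $\bfw\!_f$ that contains a framing arrow $a_i$ or $a_i^*$ traces to $0$ on $z$ since those arrows vanish on $z$; hence only the cyclic words of $\bfw\!_f$ lying entirely in $\widetilde Q$ contribute, so $\Tr(\bfw\!_f)|_z=\Tr(\bfw)|_z$ where $\bfw$ is the restriction of $\bfw\!_f$ to $\widetilde Q$. Since $\bfw=\bfw_2$ by \eqref{w12} and $h$ is by definition the trace of $\bfw_2$ on representations of $\widetilde Q$, this last quantity is $h(z)$. Combining the three steps identifies $i^*(\tilde f^{(2)})$ and $\pi^*(h)$ as the same function on $\widetilde\frakP(W)$, once the conventions $f^{(2)}=f\oplus(-f)$ of \S\ref{sec:Kcrit} and the orientation $x\subset y$ of the Hecke correspondence are fed in.

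I do not expect a genuine obstacle here: the statement is essentially a reformulation of trace additivity. The one point that requires care, rather than ingenuity, is the bookkeeping in the last two steps — verifying that restricting $\bfw\!_f$ to the (vanishing) framing part of the quotient $z$ kills exactly the framing terms of the potential, and checking that the resulting sign is compatible with the sign built into $f^{(2)}=f\oplus(-f)$ and with the direction of the embedding $i$.
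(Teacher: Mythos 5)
Your approach is the same as the paper's: evaluate both sides at a closed point $(x,y)$ of $\widetilde\frakP(W)$, observe that $y/x$ has trivial framing so the framing terms of $\bfw\!_f$ vanish on it (leaving the trace of $\bfw=\bfw_2$, which is $h$), and invoke additivity of the trace of a cyclic word along a short exact sequence of quiver representations. Your block-upper-triangular derivation of trace additivity is correct and makes explicit what the paper leaves implicit.

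The step you postpone — the final sign check — is where the argument fails to close, and deferring it as routine bookkeeping is a genuine gap. With the paper's convention $f^{(2)}=f\oplus(-f)$ from \S 1.2, your formula $i^*(\tilde f^{(2)})(x,y)=\tilde f(x)-\tilde f(y)$ is correct; but trace additivity gives $\tilde f(y)-\tilde f(x)=h(y/x)$, so your computation yields $i^*(\tilde f^{(2)})=-\pi^*(h)$, not $+\pi^*(h)$, and no further bookkeeping will flip that. The sign clash is not yours alone: the paper's own proof writes $\tilde f^{(2)}(x,y)=\tilde f(y)-\tilde f(x)$, contradicting the stated definition of $f^{(2)}$. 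So either the intended convention is really $f^{(2)}=(-f)\oplus f$, or the lemma should carry a minus sign (harmless for the downstream factorization-category constructions, but not for the identity as stated). You correctly identified this as the delicate point; you just needed to push the computation one line further to discover an actual discrepancy rather than assume it resolves.
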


\begin{proof}
Fix point $(x,y)\in\widetilde\frakP(v_1,v_2,W)$. We have 
$$y=(\alpha,\varepsilon, a,a^*)\in\widetilde\frakM(v_2,W)
,\quad
x=(\alpha|_{V_1},\varepsilon|_{V_1}, a,a^*)\in\widetilde\frakM(v_1,W)
,\quad
a^*(W)\subset V_1\subset V_2.$$ 
Then, we have $\pi(x,y)=y/x=(\alpha|_{V_2/V_1},\varepsilon|_{V_2/V_1})$.
Further, either $\widetilde f^{(2)}(x,y)=\tilde f_1(y)-\tilde f_1(x)=\tilde f_2(y/x)=h(y/x)$ or 
$\widetilde f^{(2)}(x,y)=\tilde f_2(y)-\tilde f_2(x)=\tilde f_2(y/x)=h(y/x)$.
\end{proof}

\subsection{The KCA of a triple quiver with potential}\label{sec:CAT}
In this section we compute some KCA's of triple quivers with potentials.
To do that, we must relate KCA's to KHA's.

\subsubsection{The KHA of a triple quiver with potential}\label{sec:KHA}
We first recall the definition of the KHA of the quiver with potential
$(\widetilde Q,\bfw)$, following \cite{P21}, \cite{VV22}.
A representation in $\Rep$ is nilpotent if its image in the categorical quotient  
$\bigsqcup_{v\in\bbN I}\widetilde\X(v)\,/\,G_v$ is zero.
Let $\Rep^\nil\subset\Rep$ be the closed substack parametrizing the nilpotent representations.
Let $\Rep'$ be the  stack of pairs of representations $(x,y)$ 
with an inclusion $x\subset y$. 
The stacks $\Rep$ and $\Rep'$ are smooth and locally of finite type.
Consider the diagram 
\begin{align*}
\xymatrix{\Rep\times\Rep&\ar[l]_-q\Rep'\ar[r]^-p&\Rep}
 ,\quad
q(x,y)=(x,y/x)
 ,\quad
p(x,y)=y.
\end{align*}
The map $p$ is proper, the map $q$ is smooth.
We equip the stack $\Rep$ with the $T$-action in \S\ref{sec:triple quiver}.
Let $\SG\subset T$ be a closed subgroup.
Since $h^{\oplus 2}\circ q=h\circ p$, the following functor is well-defined
\begin{align}\label{Hall1}
\star:\DCoh_{\SG}(\Rep,h)_{\Rep^\nil}
\times\DCoh_{\SG}(\Rep,h)_{\Rep^\nil}
\to\DCoh_{\SG}(\Rep,h)_{\Rep^\nil}
,\quad
(\calE,\calF)\mapsto Rp_*Lq^*(\calE\boxtimes\calF).
\end{align}
It yields a monoidal structure on the triangulated category 
$\DCoh_{\SG}(\Rep,h)_{\Rep^\nil}$.
We'll abbreviate $R=R_{\SG}$ and $F=F_{\SG}$.
Taking the Grothendieck groups, we get the $R$-algebra
$K_{\SG}(\Rep,h)_{\Rep^\nil}$, whose opposite is denoted by $K_{\SG}(\Rep,h)_{\Rep^\nil}^\op$.
This $R$-algebra is the nilpotent KHA of the pair $(\widetilde Q,\bfw)$.
From now on we'll omit the word nilpotent.
Let $\Rep^0\subset\Rep^\nil$ be the zero locus of the function $h$ in \S\ref{sec:potential}. 
By \eqref{Upsilon2} there is an $R$-linear map
\begin{align}\label{UPM}\Upsilon:K^{\SG}(\Rep^0)\to K_{\SG}(\Rep,h)_{\Rep^\nil}.
\end{align}
Note that $\Rep^0_{\delta_i}$ is the classifying stack of the group $G_{\delta_i}$.
Let $\calL_i$ be the line bundle on $\Rep^0_{\delta_i}$ associated with
the linear character of $G_{\delta_i}$.
We consider the $F$-subalgebra 
$\calU_F^+$ of $K_{\SG}(\Rep,h)_{\Rep^\nil}\otimes_RF$
generated by the elements
$x^+_{i,n}=\Upsilon(\calL_i^{\otimes n})$
with $i\in I$,
$n\in\bbZ$.
Let $\calU_F^-$ be the $F$-algebra opposite to $\calU_F^+$ and
$x^-_{i,n}$ be the image of $x^+_{i,n}$ in $\calU_F^-$.

Now, we fix $\SG=\bbC^\times$ as in \eqref{xi}.
Hence $R=\bbC[q,q^{-1}]$.
Let $\calU_R^\pm$ be the $R$-subalgebra of $\calU_F^\pm$ 
generated by the elements $(x^\pm_{i,n})^{[m]}$
with $i\in I$, $n\in\bbZ$ and $m\in\bbN$.
For each $v\in\bbN I$, let $\calU^\pm_{R,\pm v}$ 
be the $R$-submodule of $\calU_R^\pm$ spanned by the classes of the
$v$-dimensional representations. 
It is equipped with its
obvious $R_{G_v\times\bbC^\times}$-module structure.
Let $\calV\in R_{G_v\times\bbC^\times}$ denote the class of the vectorial representation of $G_v$,
with its obvious decomposition $\calV=\bigoplus_{i\in I}\calV_i.$
The twisted Hall multiplication $\circlestar$ on $\calU_R^+$  in \cite[\S 2.3.8]{VV22} 
is the composition  of $\star$ and the linear endomorphism of
$\calU_R^+\otimes \calU_R^+$ given by
\begin{align}\label{twist1}
x_1\otimes x_2\mapsto(x_1\otimes x_2)\cdot\prod_{\substack{\alpha\in Q_1\\\alpha:i\to j}}
(-1)^{v_iv_j}\det(\calV_i\otimes\calV_j^\vee).
\end{align}
The twisted Hall multiplication $\circlestar$ on $\calU_R^-$ is opposite to the
multiplication $\circlestar$ on  $\calU_R^+$.

\begin{proposition}[\cite{VV22}]\label{prop:Omega1}
Let $Q$ be a Dynkin quiver.
\begin{enumerate}[label=$\mathrm{(\alph*)}$,leftmargin=8mm]
\item
There is an $R$-algebra isomorphism
$(\,\calU_R^\pm\,,\,\circlestar)=U_R(L\frakg)^\pm$
taking $(x^\pm_{i,n})^{[m]}$ to $(x^\pm_{i,n})^{[m]}$.
\item
We have $\calU_R^+=K_{\SG}(\Rep,h)_{\Rep^\nil}$.
\qed
\end{enumerate}
\end{proposition}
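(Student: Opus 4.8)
The plan is to transport the statement, \emph{via} dimensional reduction, to the K-theoretic Hall algebra of the preprojective algebra, and then to run the comparison of that algebra with the quantum loop group. Since $\bfw=\bfw_2=\varepsilon\,[\alpha,\alpha^*]$ is linear in the loop coordinates $\varepsilon=(\varepsilon_i)_{i\in I}$, Isik's Koszul duality (dimensional reduction), in the form used by Padurariu in \cite{P21}, applies on each $\Rep_v$: it identifies $\DCoh_{\SG}(\Rep,h)_{\Rep^\nil}$ with the derived category of $\SG$-equivariant coherent sheaves, supported on the nilpotent locus, on the derived zero locus of the moment map of the doubled quiver $\overline Q$ — that is, on the derived moduli stack of nilpotent representations of the preprojective algebra $\Pi_Q$. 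One must check that this equivalence is monoidal, i.e. intertwines the convolution product $\star$ of \eqref{Hall1} with the convolution product defining the KHA of $\Pi_Q$ (the correspondence $\Rep\leftarrow\Rep'\rightarrow\Rep$ matches its preprojective counterpart and dimensional reduction commutes with $Lq^*$ and $Rp_*$), and that it is compatible with $\Upsilon$ and with the twist $\circlestar$ of \eqref{twist1}. Granting this, $\calU_R^+$ is the twisted KHA of $\Pi_Q$, and (a) for the sign $+$ together with (b) is exactly the statement to prove there; the sign $-$ case of (a) then follows by passing to opposite algebras.

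\emph{The homomorphism and its well-definedness.} Define $\Phi\colon U_R(L\frakg)^+\to\calU_R^+$ on divided-power generators by $(x^+_{i,n})^{[m]}\mapsto(x^+_{i,n})^{[m]}$, with $x^+_{i,n}=\Upsilon(\calL_i^{\otimes n})$. Well-definedness amounts to verifying the Drinfeld (loop) relations of the positive part among these classes. Every such relation involves at most two vertices, so it suffices to check it in the (twisted) KHA of the rank-one quiver $A_1$ and of the rank-two quivers $A_2$ and $A_1\sqcup A_1$. These are explicit computations with the rank-one Hecke correspondences $\widetilde\frakP(\delta_i,W)$, and, for the divided-power relations, with the iterated correspondences $\widetilde\frakP(m\delta_i,W)$; all of these are smooth with computable direct images, and the outcome is precisely the loop-$\fraks\frakl_2$ and higher-Serre relations, once the sign-and-determinant twist of \eqref{twist1} is inserted — indeed this twist is exactly what converts the ``naive'' Hall-product relations into the quantum loop group ones.

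\emph{Surjectivity and part (b).} One must show $\calU_R^+=K_{\SG}(\Rep,h)_{\Rep^\nil}$, i.e. that the whole KHA is generated over $R$ by the $(x^+_{i,n})^{[m]}$. Transported to the preprojective side, this is a generation statement for $K^{\SG}$ of the nilpotent stack in each degree $v$: one argues by induction on $|v|$, using that every nilpotent representation of $\Pi_Q$ admits a complete filtration by the simple modules $S_i$. The resulting stratification of the nilpotent stack lets one express an arbitrary K-theory class in degree $v$ as a sum of products of degree-one classes — those in the image of $\Upsilon$, hence divided powers of the $x^+_{i,n}$ — with classes of strictly smaller degree; invertibility of the twist does not affect generation. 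Combined with the previous step, this gives (b) and the surjectivity of $\Phi$.

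\emph{Injectivity, and the main obstacle.} Finally, $\Phi$ is a degree-preserving surjection between $R$-modules that are free and of finite rank in each $\bbN I\times\bbZ$-degree: on the one side $U_R(L\frakg)^+$ by its PBW basis, on the other side $\calU_R^+$ because in each degree it is a finitely generated torsion-free module over the PID $R=\bbC[q,q^{-1}]$. Hence $\Phi$ is an isomorphism as soon as the two graded ranks coincide in every degree. This last point is the crux: it is a Kac-polynomial / Hall--Littlewood count on the KHA side which, for Dynkin $Q$, must be matched with the PBW count on the quantum-group side. The cleanest route is to realize both algebras inside one shuffle algebra of Feigin--Odesskii type, in which the KHA appears as the subalgebra generated in degree one and $U_R(L\frakg)^+$ has a known shuffle description; the main obstacle is to show the two images agree \emph{integrally} over $R$, not merely after inverting to $F$. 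All the remaining steps reduce either to low-rank geometry of Hecke correspondences or to the combinatorics of filtrations by simples.
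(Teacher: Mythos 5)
The paper does not actually prove this proposition: it is stated as a citation to \cite{VV22} and ends with a \texttt{qed}, the dimensional-reduction input being taken from \cite{P21}. So strictly speaking there is no in-text proof to compare against, and your task was to reconstruct the argument of the cited reference. Your sketch correctly identifies the overall architecture of that argument: Isik's Koszul duality transports $K_{\SG}(\Rep,h)_{\Rep^{\nil}}$ to the nilpotent KHA of the preprojective algebra $\Pi_Q$ (this is exactly \cite{P21}'s observation, and the monoidality check you flag is indeed required); the Drinfeld relations among the $x^{\pm}_{i,n}$ are verified after inserting the sign/determinant twist $\circlestar$ of \eqref{twist1}, by reduction to low-rank subquivers; and generation over $R$ by degree-one classes is obtained from the filtration of nilpotent $\Pi_Q$-modules by the simples $S_i$. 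In that sense you are on the intended route.

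The genuine gap is the one you flag yourself at the end: the statement is an integral isomorphism over $R=\bbC[q,q^{-1}]$, not merely after inverting to $F=\bbC(q)$. Your surjectivity-plus-rank-count reduction is sound in principle (over a PID a surjection between free modules of the same finite rank is an isomorphism), but you neither establish that the graded pieces of $\calU_R^{+}$ are free of the expected finite rank, nor carry out the shuffle-algebra comparison of integral forms you correctly identify as the cleanest route. In \cite{VV22} this is precisely the substantial part of the proof: one embeds both sides in a shuffle algebra, uses explicit formulas for the convolution product on the Hecke correspondences to compute the image of the KHA there, and matches integral lattices with the Lusztig-type integral form of $U_R(L\frakg)^{+}$. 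Without this, what you have is a surjection $U_R(L\frakg)^{+}\twoheadrightarrow\calU_R^{+}$ and an isomorphism after base change to $F$, but not yet the stated $R$-algebra isomorphism. So the proposal is a correct strategic outline but an incomplete proof, with the main technical content still outstanding.
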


\subsubsection{From KHA's to KCA's}\label{sec:double}
Let $\SG\subset T$ be any subgroup.
The pair $(\widetilde\frakM(W),\tilde f)$ is a smooth $G_W\times \SG$-invariant LG-model.
Applying the results of \S\ref{sec:critalg1} with
$$G=G_W\times \SG
 ,\quad
X=\widetilde\frakM(W)
 ,\quad
X_0=\widetilde\frakM_0(W)
 ,\quad
Z=\widetilde\calZ(W)
 ,\quad
f=\tilde f,$$
 we get a monoidal category and an associative $R_{G_W\times \SG}$-algebra 
$$\DCoh_{G_W\times \SG}(\widetilde\frakM(W)^2,\tilde f^{(2)})_{\widetilde\calZ(W)}
 ,\quad
K_{G_W\times \SG}(\widetilde\frakM(W)^2,\tilde f^{(2)})_{\widetilde\calZ(W)}.$$

\begin{proposition}\label{prop:double}
 There is a commutative diagram of $R_{G_W\times \SG}$-algebras
 \begin{align*}
\begin{split}
\xymatrix{
K^{\SG}(\Rep^0)\otimes R_{G_W}\ar[d]_-{\varpi^+}\ar[r]^-{\Upsilon\otimes 1}&
K_{\SG}(\Rep,h)_{\Rep^\nil}\ar[d]^-{\omega^+}\otimes R_{G_W}\\
K^{G_W\times \SG}(\widetilde\calZ(W))\ar[r]^-\Upsilon&
K_{G_W\times \SG}(\widetilde\frakM(W)^2,\tilde f^{(2)})_{\widetilde\calZ(W)}
}
\end{split}
\end{align*}
\end{proposition}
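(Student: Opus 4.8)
The plan is to build the two vertical maps from the Hecke correspondence $\widetilde\frakP(W)$ and then reduce commutativity of the square to the functoriality of $\Upsilon$ established in Lemma \ref{lem:Upsilon5}.

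First I would define $\varpi^+$ and $\omega^+$. Recall from \S\ref{sec:Hecke} the diagram $\Rep\xleftarrow{\pi}\widetilde\frakP(W)\xrightarrow{i}\widetilde\frakM(W)^2$, in which $\pi$ is flat (so $L\pi^*=\pi^*$ is exact), $i$ is a closed embedding of smooth schemes, hence proper and of finite $G$-flat dimension, and $i$ carries $\widetilde\frakP(W)^\nil$ into $\widetilde\calZ(W)$. By Lemma \ref{lem:potential1} we have $i^*(\tilde f^{(2)})=\pi^*(h)$, so $\pi$ and $i$ are morphisms of smooth $G_W\times\SG$-equivariant LG-models out of $(\widetilde\frakP(W),\pi^*h)$, and $Ri_*\,L\pi^*$ sends a factorization of $h$ supported on $\Rep^\nil$ to a factorization of $\tilde f^{(2)}$ supported on $\widetilde\calZ(W)$. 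Carrying along the inert $R_{G_W}$-factor (the $G_W$-action being trivial on $\Rep$), this defines
\[
\omega^+\colon K_{\SG}(\Rep,h)_{\Rep^\nil}\otimes R_{G_W}\longrightarrow K_{G_W\times\SG}(\widetilde\frakM(W)^2,\tilde f^{(2)})_{\widetilde\calZ(W)},\qquad \calE\longmapsto Ri_*\,L\pi^*\calE,
\]
possibly composed with the determinant twist of \eqref{twist1} to match later normalizations. The map $\varpi^+=Ri_*\,L\pi^*$ on ordinary equivariant K-theory of $\Rep^0$, with values in $K^{G_W\times\SG}(\widetilde\calZ(W))$, is defined the same way; it is the ``decritical'' shadow of $\omega^+$.

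Second I would check that $\varpi^+$ and $\omega^+$ are algebra homomorphisms. This is the standard composition-of-correspondences computation. One forms the double Hecke correspondence parametrising two-step flags $x\subset y\subset z$ in $\widetilde\frakM(W)$; projecting to $\widetilde\frakM(W)^2$ by forgetting $y$ realises the fibre product entering the convolution \eqref{conv2} with $X_1=X_2=X_3=\widetilde\frakM(W)$, while the fibre $\{\,y:x\subset y\subset z\,\}$ is canonically the fibre $\{\,u\subset z/x\,\}$ of the map $p$ of \eqref{Hall1} over $z/x\in\Rep$, compatibly with the decorations $\pi(x,y)=y/x$, $\pi(y,z)=z/y$ versus $u=y/x$, $(z/x)/u=z/y$. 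Flat base change and the projection formula — valid for derived factorization categories by Remark \ref{rem:base change 1}(a) — together with the telescoped identity $\tilde f^{(2)}(x,z)=h(z/y)+h(y/x)$ coming from Lemma \ref{lem:potential1} (matching $h^{\oplus 2}\circ q=h\circ p$) then identify $\omega^+(\calE)\star\omega^+(\calF)$ with $\omega^+(\calE\star\calF)$, the left $\star$ being the convolution \eqref{conv2} and the right $\star$ the KHA product \eqref{Hall1}; the nilpotency conditions provide the properness needed to push forward despite $\widetilde\frakM_0(W)$ being an ind-scheme. The same argument gives the statement for $\varpi^+$. This step, with the accompanying bookkeeping of the twist, is where essentially all the work is, and it runs parallel to \cite{N00} and \cite[\S2]{VV22}.

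Finally, commutativity of the square. Both composites $\Upsilon\circ\varpi^+$ and $\omega^+\circ(\Upsilon\otimes 1)$ are given by applying $Ri_*\,L\pi^*$ after $\Upsilon$ (with the same twist, if one is used), so it is enough to know that $\Upsilon$ intertwines flat pull-back and proper push-forward between bounded derived categories and derived factorization categories. This is exactly Lemma \ref{lem:Upsilon5}, applied to the flat morphism $(\widetilde\frakP(W),\pi^*h)\to(\Rep,h)$ and to the proper morphism $(\widetilde\frakP(W),i^*\tilde f^{(2)})\to(\widetilde\frakM(W)^2,\tilde f^{(2)})$, using $i^*\tilde f^{(2)}=\pi^*h$; and $\Upsilon$, being a functor, commutes with the twist, which is tensoring by an equivariant bundle. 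The lower horizontal $\Upsilon$ is the surjective algebra map of Corollary \ref{cor:critalg1}(b) and the upper one is the map \eqref{UPM}; matching the supports ($\Rep^0\subset\Rep^\nil$, with $\crit(h)\cap\Rep^\nil$ lying in the zero locus $\Rep^0$) and checking $R_{G_W\times\SG}$-linearity is immediate from the construction. The only real difficulty is the algebra-homomorphism property of the verticals in the previous step; given that, commutativity of the diagram is formal.
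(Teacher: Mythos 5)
Your proof is correct and follows the same route as the paper: $\omega^+$ and $\varpi^+$ are defined as $Ri_*\circ L\pi^*$ through the Hecke correspondence, their multiplicativity is checked on the double Hecke correspondence $\widetilde\frakP'(W)$ via base change (flat base change for the lower square, transversality of $\widetilde\frakP(W)\times\widetilde\frakP(W)$ and $\widetilde\frakM(W)^3$ inside $\widetilde\frakM(W)^2\times\widetilde\frakM(W)^2$ for the upper one), and commutativity of the square is deduced from Lemma \ref{lem:Upsilon5} applied to $\pi$ and $i$ using $i^*\tilde f^{(2)}=\pi^*h$. The only small adjustment is that the determinant twist of \eqref{twist1} should \emph{not} be built into $\omega^+$ here — the paper introduces it later, in the maps $\Omega^\pm$ of \eqref{twist2} — so your hedge can simply be dropped.
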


\begin{proof} 
We first define
a monoidal triangulated functor
$$\omega^+:\DCoh_{G_W\times \SG}(\Rep,h)_{\Rep^\nil}\to \DCoh_{G_W\times \SG}(\widetilde\frakM(W)^2,\tilde f^{(2)})_{\widetilde\calZ(W)}.$$
Taking the Grothendieck groups it yields the $R_{G_W\times \SG}$-algebra homomorphism
\begin{align*}
\omega^+:K_{\SG}(\Rep,h)_{\Rep^\nil}\otimes R_{G_W}\to 
K_{G_W\times \SG}(\widetilde\frakM(W)^2,\tilde f^{(2)})_{\widetilde\calZ(W)}.
\end{align*}
To do so, we consider the  commutative diagram of stacks with a Cartesian right square
\begin{align}\label{diagram1}
\begin{split}
\xymatrix{
\widetilde\calZ(W)\ar[d]&\ar[l]_-i\widetilde\frakP(W)^\nil\ar[r]^-\pi\ar[d]&\Rep^\nil\ar[d]\\
\widetilde\frakM(W)^2&\ar[l]_-i\widetilde\frakP(W)\ar[r]^-\pi&\Rep}
\end{split}
\end{align}
It yields the functors
\begin{align*}
&Ri_*:\DCoh_{G_W\times \SG}(\widetilde\frakP(W),i^*\tilde f^{(2)})_{\widetilde\frakP(W)^\nil}\to
\DCoh_S(\widetilde\frakM(W)^2,\tilde f^{(2)})_{\widetilde\calZ(W)},\\
&L\pi^*:\DCoh_{G_W\times \SG}(\Rep,h)_{\Rep^\nil}\to
\DCoh_{G_W\times \SG}(\widetilde\frakP(W),\pi^*h)_{\widetilde\frakP(W)^\nil}.
\end{align*}
By Lemma \ref{lem:potential1} we have $i^*\tilde f^{(2)}=\pi^*h$.
Thus, composing $Ri_*$ and $L\pi^*$ we get a functor
\begin{align*}
\omega^+:\DCoh_{G_W\times \SG}(\Rep,h)_{\Rep^\nil}\to 
\DCoh_{G_W\times \SG}(\widetilde\frakM(W)^2,\tilde f^{(2)})_{\widetilde\calZ(W)}.
\end{align*}
We claim that the functor $\omega^+$ has a monoidal structure.
Indeed, set
$$\widetilde\frakP'(W)=\{(x,y,z)\in\widetilde\frakM(W)^3\,;\,x\subset y\subset z\}.$$
We have the following commutative diagram
\begin{align}
\begin{split}
\xymatrix{
\widetilde\frakM(W)^2\times \widetilde\frakM(W)^2&&\ar[ll]_-{\pi_{12}\times\pi_{23}}\widetilde\frakM(W)^3
\ar[r]^-{\pi_{13}}&\widetilde\frakM(W)^2\\
\widetilde\frakP(W)\times \widetilde\frakP(W)\ar[d]_-{\pi\times\pi}\ar[u]^-{i\times i}&&
\ar[ll]\widetilde\frakP'(W)\ar[u]^{i'}\ar[r]\ar[d]_-{\pi'}&
\widetilde\frakP(W)\ar[d]_-\pi\ar[u]^-i\\
\Rep\times\Rep&&\ar[ll]_-q\Rep'\ar[r]^-p&\Rep
}
\end{split}
\end{align}
The left upper square is Cartesian. 
The right lower one either because the set of stable representations of the quiver $\widetilde Q_f$
is preserved by subobjects.
By base change, we get an isomorphism of functors
\begin{align*}
\omega^+\circ\star&=Ri_*\circ L\pi^*\circ Rp_*\circ Lq^*\\
&=R(\pi_{13})_*\circ L(\pi_{12}\times\pi_{23})^*\circ R(i\times i)_*\circ L(\pi\times\pi)^*\\
&=\star\circ R(i\times i)_*\circ L(\pi\times\pi)^*
\end{align*}
where the convolution functors $\star$ are as in \eqref{Hall1} and \eqref{conv2},
proving the claim.
More precisely, for the right lower square
we use the flat base change, and for the
left upper square the fact that
$\widetilde\frakP(W)\times \widetilde\frakP(W)$ and $\widetilde\frakM(W)^3$
intersect transversally in $\widetilde\frakM(W)^2\times \widetilde\frakM(W)^2$.

Next, we define the map $\varpi^+$.
For $\flat=0$ or $\nil$, we have by \eqref{Hall1} a functor
\begin{align}\label{Hall3}
\begin{split}
\star:\Db\Coh_{\SG}(\Rep)_{\Rep^\flat}
\times\Db\Coh_{\SG}(\Rep)_{\Rep^\flat}
\to\Db\Coh_{\SG}(\Rep)_{\Rep^\flat}
\end{split}
\end{align}
This functor yields an $R_{\SG}$-algebra structure on $K^{\SG}(\Rep')$.
The pushforward $K^{\SG}(\Rep^0)\to K^{\SG}(\Rep^\nil)$ is an algebra homomorphism.
Composing it with
$$Ri_*\circ L\pi^*:K^{\SG}(\Rep^\nil)\to K^{G_W\times \SG}(\widetilde\calZ(W))$$
we get the map $\varpi^+:K^{\SG}(\Rep^0)\to K^{G_W\times \SG}(\widetilde\calZ(W)).$

Finally, the map $\Upsilon$ in \eqref{UPM} is an algebra homomorphism by Lemma \ref{lem:Upsilon5}.
The same argument as for $\omega^+$ 
proves that $\varpi^+$ is an algebra homomorphism.
The diagram in the proposition commutes by
Lemma \ref{lem:Upsilon5} and \eqref{diagram1}.
\end{proof}

Taking the opposite algebras and Hecke correspondences, 
we get in a similar way  the commutative diagram of 
$R_{G_W\times \SG}$-algebras
 \begin{align*}
\begin{split}
\xymatrix{
K^{\SG}(\Rep^0)^\op\otimes R_{G_W}\ar[d]_-{\varpi^-}\ar[r]^-{\Upsilon\otimes 1}&
K_{\SG}(\Rep,h)_{\Rep^\nil}^\op\ar[d]^-{\omega^-}\otimes R_{G_W}\\
K^{G_W\times \SG}(\widetilde\calZ(W))\ar[r]^-\Upsilon&
K_{G_W\times \SG}(\widetilde\frakM(W)^2,\tilde f^{(2)})_{\widetilde\calZ(W)}
}
\end{split}
\end{align*}
Now, we fix $\SG=\bbC^\times$ as in \eqref{xi}.
We'll need a twisted version of the maps $\omega^\pm$. 
To define them, we consider the decomposition
$$K_{G_W\times\bbC^\times}(\widetilde\frakM(W)^2,\tilde f^{(2)})_{\widetilde\calZ(W)}=
\bigoplus_{v_1,v_2\in\bbN I}K_{G_W\times\bbC^\times}\big(\widetilde\frakM(v_1,W)\times\widetilde\frakM(v_2,W),\tilde f^{(2)}\big)_{\widetilde\calZ(W)}$$
and we write
$\omega^\pm=\bigoplus_{v_1,v_2}\omega^\pm_{v_1,v_2}$ with
$$\omega^\pm_{v_1,v_2}:\calU_R^\pm\to 
K_{G_W\times\bbC^\times}\big(\widetilde\frakM(v_1,W)\times\widetilde\frakM(v_2,W),\tilde f^{(2)}\big)_{\widetilde\calZ(W)}.$$
Let $\calV_1,$ $ \calV_2$ be the classes in
$R_{G_{v_1}\times G_{v_2}\times\bbC^\times}$
of the vectorial representations of the groups $G_{v_1}$, $G_{v_2}$,
with their obvious decompositions
$\calV_1=\bigoplus_{i\in I}\calV_{1,i}$ and
$\calV_2=\bigoplus_{i\in I}\calV_{2,i}$.
We define
\begin{gather*}
(v_1 \,|\,v_2)=\sum_{\substack{\alpha\in Q_1\\\alpha:i\to j}}v_{1i} \,v_{2j}
 ,\quad
\alpha_{v_1,v_2}=
\prod_{\substack{\alpha\in Q_1\\\alpha:i\to j}}\det(\calV_{1,i})^{v_{2j}}
 ,\quad
\beta_{v_1,v_2}=
\prod_{\substack{\alpha\in Q_1\\\alpha:i\to j}}\det(\calV_{1,j})^{-v_{2i}}.
\end{gather*} 
Given $x_1\in\calU^+_{R,v_1}$ and $x_2\in\calU^+_{R,v_2}$
the formula \eqref{twist1} yields
$$x_1\circlestar x_2=
(-1)^{(v_1\,|\,v_2)}(\alpha_{v_1,v_2}x_1)\star (\beta_{v_2,v_1}x_2).$$
For $x_1\in\calU^-_{R,-v_1}$ and $x_2\in\calU^-_{R,-v_2}$ we have instead
$$x_1\circlestar x_2=
(-1)^{(v_2\,|\,v_1)}(\beta_{v_1,v_2}x_1)\star (\alpha_{v_2,v_1}x_2).$$
We'll abbreviate
$e_{v_1,v_2}=e_{-v_2,-v_1}=(-1)^{(v_1\,|\,v_2)}\,\alpha_{v_1,v_2}\,\beta_{v_2,v_1}.$
Choose some elements $r_{v_1,v_2}\in R_{G_{v_1}\times G_{v_2}\times G_W\times\bbC^\times}$ 
for each $v_1,v_2\in\bbN I$ such that 
$$
v_3\geqslant v_2\geqslant v_1\ \text{or}\ 
v_3\leqslant v_2\leqslant v_1\Rightarrow
e_{v_2-v_1,v_3-v_2}=r_{v_1,v_3}^{-1}\,
r_{v_1,v_2}\,r_{v_2,v_3}$$
Finally, we consider the following map 
\begin{align}\label{twist2}
\Omega^\pm:\calU_R^\pm\to 
K_{G_W\times\bbC^\times}(\widetilde\frakM(W)^2,\tilde f^{(2)})_{\widetilde\calZ(W)}
,\quad
\Omega^\pm=\bigoplus_{v_1,v_2}r_{v_1,v_2}\,\omega^\pm_{v_1,v_2}.
\end{align}
From now on  we'll equip $\calU_R^\pm$ 
with the twisted Hall multiplication $\circlestar$ and
we'll omit the symbol $\circlestar$.
Further, we'll normalize the twist $r_{v_1,v_2}$ such that
$r_{v_1,v_2}=1$ whenever
$v_2-v_1=\pm\delta_i$.

\begin{proposition}\label{prop:Omega2}
The map $\Omega^\pm$ is an $R$-algebra homomorphism 
$$\Omega^\pm:\calU_R^\pm\to 
K_{G_W\times\bbC^\times}(\widetilde\frakM(W)^2,\tilde f^{(2)})_{\widetilde\calZ(W)}.$$
\qed
\end{proposition}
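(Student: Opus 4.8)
The plan is to deduce the multiplicativity of $\Omega^\pm$ from that of the \emph{untwisted} maps $\omega^\pm$ (Proposition~\ref{prop:double}), by checking that the correction factors $r_{v_1,v_2}$ convert the twisted product $\circlestar$ on the source into the convolution product $\star$ on the target.

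First I would pass to homogeneous components. By Proposition~\ref{prop:Omega1}(b), for $\SG=\bbC^\times$ the $R$-module $\calU_R^+$ equipped with the \emph{untwisted} Hall product is $K_{\SG}(\Rep,h)_{\Rep^\nil}$; restricting the algebra homomorphism $\omega^+$ of Proposition~\ref{prop:double} along $\calU_R^+\otimes 1\hookrightarrow K_{\SG}(\Rep,h)_{\Rep^\nil}\otimes R_{G_W}$ and decomposing along dimension vectors, the maps $\omega^+=\bigoplus_{v_1\le v_2}\omega^+_{v_1,v_2}$ intertwine the untwisted Hall product with the convolution $\star$ on $K_{G_W\times\bbC^\times}(\widetilde\frakM(W)^2,\tilde f^{(2)})_{\widetilde\calZ(W)}$; likewise for $\omega^-$. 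Fix homogeneous $x_a\in\calU^+_{R,d_a}$, $a=1,2$, put $v_2=v_1+d_1$ and $v_3=v_2+d_2$; it then suffices to check that the $(v_1,v_3)$-component of $\Omega^+(x_1\circlestar x_2)$ equals that of $\Omega^+(x_1)\star\Omega^+(x_2)$ for every $v_1$ (and then $v_1\le v_2\le v_3$ automatically, since $d_1,d_2\in\bbN I$).

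Second I would carry out that comparison on the triple Hecke correspondence $\widetilde\frakP'(W)=\{x\subset y\subset z\}$. Both $(v_1,v_3)$-components are computed by pulling $\omega^+_{v_1,v_2}(x_1)$ and $\omega^+_{v_2,v_3}(x_2)$ back to $\widetilde\frakP'(W)$ and pushing forward; the class $(\pi_{12}^*r_{v_1,v_2})\cdot(\pi_{23}^*r_{v_2,v_3})$ entering $\Omega^+(x_1)\star\Omega^+(x_2)$ can, by the defining cocycle identity $e_{v_2-v_1,v_3-v_2}=r_{v_1,v_3}^{-1}r_{v_1,v_2}r_{v_2,v_3}$, be rewritten as $(\pi_{13}^*r_{v_1,v_3})\cdot e_{d_1,d_2}$, where $e_{d_1,d_2}=(-1)^{(d_1\,|\,d_2)}\alpha_{d_1,d_2}\beta_{d_2,d_1}$ is the twisting class of \eqref{twist1} expressed through the quotient bundles $\calV_{v_2}/\calV_{v_1}$ and $\calV_{v_3}/\calV_{v_2}$. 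The factor $r_{v_1,v_3}$ is pulled back from the outer factors $\widetilde\frakM(v_1,W)\times\widetilde\frakM(v_3,W)$, so it comes out of the pushforward by the projection formula. For the remaining factor $e_{d_1,d_2}$ one uses that the projection $\pi$ in the right square of \eqref{diagram1} identifies, over $\Rep^\nil_{d_1}$, the tautological bundle $\calV$ with $\calV^+/\calV^-$ on $\widetilde\frakP(v_1,v_2,W)^\nil$ (and similarly in degree $d_2$), so that multiplication by $\alpha_{d_1,d_2}$, resp.\ $\beta_{d_2,d_1}$, gets absorbed as $\omega^+_{v_1,v_2}(\alpha_{d_1,d_2}x_1)$, resp.\ $\omega^+_{v_2,v_3}(\beta_{d_2,d_1}x_2)$; combined with multiplicativity of $\omega^+$ for the untwisted $\star$ and the identity $x_1\circlestar x_2=(-1)^{(d_1\,|\,d_2)}(\alpha_{d_1,d_2}x_1)\star(\beta_{d_2,d_1}x_2)$ this yields exactly $r_{v_1,v_3}\,\omega^+_{v_1,v_3}(x_1\circlestar x_2)$. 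Hence the two components agree. Unitality of $\Omega^+$ is immediate: $r_{0,0}=1$ by the cocycle identity with $v_1=v_2=v_3=0$, and $\omega^+$ sends the Hall unit to the unit $\Upsilon\Delta_*\calO$.

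Finally, the case of $\Omega^-$ is identical, using the opposite Hecke correspondences $\widetilde\frakP(-\delta_i,W)$, the second twist identity $x_1\circlestar x_2=(-1)^{(d_2\,|\,d_1)}(\beta_{d_1,d_2}x_1)\star(\alpha_{d_2,d_1}x_2)$, the relation $e_{-v_2,-v_1}=e_{v_1,v_2}$, and the cocycle identity in the range $v_3\le v_2\le v_1$. The step I expect to require the most care is the bookkeeping in the second paragraph: matching the equivariant twisting classes $\alpha_{d_1,d_2}$, $\beta_{d_2,d_1}$, defined from vectorial representations of $G_{d_1}$ and $G_{d_2}$, with the classes built from the quotient bundles $\calV_{v_2}/\calV_{v_1}$, $\calV_{v_3}/\calV_{v_2}$ along the base-change squares of \eqref{diagram1}, and verifying that after this identification $(\pi_{12}^*r_{v_1,v_2})(\pi_{23}^*r_{v_2,v_3})$ is precisely $(\pi_{13}^*r_{v_1,v_3})\,e_{d_1,d_2}$ on $\widetilde\frakP'(W)$; one must also confirm that $r_{v_1,v_3}$-multiplication commutes past the convolution, which follows from the projection formula since this class descends from the outer factors.
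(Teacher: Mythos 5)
Your proof is correct, and it fills in exactly the computation the paper leaves to the reader: the proposition is stated without proof because it is meant to follow directly from the monoidality of $\omega^\pm$ (Proposition~\ref{prop:double}), the twist formula \eqref{twist1}, and the defining cocycle identity on the $r_{v_1,v_2}$, which is precisely the chain of steps you carry out. The identification of the pullback of the tautological bundle along $\pi$ with the quotient bundle $\calV^+/\calV^-$ on the Hecke correspondence, and the use of the projection formula to extract $\pi_{13}^*r_{v_1,v_3}$, are both the right moves.
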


Proposition \ref{prop:double} holds with  $\widetilde\frakP(W)$, $\widetilde\frakM(W)$, $\widetilde\frakZ(W)$
replaced by $\widehat\frakP(W)$, $\widehat\frakM(W)$, $\widehat\frakZ(W)$.
We define as in \eqref{twist2} the map
\begin{align}\label{twist3}
\Omega^\pm:\calU_R^\pm\to 
K_{G_W\times\bbC^\times}(\widehat\frakM(W)^2,\tilde f^{(2)})_{\widehat\calZ(W)}
\end{align}

\subsubsection{The KCA associated with the potential $\bfw_1$}
\label{sec:Jordan1}

\begin{proposition}\label{prop:crit1}
\hfill
\begin{enumerate}[label=$\mathrm{(\alph*)}$,leftmargin=8mm]
\item The extension by zero yields an isomorphism
$\frakM(W)=\crit(\tilde f_1).$

\item
We have the following algebra and module isomorphisms 
\begin{align*}
K_{G_W\times\bbC^\times}\big(\widetilde\frakM(W)^2,(\tilde f_1)^{(2)}\big)_{\widetilde\calZ(W)}=
K^{G_W\times\bbC^\times}\big(\calZ(W)\big),\quad
K_{G_W\times\bbC^\times}\big(\widetilde\frakM(W),\tilde f_1\big)
=K^{G_W\times\bbC^\times}\big(\frakM(W)\big).
\end{align*}
\end{enumerate}
\end{proposition}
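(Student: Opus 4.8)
Here is a plan.

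\textbf{Part (a).} Fix a dimension vector $v$, put $V=\bbC^v$, and lift $\tilde f_1$ to the $G_v$-invariant function $(\alpha,a,a^*,\varepsilon)\mapsto\Tr\big(\varepsilon\,([\alpha,\alpha^*]+a^*a)\big)=\Tr(\varepsilon\,\mu_V)$ on $\widetilde\X(v,W)_s$. Extension by zero sends a stable $(\alpha,a,a^*)\in\mu_V^{-1}(0)_s$ to $(\alpha,a,a^*,0)$, which lies in $\widetilde\X(v,W)_s$ because a $\widetilde Q_f$-subrepresentation supported on $V$ of $(\alpha,a,a^*,0)$ is the same thing as an $\overline Q_f$-subrepresentation supported on $V$ of $(\alpha,a,a^*)$; this point visibly lies in $\crit(\tilde f_1)$. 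Conversely, writing out $d\tilde f_1=0$ gives $\mu_V=0$ together with $[\varepsilon,\alpha]=[\varepsilon,\alpha^*]=0$, $a\varepsilon=0$ and $\varepsilon a^*=0$, whence $\Im\varepsilon\subseteq V$ is a $\widetilde Q_f$-subrepresentation supported on $V$, so $\varepsilon=0$ by stability. Thus set-theoretically $\crit(\tilde f_1)=\frakM(W)$, and in particular $\crit(\tilde f_1)\subseteq\widetilde\frakM(W)_\circ$. For the scheme structure one checks locally that near a point of $\frakM(W)$ the Jacobian ideal of $\tilde f_1$ is generated by the components of $\mu_V$ — a regular sequence, since the $G_v$-action on the stable locus is free and hence $\mu_V$ is a submersion there — together with the $\varepsilon$-coordinates; this is the reduced ideal of the $\varepsilon=0$ copy of $\frakM(W)$.

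\textbf{Part (b), reduction and bundle picture.} The engine is dimensional reduction (Isik/Hirano, in the $G_W\times\bbC^\times$-equivariant, support-aware form; cf.\ \cite{I12}, \cite{H17a}, \cite{H17b} and its use in \cite{P21}). First I would cut down to the open locus $\widetilde\frakM(W)_\circ$: by (a) we have $\crit((\tilde f_1)^{(2)})=\frakM(W)^2\subseteq\widetilde\frakM(W)_\circ^2$, every object of $\DCoh_{G_W\times\bbC^\times}(\widetilde\frakM(W)^2,(\tilde f_1)^{(2)})_{\widetilde\calZ(W)}$ is effectively supported on $\crit((\tilde f_1)^{(2)})\cap\widetilde\calZ(W)$, and so restriction along the open embedding $\widetilde\frakM(W)_\circ^2\hookrightarrow\widetilde\frakM(W)^2$ is an equivalence onto the corresponding category over $\widetilde\frakM(W)_\circ^2$ (the factorization-category analogue of excision, as in \cite{EP15}); likewise for $\widetilde\frakM(W)$ itself. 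Next, $\widetilde\frakM(v,W)_\circ=\widetilde\X(v,W)_\circ/G_v=(\overline\X(V,W)_s\times\frakg_v)/G_v$ is the total space of the bundle $\calF=P\times_{G_v}\frakg_v$ over $N=\overline\X(V,W)_s/G_v$, where $P=\overline\X(V,W)_s\to N$ is the (free, hence principal) $G_v$-bundle, and under this identification $\tilde f_1$ becomes the function on $\Tot(\calF)$ attached to the cosection $\overline\mu_V\in\Gamma(N,\calF^\vee)$ obtained by descending $\mu_V$.

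\textbf{Part (b), dimensional reduction.} Applying dimensional reduction then gives $\DCoh_{G_W\times\bbC^\times}(\Tot(\calF),\tilde f_1)\simeq\Db\Coh_{G_W\times\bbC^\times}\big(R(\calF^\vee\to N,\overline\mu_V)\big)$, and the derived zero locus $R(\calF^\vee\to N,\overline\mu_V)$ is the ordinary scheme $\frakM(v,W)$, since $\overline\mu_V$ is transverse (again by freeness of the $G_v$-action on the stable locus). Running the same argument on products — cosection $(\overline\mu_V,-\overline\mu_V)$ of $\calF\boxplus\calF$ over $N\times N$, zero locus $\frakM(v_1,W)\times\frakM(v_2,W)$ — and summing over dimension vectors yields $\DCoh_{G_W\times\bbC^\times}(\widetilde\frakM(W),\tilde f_1)\simeq\Db\Coh_{G_W\times\bbC^\times}(\frakM(W))=\Perf_{G_W\times\bbC^\times}(\frakM(W))$ and $\DCoh_{G_W\times\bbC^\times}(\widetilde\frakM(W)^2,(\tilde f_1)^{(2)})_{\widetilde\calZ(W)}\simeq\Db\Coh_{G_W\times\bbC^\times}(\frakM(W)^2)_{\calZ(W)}$. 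For the support condition in the second equivalence I use $\widetilde\calZ(W)\cap\frakM(W)^2=\calZ(W)$: the restriction of $\tilde\pi$ to $\frakM(W)$ factors as $\frakM(W)\xrightarrow{\pi}\frakM_0(W)\xrightarrow{\iota}\widetilde\frakM_0(W)$, where $\iota$ is the closed embedding given by $\varepsilon=0$ (both $\tilde\pi$ and $\pi$ send a point to the unique closed $G_v$-orbit in the closure of its orbit, and that orbit stays in the $\varepsilon=0$ locus), and $\iota$ is injective. Passing to complexified Grothendieck groups and using $K^G(X)_Z=K^G(Z)$ together with smoothness of $\frakM(W)$ then gives the two stated isomorphisms of groups, resp.\ modules.

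\textbf{Compatibility and main obstacle.} Finally, to upgrade these to an isomorphism of algebras (first one) and a compatible isomorphism of modules (second one), I would observe that the convolution functor \eqref{conv2} and the Steinberg convolution on $\Db\Coh_{G_W\times\bbC^\times}(\frakM(W)^2)_{\calZ(W)}$ are both assembled from flat pull-backs, proper push-forwards and derived tensor products, and that the dimensional reduction equivalences are compatible with these three operations (this is the functoriality behind the construction of the KHA in \cite{P21}, and in our language follows from Lemma \ref{lem:Upsilon5} and base change); hence they intertwine the two convolutions and the two module actions. Taking $K_0$ concludes. I expect the main obstacle to be exactly this last step — verifying that dimensional reduction is monoidal/module-compatible for the convolution structures — together with the bookkeeping of supports (matching $\widetilde\calZ(W)$ with $\calZ(W)$ after restricting to $\widetilde\frakM(W)_\circ$); by contrast the critical-locus computation of Part (a) and the plain vector-bundle form of dimensional reduction are routine.
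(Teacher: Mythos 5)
Your proposal follows the same route as the paper's proof: Part (a) via the stability argument that forces $\varepsilon=0$ at critical points (the image of $\varepsilon$ being a $\widetilde Q_f$-subrepresentation supported on $V$), and Part (b) by first cutting down to $\widetilde\frakM(W)_\circ$ using the Polishchuk--Vaintrob support result and then applying Isik--Hirano dimensional reduction along the $\varepsilon$-fibre. You spell out a few points the paper leaves implicit — transversality of $\mu_V$ so that the derived zero locus is the classical one, the matching $\widetilde\calZ(W)\cap\frakM(W)^2=\calZ(W)$ via $\tilde\pi|_{\frakM(W)}=\iota\circ\pi$, and the compatibility of the dimensional-reduction equivalence with convolution — and you correctly flag the last of these as the only nonroutine bookkeeping step, which the paper dispatches with "in a similar way."
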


\begin{proof}
We have 
$$\{(x,\varepsilon)\in\widetilde\X(V,W)_s\,;\,
[\varepsilon,x]=\mu_V(x)=0\}\,/\,G_V=\crit(\tilde f_1)\cap\widetilde\frakM(v,W).$$
For any tuple $(x,\varepsilon)$ as above,
the subspace $\Im(\varepsilon)$ of $V$ is preserved by the
action of the path algebra $\bbC\widetilde Q$ of $\widetilde Q$ and is contained in the kernel of $a$.
Hence, we have $\varepsilon=0$ and $x\in\mu_V^{-1}(0)_s$.
Thus, the assignment $x\mapsto (x,0)$ yields an isomorphism
$\frakM(W)=\crit(\tilde f_1)$, proving Part (a). 
To prove the part (b) observe that by (a) we have
$\crit(\tilde f_1)\subset\widetilde\frakM(W)_\circ$.
Since any matrix factorization is supported on the critical set of the potential by
\cite[cor.~3.18]{PV11}, we have
%By \cite[Hence \cite[prop.~1.3]{O04} yields
$$K_{G_W\times\bbC^\times}\big(\widetilde\frakM(W),\tilde f_1\big)=
K_{G_W\times\bbC^\times}\big(\widetilde\frakM(W)_\circ,\tilde f_{1,\circ}\big).$$
Next, we use the dimensional reduction in K-theory. 
More precisely, we apply \cite{I12} or \cite[thm.~1.2]{H17b}
to the vector bundle
$$\rho_1:\widetilde\frakM(W)_\circ\to\big\{\ux\in\widetilde\frakM(W)_\circ\,;\,
\varepsilon=0\big\}$$
given by forgetting the variable $\varepsilon$. Using the isomorphism
\begin{align*}
\frakM(W)&=\big\{\ux\in\widetilde\frakM(W)_\circ\,;\,
\varepsilon=0\,,\,\partial\tilde f_{1,\circ}/\partial\varepsilon(\ux)=0\big\}
\end{align*}
we deduce that
$$K_{G_W\times\bbC^\times}\big(\widetilde\frakM(W)_\circ,\tilde f_{1,\circ}\big)
=K^{G_W\times\bbC^\times}\big(\frakM(W)\big).$$
In a similar way we prove that
\begin{align*}
K_{G_W\times\bbC^\times}\big(\widetilde\frakM(W)^2,(\tilde f_1)^{(2)}\big)_{\widetilde\calZ(W)}
&=
K_{G_W\times\bbC^\times}\big(\widetilde\frakM(W)_\circ^2,(\tilde f_{1,\circ})^{(2)}
\big)_{\widetilde\calZ(W)_\circ}\\
&=
K^{G_W\times\bbC^\times}\big(\calZ(W)\big).
\end{align*}
\end{proof}

The Nakajima's construction recalled in \S\ref{sec:N00} yields the following.

\begin{theorem}\label{thm:Nakajima}
Let $Q$ be a Dynkin quiver. 
\hfill
\begin{enumerate}[label=$\mathrm{(\alph*)}$,leftmargin=8mm]
\item
There is an $R$-algebra map
$\U_R(L\frakg)\to 
K_{G_W\times\bbC^\times}\big(\widetilde\frakM(W)^2,(\tilde f_1)^{(2)}\big)_{\widetilde\calZ(W)}.$
\item
The $R$-algebra $\U_R(L\frakg)$ acts on 
$K_{G_W\times \bbC^\times}(\widetilde\frakM(W),\tilde f_1)$.
\end{enumerate}
\end{theorem}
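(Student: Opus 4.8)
The plan is to deduce the theorem from Proposition \ref{prop:crit1} by transporting Nakajima's construction, recalled in \S\ref{sec:N00}, across the dimensional--reduction isomorphisms proved there. Indeed, Proposition \ref{prop:crit1}(b) furnishes a canonical isomorphism of $R_{G_W\times\bbC^\times}$--algebras
\[
K_{G_W\times\bbC^\times}\big(\widetilde\frakM(W)^2,(\tilde f_1)^{(2)}\big)_{\widetilde\calZ(W)}\;=\;K^{G_W\times\bbC^\times}\big(\calZ(W)\big),
\]
compatible with the module isomorphism $K_{G_W\times\bbC^\times}(\widetilde\frakM(W),\tilde f_1)=K^{G_W\times\bbC^\times}(\frakM(W))$. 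Here "compatible" includes the assertion, part of Proposition \ref{prop:crit1}, that the convolution product and the convolution action go over to Nakajima's convolution product on $K^{G_W\times\bbC^\times}(\calZ(W))$ and its action on $K^{G_W\times\bbC^\times}(\frakM(W))$; one sees this from the fact that both products are given by the same formula $\calE\star\calF=R(\pi_{13})_*(L\pi_{12}^*\calE\otimes^LL\pi_{23}^*\calF)$ and that the dimensional--reduction equivalence commutes with the relevant pushforwards, finite-Tor-dimension pullbacks and tensor products, exactly as in the proof of Corollary \ref{cor:critalg1}. Granting this, the theorem is a formal consequence of Nakajima's results.

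In detail, for part (a) I would take the $R_{G_W\times\bbC^\times}$--algebra homomorphism $\U_R(L\frakg)\otimes R_{G_W}\to K^{G_W\times\bbC^\times}(\calZ(W))$ of \cite[thm.~12.2.1]{N00} (the first map in \eqref{nak1}), restrict it along the base--change map $\U_R(L\frakg)\to\U_R(L\frakg)\otimes R_{G_W}$, $u\mapsto u\otimes 1$, and post--compose with the isomorphism of Proposition \ref{prop:crit1}(b); this produces the asserted $R$--algebra map $\U_R(L\frakg)\to K_{G_W\times\bbC^\times}\big(\widetilde\frakM(W)^2,(\tilde f_1)^{(2)}\big)_{\widetilde\calZ(W)}$. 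For part (b) I would invoke Corollary \ref{cor:critalg1}(a), which makes $K_{G_W\times\bbC^\times}(\widetilde\frakM(W),\tilde f_1)$ a module over that algebra, and compose with the map of (a); under Proposition \ref{prop:crit1}(b) this action is identified with the representation of $\U_R(L\frakg)$ on $K^{G_W\times\bbC^\times}(\frakM(W))$ coming from \S\ref{sec:N00}. As an internal consistency check, the restriction of the map of (a) to the triangular halves $\calU_R^\pm\cong\U_R(L\frakg)^\pm$ (Proposition \ref{prop:Omega1}) should coincide with the homomorphisms $\Omega^\pm$ of Proposition \ref{prop:Omega2}, which were built directly from Hecke correspondences on triple quiver varieties.

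The one point that genuinely needs attention — and where I would concentrate — is the bookkeeping between algebraic $K$--theory and its torsion--free quotient: the homomorphism in \eqref{nak1} is stated with target $K^{G_W\times\bbC^\times}(\calZ(W))/\tor$, whereas the theorem asks for values in the full group $K^{G_W\times\bbC^\times}(\calZ(W))=K_{G_W\times\bbC^\times}\big(\widetilde\frakM(W)^2,(\tilde f_1)^{(2)}\big)_{\widetilde\calZ(W)}$. I would resolve this by checking that the generators of $\U_R(L\frakg)$ — the $\psi^\pm_{i,0}$, the divided powers $(x^\pm_{i,n})^{[m]}$ and the $h_{i,\pm m}/[m]_q$ — act through classes already lying in $K^{G_W\times\bbC^\times}(\calZ(W))$, via explicit Hecke and diagonal correspondences, and that the defining relations of $\U_R(L\frakg)$ hold there integrally, as in \cite{N00}; equivalently one may appeal to the torsion--freeness of $K^{G_W\times\bbC^\times}(\calZ(W))$ over $R_{G_W\times\bbC^\times}$. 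Apart from this, no further difficulty arises, the substantive input being Proposition \ref{prop:crit1}.
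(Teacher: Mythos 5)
Your approach is the paper's: the authors prove (a) by combining \eqref{nak1} with Proposition \ref{prop:crit1}(b), then read (b) off (a) via Corollary \ref{cor:critalg1}, exactly as you lay out. The one substantive thing you flag is real: \eqref{nak1} only furnishes a homomorphism into $K^{G_W\times\bbC^\times}(\calZ(W))/\tor$, so under the isomorphism of Proposition \ref{prop:crit1}(b) the natural target is $K_{G_W\times\bbC^\times}\big(\widetilde\frakM(W)^2,(\tilde f_1)^{(2)}\big)_{\widetilde\calZ(W)}\,/\,\tor$, and the paper's one-line proof does not say why the quotient can be dropped. Comparing with Theorem \ref{thm: Nakajima shifted}(b) and Theorem \ref{thm:moduleK}(c), where $/\tor$ appears explicitly, it is very likely the quotient was simply omitted in the statement of Theorem \ref{thm:Nakajima}; absent a torsion-freeness statement for $K^{G_W\times\bbC^\times}(\calZ(W))$ (which Nakajima does not assert — his thm.\ 12.2.1 is itself phrased modulo torsion), your caution is the correct reading and your proposed fixes are the right ones to pursue.
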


\begin{proof}
Part (b) follows from (a), and (a) from \eqref{nak1} and Proposition \ref{prop:crit1}. 
\end{proof}

\begin{remark}\label{rem:VV}
\hfill
\begin{enumerate}[label=$\mathrm{(\alph*)}$,leftmargin=8mm]
\item
The theorem holds for any quiver without edge loops, as well as for the Jordan quiver,
see \S\ref{sec:toroidal}.
\item
The same proof as in Proposition \ref{prop:crit1} implies that
the extension by zero is an isomorphism $\frakM^\bullet(W)=\crit(\tilde f^\bullet_{\!1})$
and that
$K\big(\widetilde\frakM^\bullet(W),\tilde f^\bullet_1\big)=
K\big(\frakM^\bullet(W)\big)$ and
$H^\bullet\big(\widetilde\frakM^\bullet(W),\tilde f^\bullet_1\big)=
H^\bullet\big(\frakM^\bullet(W)\big).$
\item 
The relation between $\crit(\tilde f_1)$ and Nakajima's quiver varieties
is not new. 
It appears already in the literature in several forms, see, e.g., \cite{D16}, \cite{L20}.

\end{enumerate}
\end{remark}

\subsubsection{The KCA associated with the potential $\bfw_2$}\label{sec:Jordan2}
Let $A\subset G_W\times\bbC^\times$ be as in $\S\ref{sec:graded quiver}$.
Recall that $R=R_A$ and $F=F_A$.
For any $R$-module $M$, 
let $M/\tor\subset M\otimes_RF$ be the torsion free part.
The stability in \eqref{stable-triple1} 
does not depend on the variable $a^*$. Forgetting $a^*$ yields a vector bundle
\begin{align}\label{rho}\rho_2:\widetilde\frakM(W)\to\widehat\frakM(W).\end{align}
Since the potential $\bfw_2$ does not depend on $a^*$ either,
we have $\tilde f_2=\hat f_2\circ\rho_2$ for some function $\hat f_2$ on $\widehat\frakM(W)$.
Thus Proposition \ref{prop:Thom} yields an isomorphism
$$K_A(\widehat\frakM(W),\hat f_2)=
K_A(\widetilde\frakM(W),\tilde f_2).$$
We also define the function $\hat f_2^\bullet:\widehat\frakM^\bullet(W)\to\bbC$ as above
using $\tilde f^\bullet_2$.
Let $Q$ be a Dynkin quiver. 
Let $\U_F^{-w}(L\frakg)$ be the $(0,-w)$-shifted quantum loop group defined in \cite{FT19}.
See \S\ref{sec:QG} for details.

\begin{theorem}\label{thm: Nakajima shifted}
Assume that $Q$ is a Dynkin quiver.
\hfill
\begin{enumerate}[label=$\mathrm{(\alph*)}$,leftmargin=8mm]
\item
There is an $F$-algebra map
$\U_F^{-w}(L\frakg)\to 
K_A\big(\widehat\frakM(W)^2,(\hat f_2)^{(2)}\big)_{\widehat\calZ(W)}\otimes_RF$
which takes the central element $\psi^+_{i,0}\,\psi^-_{i,-w_i}$ to $(-q)^{-w_i}\det(W_i)^{-1}$ for each $i\in I$. Hence the $F$-algebra $\U_F^{-w}(L\frakg)$ acts on the $F$-vector spaces
$K_A(\widehat\frakM(W),\hat f_2)_{\widehat\frakL(W)}\otimes_RF$ and
$K_A(\widehat\frakM(W),\hat f_2)\otimes_RF$.
\item
The map in Part $\mathrm{(a)}$ restricts to an $R$-algebra homomorphism
$\U_R^{-w}(L\frakg)\to 
K_A\big(\widehat\frakM(W)^2,(\hat f_2)^{(2)}\big)_{\widehat\calZ(W)}\,/\,\tor$.
Hence $\U_R^{-w}(L\frakg)$ acts on 
$K_A(\widehat\frakM(W),\hat f_2)_{\widehat\frakL(W)}\,/\,\tor$ and
$K_A(\widehat\frakM(W),\hat f_2)\,/\,\tor$.
\item
Let $W\in\bfC^\bullet$ be as in $\S\ref{sec:graded quiver}$.
The map in Part $\mathrm{(b)}$ specializes to an algebra homomorphism
$\U_\zeta^{-w}(L\frakg)\to 
K\big(\widehat\frakM^\bullet(W)^2,(\hat f^\bullet_2)^{(2)}\big)_{\widehat\calZ^\bullet(W)}$.
Hence $\U_\zeta^{-w}(L\frakg)$ acts on 
$K(\widehat\frakM^\bullet(W),\hat f^\bullet_2)_{\widehat\frakL^\bullet(W)}$ and
$K(\widehat\frakM^\bullet(W),\hat f^\bullet_2)$.
\end{enumerate}

\end{theorem}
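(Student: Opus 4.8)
The plan is to produce the homomorphism from the Drinfeld-type presentation of the $(0,-w)$-shifted quantum loop group recalled in \S\ref{sec:QG} (following \cite{FT19}), exploiting its triangular decomposition. In that presentation the subalgebras $\U_F^\pm$ generated by the $x^\pm_{i,n}$, $n\in\bbZ$, coincide with those of the unshifted $\U_F(L\frakg)$, while the commutative Cartan part carries the shift: $\psi^+_i(z)=\sum_{m\geqslant 0}\psi^+_{i,m}z^{-m}$ is a series in $z^{-1}$ with invertible leading term $\psi^+_{i,0}$, $\psi^-_i(z)=\sum_{m=0}^{w_i}\psi^-_{i,-m}z^{m}$ is a polynomial of $z$-degree $w_i$, and $\psi^+_{i,0}\psi^-_{i,-w_i}$ is central. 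For the positive and negative halves I would take the twisted maps $\Omega^\pm$ of \eqref{twist3} composed with the isomorphism $(\calU_R^\pm,\circlestar)\cong U_R(L\frakg)^\pm$ of Proposition \ref{prop:Omega1}, restricting the equivariance to $A$ and extending scalars to $F$. This yields algebra maps $\U_F^\pm\to K_A(\widehat\frakM(W)^2,(\hat f_2)^{(2)})_{\widehat\calZ(W)}\otimes_RF$ sending $x^\pm_{i,n}\mapsto\Omega^\pm(x^\pm_{i,n})$ and disposes of all $x^\pm$--$x^\pm$ relations, the Serre relations included; here one uses that $\tilde f_2$ restricts to $\hat f_2$ along the zero section $\widehat\frakM(W)\subset\widetilde\frakM(W)$ of $\rho_2$, so that the KCA appearing here is the one built in \S\ref{sec:double}.

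The heart of the argument is the construction and identification of the Cartan part. I would compute the commutator $[\Omega^+(x^+_{i,n}),\Omega^-(x^-_{j,m})]$ inside the convolution algebra, using the geometry of the ascending and descending Hecke correspondences $\widehat\frakP(\pm\delta_i,W)$: a support estimate on their composition shows this commutator lies in the image under $\Upsilon$ of the diagonal classes $K^{G_W\times\bbC^\times}(\widehat\frakM(W))$, and expanding in $z$ while bookkeeping the contributions of the tautological bundles $\calV_i,\calW_i$ and the line bundles $\calL_i$ gives explicit diagonal operators $\widehat\psi^\pm_i(z)$. One then checks that (i) they generate a commutative subalgebra, (ii) $\widehat\psi^+_i(z)$ is a series in $z^{-1}$ with invertible leading term, (iii) $\widehat\psi^-_i(z)$ is a \emph{polynomial} in $z$ of degree $w_i$ — this polynomiality, reflecting the vanishing of the $a^*$-directions on $\widehat\frakM(W)$, is precisely where the shift enters and is the point that departs from \cite{N00} — and (iv) $\widehat\psi^+_{i,0}\,\widehat\psi^-_{i,-w_i}=(-q)^{-w_i}\det(W_i)^{-1}$, a direct Euler-class computation on $\widehat\frakP(\delta_i,W)$ together with the framing contribution of $\calW_i$. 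Setting $\psi^\pm_i(z)\mapsto\widehat\psi^\pm_i(z)$ then extends the assignment to all Drinfeld generators.

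It remains to verify the $\psi^\pm$--$x^\pm$ relations. These are convolutions of diagonal classes with Hecke classes; being supported on the $\pm\delta_i$-Hecke correspondences they are ``rank-one local'', and restricting to the subquiver at a single vertex reduces them to the same computation as in \cite{N00} and \cite{VV22}, the shift affecting only the degree bookkeeping of $\psi^-$. Since \cite{FT19} presents $\U_F^{-w}(L\frakg)$ by precisely these relations, the assignment is an $F$-algebra homomorphism, proving (a); the module structures on $K_A(\widehat\frakM(W),\hat f_2)_{\widehat\frakL(W)}\otimes_RF$ and $K_A(\widehat\frakM(W),\hat f_2)\otimes_RF$ are those coming from the action of the monoidal category on $\DCoh_G(X,f)_L$ and $\DCoh_G(X,f)$ in \S\ref{sec:Kcrit}, with $L=\widehat\frakL(W)$. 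For (b) one observes that $\Omega^\pm$ lands in $\calU_R^\pm$, hence in the torsion-free part, that the divided powers $(x^\pm_{i,n})^{[m]}$ are the integral generators of $\U_R^\pm$ by Proposition \ref{prop:Omega1}, and that the classes $\widehat\psi^\pm_i$ are integral (polynomials in tautological $K$-classes; only the commutator identification used localization), so the map restricts to $\U_R^{-w}(L\frakg)\to K_A(\widehat\frakM(W)^2,(\hat f_2)^{(2)})_{\widehat\calZ(W)}/\tor$. For (c) one takes $W\in\bfC^\bullet$ so that $A=a(\bbC^\times)$ acts trivially on $\widehat\frakM^\bullet(W)=\widehat\frakM(W)^A$ and its relatives as in \eqref{GA}, with $R_A=R$; composing the map of (b) with restriction to the $A$-fixed locus (an algebra map, an isomorphism after inverting suitable elements by the Thomason localization theorem) and with the specialization $q\mapsto\zeta$, exactly as in Nakajima's passage from \eqref{nak1}--\eqref{nak2} to \eqref{Psi}, and using the dimensional-reduction identity $K(\widetilde\frakM^\bullet(W),\tilde f_2^\bullet)=K(\widehat\frakM^\bullet(W),\hat f_2^\bullet)$ furnished by the graded analogue of $\rho_2$ via Proposition \ref{prop:Thom}, one obtains $\U_\zeta^{-w}(L\frakg)\to K(\widehat\frakM^\bullet(W)^2,(\hat f^\bullet_2)^{(2)})_{\widehat\calZ^\bullet(W)}$ and its action on $K(\widehat\frakM^\bullet(W),\hat f^\bullet_2)_{\widehat\frakL^\bullet(W)}$ and $K(\widehat\frakM^\bullet(W),\hat f^\bullet_2)$.

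The main obstacle is the second step: identifying the Cartan part. Proving that the commutator of the geometric raising and lowering operators produces a series in $z$ which is a \emph{polynomial} of degree exactly $w_i$ — not a power series, as in the non-shifted case — and pinning down the extremal coefficients so that the central element $\psi^+_{i,0}\psi^-_{i,-w_i}$ maps to $(-q)^{-w_i}\det(W_i)^{-1}$, is the genuinely new input; everything else runs parallel to \cite{N00} and \cite{VV22}.
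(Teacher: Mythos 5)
Your overall architecture mirrors the paper's: triangular decomposition, the twisted Hall-to-KCA maps $\Omega^\pm$ for the nilpotent halves, a separate construction of the Cartan part, a vertex-by-vertex reduction of the cross relations to the rank-one case, integrality for (b), and Thomason localization plus dimensional reduction for (c). The one genuine methodological departure is that you propose to \emph{define} the Cartan operators $\widehat\psi^\pm_i(z)$ as the output of the commutator $[\Omega^+(x^+_i(u)),\Omega^-(x^-_j(v))]$, making (A.6) tautological and shifting the burden to (A.4), whereas the paper defines $\psi^\pm_i$ explicitly by the closed formula \eqref{psi+-} and then verifies (A.6) by a residue computation in the $A_1$ fixed-point model. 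That inversion is a legitimate alternative, but it comes at a cost: you must prove that the commutator is in fact supported on the diagonal and that it admits a canonical splitting into an ``expansion at $\infty$'' piece $\widehat\psi^+_i$ and an ``expansion at $0$'' piece $\widehat\psi^-_i$. That splitting is not a formal consequence of the support estimate; it is exactly the residue computation the paper performs, backed by the property-$(T)$ / affine-paving Lemma \ref{lem:(T)}, which you do not invoke and which is needed to make the $G_W\times\bbC^\times$-equivariant localization argument rigorous.

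There are two points where the proposal is actually wrong, not merely incomplete. First, you assert that $\psi^-_i(z)=\sum_{m=0}^{w_i}\psi^-_{i,-m}z^{m}$ is a polynomial of degree $w_i$, and you base the ``genuinely new input'' of your argument on establishing this polynomiality geometrically. This is incorrect. In $\U_F^{0,-w}(L\frakg)$ the negative Cartan series is
$\psi^-_i(u)=\sum_{n\geqslant w_i}\psi^-_{i,-n}\,u^{n}=\psi^-_{i,-w_i}u^{w_i}\exp\big(-(q-q^{-1})\sum_{m>0}h_{i,-m}u^m\big)$,
an infinite formal series whose lowest-degree term sits in degree $w_i$; the shift manifests as a degree offset of the leading term, not as truncation to a polynomial. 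The paper's realization \eqref{psi+-}, \eqref{psipmi} makes this visible: the factor $\Lambda_{-u}\big(-q^{-1}\calW_i^\vee\big)$ is a genuine infinite series, and the deletion of the $a^*$-directions from $T\widehat\frakM$ changes only the denominator of the rational function whose two expansions give $\psi^\pm_i$, not its transcendence. Any computation aimed at proving polynomiality will fail, and the ``Euler-class computation'' you outline for (iv) would have to compute the $u^{w_i}$-coefficient of an infinite series, which in the paper is the content of the formulas \eqref{psipmi}--\eqref{CT}.

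Second, for $i\neq j$ you claim that a ``support estimate'' on the composed Hecke correspondences reduces the commutator to a diagonal class and hence (in this case) to zero. Support considerations alone do not give vanishing: the two compositions $\widehat\frakP(\delta_i,W)\star\widehat\frakP(-\delta_j,W)$ and $\widehat\frakP(-\delta_j,W)\star\widehat\frakP(\delta_i,W)$ have the same support but \emph{a priori} different classes, and establishing their equality requires showing both intersections $I_{v_1,v_2,v_3}$ and $I_{v_1,v_4,v_3}$ are transverse inside $\widehat\frakM(W)^3$ and exhibiting an explicit isomorphism between them matching the relevant line bundles — this is the content of Lemma \ref{lem:transverse}, the analogue of the lemmas in \cite{N98}, \cite{N00}. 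Without this transversality, the convolution classes are not even computed by the na\"ive set-theoretic intersection, so the claimed reduction does not go through.
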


\begin{proof}
The proof of the theorem is based on the following ingredients :
the compatibility KCA/KHA proved in \S\ref{sec:double},
a reduction to the $Q=A_1$ case as in \cite{N00}, a fixed point computation in the $Q=A_1$ case as in
\cite{VV99}.
We first concentrate on the first claim of Part (a).
We consider the $F$-algebra
$$\calU_F^0=F[\psi_{i,n}^+\,,\,\psi_{i,-w_i-n}^-\,;\,i\in I\,,\,n\in\bbN].$$
The triangular decomposition of the shifted quantum loop group
yields an isomorphism
\begin{align*}
\U_F^{-w}(L\frakg)=\calU_F^+\otimes_F
\calU_F^0\otimes_F\calU_F^-.
\end{align*}
We'll define an $F$-algebra homomorphism
\begin{align}\label{map8}
\U_F^{-w}(L\frakg)\to 
K_A\big(\widehat\frakM(W)^2,(\hat f_2)^{(2)}\big)_{\widehat\calZ(W)}\otimes_RF
\end{align}
The $F$-algebra $\U_F^{-w}(L\frakg)$ is generated by the Fourier coefficients of 
\begin{align*}
x^\pm_i(u)=\sum_{n\in\bbZ}x^\pm_{i,n}\,u^{-n}
 ,\quad
\psi^+_i(u)=\sum_{n\in\bbN}\psi^+_{i,n}\,u^{-n}
 ,\quad
\psi^-_i(u)=\sum_{n\geqslant w_i}\psi^-_{i,-n}\,u^{n}
\end{align*}
modulo the defining relations (A.2) to (A.7) in \S\ref{sec:QG}.
Let $\psi^m$ be the Adams operation in 
$K_A\big(\widehat\frakM(W)\big).$
We consider the classes in $K_A\big(\widehat\frakM(W)\big)$ given by
$$\calH_{i,1}=\calW_i-\sum_j[c_{ij}]_q\calV_j
 ,\quad
\calH_{i,-1}=\calW_i^\vee-\sum_j[c_{ij}]_q\calV_j^\vee
 ,\quad
\calH_{i,\pm m}=\frac{[m]_q}{m}\,\psi^m(\calH_{i,\pm 1})
 $$
Composing the pushforward by the diagonal embedding with the algebra homomorphism 
\begin{align}\label{U2}
\Upsilon:K^A(\widehat\calZ(W))\to K_A\big(\widehat\frakM(W)^2,(\hat f_2)^{(2)}\big)_{\widehat\calZ(W)}
\end{align}
in Corollary \ref{cor:critalg1}, yields the map
\begin{align}\label{delta}
\Delta:K_A\big(\widehat\frakM(W)\big)\to
K_A\big(\widehat\frakM(W)^2,(\hat f_2)^{(2)}\big)_{\widehat\calZ(W)}
\end{align}
We consider the formal series with coefficients in
$K_A\big(\widehat\frakM(W)\big)$ given by
\begin{align}\label{psi+-}
q^{-w_i\pm(\alpha_i,w-\bfc v)}\,\Lambda_{-u^{-1}}(q^{-1}\calW_i)^{-1}
\,\exp\Big(\pm (q-q^{-1})\sum_{m>0}\calH_{i,\pm m}u^{\mp m}\Big).
\end{align}
We assign to the element $\psi_{i,n}^\pm$ in $\U_F^{-w}(L\frakg)$ the image by the map $\Delta$
of the coefficient of $u^{-n}$ 
in the formal series \eqref{psi+-}.
Composing \eqref{twist3} with the map \eqref{UPM} yields the algebra homomorphism
\begin{align*}
\Omega^\pm\Upsilon^\pm:K^{\bbC^\times}(\Rep^0)
\to
K_A(\widehat\frakM(W)^2,(\hat f_2)^{(2)})_{\widehat\calZ(W)}
\end{align*}
Recall the line bundle $\calL_i$ on $\Rep^0_{\delta_i}$ introduced in \S\ref{sec:KHA}.
We define 
$$A_{i,n}^\pm=\Omega^\pm\Upsilon^\pm\big(\calL_i^{\otimes n}\big)
,\quad
A_i^\pm(u)=\sum_{n\in\bbZ}A_{i,n}^\pm u^{-n}.$$
We assign to $x_{i,n}^\pm$ the following element in
$K_A(\widehat\frakM(W)^2,(\hat f_2)^{(2)})_{\widehat\calZ(W)}$
\begin{align}\label{xpm}
\begin{split}
x_{i,n}^+\mapsto A^+_{i,n}\star\det(\calV_{\circ i})(-1)^{v_{+i}}
,\quad
x_{i,n}^-\mapsto
(-1)^{v_{-i}}q^{-1}\,\calL_i^{-v_{\circ i}}\star A^-_{i,n}
\end{split}
\end{align}

To prove that the images of $x_{i,n}^\pm$ and $\psi_{i,n}^\pm$ in
$K_A(\widehat\frakM(W)^2,(\hat f_2)^{(2)})_{\widehat\calZ(W)}$
defined by the assignments \eqref{psi+-} and
\eqref{xpm} give a well-defined morphism \eqref{map8}, we must check that they satisfy the relations 
$\mathrm{(A.2)}$ to $\mathrm{(A.7)}$.
The relations $\mathrm{(A.5)}$ and $\mathrm{(A.7)}$ are already satisfied in $\calU_F^\pm$ 
by Proposition \ref{prop:Omega1}.
The relations $\mathrm{(A.2)}$ and $\mathrm{(A.3)}$ are straightforward.
The relation $\mathrm{(A.4)}$ is easy to check using the formulas \eqref{psi+-}.
We now concentrate on $\mathrm{(A.6)}$.

First, we assume that $Q=A_1$. 
Then $I=\{i\}$ and $\hat f_2=0$. 
Hence, we have 
\begin{align*}
K^A(\widehat\calZ(W))=
K_A(\widehat\frakM(W)^2,(\hat f_2)^{(2)})_{\widehat\calZ(W)}
\end{align*}
We'll omit the vertex $i$ in the notation, e.g., we abbreviate 
$$\calL=\calL_i
 ,\quad
A^\pm_n=A^\pm_{i,n}
 ,\quad
g(u)=g_{ii}(u)
 ,\quad
w=w_i
 ,\quad
v=v_i.$$
Given a variety $X$ with an action of an affine group $G$, we'll say that $X$ satisfies the property ($T$) if 
\begin{itemize}[leftmargin=3mm]
\item[-]
$K^G(X)$ is a free $R_G$-module, 
\item[-] the forgetful morphism 
$K^G(X)\otimes_{R_G}R_H\to K^H(X)$
is an isomorphism for all closed subgroup $H\subset G$.
\end{itemize}

\begin{lemma}\label{lem:(T)} The $G_W\times\bbC^\times$-varieties
$\widehat\frakM(W)$ and $\widehat\calZ(W)$ satisfy the property $(T)$.
\end{lemma}

\begin{proof}
The variety $\widehat\frakM(v,W)$ parametrizes
the conjugacy classes of pairs consisting of a $(v,v)$-matrix $\varepsilon$ and a $w$-tuple of generators
of $\bbC^v$ for the $\varepsilon$-action. 
In other words, $\widehat\frakM(v,W)$ is isomorphic to the Quot scheme
$\Quot_\bbC(W\otimes\calO, v)$ parametrizing length $v$-quotients of the trivial vector bundle
$W\otimes\calO$ over $\bbC$.
The group $G_W$ acts on $W$ in the obvious way, and 
$\bbC^\times$ dilates both the framing and $\varepsilon$.
The variety $\widehat\frakM(v,W)$ is smooth. Fix a basis of $W$.
Let $W=\bigoplus_{r=1}^wW_r$ be the corresponding decomposition of $W$ as a sum of lines.
Let $T_W\subset G_W$ be the diagonal maximal torus.
Let $\lambda:\bbC^\times\to T_W$ be the cocharacter $z\mapsto(z,z^2,\dots,z^w)$.
The $T_W$-fixed point locus is the disjoint union of the varieties
$$\Quot_\bbC(W\otimes\calO, \bfv)=
\prod_{r=1}^w\Quot_\bbC(W_r\otimes\calO, v_r)=\prod_{r=1}^w\bbC^{[v_r]}=\bbC^v$$
where $\bfv=(v_1,v_2,\cdots,v_w)$ runs into the set of tuples in $\bbN^w$ with sum $v$,
and $\bbC^{[v_r]}$ is the $v_r$-fold symmetric product of $\bbC$.
The closed embedding $\Quot_\bbC(W\otimes\calO, \bfv)\subset\Quot_\bbC(W\otimes\calO, v)$
is the direct sum of $\calO$-modules.
The Byalinicki-Birula theorem yields a $T_W\times\bbC^\times$-equivariant stratification
\begin{align}\label{cells}
\Quot_\bbC(W\otimes\calO, v)=\bigsqcup_\bfv\Quot_\bbC(W\otimes\calO, \bfv)^+
\end{align}
where $\Quot_\bbC(W\otimes\calO, \bfv)^+$ is an affine fiber bundle over 
$\Quot_\bbC(W\otimes\calO, \bfv)$ of relative dimension $\sum_{r=1}^w(r-1)v_r$.
See \cite[prop.~3.4]{MR22} for more details.
This yields a $T_W\times\bbC^\times$-equivariant stratification 
of $\widehat\frakM(v,W)$ by affine cells,
and the first claim of the lemma follows using \cite[thm.~6.1.22]{CG}.

The proof of the second claim is similar. Recall that 
$$\widehat\calZ(W)=\widehat\frakM(W)\times_{\widehat\frakM_0(W)}\widehat\frakM(W)$$
and that $\widehat\frakL(W)$ is the central fiber of the map
$\pi:\widehat\frakM(W)\to\widehat\frakM_0(W)$.
The isomorphism $\widehat\frakM(v,W)=\Quot_\bbC(W\otimes\calO, v)$ identifies
$\widehat\frakL(v,W)$ with the punctual Quot scheme $\Quot_\bbC(W\otimes\calO, v)_0$
consisting of the sheaves supported at $0$.
Intersecting the cell decomposition \eqref{cells} with $\Quot_\bbC(W\otimes\calO, v)_0$
yields an affine cell decomposition 
\begin{align*}
\Quot_\bbC(W\otimes\calO, v)_0=\bigsqcup_\bfv\Quot_\bbC(W\otimes\calO, \bfv)^+_0
\end{align*}
such that $\Quot_\bbC(W\otimes\calO, \bfv)^+$ is an affine fiber bundle over
$\Quot_\bbC(W\otimes\calO, \bfv)^+_0$ for each tuple $\bfv$.
We deduce that $\widehat\calZ(W)$ has a $T_W\times\bbC^\times$-equivariant 
affine cell decomposition whose cells
are affine fiber bundles over the cells
$$\Quot_\bbC(W\otimes\calO, \bfv_1)^+_0\times\Quot_\bbC(W\otimes\calO, \bfv_2)^+_0
\subset\widehat\frakL(W)\times\widehat\frakL(W)$$
for each pair of tuples $(\bfv_1,\bfv_2)$ as above.
\end{proof}

Next, to compute the relations between $A_n^+$ and $A_n^-$ we claim that it is enough
to compute their actions on
$K^{G_W\times\bbC^\times}(\widehat\frakM(W))\otimes_{R_{G_W\times\bbC^\times}}F_{G_W\times\bbC^\times}$ because Lemma \ref{lem:(T)} and the localization theorem in K-theory yield
the following commutative diagram of algebras
\begin{align*}
\xymatrix{
K^{G_W\times\bbC^\times}(\widehat\calZ(W))\ar@{^{(}->}[r]\ar@{->>}[d]&
K^{G_W\times\bbC^\times}(\widehat\calZ(W))\otimes_{R_{G_W\times\bbC^\times}}F_{G_W\times\bbC^\times}
\ar@{=}[d]\\
K^A(\widehat\calZ(W))&
\End_{F_{G_W\times\bbC^\times}}(K^{G_W\times\bbC^\times}(\frakM(W))\otimes_{R_{G_W\times\bbC^\times}}F_{G_W\times\bbC^\times})}
\end{align*}
To do that, let $T_W\subset G_W$ be a maximal torus.
The $T_W\times\bbC^\times$-fixed points locus is
$$\widehat\frakM(v,W)^{T_W\times\bbC^\times}=
\{\underline x_\lambda\,;\,\lambda\in\bbN^w\,,\,|\lambda|=v\}$$ 
where $|\lambda|=\sum_{s=1}^w\lambda_s$ is the weight of the $w$-tuple $\lambda=(\lambda_1,\lambda_2,\dots,\lambda_w)$.
Let 
$[\lambda]$ be the fundamental class
of $\{\underline x_\lambda\}$.
For any linear operator $A$, 
let $\langle \lambda|A|\mu\rangle$ be the coefficient of the basis element $[\lambda]$ 
in the expansion of $A[\mu]$ in the
basis $\{[\lambda]\,;\,\lambda\in\bbN^w\}$.
Recall the tautological vector bundles
$\calV^+$, $\calV^-$ and $\calL=\calV^+/\,\calV^-$ on the Hecke correspondences
and on $\widehat\calZ(W)$.
Let $\lambda$ and $\mu$ be $w$-tuples of weight $v$ and $v+1$.
We abbreviate
$\calV_\lambda=\calV|_{\{\underline x_\lambda\}}$, 
$\calL_{\lambda,\mu}=\calL|_{\{(\underline x_\lambda,\,\underline x_\mu)\}}$, etc.
By \cite[\S4.5]{VV99} we have
\begin{align*}
\langle \lambda|A^-_n|\mu\rangle
&=(\calL_{\mu,\lambda})^{\otimes n}\otimes
\Lambda_{-1}\Big(T_{\mu}\widehat\frakM(v+1,W)-T_{\mu,\lambda}\widehat\frakP(v+1,v,W)\Big)\\
\langle \mu|A^+_m|\lambda\rangle
&=(\calL_{\lambda,\mu})^{\otimes m}\otimes
\Lambda_{-1}\Big(T_\lambda\widehat\frakM(v,W)-T_{\lambda,\mu}\widehat\frakP(v,v+1,W)\Big)
\end{align*}
The class of $T\widehat\frakM(W)$ in the Grothendieck group of $\widehat\frakM(W)$ is
\begin{align}\label{C1}
T\widehat\frakM(W)=(q^{-2}-1)\End(\calV)+q\Hom(\calV,\calW).
\end{align}
The class of $T\widehat\frakP(W)$ in the Grothendieck group of $\widehat\frakP(W)$ is
\begin{align}\label{C2}
T\widehat\frakP(W)=(q^{-2}-1)\calP+q\Hom(\calV^+,\calW)
\end{align}
with
$
\calP=\End(\calV^-)+\Hom(\calL,\calV^+)=\End(\calV^+)-\Hom(\calV^-,\calL).$
We write 
\begin{align}\label{form10}
\calV=\sum_{r=1}^vz_r\in R_{G_v}
 ,\quad
\calW=\sum_{s=1}^w\chi_s\in 
R_{G_W\times\bbC^\times}
\end{align} 
where $z_1,\dots, z_v$ and $\chi_1,\dots,\chi_w$ are the fundamental characters of maximal tori
in $G_v$ and $G_W$.
Specializing at the points $\ux_\lambda$ and $\ux_\mu$, we get the following classes in $R_{G_W\times\bbC^\times}$
\begin{align}\label{form7}
\begin{split}
\calV_\lambda=\sum_{s=1}^w\sum_{r=1}^{\lambda_s}\chi_sq^{3-2r}
 ,\quad
\calL_{\lambda,\mu}=z_{v+1}.
\end{split}
\end{align} 
Fix positive integers  $s_0,$ $r_0$ with $s_0\leqslant w$ such that 
$\calV_\mu-\calV_\lambda=z_{v+1}=\chi_{s_0}q^{3-2r_0}.$
The matrix coefficients $\langle \lambda |A^-_n|\mu\rangle$ and $\langle \mu|A^+_m|\lambda\rangle$
in $F_{G_W\times\bbC^\times}$ are given by
\begin{align*}
\langle \lambda |A^-_n|\mu\rangle
&=(\calL_{\lambda,\mu})^{\otimes n}\otimes
\Lambda_{-1}\Big((q^{-2}-1)\calV^\vee_\lambda\otimes\calL_{\lambda,\mu}\Big)\\
&=\ev_{u=z_{v+1}}\Big(u^{n}\prod_{r=1}^v\frac{uq^{-2}-z_r}{u-z_r}\Big)\\
\langle \mu|A^+_m|\lambda\rangle
&=(\calL_{\lambda,\mu})^{\otimes m}\otimes\Lambda_{-1}
\Big((1-q^{-2})\otimes\calL^\vee_{\lambda,\mu}\otimes\calV_\mu-q\calL^\vee_{\lambda,\mu}\otimes\calW\Big)\\
&=(1-q^{-2})^{-1}\Res_{u=z_{v+1}}\Big(\frac{u^{m+w-1}}{\prod_{s=1}^w(u-\chi_sq)}\prod_{r=1}^{v}\frac{u-z_r}{u-z_rq^{-2}}\Big)
\end{align*}
We consider the rational function
$\phi_\lambda(u)\in F_{G_W\times\bbC^\times}$ such that
\begin{align}\label{h}
\begin{split}
\phi_\lambda(u)=u^w\frac{\prod_{s=1}^w\prod_{r=1}^{\lambda_s}g(u/\chi_sq^{3-2r})}{\prod_{s=1}^w(u-\chi_s q)}.
\end{split}
\end{align}
We have
$$\phi_\lambda(u)=
q^{-2v}u^w\prod_{s=1}^w\frac{u-\chi_sq^3}{(u-\chi_sq^{1-2\lambda_s})(u-\chi_sq^{3-2\lambda_s})}
.$$
The poles of 
$\phi_\lambda(u)$ belong to the set 
$\{\chi_sq^{1-2\lambda_s}\,,\,\chi_sq^{3-2\lambda_s}\,;\,s\in[1,w]\}$.
So the residue theorem yields
\begin{align*}
(q-q^{-1})\langle\lambda|[A^+_m,A^-_n|\lambda\rangle
&=
-q\sum_{s=1}^w\Res_{u=\chi_sq^{3-2\lambda_s}}\Big(u^{m+n-1}\phi_\lambda(u)\Big)+\\
&\qquad+\Res_{u=\chi_sq^{1-2\lambda_s}}\Big(u^{m+n-1}\phi_\lambda(u)\Big)\\
&=
q\Res_{u=0}\Big(u^{m+n-1}\phi_\lambda(u)\Big)+
q\Res_{u=\infty}\Big(u^{m+n-1}\phi_\lambda(u)\Big)
\end{align*}
Let $\phi^\pm_\lambda(u)$ be the expansion of  $\phi_\lambda(u)$
in non negative powers of $u^{\mp 1}$. 
The matrix coefficient
$(q-q^{-1})\,\langle \lambda|[A^+_m,A^-_n]|\mu\rangle$ 
is equal to the Kronecker symbol $\delta_{\lambda,\mu}$ 
times the coefficient of $u^{-m-n}$ in the formal series 
$-q\phi^+_\lambda(u)+q\phi^-_\lambda(u)$.
Now, let $\psi^\pm(u)$ be the formal series of operators on
$K^{G_W\times\bbC^\times}(\widehat\frakM(v,W))$ which act
by multiplication by the Fourier coefficients of the
expansions in non negative powers of $u^{\mp 1}$ of the following rational function in
$F_{G_v\times G_W\times\bbC^\times}$
\begin{align*}
\begin{split}
\psi(u)=u^w\frac{\prod_{r=1}^vg(u/z_r)}{\prod_{s=1}^w(u-\chi_s q)}.
\end{split}
\end{align*}
The upperscript $\pm$ holds for the expansion in non negative powers of $u^{\mp 1}$.
We have
$(q-q^{-1})[x^+(u)\,,\,x^-(v)]=\delta(u/v)\,(\psi^+(u)-\psi^-(u))$
with
\begin{align}\label{psipmi}
\begin{split}
\psi^+(u)&=q^{-w_i}q^{(\alpha_i,w-\bfc v)}
\Lambda_{-u^{-1}}\big((q^2-q^{-2})\calV_i-q\calW_i\big)^+,
\\
\psi^-(u)&=(-u)^{w_i}q^{-(\alpha_i,w-\bfc v)}\det(\calW_i)^{-1}
\Lambda_{-u}\big((q^{-2}-q^2)\calV_i^\vee-q^{-1}\calW_i^\vee\big)^-
\end{split}
\end{align}

Now, let $Q$ be any Dynkin quiver.
First, we prove the relation (A.6) for $i=j$. 
To do this, we'll use a reduction to the case $A_1$,
which is proved above, similar to the one used in proof of \cite[\S11.3]{N00}.
Fix a vertex $i\in I$. Consider the subquiver 
$\widehat Q_{f,\neq i}$ of $\widehat Q_f$ such that
$$(\widehat Q_{f,\neq i})_0=(\widehat Q_f)_0\,\setminus\,\{i,i'\}
 ,\quad
(\widehat Q_{f,\neq i})_1=\{h\in (\widehat Q_f)_1\,;\,s(h),t(h)\neq i\}.$$
The representation variety of $\widehat Q_f$ decomposes as
$$
\widehat\X(V,W)=\widehat\X(V_i,W_i\oplus V_{\circ i})\times
\Hom(V_{\circ i},V_i)\times\X_{\widehat Q_{f,\neq i}}(V_{\neq i},W_{\neq i})
$$
where 
$$V=\bbC^v
 ,\quad
V_{\circ i}=\bigoplus_{j\neq i}(V_j)^{\oplus (-c_{ij})}
 ,\quad
V_{\neq i}=\bigoplus_{j\neq i}V_j
 ,\quad
W_{\neq i}=\bigoplus_{j\neq i}W_j.$$
We define
$\frakM(v,W)_\diamondsuit=\X(V,W)_\diamondsuit\,/\,G_{V_i},$
$\frakM(v,W)_\heartsuit=\X(V,W)_\heartsuit\,/\,G_{V_i}$ and
$\frakM(v,W)_\spadesuit=\widehat\X(V,W)_s\,/\,G_{V_i}$
where
\begin{align*}
\X(V,W)_\diamondsuit&=\widehat\X(V_i,W_i\oplus V_{\circ i})_s\times
\Hom(V_{\circ i},V_i),\\
\X(V,W)_\heartsuit&=\X(V,W)_\diamondsuit\times\X_{\widehat Q_{f,\neq i}}(V_{\neq i},W_{\neq i})
\end{align*}
We consider the diagram
$$\xymatrix{\widehat\frakM(v_i,W_i\oplus V_{\circ i})&\ar[l]_-\rho
\frakM(v,W)_\diamondsuit&\ar[l]_p\frakM(v,W)_\heartsuit&\ar[l]_-{\iota}\frakM(v,W)_\spadesuit\ar[r]^\pi
&\widehat\frakM(v,W)}$$
where $\widehat\frakM(v_i,W_i\oplus V_{\circ i})$ is the quiver variety of type $A_1$.
The map $\rho$ is the first projection. 
It is the vector bundle given by forgetting the arrow $\alpha_{ij}$ for all $j\in I$. 
The map $p$ is the first projection,
$\iota$ is an open embedding, and $\pi$ is a principal bundle.
Let $v=(v_i,v_{\neq i})$ with $v_{\neq i}$ fixed and $v_i$ running in $\bbN\{i\}\simeq\bbN$. Set
$$\widehat\frakM(W_i\oplus V_{\circ i})=\bigsqcup_{v_i\in\bbN}\widehat\frakM(v_i,W_i\oplus V_{\circ i})
 ,\quad
\frakM(W)_\flat=\bigsqcup_{v_i\in\bbN}\frakM(v,W)_\flat
,\quad
\flat=\diamondsuit,\heartsuit,\spadesuit.$$
%We equip the variety $\frakM(W)_\diamondsuit$ with the function
%$$f_\diamondsuit:\frakM(W)_\diamondsuit\to\bbC,\quad
%\ux\mapsto\sum_{i,j\in I}\sgn_{ij}\Tr_{V_i}(\varepsilon_i\alpha_{ij}\alpha_{ji}).$$
The subvarieties $\frakP(W)_\diamondsuit,\calZ(W)_\diamondsuit\subset\frakM(W)_\diamondsuit^2$
are defined in the obvious way.
We define $\frakP(W)_\heartsuit\subset\frakM(W)_\heartsuit^2$ to be the product of
$\frakP(W)_\diamondsuit$ 
and the diagonal of the affine space 
$\X_{\widehat Q_{f,\neq i}}(V_{\neq i},W_{\neq i})$. 
The map $\iota$ satisfies the condition \cite[(11.2.1)]{N00},
and $\pi$ the condition \cite[(11.2.9)]{N00}.
Hence we can apply the argument in \cite[\S 11.3]{N00}.
We get an algebra homomorphism
$$K^{G_{W_i}\times G_{V_{\circ i}}\times\bbC^\times}\big(\calZ(W)_\diamondsuit\big)
\to
K^{G_W\times\bbC^\times}\big(\widehat\calZ(W)\big).$$
Composing it with \eqref{U2} yields an algebra homomorphism 
$$K^{G_{W_i}\times G_{V_{\circ i}}\times\bbC^\times}\big(\calZ(W)_\diamondsuit\big)
\to K_{G_W\times\bbC^\times}\big(\widehat\frakM(W)^2,(\hat f_2)^{(2)}\big)_{\widehat\calZ(W)}$$
Hence, we are reduced to prove the relation  (A.6) in the left hand side.
We'll prove it as above, using the action of
$K^{G_{W_i}\times G_{V_{\circ i}}\times\bbC^\times}(\calZ(W)_\diamondsuit)$
on $K^{G_{W_i}\times G_{V_{\circ i}}\times\bbC^\times}(\frakM(W)_\diamondsuit)$.
To do that, we use the following formulas in the Grothendieck groups, compare \eqref{C1} and \eqref{C2},
\begin{align*}
\begin{split}
T\frakM(W)_\diamondsuit&=(q^{-2}-1)\End(\calV_i)+q\Hom(\calV_i,\calW_i\oplus\calV_{\circ i})
+q\Hom(\calV_{\circ i},\calV_i)\\
T\frakP(W)_\diamondsuit&=(q^{-2}-1)\calP_i+q\Hom(\calV^+_i,\calW_i\oplus\calV_{\circ i})
+q\Hom(\calV_{\circ i},\calV_i^-)\\
\calP_i&=\End(\calV^-_i)+\Hom(\calL_i,\calV^+_i).
\end{split}
\end{align*}
Arguing as in the case of $Q=A_1$, we prove the following relations
\begin{align*}
\begin{split}
\langle \lambda |A^-_{i,n}|\mu\rangle
&=(\calL_{i,\lambda,\mu})^{\otimes n}\otimes
\Lambda_{-1}\Big((q^{-2}-1)\calV^\vee_{i,\lambda}\otimes\calL_{i,\lambda,\mu}+
q\calV_{\circ i}^\vee\otimes\calL_{i,\lambda,\mu}\Big)\\
\langle \mu|A^+_{i,m}|\lambda\rangle
&=(\calL_{i,\lambda,\mu})^{\otimes m}\otimes\Lambda_{-1}
\Big((1-q^{-2})\otimes\calL^\vee_{i,\lambda,\mu}\otimes\calV_{i,\mu}-
q\calL^\vee_{i,\lambda,\mu}\otimes(\calW_i\oplus\calV_{\circ i})\Big).
\end{split}
\end{align*}
We deduce that
Let $v_\lambda$ be the rank of $\calV_\lambda$. We deduce that
\begin{align*}
(1-q^{-2})\langle\lambda| A^-_{i,n}A^+_{i,m}|\lambda\rangle
&=(-1)^{v_{\circ i}}q^{-(\alpha_i,\bfc v_\lambda)}\det\big(\calV^\vee_{\circ i}\big)\sum_\mu
\Res_{u=\calV_\mu/\calV_\lambda}\Big(u^{m+n-1+v_{\circ i}}
\\
&\quad\Lambda_{-u^{-1}}\big((q-q^{-1})\sum_j[c_{ij}]_{q}\calV_{j,\lambda}
-q\calW_i\big)
\Big),\\
(1-q^{-2})\langle\lambda| A^+_{i,m}A^-_{i,n}|\lambda\rangle
&=(-1)^{1+v_{\circ i}}q^{-(\alpha_i,\bfc v_\lambda)}\det\big(\calV^\vee_{\circ i}\big)\sum_\mu
\Res_{u=\calV_\lambda/\calV_\mu}\Big(u^{m+n-1+v_{\circ i}}
\\
&\quad\Lambda_{-u^{-1}}\big((q-q^{-1})\sum_j[c_{ij}]_{q}\calV_{j,\lambda}-q\calW_i\big)
\Big).
\end{align*}
The sums are over all $\mu$'s such that $\ux_\lambda\subset\ux_\mu$ and 
$\ux_\mu\subset\ux_\lambda$
are of codimention $\delta_i$ respectively.
Using the residue theorem, we get
\begin{align*}
(q-q^{-1})\langle\lambda|[A^+_{i,m},A^-_{i,n}]|\lambda\rangle
=
-\Res_{u=0}\Big(u^{m+n-1}\phi_{i,\lambda}(u)\Big)
-\Res_{u=\infty}\Big(u^{m+n-1}\phi_{i,\lambda}(u)\Big)
\end{align*}
where
$$\phi_{i,\lambda}(u)=
(-u)^{v_{\circ i}}
q^{1-(\alpha_i^\vee,\bfc v_\lambda)}
\det\big(\calV_{\circ i}\big)^{-1}
\Lambda_{-u^{-1}}\Big((q-q^{-1})\sum_j[c_{ij}]_{q}\calV_{j,\lambda}-q\calW_i\Big)
$$
Similarly, given $\lambda$, $\lambda'$ such that
$\calV_\lambda\cap\calV_{\lambda'}$ is of codimension one in  
$\calV_\lambda$ and in $\calV_{\lambda'}$, we get
\begin{align*}
\langle\lambda'| A^-_{i,n}A^+_{i,m}|\lambda\rangle
&=\langle\lambda'| A^-_{i,n}|\mu\rangle\langle\mu|A^+_{i,m}|\lambda\rangle\\
&=(\calL_{i,\lambda',\mu})^{\otimes n}\otimes(\calL_{i,\lambda,\mu})^{\otimes m}\otimes
\Lambda_{-1}\big((1-q^{-2})(\calL_{i,\lambda,\mu}^\vee\otimes\calV_{i,\mu}
-\calL_{i,\lambda',\mu}\otimes\calV_{i,\lambda'}^\vee)-
q\calL_{i,\lambda,\mu}^\vee\otimes\calW_i\\
&\quad
+\sum_{c_{ij}<0}q^{-c_{ij}}(\calL_{i,\lambda',\mu}\otimes\calV_j^\vee-\calL_{i,\lambda,\mu}^\vee\otimes\calV_j)
\big)\\
\langle\lambda'| A^+_{i,m}A^-_{i,n}|\lambda\rangle
&=\langle\lambda'| A^+_{i,m}|\nu\rangle\langle\nu|A^-_{i,n}|\lambda\rangle\\
&=(\calL_{i,\nu,\lambda})^{\otimes n}\otimes(\calL_{i,\nu,\lambda'})^{\otimes m}\otimes
\Lambda_{-1}\big((1-q^{-2})(\calL_{i,\nu,\lambda'}^\vee\otimes\calV_{i,\lambda'}
-\calL_{i,\nu,\lambda}\otimes\calV_{i,\nu}^\vee)-
q\calL_{i,\nu,\lambda'}^\vee\otimes\calW_i\\
&\quad
+\sum_{c_{ij}<0}q^{-c_{ij}}(\calL_{i,\nu,\lambda}\otimes\calV_j^\vee-\calL_{i,\nu,\lambda'}^\vee\otimes\calV_j)
\big)
\end{align*}
where $\mu$, $\nu$ are such that 
$\calV_\mu=\calV_\lambda+\calV_{\lambda'}$ and $\calV_\nu=\calV_\lambda\cap\calV_{\lambda'}$.
Let $\phi^\pm_{i,\lambda}(u)$ be the expansion of  $\phi_{i,\lambda}(u)$
in non negative powers of $u^{\mp 1}$. 
We deduce that 
$$(q-q^{-1})\,\langle \lambda'|[A^+_i(u),A^-_i(v)]|\lambda\rangle=
\delta_{\lambda,\lambda'}\delta(u/v)(\phi^+_{i,\lambda}(u)-\phi^-_{i,\lambda}(u)).$$
Let $\phi^\pm_i(u)$ be the formal series of operators acting on
$K^{G_W\times\bbC^\times}(\widehat\frakM(v,W))$ 
by multiplication by the Fourier coefficients of the
expansions in non negative powers of $u^{\mp 1}$ of the following rational function
$$\phi_i(u)=
(-1)^{v_{\circ i}}q^{1-(\alpha_i^\vee,\bfc v)}
\det\big(\calL_i^\vee\otimes\calV_{\circ i}\big)^{-1}
\,\Lambda_{-u^{-1}}(q^{-1}\calW_i)^{-1}
\Lambda_{-u^{-1}}\big(-(q-q^{-1})\calH_{i,1}\big)$$
We have
\begin{align}\label{Apm}
(q-q^{-1})\,[A^+_i(u),A^-_i(v)]=
\delta(u/v)(\phi^+_i(u)-\phi^-_i(u)).
\end{align}
We define similarly 
$$\psi_i(u)=
q^{-(\alpha_i^\vee,\bfc v)}
\,\Lambda_{-u^{-1}}(q^{-1}\calW_i)^{-1}
\Lambda_{-u^{-1}}\big(-(q-q^{-1})\calH_{i,1}\big)$$
Then, we have
$$(q-q^{-1})[x^+_i(u)\,,\,x^-_i(v)]=\delta(u/v)\,(\psi^+_i(u)-\psi^-_i(u)).$$
Note that
\begin{align*}
\begin{split}
\psi_i^\pm(u)&=q^{- w_i}q^{\pm(\alpha_i^\vee,w-\bfc v)}
\,\Lambda_{-u^{-1}}(q^{-1}\calW_i)^{-1}
\,\Lambda_{-u^{\mp 1}}\big(\mp(q-q^{-1})\calH_{i,\pm 1}\big)^\pm.
\end{split}
\end{align*}
Further, we have the following relation between wedges and Adams operations 
\begin{align}\label{WA}
\Lambda_{-u}(\calE)=\exp\Big(-\sum_{m>0}\psi^m(\calE)u^m/m\Big).
\end{align}
We deduce that the series $\psi_i^\pm(u)$ above coincide with the series in \eqref{psi+-},
proving the relation (A.6) with $i=j$.
Note that 
\begin{align}\label{CT}\psi^+_{i,0}=q^{-w_i+(\alpha_i^\vee,w-\bfc v)}
 ,\quad
\psi^-_{i,-w_i}=(-1)^{w_i}q^{-(\alpha_i^\vee,w-\bfc v)}\det(\calW_i)^{-1}.
\end{align}

Finally, we prove the relation (A.6) for $i\neq j$.
The proof of Proposition \ref{prop:double} yields
the commutative diagram of $R$-algebra homomorphisms 
 \begin{align}\label{form4}
\begin{split}
\xymatrix{
K^{\bbC^\times}(\Rep^0)^\pm\otimes R_{G_W}\ar[d]_-{\omega^\pm}\ar[r]^-{\Upsilon^\pm\otimes 1}&
K_{\bbC^\times}(\bfw)^\pm\ar[d]^-{\omega^\pm}\otimes R_{G_W}\\
K^A(\widehat\calZ(W))\ar[r]^-\Upsilon&
K_A(\widehat\frakM(W)^2,(\hat f_2)^{(2)})_{\widehat\calZ(W)}
}
\end{split}
\end{align}
The function $h:\Rep\to\bbC$ in \S\ref{sec:potential} vanishes on the substacks $\Rep_{\delta_i}$ and $\Rep_{\delta_j}$.
Hence, by \eqref{form4}, the elements $x_{i,m}^+$ and $x_{j,n}^-$ defined in \eqref{xpm}
have obvious liftings in $K^A(\widehat\calZ(W))$ and
it is enough to check that these liftings 
commute with each other.
This follows from the transversality result in Lemma \ref{lem:transverse} below,
which is analogous to 
\cite[lem.~10.2.1]{N00} and \cite[lem.~9.8\,,\,9.9\,,\,9.10]{N98}.
Set $v_2=v_1+\delta_i=v_3+\delta_j$
and
$v_4=v_1-\delta_j=v_3-\delta_i.$
We consider the intersections
\begin{align*}
&I_{v_1,v_2,v_3}=\big(\widehat\frakP(v_1,v_2,W)\times\widehat\frakM(v_3,W)\big)\cap
\big(\widehat\frakM(v_1,W)\times\widehat\frakP(v_2,v_3,W)\big),\\
&I_{v_1,v_4,v_3}=\big(\widehat\frakP(v_1,v_4,W)\times\widehat\frakM(v_3,W)\big)\cap
\big(\widehat\frakM(v_1,W)\times\widehat\frakP(v_4,v_3,W)\big)
\end{align*}

\begin{lemma}\label{lem:transverse}
\hfill
\begin{enumerate}[label=$\mathrm{(\alph*)}$,leftmargin=8mm]
\item 
The intersections $I_{v_1,v_2,v_3}$ and $I_{v_1,v_4,v_3}$ are both transversal
in $\widehat\frakM(W)^3$.
\item
There is a $G_W\times\bbC^\times$-equivariant isomorphism 
$I_{v_1,v_2,v_3}\simeq I_{v_1,v_4,v_3}$ which intertwines the sheaves
$(\calL_i\otimes\calO)|_{I_{v_1,v_2,v_3}}$ and $(\calO\otimes\calL_i)|_{I_{v_1,v_4,v_3}}$,
and the sheaves
$(\calO\boxtimes\calL_j)|_{I_{v_1,v_2,v_3}}$ and $(\calL_j\boxtimes\calO)|_{I_{v_1,v_4,v_3}}$.
\end{enumerate}
\end{lemma}

\begin{proof}
We first prove that the intersection $I_{v_1,v_2,v_3}$
is transversal at any point $(\ux_1,\ux_2,\ux_3)$. 
Let $\pi_i$ be the projection of
$\widehat\frakM(v_1,W)\times\widehat\frakM(v_2,W)\times\widehat\frakM(v_3,W)$
to the $i$th factor along the other ones.
Set $\ux_{12}=(\ux_1,\ux_2)$ and $\ux_{23}=(\ux_2,\ux_3)$.
The Hecke correspondences
$\widehat\frakP(v_1,v_2,W)$ and $\widehat\frakP(v_2,v_3,W)$ are smooth.
Set
$$W_1=(d_{\ux_{12}}\pi_2)\big(\widehat\frakP(v_1,v_2,W)\big)
 ,\quad
W_3=(d_{\ux_{23}}\pi_2)\big(\widehat\frakP(v_2,v_3,W)\big).
$$
We claim that
$W_1+W_3=T_{\ux_2}\widehat\frakM(v_2,W).$
The tangent space of $\widehat\frakM(v_2,W)$ is
$$T_{\ux_2}\widehat\frakM(v_2,W)=\widehat\X(v_2,W)\,/\,\frakg_{v_2}\cdot \ux_2$$
and the tangent spaces of the Hecke correspondences are
\begin{align*}
T_{\ux_{12}}\widehat\frakP(v_1,v_2,W)=\widehat\X(v_1,v_2,W)\,/\,\frakp_{v_1,v_2}\cdot \ux_{12}
 ,\quad
T_{\ux_{23}}\widehat\frakP(v_2,v_3,W)=\widehat\X(v_2,v_3,W)\,/\,\frakp_{v_2,v_3}\cdot \ux_{23}
\end{align*}
where $\widehat\X(v_1,v_2,W)$ is the subspace of $\widehat\X(v_2,W)$ given by
\begin{align*}
\widehat\X(v_1,v_2,W)=\{y\in\widehat\X(v_2,W)\,;\,y(\bbC^{v_1}\oplus W)
\subseteq\bbC^{v_1}\oplus W\},
\end{align*}
and $\widehat\X(v_2,v_3,W)\subset\widehat\X(v_2,W)$ is defined similarly.
It is enough to prove that
$$\pi_2(\widehat\X(v_1,v_2,W))+\pi_2(\widehat\X(v_2,v_3,W))=\widehat\X(v_2,W).$$
To prove this recall that $i\neq j$. 
Hence we have 
$$\bbC^{v_2}=\bbC^{\delta_j}\oplus(\bbC^{v_1}\cap\bbC^{v_3})\oplus\bbC^{\delta_i}.$$
Let $p_1,p_3\in\End(\bbC^{v_2})$ be the projection along $\bbC^{\delta_i}$ and $\bbC^{\delta_j}$ respectively,
onto the other summands.
Fix any tuple $x_2=(\alpha_2,a_2,\varepsilon_2)\in\widehat\X(v_2,W)$.
We define $x_1=(\alpha_1,a_1,\varepsilon_1)$ and $x_3=x_2-x_1$ with 
$$\alpha_1=p_1\alpha_2+(1-p_1)\alpha_2(1-p_1)
,\quad
a_1=a_2p_1
,\quad
\varepsilon_1=p_1\varepsilon_2+(1-p_1)\varepsilon_2(1-p_1)$$
It is not difficult to see that
$$x_1\in\pi_2(\widehat\X(v_1,v_2,W))
,\quad
x_3\in\pi_2(\widehat\X(v_2,v_3,W)).$$
The transversality of
$I_{v_1,v_4,v_3}$
can be proved in a similar way.
Next, we concentrate on Part (b). Let $\Gr(\delta_i,V)$ be the grassmannian of codimension
$\delta_i$ $I$-graded subspaces. We have
\begin{align*}
I_{v_1,v_2,v_3}=&\{(S_1,S_3,x)\,;\,x(S_1)\subset S_1\,,\,x(S_3)\subset S_3\}\,/\,G_{v_2}\\
I_{v_1,v_4,v_3}=&\{(T_1,T_3,x_1,x_3,\phi)\,;\,
x_1(T_1)\subset T_1\,,\,x_3(T_3)\subset T_3\,,\,\phi\circ x_1|_{T_1}=x_3|_{T_3}\circ\phi\}\,/\,
G_{v_1}\times G_{v_3}.
\end{align*}
where $(S_1,S_3,x)\in\Gr(\delta_i,V)\times\Gr(\delta_j,V)\times
\widehat\X(V_2,W)_s$ and
\begin{align*}
(T_1,T_3,x_1,x_3,\phi)&\in\Gr(\delta_j,V_1)\times\Gr(\delta_i,V_3)\times
\widehat\X(v_1,W)_s\times\widehat\X(v_3,W)_s\times\Isom(T_1,T_3).
\end{align*}
The isomorphism $I_{v_1,v_2,v_3}\simeq I_{v_1,v_4,v_3}$ is given by 
\begin{align*}
(S_1\,,\,S_3\,,\,x)&\mapsto(S_1\cap S_3\,,\,S_1\cap S_3\,,\,x|_{S_1}\,,\,x|_{S_3}\,,\,\id_{S_1\cap S_3})\\
(T_1\,,\,T_3\,,\,x_1\,,\,x_3\,,\,\phi)&\mapsto(V'_1\,,\,V'_3\,,\,x')
\end{align*}
where $V'_2=V_1\oplus V_3/(\id\times\phi)(T_1)$, the subspaces
$V'_1, V'_3\subset V'_2$ are the images of $V_1$, $V_3$ in
$V'_2$, and
$x'$ is the image of $x_1\oplus x_3$ in $\widehat\X(V'_2,W)$.
Note that $x|_{S_1}$, $x|_{S_3}$ and $x'$ are stable.
\end{proof}

We have proved the relation (A.6).
The second claim of the part (a) of the theorem follows from the formula \eqref{CT}. 
To prove the part (b) we must check that the morphism \eqref{map8} restricts to a map
$$\U_R^{-w}(L\frakg)\to 
K_A\big(\widehat\frakM(W)^2,(\hat f_2)^{(2)}\big)_{\widehat\calZ(W)}\,/\,\tor$$
By \eqref{LGR} the $R$-subalgebra $\U_R^{-w}(L\frakg)$ of $\U_F^{-w}(L\frakg)$ is generated by 
$$\psi_{i,\mp w^\pm_i}^\pm 
 ,\quad
(\psi^\pm_{i,\mp w_i^\pm})^{-1}
 ,\quad
h_{i,\pm m}/[m]_q
 ,\quad
(x^\pm_{i,n})^{[m]}$$
with $
i\in I$, $n\in\bbZ$, $m\in\bbN^\times$, and, by
\eqref{psi+-}, \eqref{xpm} and Proposition \ref{prop:Omega1},
the map \eqref{map8} takes these elements into 
$K_{G_W\times\bbC^\times}\big(\widehat\frakM(W)^2,(\hat f_2)^{(2)}\big)_{\widehat\calZ(W)}\,/\,\tor$.
The part (c) of the theorem follows from the part (b) and Proposition \ref{prop:Thomason}.
\end{proof}

\begin{remark}
\hfill
\begin{enumerate}[label=$\mathrm{(\alph*)}$,leftmargin=8mm]
\item The proof of the theorem yields also an $F$-algebra homomorphism
$$\U_F^{-w}(L\frakg)\to 
K_{G_W\times\bbC^\times}\big(\widehat\frakM(W)^2,(\hat f_2)^{(2)}\big)_{\widehat\calZ(W)}\otimes_RF.$$
\item In the particular case where $Q=A_1$ the theorem implies that the shifted quantum group
$\U_F^{-w}(L\frakg)$ of $\fraks\frakl_2$ acts on the equivariant K-theory of the Quot scheme
parametrizing all finite length quotients of the trivial bundle $W\otimes\calO_{\bbA^1}$.
\end{enumerate}
\end{remark}

\section{CCA's and representations of (shifted) quantum loop groups}

\subsection{Admissible triples}\label{sec:admissible}
Let $Q$ be a Dynkin quiver.
Fix $W\in\bfC$. Fix a nilpotent element $\gamma$ in $\frakg_W^\nil$ and fix
a cocharacter $\sigma:\bbC^\times\to G_W$ such that 
\begin{align}\label{sigma2}
\Ad_{\sigma(z)}(\gamma)=z^2\gamma.
\end{align}
We equip $W$ with the $I^\bullet$-grading \eqref{sigma1}, for which we have $\gamma\in\frakg^2_W.$
We define
$$a=(\sigma,\xi)
 ,\quad
A=a(\bbC^\times)\subset\{(g,\xi(z))\,;\,g\in G_W\,,\,z\in\bbC^\times\,,\,\Ad_g(\gamma)=z^2\gamma\}
.$$
We'll call $(W,A,\gamma)$ an admissible triple.
The triple $(W,A,\gamma)$ is called regular if $\gamma$ is a regular nilpotent element of $\frakg_W$.
Let
$\langle-,-\rangle:\frakg_W\times\frakg_W^\vee\to\bbC$
be the canonical pairing.
We have the
%$G_V\times\Gu$-invariant function 
%$$f_\gamma:\overline \X(V,W)\to\bbC ,\quad
%x\mapsto\langle\gamma,\mu_W(x)\rangle.$$
%Taking the quotient by the $G_V$-action, it yields the 
$A$-invariant function
\begin{align}\label{fu}
f_\gamma:\frakM(W)\to\bbC
 ,\quad
\ux\mapsto\langle\gamma,\mu_W(\ux)\rangle.
\end{align}
The restriction of the function $f_\gamma$ to the
$A$-fixed points locus is the function
$f_\gamma^\bullet:\frakM^\bullet(W)\to\bbC$.
We have  $f_\gamma=f_0\circ\pi$ and $f_\gamma^\bullet=f_0^\bullet\circ\pi^\bullet$ with
\begin{align}\label{f0}
f_0:\frakM_0(W)\to\bbC
 ,\quad
f_0^\bullet:\frakM_0^\bullet(W)\to\bbC
 ,\quad
\ux\mapsto\langle\gamma,\mu_0(\ux)\rangle.
\end{align}

\begin{proposition}\label{prop:crit2}
\hfill
\begin{enumerate}[label=$\mathrm{(\alph*)}$,leftmargin=8mm]
\item
For $V\in\bfC$, the assignment $(x,\varepsilon)\mapsto x$ yields an isomorphism
\begin{align*}
\big\{(x,\varepsilon)\in\mu_V^{-1}(0)_s\times\frakg^\nil_V\,;\,
[\gamma\oplus\varepsilon,x]=0\big\}/G_V
= \crit(f_\gamma)\cap\frakM(v,W).
\end{align*}
\item
For $V\in\bfC^\bullet$, the assignment $(x,\varepsilon)\mapsto x$ yields an isomorphism
\begin{align*}
\big\{(x,\varepsilon)\in\mu_V^{-1}(0)_s^\bullet\times\frakg^2_V\,;\,
[\gamma\oplus\varepsilon,x]=0\big\}/G_V^0
= \crit(f_\gamma^\bullet)\cap\frakM^\bullet(v,W).
\end{align*}
\end{enumerate}
\end{proposition}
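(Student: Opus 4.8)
The plan is to compute the critical locus of $f_\gamma$ directly from the definition of the function as a pairing against the moment map, and then recognize the resulting equations as those describing a subvariety of a triple quiver variety, exactly mirroring the argument used in Proposition \ref{prop:crit1} for the potential $\bfw_1$.

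\textbf{Setup and strategy for Part (a).} First I would work on the smooth variety $\frakM(v,W) = \mu_V^{-1}(0)_s / G_V$ with its $A$-action, and recall that $f_\gamma(\ux) = \langle \gamma, \mu_W(\ux)\rangle = \langle\gamma, aa^*\rangle$, where the second equality uses $\mu_W = \mu_0\circ\pi$ and $\mu_0(\ux) = aa^*$ from \S\ref{sec:quiver variety}. Since $\frakM(v,W)$ is the symplectic reduction of $\overline\X(V,W)$ by $G_V$, its tangent space at a stable point $\ux$ is the quotient of $\ker(d\mu_V)$ by $\frakg_V\cdot x$, and the critical point equation $d f_\gamma(\ux) = 0$ becomes the condition that the differential of $x\mapsto\langle\gamma, aa^*\rangle$ on $\overline\X(V,W)$ vanishes on $\ker(d_x\mu_V)$, i.e., lies in the image of $d_x\mu_V^\vee$. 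Computing this Lagrange-multiplier condition: the differential of $\langle\gamma, aa^*\rangle$ pairs against variations $(\delta\alpha, \delta\alpha^*, \delta a, \delta a^*)$ and the condition that it be a linear combination $\langle\varepsilon, d_x\mu_V(-)\rangle$ for some $\varepsilon\in\frakg_V$ yields precisely the equations $[\varepsilon,\alpha] = 0$, $[\gamma\oplus\varepsilon, a] = \gamma a - a\varepsilon = 0$, $[\gamma\oplus\varepsilon, a^*] = \varepsilon a^* - a^*\gamma = 0$ — which is exactly the notation $[\gamma\oplus\varepsilon, x] = 0$ introduced at the end of \S\ref{sec:quiver variety}. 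The multiplier $\varepsilon$ is determined up to the stabilizer of $x$, which is trivial since $x$ is stable, so $\varepsilon$ is unique given $x$; hence the assignment $(x,\varepsilon)\mapsto x$ is well-defined and bijective onto its image. The remaining point is that $\varepsilon$ is automatically nilpotent: I would argue, as in Proposition \ref{prop:crit1}, that $\Im(\varepsilon)$ (more precisely, the span of all images of powers of $\varepsilon$) is a $\bbC\overline Q$-subrepresentation of $V$ — this follows from $[\varepsilon,\alpha] = 0$ — and using $\varepsilon a^* = a^*\gamma$ together with stability and the fact that $\gamma$ is nilpotent on $W$, one forces $\varepsilon$ to be nilpotent on $V$. (Here some care is needed: unlike the $\bfw_1$ case where $\varepsilon$ vanished entirely, here $\varepsilon$ need not vanish because $\gamma\neq 0$; the grading/weight argument using $\Ad_{\sigma(z)}\gamma = z^2\gamma$ is what gives nilpotency rather than vanishing.) This identifies the set on the left with $\crit(f_\gamma)\cap\frakM(v,W)$.

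\textbf{Part (b).} For $V\in\bfC^\bullet$, the variety $\frakM^\bullet(v,W) = \frakM(v,W)^A$ is the $A$-fixed locus by \eqref{GA}, and $f_\gamma^\bullet$ is the restriction of $f_\gamma$. The critical locus of a function restricted to a smooth fixed-point subvariety is the intersection of the fixed-point locus with the critical locus of the ambient function (a standard fact for reductive, here torus, actions, since the normal bundle to the fixed locus has no invariant part), so $\crit(f_\gamma^\bullet)\cap\frakM^\bullet(v,W) = \crit(f_\gamma)\cap\frakM^\bullet(v,W)$. Then I would apply Part (a) and observe that the $A$-fixed condition on $(x,\varepsilon)$, unwound through the definitions of $\sigma$ and $\xi$, says exactly that $x$ is a graded representation (lies in $\mu_V^{-1}(0)_s^\bullet$) and $\varepsilon\in\frakg^2_V$ has degree $2$ with respect to the $I^\bullet$-grading — the grading $\deg\varepsilon_i = 2$ of \eqref{degree2} being precisely the weight forced by $\xi(z)\cdot\varepsilon = z^{-2}\varepsilon$ combined with $\Ad_{\sigma(z)}\gamma = z^2\gamma$. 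The quotient is by $G_V^0 = (G_V)^A$. This gives the stated isomorphism.

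\textbf{Main obstacle.} The routine-but-delicate part is the Lagrange-multiplier computation identifying the critical-point equations on the symplectic quotient: one must carefully track that the condition $df_\gamma = 0$ on the reduced space $\mu_V^{-1}(0)/G_V$ is equivalent to $d(\langle\gamma,aa^*\rangle)|_{\ker d\mu_V} \in \Im(d\mu_V)^\perp$ and then match the resulting multiplier equations to the bracket notation, being careful about the holomorphic symplectic structure, the dualities $\frakg_V\cong\frakg_V^\vee$, and signs. The genuinely substantive step, however, is establishing the nilpotency of $\varepsilon$ in Part (a): this is where the hypothesis that $\gamma$ is nilpotent and the weight condition \eqref{sigma2} get used, and it requires an argument combining stability with the observation that the subrepresentation generated by $\Im(\varepsilon)$ under $\bbC\overline Q$ interacts correctly with the framing maps. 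I expect this nilpotency argument to be the main thing that needs care, since it is exactly the place where the triple quiver variety $\widetilde\frakM(W)$ (with its nilpotent-$\varepsilon$ stratum) rather than some larger space is singled out.
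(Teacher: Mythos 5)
Your overall strategy matches the paper's: they observe that $f_\gamma = \langle\gamma,\mu_W\rangle$ is the Hamiltonian of the infinitesimal $\gamma$-action, so $\crit(f_\gamma)$ equals the vanishing locus $\frakM(v,W)^\gamma$ of the corresponding vector field, and then a standard argument on the $G_V$-quotient identifies the latter with $\{x\in\mu_V^{-1}(0)_s\;;\;\exists\varepsilon\in\frakg_V,\;[\gamma\oplus\varepsilon,x]=0\}/G_V$; stability gives uniqueness of $\varepsilon$. Your Lagrange-multiplier computation is a more explicit version of the same step and leads to the same equations, and your Part~(b) reduction to the $A$-fixed locus is exactly the paper's argument, so these parts are sound.

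The concrete issue is in the nilpotency step, which you correctly flag as the delicate point. You propose to use $\varepsilon a^*=a^*\gamma$ (the $a^*$-component of $[\gamma\oplus\varepsilon,x]=0$) together with stability. Iterating that equation gives $\varepsilon^l a^*=a^*\gamma^l=0$, i.e.\ $\Im(a^*)\subseteq\ker(\varepsilon^l)$. But the stability condition used in the paper (``no non-zero subrepresentation supported on $V$'') says: no non-zero $\overline Q$-stable subspace $V'\subseteq V$ with $a(V')=0$. The inclusion $\Im(a^*)\subseteq\ker(\varepsilon^l)$ is a costability-type statement and does \emph{not} produce such a $V'$, so it does not feed the stability hypothesis. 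The relation you want is the other one, $\gamma a=a\varepsilon$ (the $a$-component), which after iterating gives $a\varepsilon^l=\gamma^l a=0$. Combined with $[\varepsilon^l,\alpha]=0$, this exhibits $\Im(\varepsilon^l)$ as an $\overline Q$-stable subspace of $V$ contained in $\ker(a)$, hence zero by stability, so $\varepsilon^l=0$. This is exactly the paper's argument.

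A smaller point: you attribute the passage from ``vanishing'' (as in Proposition~\ref{prop:crit1}) to ``nilpotency'' to the weight condition $\Ad_{\sigma(z)}\gamma=z^2\gamma$. In fact the grading plays no role in the nilpotency of $\varepsilon$ in Part~(a); what is used is only the hypothesis $\gamma\in\frakg_W^\nil$, which supplies the integer $l$ with $\gamma^l=0$. The weight condition enters only in Part~(b), where the $A$-fixed-point condition on $(x,\varepsilon)$ forces $\varepsilon\in\frakg_V^2$ (degree $2$ with respect to the $I^\bullet$-grading), matching $\deg\varepsilon_i=2$ in \eqref{degree2}.
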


\begin{proof}
The infinitesimal action of $\gamma$  yields a vector field on $\frakM(W)$.
Let $\frakM(W)^\gamma$ be the reduced zero locus of this vector field
in $\frakM(W)$.
Since $\mu_W$ is the moment map for the $G_W$-action on
$\frakM(W)$, from \eqref{fu} we have
$\frakM(W)^\gamma=\crit(f_\gamma).$ 
Recall that the group $G_V$ acts properly and freely on the set of stable representations,
i.e., the map $G_V\times\overline\X(V,W)_s\to\overline\X(V,W)_s\times\overline\X(V,W)_s$ 
defined by $(g,x)\mapsto(gx,x)$ is a closed embedding.
Using this, a standard argument implies that
\begin{align}\label{above}
\frakM(v,W)^\gamma=\big\{x\in\mu_V^{-1}(0)_s\,;\,
\exists\varepsilon\in\frakg_V\,,\,[\gamma\oplus\varepsilon,x]=0\big\}/G_V.
\end{align}
For any $x$ as in \eqref{above}
the stability condition implies that 
there is at most one element $\varepsilon\in\frakg_V$ such that $[\gamma\oplus\varepsilon,x]=0$, 
because 
$$[\gamma\oplus\varepsilon_1,x]=[\gamma\oplus\varepsilon_2,x]
\Rightarrow [\varepsilon_1-\varepsilon_2,x]=0
\Rightarrow \varepsilon_1-\varepsilon_2=0.$$
Thus, the assignment $(x,\varepsilon)\mapsto x$ yields an isomorphism
\begin{align*}
\crit(f_\gamma)\cap\frakM(v,W)=\frakM(v,W)^\gamma=\big\{(x,\varepsilon)\in\mu_V^{-1}(0)_s\times\frakg_V\,;\,
[\gamma\oplus\varepsilon,x]=0\big\}/G_V.
\end{align*}
Finally, given a positive integer $l$ such that $\gamma^l=0$,
for each pair $(x,\varepsilon)$ as above we have $[\varepsilon^l,\alpha]=a\varepsilon^l=0$,
hence $\varepsilon^l=0$ because $x$ is stable.
Part (a) is proved.
Part (b) follows from (a).
Indeed, the group $A$ acts on  $\frakM(v,W)^\gamma$ and we have
\begin{align*}
\crit(f_\gamma^\bullet)\cap\frakM^\bullet(v,W)
=\crit(f_\gamma)\cap\frakM^\bullet(v,W)
=(\frakM(v,W)^\gamma)^A.
\end{align*}
\end{proof}

\begin{proposition}\label{prop:LG}
\hfill
\begin{enumerate}[label=$\mathrm{(\alph*)}$,leftmargin=8mm]
\item
$(\frakM(W),f_\gamma)$ is a smooth $A$-invariant LG-model and
$(f_\gamma)^{-1}(0)$ is homotopic to $\frakM(W)$.
\item
$(\frakM^\bullet(W),f_\gamma^\bullet)$ is a smooth LG-model and
$(f_\gamma^\bullet)^{-1}(0)$ is homotopic to $\frakM^\bullet(W)$.
\end{enumerate}
\end{proposition}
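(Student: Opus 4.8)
The plan is to verify the three defining conditions of a (smooth) $A$-invariant LG-model for the pair $(\frakM(W),f_\gamma)$, and then deduce the homotopy statement; part (b) will follow by passing to the $A$-fixed points. Smoothness of $\frakM(W)$ is already recorded in \S\ref{sec:quiver variety}, and $A\subset G_W\times\bbC^\times$ acts on $\frakM(W)$ with $f_\gamma$ being $A$-invariant by construction, since $f_\gamma(\ux)=\langle\gamma,\mu_W(\ux)\rangle$ and $\mu_W$ is $G_W$-equivariant with $\sigma$ scaling $\gamma$ by $z^2$ while $\xi$ acts trivially on the moment-map side; one must also observe that $f_\gamma$ is regular, which is clear because $\mu_W$ is an algebraic map and $\gamma$ a fixed covector, and that $\frakM(W)$ carries a $G_W\times\bbC^\times$-equivariant ample line bundle, hence an $A$-equivariant one. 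The only substantive point for the LG-model axioms is condition (c): the critical locus of $f_\gamma$ must lie in its zero locus. This is where Proposition \ref{prop:crit2} does the work: by Proposition \ref{prop:crit2}(a), every point of $\crit(f_\gamma)$ is represented by a pair $(x,\varepsilon)$ with $[\gamma\oplus\varepsilon,x]=0$ and $\varepsilon$ nilpotent, and for such $(x,\varepsilon)$ one computes $f_\gamma(\ux)=\langle\gamma,aa^*\rangle=\langle\gamma\oplus\varepsilon,[\text{something}]\rangle$ — more precisely, using $[\gamma\oplus\varepsilon,a]=\gamma a-a\varepsilon=0$ one gets $\langle\gamma,aa^*\rangle=\Tr(\gamma aa^*)=\Tr(a\varepsilon a^*)=\Tr(\varepsilon a^*a)$, and then $[\gamma\oplus\varepsilon,a^*]=\varepsilon a^*-a^*\gamma=0$ together with nilpotency of $\varepsilon$ forces this trace to vanish. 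So $\crit(f_\gamma)\subset f_\gamma^{-1}(0)$.

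For the homotopy equivalence in part (a), the plan is to use the $\bbC^\times$-action coming from $\xi$ (rescaled if necessary) that contracts $\frakM(W)$ onto the zero fiber of $f_\gamma$. Concretely, since $f_\gamma=f_0\circ\pi$ and $f_0$ is a function on the affine cone $\frakM_0(W)$ that is homogeneous of positive weight for the scaling $\bbC^\times$-action inherited from $T$ (the weight-two scaling of the linear coordinates scales $f_0$, which is quadratic in $a,a^*$ and hence has weight $>0$), the gradient flow of the norm-square of $f_\gamma$, or more simply the contraction $\lim_{t\to 0}$ along this cocharacter, retracts $\frakM(W)$ onto $f_\gamma^{-1}(0)$: one checks that the limit $\lim_{t\to 0}t\cdot x$ exists in $\frakM(W)$ (using properness of $\pi$ over the affine cone and the fact that the cocharacter contracts $\frakM_0(W)$ to the origin, which lies in $Y_0$) and lands in $f_\gamma^{-1}(0)$ because $f_\gamma(t\cdot x)=t^{d}f_\gamma(x)\to 0$ for $d>0$. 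This deformation retraction is $A$-equivariant provided the contracting cocharacter commutes with $A$, which it does since it is built from $\xi$ and the $G_W$-part only scales coordinates; hence $f_\gamma^{-1}(0)\hookrightarrow\frakM(W)$ is a homotopy equivalence.

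Part (b) then follows formally: by \eqref{GA} the fixed locus $\frakM^\bullet(W)=\frakM(W)^A$ is again smooth and quasi-projective with an ample line bundle, $f_\gamma^\bullet$ is its restriction of $f_\gamma$ hence regular, and condition (c) is inherited since $\crit(f_\gamma^\bullet)=\crit(f_\gamma)\cap\frakM^\bullet(W)\subset f_\gamma^{-1}(0)\cap\frakM^\bullet(W)=(f_\gamma^\bullet)^{-1}(0)$ — this is exactly the identity established at the end of the proof of Proposition \ref{prop:crit2}. The homotopy statement for part (b) is obtained by restricting the contracting $\bbC^\times$-flow to the $A$-fixed locus, which is preserved because the flow commutes with $A$. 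The main obstacle I anticipate is the homotopy/contraction argument: one must be careful that the limits along the contracting cocharacter genuinely exist as points of the (possibly non-projective, only quasi-projective) variety $\frakM(W)$ and depend continuously on the starting point, so that one really gets a deformation retraction and not merely a map on sets; the cleanest route is to invoke properness of $\pi:\frakM(W)\to\frakM_0(W)$ together with the circle-compactness of the induced action on the affine base $\frakM_0(W)$, reducing existence of limits upstairs to the evident existence downstairs on the cone.
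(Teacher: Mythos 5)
Your verification of conditions (a) and (b) of the LG-model definition is fine, and your proof of condition (c) is a valid variant of the paper's: where the paper uses the moment-map equation $a^*a=-[\alpha,\alpha^*]$ together with $[\varepsilon,\alpha]=0$ to get $\Tr_V(\varepsilon a^*a)=-\Tr_V(\varepsilon[\alpha,\alpha^*])=0$, you instead observe that $\varepsilon a^*a = a^*\gamma a = a^*a\varepsilon$ and then use that $\varepsilon a^*a$ is nilpotent (being a product of commuting factors with $\varepsilon$ nilpotent); both derivations rest on Proposition \ref{prop:crit2}. One smaller omission: the paper's proof explicitly shows $f_\gamma$ does not vanish identically on any connected component $\frakM(v,W)$ (by reduction to type $A_1$ and the observation that every nilpotent in $\frakg_W^\nil$ arises as $aa^*$). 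This non-vanishing is what the authors use later on to ensure the underived and derived zero loci coincide, so simply declaring $f_\gamma$ ``regular because it is algebraic'' leaves a step unaddressed.

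The genuine gap is in your homotopy argument. You assert that the contracting $\bbC^\times$-flow ``retracts $\frakM(W)$ onto $f_\gamma^{-1}(0)$,'' but this is not what that flow does: $\lim_{t\to 0}\xi(t)\cdot x$ lands in the fixed locus, i.e., in $\frakL(W)$, which is in general a \emph{proper} subset of $f_\gamma^{-1}(0)$. In particular the flow does not fix $f_\gamma^{-1}(0)$ pointwise, so it is not a deformation retraction onto $f_\gamma^{-1}(0)$. The paper's argument instead factors through $\frakL(W)$ as an intermediary: $\frakM(W)$ is homotopic to $\frakL(W)$ by \cite[cor.~5.5]{N94}; the function $f_0$ on $\frakM_0(W)$ is homogeneous of positive degree for $\xi$, so $f_0^{-1}(0)$ is homotopic to $\{0\}$; and then Slodowy's retraction-lifting argument \cite[\S 4.3]{S80} (using properness of $\pi$) upgrades this to a homotopy equivalence $f_\gamma^{-1}(0)=\pi^{-1}(f_0^{-1}(0))\simeq\pi^{-1}(0)=\frakL(W)$. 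You need either this intermediary $\frakL(W)$ step or some substitute; as written the contraction-onto-$f_\gamma^{-1}(0)$ claim is false.
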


\begin{proof}
We'll prove the part (a). The proof of (b) is similar.
%Recall that $\frakM(v,W)$ is smooth and connected.
To prove that the function $f_\gamma$ is regular
it is enough to check that it does not vanish identically on any connected component $\frakM(v,W)$.
We may assume that $Q$ is of type $A_1$. 
It is easy to see that $f_\gamma\neq 0$, since each nilpotent matrix in
$\frakg_W^\nil$ is of the form $aa^*$ for some tuple $x=(a,a^*)$ in $\overline\X(V,W)$ with $V\neq 0$.
Next, we must check that 
$\crit(f_\gamma)$
is contained in $(f_\gamma)^{-1}(0)$.
Proposition \ref{prop:crit2} yields
\begin{align*}
\ux\in\crit(f_\gamma)\Rightarrow
f_\gamma(\ux)=\Tr_W(\gamma aa^*)
=\Tr_V(\varepsilon a^*a)
=-\Tr_V(\varepsilon[\alpha,\alpha^*])
=0.
\end{align*}
The function $f_\gamma$ is $A$-invariant.
Thus the first claim in (a) is proved. 
To prove the second one, recall that $\frakM(W)$ is homotopic to $\frakL(W)$ 
by \cite[cor.~5.5]{N94}, and that $f_\gamma=f_0\circ\pi$ by \eqref{f0}.
The function $f_0$ is homogeneous of degree $2$ relatively to the $\bbC^\times$-action $\xi$ in 
\S\ref{sec:quiver variety}. 
Hence the zero locus of $f_0$ is homotopic to $\{0\}$. 
So, by \cite[\S 4.3]{S80}, the zero locus of  $f_\gamma$ is also homotopic to 
$\frakL(W)$. 
\end{proof}

Now, we state a version of Theorem \ref{thm:Nakajima}
for the deformed potential $f_\gamma$.
We'll write it in K-theory, topological K-theory and cohomology.
See \eqref{Ktop2} for a definition the topological critical equivariant K-theory
and a definition of $K_G^\top(X,f)_Z$.
Recall that  $R=R_A$ and $F=F_A$.
In the non graded case we set
$$G=A
 ,\quad
X=\frakM(W)
 ,\quad
X_0=\frakM_0(W)
 ,\quad
L=\frakL(W)
 ,\quad
f=f_\gamma.$$
In the graded case we set
$$G=\{1\}
 ,\quad
X=\frakM^\bullet(W)
 ,\quad
X_0=\frakM_0^\bullet(W)
 ,\quad
L=\frakL^\bullet(W)
 ,\quad
f=f_\gamma^\bullet.$$
From \eqref{nak1} we get $R$-algebra structures on $K^A(\calZ(W))$ and
$K_\top^A(\calZ(W))$ with
\begin{itemize}[leftmargin=3mm]
\item[-]  the topologization map yields an homomorphism
$K^A(\calZ(W))\to K_\top^A(\calZ(W))$ and isomorphisms
$K_\top^A(\frakL(W))=K^A(\frakL(W))$,
$K_\top^A(\frakM(W))=K^A(\frakM(W)),$
\item[-] $K_\top^A(\frakL(W))$ and $K_\top^A(\frakM(W))$
are modules over $K_\top^A(\calZ(W))/\tor$.
\end{itemize}

\begin{theorem}\label{thm:moduleK}
\hfill
\begin{enumerate}[label=$\mathrm{(\alph*)}$,leftmargin=8mm]
\item
$K_A(\frakM(W)^2,(f_\gamma)^{(2)})_{\calZ(W)}$ is an $R$-algebra wich acts on 
$K_A(\frakM(W),f_\gamma)$ and $K_A(\frakM(W),f_\gamma)_{\frakL(W)}$.
\item
$K_A^\top(\frakM(W)^2,(f_\gamma)^{(2)})_{\calZ(W)}$ 
is an $R$-algebra wich acts on 
$K_A^\top(\frakM(W),f_\gamma)$ and $K_A^\top(\frakM(W),f_\gamma)_{\frakL(W)}.$
The topologization map is an intertwiner.
\item
There are $R$-algebra homomorphisms with $\Upsilon$ surjective
$$\xymatrix{
\U_R(L\frakg)\ar[r]&K^A(\calZ(W))/\tor
\ar[r]^-\Upsilon& K_A(\frakM(W)^2,(f_\gamma)^{(2)})_{\calZ(W)}/\tor}$$
\item
Idem in the graded case with $\frakM^\bullet(W)$, $f^\bullet_\gamma$ and
$$\xymatrix{
\U_\zeta(L\frakg)\ar[r]&K(\calZ^\bullet(W))\ar[r]^-\Upsilon& 
K(\frakM^\bullet(W)^2,(f_\gamma^\bullet)^{(2)})_{\calZ^\bullet(W)}}$$
\item
$H^\bullet(\frakM^\bullet(W)^2,(f_\gamma^\bullet)^{(2)})_{\calZ^\bullet(W)}$ 
is an algebra which acts on 
$H^\bullet(\frakM^\bullet(W),f_\gamma^\bullet)$ and
$H^\bullet(\frakM^\bullet(W),f_\gamma^\bullet)_{\frakL^\bullet(W)}$, and there are
algebra homomorphisms 
$$\xymatrix{\U_\zeta(L\frakg)\ar[r]& 
H_\bullet(\calZ^\bullet(W))\ar[r]& H^\bullet(\frakM^\bullet(W)^2,(f_\gamma^\bullet)^{(2)})_{\calZ^\bullet(W)}.}
$$
\end{enumerate}
\end{theorem}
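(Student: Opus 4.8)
The plan is to reduce everything to Theorem \ref{thm:Nakajima} (and its graded analog in Remark \ref{rem:VV}(b)) together with the general structural results of Section 2, so that the only genuinely new input is the critical-locus description of Proposition \ref{prop:crit2} and the homotopy statement of Proposition \ref{prop:LG}. First I would observe that Proposition \ref{prop:LG} guarantees $(\frakM(W),f_\gamma)$ and $(\frakM^\bullet(W),f_\gamma^\bullet)$ are genuine smooth LG-models with $f_\gamma=f_0\circ\pi$ for a proper $\pi$ over an affine base, so the machinery of \S\ref{sec:critalg1} applies verbatim: the convolution functor \eqref{conv2} endows $\DCoh_A(\frakM(W)^2,(f_\gamma)^{(2)})_{\calZ(W)}$ with a monoidal structure for which $\DCoh_A(\frakM(W),f_\gamma)$ and $\DCoh_A(\frakM(W),f_\gamma)_{\frakL(W)}$ are module categories. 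Passing to Grothendieck groups (algebraic K-theory, topological K-theory via \S\ref{sec:Ktheory}, and critical cohomology via Proposition \ref{prop:critalg3}) yields parts (a), (b), (e) on the nose, where the compatibility of the topologization map with convolution in (b) is the standard fact that topologization is a symmetric monoidal transformation compatible with $Rp_*$, $Lp^*$ and external products; the surjectivity of $\Upsilon$ in (c)--(d) is immediate from Corollary \ref{cor:critalg1}(b) and Corollary \ref{cor:Upsilon5}.

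Next I would produce the algebra homomorphisms from the quantum loop group. The key point is that although $f_\gamma$ is a deformation of the degenerate potential $\tilde f$ used in Theorem \ref{thm:Nakajima}, the description of its critical locus in Proposition \ref{prop:crit2} shows that $\crit(f_\gamma)$ is precisely the fixed locus $\frakM(W)^\gamma$ of the vector field generated by $\gamma$, and in particular it sits inside $\widetilde\frakM(W)_\circ$ after the usual identification $\frakM(W)=\crit(\tilde f_1)$ of Proposition \ref{prop:crit1}; here $\gamma$ plays the role of the extra loop variable $\varepsilon$. Since every matrix factorization is supported on the critical locus \cite[cor.~3.18]{PV11}, the critical K-theory and critical cohomology of $(\frakM(W),f_\gamma)$, $(\frakM(W)^2,(f_\gamma)^{(2)})_{\calZ(W)}$ are computed on a formal neighborhood of $\crit(f_\gamma)$, and I would apply dimensional reduction \cite{I12}, \cite[thm.~1.2]{H17b} to the vector bundle forgetting the auxiliary variable, exactly as in the proof of Proposition \ref{prop:crit1}, to obtain identifications
\begin{align*}
K_A(\frakM(W)^2,(f_\gamma)^{(2)})_{\calZ(W)}\cong K^A(\calZ(W)),\qquad
K_A(\frakM(W),f_\gamma)\cong K^A(\frakM(W)),
\end{align*}
and likewise in topological K-theory and (for the graded case) cohomology, where the cohomological dimensional reduction statement is the vanishing-cycle analog used in the proof of Proposition \ref{prop:critalg3} together with \cite[\S 4.3]{S80}. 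Composing these identifications with the Nakajima homomorphisms \eqref{nak1}, \eqref{nak2}, \eqref{Psi} gives the desired maps $\U_R(L\frakg)\to K^A(\calZ(W))/\tor\to K_A(\frakM(W)^2,(f_\gamma)^{(2)})_{\calZ(W)}/\tor$, and by restriction the module structures on $K_A(\frakM(W),f_\gamma)$ and $K_A(\frakM(W),f_\gamma)_{\frakL(W)}$; the graded versions (d) and (e) follow the same way using Remark \ref{rem:VV}(b), the $A$-triviality of $\frakM^\bullet(W)$, $\calZ^\bullet(W)$, and the specialization maps of \S\ref{sec:N00}.

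The one point requiring care, and the main obstacle, is that the dimensional-reduction identification must be checked to be \emph{compatible with convolution}, i.e.\ to be an algebra (and module) isomorphism rather than just an isomorphism of groups. This is exactly the kind of compatibility proved for $\omega^\pm$ in Proposition \ref{prop:double}: one must verify that the dimensional-reduction functors commute with the pull-back $L(\pi_{12}\times\pi_{23})^*$ and push-forward $R(\pi_{13})_*$ along the convolution diagram, which amounts to a base-change and projection-formula argument on the relevant derived fiber products, using that $\crit(f_\gamma)\subset\widetilde\frakM(W)_\circ$ and that forgetting the auxiliary variable is a vector bundle equivariantly along the whole convolution diagram (the Hecke and triple correspondences all have product-form local models, so the needed squares are Cartesian with the flat/quasi-smooth hypotheses of \cite[cor.~3.4.2.2]{L18}, \cite[lem.~1.15]{K22}). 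Once this compatibility is in place, the transport of the associative algebra structure from $K^A(\calZ(W))$ to $K_A(\frakM(W)^2,(f_\gamma)^{(2)})_{\calZ(W)}$ is automatic, and likewise the module structures on $K_A(\frakM(W),f_\gamma)$ and $K_A(\frakM(W),f_\gamma)_{\frakL(W)}$ follow by running the same base-change argument with the third factor set to a point as in the proof of Corollary \ref{cor:critalg1}.
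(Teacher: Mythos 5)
Your handling of parts (a), (b), (e) is correct and matches the paper: the algebra and module structures come from Corollary \ref{cor:critalg1}, Proposition \ref{prop:critalg2}, and Proposition \ref{prop:critalg3}, applied to the smooth $A$-invariant LG-model $(\frakM(W),f_\gamma)$ supplied by Proposition \ref{prop:LG} with the proper map $\pi$ to $\frakM_0(W)$.

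However, there is a genuine error in your treatment of parts (c) and (d). You claim, via dimensional reduction, identifications
\begin{align*}
K_A(\frakM(W)^2,(f_\gamma)^{(2)})_{\calZ(W)}\cong K^A(\calZ(W)),\qquad
K_A(\frakM(W),f_\gamma)\cong K^A(\frakM(W)),
\end{align*}
``exactly as in the proof of Proposition \ref{prop:crit1}.'' This is false, and the purported analogy breaks down at the very first step. In Proposition \ref{prop:crit1} the LG-model is $(\widetilde\frakM(W),\tilde f_1)$, and the potential $\tilde f_1=\Tr(\varepsilon[\alpha,\alpha^*]+\varepsilon a^*a)$ is \emph{linear} in the auxiliary bundle variable $\varepsilon$, so forgetting $\varepsilon$ via Koszul duality eliminates the potential entirely, landing on $\frakM(W)$ with zero potential. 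In Theorem \ref{thm:moduleK} the LG-model is $(\frakM(W),f_\gamma)$ directly: there is no auxiliary variable to forget, and $f_\gamma(\ux)=\Tr_W(\gamma\,aa^*)$ is a nontrivial function of the arrows of $\frakM(W)$ that does not factor as a linear function along the fibers of any vector bundle projection. You write that ``$\gamma$ plays the role of the extra loop variable $\varepsilon$,'' but this conflates a fixed parameter $\gamma\in\frakg_W^\nil$ with the dynamical variable $\varepsilon\in\frakg_V$ — they live in different Lie algebras and play completely different roles. If your claimed isomorphism $K_A(\frakM(W),f_\gamma)\cong K^A(\frakM(W))$ were true, then Theorem \ref{thm:HL1} would identify the Kirillov-Reshetikhin module with the costandard module $K^A(\frakM(W))$, which is false in general (the former is simple, the latter typically is not). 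Indeed the entire point of introducing the nontrivial potential $f_\gamma$ is that its critical K-theory is a proper quotient of the ordinary K-theory. (If anything, the correct relative statement is Lemma \ref{lem:crit4}: $K(\widetilde\frakM^\bullet(W),\tilde f_\gamma^\bullet)=K(\frakM^\bullet(W),f_\gamma^\bullet)$, where $\tilde f_\gamma^\bullet=\tilde f_1^\bullet-f_\gamma^\bullet$ still carries the nontrivial potential $f_\gamma^\bullet$ after the $\varepsilon$-reduction.)

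Fortunately the isomorphism you were trying to prove is not needed. Parts (c) and (d) require only a \emph{surjection}, not an isomorphism: Corollary \ref{cor:critalg1}(b) already provides a surjective algebra homomorphism $\Upsilon:K^A(\calZ(W))\to K_A(\frakM(W)^2,(f_\gamma)^{(2)})_{\calZ(W)}$, and precomposing with the Nakajima homomorphisms \eqref{nak1}, \eqref{nak2} (restricted to the subgroup $A$, respectively specialized at $\zeta$) gives the required maps directly. The last paragraph of your proposal, about verifying compatibility of the dimensional-reduction identification with convolution, is therefore both unnecessary and founded on the incorrect premise; it should be dropped.
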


\begin{proof}
Part (a) follows from Corollary \ref{cor:critalg1}, 
Part (b) from Proposition \ref{prop:critalg2},
Parts (c) and (d) from Corollary \ref{cor:critalg1}, \eqref{nak1} and \eqref{nak2}
and  Part (e) from Proposition \ref{prop:critalg3}.
\end{proof}

\iffalse%%%%%%%%%%%%%%%%%%%%%%
\begin{theorem}\label{thm:gradedmoduleK}
\hfill
\begin{enumerate}[label=$\mathrm{(\alph*)}$,leftmargin=8mm]
\item
$K(\frakM^\bullet(W)^2,(f_\gamma^\bullet)^{(2)})_{\calZ^\bullet(W)}$ is an algebra wich acts on 
$K(\frakM^\bullet(W),f_\gamma^\bullet)_{\frakL^\bullet(W)}$ and
$K(\frakM^\bullet(W),f_\gamma^\bullet)$.
\item
$K^\top(\frakM^\bullet(W)^2,(f_\gamma^\bullet)^{(2)})_{\calZ^\bullet(W)}$ 
is an algebra wich acts on 
$K^\top(\frakM^\bullet(W),f_\gamma^\bullet)_{\frakL^\bullet(W)}$ and $K^\top(\frakM^\bullet(W),f_\gamma^\bullet).$
The topologization map is an intertwiner.
\item
There are algebra homomorphisms with $\Upsilon$ surjective
$$\xymatrix{
\U_\zeta(L\frakg)\ar[r]&K(\calZ^\bullet(W))\ar[r]^-\Upsilon& 
K(\frakM^\bullet(W)^2,(f_\gamma^\bullet)^{(2)})_{\calZ^\bullet(W)}}$$
\qed
\end{enumerate}
\end{theorem}

\begin{theorem}\label{thm:gradedmoduleH}
\hfill
\begin{enumerate}[label=$\mathrm{(\alph*)}$,leftmargin=8mm]
\item
$H^\bullet(\frakM^\bullet(W)^2,(f_\gamma^\bullet)^{(2)})_{\calZ^\bullet(W)}$ 
is an algebra wich acts on 
$H^\bullet(\frakM^\bullet(W),f_\gamma^\bullet)_{\frakL^\bullet(W)}$ and
$H^\bullet(\frakM^\bullet(W),f_\gamma^\bullet).$
\item
There are algebra homomorphisms 
$$\xymatrix{\U_\zeta(L\frakg)\ar[r]& 
H_\bullet(\calZ^\bullet(W))\ar[r]& H^\bullet(\frakM^\bullet(W)^2,(f_\gamma^\bullet)^{(2)})_{\calZ^\bullet(W)}.}
$$
\qed
\end{enumerate}
\end{theorem}
\fi%%%%%%%%%%%%%%%%%%%%%%%%%

\subsection{CCA's and quantum loop groups}

\subsubsection{Quiver Grassmannians}
Let $Q$ be a quiver of Dynkin type.
Let  $\overline\Pi$ be the preprojective algebra of the quiver $Q$.
The generalized preprojective algebra $\widetilde\Pi$ is the quotient of the path algebra $\bbC\widetilde Q$
 by the two-sided ideal generated by the elements
$[\alpha,\alpha^*]$ and $[\varepsilon,\alpha]$.
We have $\widetilde\Pi=\overline\Pi\otimes\bbC[\varepsilon]$.
For every positive integer $l$ we set $\widetilde\Pi^l=\widetilde\Pi/(\varepsilon^l)$.
We equip the quiver $\widetilde Q$ with the degree function
$\deg:\widetilde Q_1\to\bbZ$ in \eqref{degree2}.
We equip the algebras $\overline\Pi$, $\widetilde\Pi$ and $\widetilde\Pi^l$ with the 
corresponding $\bbZ$-gradings.
For each vertex $i$ let $e_i$ be the length 0 path supported on $i$
and let $S_i$ be the $\widetilde\Pi$-module of dimension $\delta_i$.
Let $\bfP$ and $\bfP^\bullet$ be the categories of finite dimensional $\widetilde\Pi$-modules
and graded $\widetilde\Pi$-modules.
Let $\bfP_l$ and $\bfP^\bullet_l$ the subcategories of finite dimensional $\widetilde\Pi^l$-modules
and graded $\widetilde\Pi^l$-modules.
We equip the categories $\bfP^\bullet$ and $\bfP^\bullet_l$ 
with the grading shift functor $[1]$ and the duality functor $D$ such that
$D(M)_k=(M_{-k})^\vee$ for each  graded module $M=\bigoplus_{k\in\bbZ}M_k$.
The $\widetilde\Pi$-action on $D(M)$ is the transpose of the $\widetilde\Pi$-action on $M$.
A (graded) module over $\widetilde\Pi$ or $\overline\Pi$
is nilpotent if it is killed by a power of the augmentation ideal.
%The injective hull of $S_i$ in $\bfP^\bullet_l$ is
%$D(\widetilde\Pi^le_i)$ for each $i\in I$ and $l>0$.
%Let $I_i$ be the injective hull of $S_i$ in $\bfP^\bullet$. We have
We consider the following graded $\widetilde\Pi$-modules
$$I_{i,k}^l=D(\widetilde\Pi^le_i)[-k-l]
,\quad
I_i=D(\widetilde\Pi e_i)
=\bigcup_{l>0}I_{i,-l}^l
,\quad
I_{i,k}=I_i[-k].$$
The Jacobi algebra of the quiver with potential $(\widetilde Q^\bullet,\bfw^\bullet_2)$ is the quotient 
$$\widetilde\Pi^\bullet=\bbC \widetilde Q^\bullet\,\big/\,\big(\partial\bfw_2^\bullet/\partial\varepsilon_{i,k}
\,,\,\partial\bfw_2^\bullet/\partial\alpha_{ij,k}\,;\,i,j\in I\,,\,k\in\bbZ\big).$$
By \cite[prop.~4.4, 5.1]{FM21}, a graded $\widetilde\Pi$-module is the same as a 
$\widetilde\Pi^\bullet$-module, and,
under this equivalence, the graded $\widetilde\Pi$-module $I_{i,k}^l$
is the same as the generic kernel associated in \cite{HL16} with the Kirillov-Reshetikhin module
$KR^l_{i,k}$.
Given a module $M\in\bfP$ and $v\in\bbN I$, let $\Gr_v(M)$ and $\widetilde\Gr_v(M)$
be the Grassmannians of all $\overline\Pi$-submodules and $\widetilde\Pi$-submodules 
of dimension $v$. 
Given a graded module $M\in\bfP^\bullet$  and $v\in\bbN I^\bullet$, 
let $\widetilde\Gr^\bullet_v(M)$ be the Grassmannian of all graded $\widetilde\Pi$-submodules 
of dimension $v$. 
Set
$$\widetilde\Gr(M)=\bigsqcup_{v\in\bbN I}\widetilde\Gr_v(M)
 ,\quad
\widetilde\Gr^\bullet(M)=
\bigsqcup_{v\in\bbN {I^\bullet}}\widetilde\Gr^\bullet_v(M).$$

\subsubsection{Finite dimensional representations of quantum loop groups}\label{sec:HL1}
We define
\begin{align}\label{KRw}
w^l_{i,k}=\delta_{i,k-l+1}+\delta_{i,k-l+3}+\cdots+\delta_{i,k+l-1},\quad
(i,k)\in I^\bullet,\quad
l\in\bbN^\times.
\end{align}
Fix a graded vector space $W^l_{i,k}\in\bfC^\bullet$
of dimension $w^l_{i,k}$
and a regular element
$\gamma_{i,k}^l$ in $\frakg^2_{W_{i,k}^l}$.
Fix an admissible triple $(W,A,\gamma)$.
Given any tuple $i_1,k_1,l_1,\dots,i_s,k_s,l_s$, we write
\begin{align}\label{WgI}
W=\bigoplus_{r=1}^sW_{i_r,k_r}^{l_r}
 ,\quad
\gamma=\bigoplus_{r=1}^s\gamma_{i_r,k_r}^{l_r}
 ,\quad
I_\gamma=\bigoplus_{r=1}^sI_{i_r,k_r}^{l_r}.
\end{align}

\begin{proposition}\label{prop:HL1} 
Let $(W,A,\gamma)$ be an admissible triple.
We have an homeomorphism 
$\crit(f_\gamma^\bullet)\cap\frakL^\bullet(W)=\widetilde\Gr^\bullet\!(I_\gamma).$
%There is a representation of $\U_\zeta(L\frakg)$ in the vector space 
%$H^\bullet(\widetilde\Gr^\bullet\!(I_\gamma),\calL_\gamma)$.
\end{proposition}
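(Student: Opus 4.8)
The plan is to reduce the statement, via Proposition~\ref{prop:crit2}(b), to an identification of a locus of pairs $(x,\varepsilon)$ with a quiver Grassmannian, and then to recognise that identification as the graded, $\varepsilon$‑deformed version of Nakajima's description of Lagrangian quiver varieties as quiver Grassmannians of injective (generalized) preprojective modules. First I would unwind the left hand side. By Proposition~\ref{prop:crit2}(b), a point of $\crit(f_\gamma^\bullet)\cap\frakM^\bullet(v,W)$ is the $G_V^0$‑orbit of a pair $(x,\varepsilon)$ with $x=(\alpha,a,a^*)\in\mu_V^{-1}(0)_s^\bullet$, $\varepsilon\in\frakg_V^2$, and $[\varepsilon,\alpha]=0$, $\gamma a=a\varepsilon$, $\varepsilon a^*=a^*\gamma$; intersecting with $\frakL^\bullet(W)$ keeps those orbits lying over $0\in\frakM_0^\bullet(W)$, i.e. those whose underlying $\widetilde Q^\bullet_f$‑representation is nilpotent. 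As in the proof of Proposition~\ref{prop:crit2}, stability forces $\varepsilon$ to be the \emph{unique} element of $\frakg_V^2$ compatible with $x$ whenever one exists, so the forgetful map $(x,\varepsilon)\mapsto x$ identifies $\crit(f_\gamma^\bullet)\cap\frakL^\bullet(v,W)$ with the closed subvariety of $\frakL^\bullet(v,W)$ of representations admitting a compatible $\varepsilon$.

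The core step is the module-theoretic dictionary. Using the equivalence of \cite[prop.~4.4, 5.1]{FM21} between graded $\widetilde\Pi$‑modules and $\widetilde\Pi^\bullet$‑modules, together with the identification there of $I^l_{i,k}$ with the generic kernel attached in \cite{HL16} to the Kirillov--Reshetikhin module, I would regard $I_\gamma=\bigoplus_r I^{l_r}_{i_r,k_r}$ as the graded injective module associated with $(W,\gamma)$: as a module over $\overline\Pi$ it is $\bigoplus_i I_i^{\oplus w_i}$, and the degree $2$ operator it carries records $\gamma$ (this is where the prescribed form $W=\bigoplus_r W^{l_r}_{i_r,k_r}$ with each $\gamma^{l_r}_{i_r,k_r}$ regular enters, so that $(W,\gamma)$ indeed has $I_\gamma$ as its graded injective envelope among nilpotent $\widetilde\Pi$‑modules). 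To a pair $(x,\varepsilon)$ as above I would attach the $\overline\Pi$‑submodule of $\bigoplus_i I_i^{\oplus w_i}$ produced from the stable nilpotent framed datum $(\alpha,a,a^*)$ by the by‑now‑standard dictionary relating $\frakL(v,W)$ to quiver Grassmannians of injectives (going back to \cite{N94}, in the graded form of \cite{FM21}); the relations $[\varepsilon,\alpha]=0$, $\gamma a=a\varepsilon$, $\varepsilon a^*=a^*\gamma$ say precisely that the comparison maps intertwine $\varepsilon$ on $V$ with the $\gamma$‑operator on the injective, so this submodule is stable under that operator, hence is a graded $\widetilde\Pi$‑submodule of $I_\gamma$. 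Conversely a graded submodule $N\subseteq I_\gamma$ yields, applying the same correspondence to its $\overline\Pi$‑structure, a stable nilpotent framed representation, and the $\varepsilon$‑action inherited from the ambient injective restricts to the required $\varepsilon$; one then checks these two assignments are mutually inverse and match the dimension vectors (sending $v$ to $\underline\dim N$, up to a fixed affine bijection of $\bbN I^\bullet$).

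For the topological conclusion I would argue as follows. For a Dynkin quiver only finitely many $\frakM^\bullet(v,W)$, resp. finitely many $\widetilde\Gr^\bullet_v(I_\gamma)$, are nonempty, so both $\crit(f_\gamma^\bullet)\cap\frakL^\bullet(W)$ and $\widetilde\Gr^\bullet(I_\gamma)$ are finite disjoint unions of projective varieties, and the bijection just constructed respects the decomposition by dimension vectors. The assignment $(x,\varepsilon)\mapsto N$ is obtained from $(\alpha,a,a^*)$ by explicit linear algebra which descends along the GIT quotient, so it is continuous; being a continuous bijection from a compact space to a Hausdorff space it is automatically a homeomorphism.

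The main obstacle will be making the dictionary of the second paragraph precise: spelling out the correspondence between stable nilpotent framed preprojective‑type representations and quiver Grassmannians of injective modules in the present graded formulation, checking that the $\varepsilon$‑compatibility relations place the submodule inside $I_\gamma$ and not merely inside the undeformed injective $\bigoplus_i I_i^{\oplus w_i}$, and keeping the grading bookkeeping consistent — matching the shifts built into $I^l_{i,k}=D(\widetilde\Pi^l e_i)[-k-l]$ with the $I^\bullet$‑grading on $W$ coming from $\sigma$ and the degree conventions \eqref{degree2}. Everything else (uniqueness of $\varepsilon$, properness, continuity) is routine given Propositions~\ref{prop:crit2} and~\ref{prop:LG}.
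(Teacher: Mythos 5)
Your approach is essentially the paper's: reduce via Proposition~\ref{prop:crit2} to pairs $(x,\varepsilon)$, then invoke the homeomorphism between nilpotent Lagrangian quiver varieties and quiver Grassmannians of injective $\overline\Pi$-modules (the paper cites Savage--Tingley \cite[thm.~4.4, prop.~5.1]{ST11} for this step, after identifying $W=\soc(I_\gamma)$ and fixing an $I^\bullet$-graded $\bbC[\varepsilon]$-linear retraction $a:I_\gamma\to W$), checking that the $\varepsilon$-relations encode exactly the $\widetilde\Pi$-submodule structure. One organizational simplification the paper uses that addresses the "main obstacle" you flag: it first proves the ungraded homeomorphism $\widetilde\Gr(I_\gamma)=\crit(f_\gamma)\cap\frakL(W)$ equivariantly for $G_W\times\Aut_I(\overline\Pi)$, and then obtains the graded statement in one stroke by taking $A$-fixed points via $\frakM^\bullet(W)=\frakM(W)^A$, so all grading bookkeeping is deferred to a single final step rather than threaded through the dictionary.
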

 
\begin{proof} 
%To simplify the notation, we will assume that $I_\gamma=I_{i,k}^l$,
%$\gamma=\gamma_{i,k}^l$ and $W=W_{i,k}^l.$
%The general case is similar. 
Let $\widehat\Gr_v(I_\gamma)$
be the set of all injective $I$-graded linear maps $f:\bbC^v\to I_\gamma$
whose image is a $\overline\Pi$-submodule of $I_\gamma$.
The quotient by the $G_v$-action is a $G_v$-torsor
$\widehat\Gr_v(I_\gamma)\to\Gr_v(I_\gamma)$, $f\mapsto\Im(f)$. 
For each positive integer $l$, since $\widetilde\Pi^l=\overline\Pi\otimes\bbC[\varepsilon]/(\varepsilon^l)$
as a $\bbC[\varepsilon]/(\varepsilon^l)$-module,
the top of the $\overline\Pi$-module $\widetilde\Pi^le_i$ is 
$$\top(\widetilde\Pi^le_i)=%\widetilde\Pi(k)e_i/\,\overline\Pi_+\widetilde\Pi(k)e_i=
S_i\otimes\bbC[\varepsilon]/(\varepsilon^l).$$
We deduce that the socle of $I_\gamma$, 
viewed as a $\overline\Pi$-module, is $I^\bullet$-graded of dimension $w$. 
Further, the action of $\varepsilon$ on $I_\gamma$ preserves the socle and is given by
an homogeneous operator of degree 2.
We identify the $I^\bullet$-graded vector spaces $W=\soc(I_\gamma)$
so that the action of $\gamma$ on $W$ coincides with the action of
$\varepsilon$ on $\soc(I_\gamma)$.
Fix an $I^\bullet$-graded $\bbC[\varepsilon]$-linear map 
$a:I_\gamma\to W$
which is the identity on  $W\subset I_\gamma$.
The set of nilpotent representations in $\overline\X(v,W)_s$ is
$$\overline\X(v,W)_s^\nil=\{x=(\alpha,a,a^*)\in\overline\X(v,W)_s\,;\,\underline x\in\frakL(v,W)\}.$$
We abbreviate
$$\mu_v^{-1}(0)_s^\nil=\mu_v^{-1}(0)\cap\overline\X(v,W)_s^\nil
=\{x\in\mu_v^{-1}(0)_s\,;\,a^*=0\,,\,\alpha\ \text{is\ nilpotent}\},$$ 
where $\alpha$ is nilpotent if it is nilpotent as a representation of $\overline\Pi$.
Let $\Aut_I(\overline\Pi)$ be the group of all algebra automorphisms of $\overline\Pi$ that
fix the idempotents $e_i$'s.
By \cite[\S 5A]{ST11}, the group $G_v\times G_W\times\Aut_I(\overline\Pi)$
acts on $\widehat\Gr_v(I_\gamma)$, and
by \cite[thm.~4.4, prop.~5.1]{ST11} there is a
$G_v\times G_W\times\Aut_I(\overline\Pi)$-equivariant homeomorphism 
\begin{align}\label{homeo1}
\widehat\Gr_v(I_\gamma)\to\mu_v^{-1}(0)_s^\nil
 ,\quad
f\mapsto (f^{-1}\circ\alpha\circ f\,,\,a\circ f\,,\,0)\end{align}
where the map $a:I_\gamma\to W$ is as above and $\alpha$ denotes the $\overline\Pi$-action on 
$I_\gamma$. 
Now we consider the nilpotent operator $\varepsilon$ on $I_\gamma$ given by the $\widetilde\Pi$-action.
For any $f\in\widehat\Gr_v(I_\gamma)$ such that $\Im(f)\in\widetilde\Gr(I_\gamma)$ 
we also have a nilpotent operator $\varepsilon$ on $\Im(f)$ commuting with $\alpha$ and such that
$[\gamma\oplus\varepsilon,a]=0$.
On the other hand, Proposition \ref{prop:crit2} yields an isomorphism
%\begin{align*}\big\{(x,\xi)\in\overline\M(V,W)_s\times\frakg_v\,;\,[\gamma\oplus\xi,x]=0\big\}\,/\,G_v
%\to \crit(f_\gamma)\cap\frakM(v,W) ,\quad(\ux,\xi)\mapsto \ux.\end{align*}
%Thus, taking the subsets of nilpotent elements, we get the isomorphism
\begin{align}\label{crit2}
\crit(f_\gamma)\cap\frakL(v,W)=\big\{(x,\varepsilon)\in\mu_v^{-1}(0)_s^\nil\times\frakg^\nil_v\,;\,
[\gamma\oplus\varepsilon,x]=0\big\}/G_v
\end{align}
Comparing \eqref{homeo1} and \eqref{crit2}, 
we get a $G_W\times\Aut_I(\overline\Pi)$-equivariant homeomorphism 
$\widetilde\Gr(I_\gamma)=\crit(f_\gamma)\cap\frakL(W)$.
To prove the proposition, we view $A$ as a subgroup of 
$G_W\times\Aut_I(\overline\Pi)$ in the obvious way.
Since $\frakM^\bullet(W)=\frakM(W)^A$,
the homeomorphism in the proposition follows  by taking the $A$-fixed points.

\end{proof}

\begin{theorem}\label{thm:HL1} 
Let $(W,A,\gamma)$ be a regular admissible triple with
$W=W^l_{i,k}$. The $\U_\zeta(L\frakg)$-modules
$K(\frakM^\bullet(W),f_\gamma^\bullet)$,
$K(\frakM^\bullet(W),f_\gamma^\bullet)_{\frakL^\bullet(W)}$
and their cohomological analogues are simple and are isomorphic to $KR_{i,k}^l$.
\end{theorem}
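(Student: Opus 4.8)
The plan is to combine the geometric identification of the critical locus in Proposition \ref{prop:HL1} with the known character formula of Hernandez--Leclerc \cite{HL16} and the general machinery of \S\ref{sec:N00}. First, by Theorem \ref{thm:moduleK}(d),(e) the spaces $K(\frakM^\bullet(W),f_\gamma^\bullet)$, $K(\frakM^\bullet(W),f_\gamma^\bullet)_{\frakL^\bullet(W)}$ and $H^\bullet(\frakM^\bullet(W),f_\gamma^\bullet)$ carry $\U_\zeta(L\frakg)$-actions. By dimensional reduction (Remark \ref{rem:VV}(b)) and the fact that matrix factorizations are supported on the critical locus \cite[cor.~3.18]{PV11}, these spaces compute the Borel--Moore homology or $K$-theory of $\crit(f_\gamma^\bullet)\cap\frakM^\bullet(W)$ and of $\crit(f_\gamma^\bullet)\cap\frakL^\bullet(W)$, which by Proposition \ref{prop:crit2}(b) and Proposition \ref{prop:HL1} are identified with $\widetilde\Gr^\bullet(I_\gamma)$ (and an open analogue). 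Here $I_\gamma$ is, by \cite[prop.~4.4, 5.1]{FM21}, the generic kernel attached in \cite{HL16} to the Kirillov--Reshetikhin module $KR^l_{i,k}$. Hence the $q$-character of $H^\bullet(\frakM^\bullet(W),f_\gamma^\bullet)$ is $\sum_{v} \chi\big(\widetilde\Gr^\bullet_v(I_\gamma)\big)\,e^{w-\bfc v}$, which by the main theorem of \cite{HL16} equals the $q$-character of $KR^l_{i,k}$.

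Next I would pin down the $\U_\zeta(L\frakg)$-module structure, not just the character. The argument parallels \S\ref{sec:N00}: the closed embedding $\frakL^\bullet(W)\hookrightarrow\frakM^\bullet(W)$ induces a map from the "critical standard" module $K(\frakM^\bullet(W),f_\gamma^\bullet)_{\frakL^\bullet(W)}$ to the "critical costandard" module $K(\frakM^\bullet(W),f_\gamma^\bullet)$, intertwining the actions; one shows the $\ell$-highest weight of the standard module is $\Psi_w$ with $w=w^l_{i,k}$, using \eqref{KRw} together with the explicit formulas \eqref{psi+-}, \eqref{xpm} for the action (or their deformed analogues built from $f_\gamma$ as in Theorem \ref{thm:moduleK}). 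Thus the image of the intertwiner is the simple module $L(w)$ with Drinfeld polynomial $\big(\prod_k(1-\zeta^k u)^{w_{i,k}}\big)_i = \big(\prod_{j=0}^{l-1}(1-\zeta^{k-l+1+2j}u)\big)\cdot\prod_{j\ne i}1$, which is exactly the Drinfeld polynomial of $KR^l_{i,k}$. It then remains to show the intertwiner is an isomorphism, i.e. that the standard and costandard modules are already simple; this is where the regularity of the admissible triple enters. I expect that regularity of $\gamma$ forces $\frakM_0^\bullet(W)$ to have a single relevant stratum with trivial local system, so that $\IC$ has trivial stalk at $0$ except in the top stratum; by the Jordan--Hölder description recalled in \S\ref{sec:N00}, this forces all multiplicities $\chi_0(\IC_{\frakM_0^{\bullet\reg}(v,W)})$ to vanish for $v$ not corresponding to $L(w)$, hence standard $=L(w)=$ costandard. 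Equivalently (and perhaps more robustly), one compares dimensions: $\dim K(\frakM^\bullet(W),f_\gamma^\bullet) = \chi(\widetilde\Gr^\bullet(I_\gamma)) = \dim KR^l_{i,k}$ by \cite{HL16}, while $L(w)$ is a subquotient, forcing equality.

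Finally, the same arguments apply verbatim to the cohomological versions via Proposition \ref{prop:critalg3} and Theorem \ref{thm:moduleK}(e), and the comparison of $K$-theory and Borel--Moore homology proceeds through the Chern character and Thomason localization as in \eqref{Psi}. The main obstacle, I expect, is the last step: proving that the geometrically defined standard module is \emph{already} simple rather than merely having $KR^l_{i,k}$ as a subquotient. For general $W$ this fails (one only gets a module with $KR$-type Jordan--Hölder constituents), so one must genuinely exploit that $(W,A,\gamma)$ is \emph{regular} and that $W = W^l_{i,k}$ is a single KR-building block; concretely this means showing $\widetilde\Gr^\bullet(I_\gamma)$ has the "correct" Euler characteristic matching $\dim KR^l_{i,k}$, which is precisely the content of the Hernandez--Leclerc formula, so the crux is really invoking \cite{HL16} in the right normalization and matching their combinatorial data $(i,k,l)$ with the dimension vector $w^l_{i,k}$ of \eqref{KRw}. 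A secondary technical point is checking that the deformed potential $f_\gamma$ (as opposed to $\tilde f_1$) still yields dimensional reduction onto the Nakajima-type variety carrying the $\U_\zeta(L\frakg)$-action; this is handled by Proposition \ref{prop:crit2} and Proposition \ref{prop:LG}, which guarantee $f_\gamma$ is regular with critical locus inside its zero fiber.
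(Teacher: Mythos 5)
Your proposal takes a genuinely different route from the paper, and it contains a gap at the crucial point. You try to compute the $q$-character of $H^\bullet(\frakM^\bullet(W),f_\gamma^\bullet)$ by identifying it with $\sum_v \chi\big(\widetilde\Gr^\bullet_v(I_\gamma)\big)\,e^{w-\bfc v}$ and then invoking the Hernandez--Leclerc Euler-characteristic formula for $q\!\ch(KR_{i,k}^l)$. The problem is that the $\ell$-weight space of the critical module has dimension $\dim H^\bullet(\frakM^\bullet(v,W),f_\gamma^\bullet)_{\frakL^\bullet(v,W)}$, not the Euler characteristic of the corresponding quiver Grassmannian. These agree only if the vanishing cycle cohomology is concentrated in even degree (purity), which neither you nor the paper establishes. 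Indeed the paper's remark after this theorem infers the identity $H^\bullet(\widetilde\Gr^\bullet(I_{i,k}^l),\calL_{\gamma^l_{i,k}})=KR_{i,k}^l$ as a \emph{consequence} of the theorem, so deducing the theorem from HL16's formula is circular unless you supply the purity input. Your ``more robust'' dimension-comparison argument inherits the same issue (and in K-theory there is no a priori formula relating the rank of the critical K-group to the Euler characteristic of the critical locus). Your backup stratification argument (single relevant stratum, vanishing $\IC$-multiplicities) is in the right spirit but is not worked out; the paper does carry out such a microlocal argument in Appendix C, but only under the extra hypothesis that $\frakL^\bullet(W)$ has finitely many points.

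The paper's actual proof avoids computing the character at all. It only uses the \emph{support} information from Proposition~\ref{prop:HL1}: the $\ell$-weight space $H^\bullet(\frakM^\bullet(v,W),f_\gamma^\bullet)_{\frakL^\bullet(v,W)}$ vanishes whenever $\widetilde\Gr^\bullet_v(I_\gamma)=\emptyset$. It then exploits the algebra: every nonzero graded $\widetilde\Pi$-submodule of $I_\gamma=I^l_{i,k}$ must contain the socle, whose graded dimension is $\delta_{i,k+l}$. This forces all nonhighest monomials of the $q$-character to lie in $m_{i,k}^l\,A_{i,k+l}^{-1}\,\bbZ[A_{j,r}^{-1}]$; the monomial $m_{i,k}^l\,A_{i,k+l}^{-1}$ is right-negative by \cite[lem.~4.4]{H06}, and right-negativity propagates under multiplication by $A^{-1}$'s by \cite{FM01}. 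Hence the $q$-character has a unique $\ell$-dominant monomial, which by the criterion recalled in \S\ref{sec:qg} forces simplicity -- without ever knowing the dimensions of the $\ell$-weight spaces. For $\frakM^\bullet(W)$ (as opposed to the $\frakL^\bullet(W)$-supported version) the paper additionally observes that $\crit(f_\gamma^\bullet)$ is closed and conic, so nonemptiness of the critical locus in $\frakM^\bullet(v,W)$ forces nonemptiness in $\frakL^\bullet(v,W)$ and the same conclusion follows. The K-theoretic case is handled by using the support theorem \cite[cor.~3.18]{PV11} plus the fact that each $K(\frakM^\bullet(v,W),f_\gamma^\bullet)$ is an $\ell$-weight space for the $\Upsilon$-action, so it vanishes whenever the critical locus is empty. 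To repair your argument along your own lines, you would need to establish odd-vanishing of the critical cohomology; the right-negativity route is both shorter and avoids this input entirely.
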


\begin{proof}
Theorem \ref{thm:moduleK} yield a representation of 
$\U_\zeta(L\frakg)$ on each of the vector spaces above.
Let first prove that
$H^\bullet(\frakM^\bullet(W),f_\gamma^\bullet)_{\frakL^\bullet(W)}$
is isomorphic to $KR_{i,k}^l$.
We'll give an algebraic proof of the claim.
See  \S\ref{sec:proof2} for a geometric proof using microlocal geometry.
Proposition \ref{prop:HL1}  implies that the cohomology space
$H^\bullet(\frakM^\bullet(v,W),f_\gamma^\bullet)_{\frakL^\bullet(v,W)}$ vanishes whenever 
$\widetilde\Gr^\bullet_v(I_\gamma)=\emptyset$.
We have
$$e^{w-\bfc v}=m_{i,k}^l\prod_{j,r}A_{j,r}^{-v_{j,r}},\quad
m_{i,k}^l=Y_{i,k-l+1}\cdots Y_{i,k+l-1}.$$
The socle of the $\widetilde\Pi$-module $I^l_{i,k}$ has dimension $\delta_{i,k+l}$.
Each non-zero $\widetilde\Pi$-submodule of $I_\gamma$ contains the socle of $I_\gamma$.
Hence, by  \eqref{AY}, given $v\neq 0$ such that
the quiver Grassmannian $\widetilde\Gr_v^\bullet(I_\gamma)$ is non empty, we have
$$e^{w-\bfc v}\in m_{i,k}^l\,A_{i,k+l}^{-1}\,\bbZ[A_{j,r}^{-1}\,;\,(j,r)\in I^\bullet].$$
Hence, we have
$$q\ch(H^\bullet(\frakM^\bullet(W),f_\gamma^\bullet)_{\frakL^\bullet(W)})\in m_{i,k}^l\,\big(1+A_{i,k+l}^{-1}\,\bbZ[A_{j,r}^{-1}\,;\,(j,r)\in I^\bullet]\big).$$
The monomial $m_{i,k}^l\,A_{i,k+l}^{-1}$ is right-negative by \cite[lem.~4.4]{H06}. 
From \cite{FM01} we deduce that
the $q$-character of $H^\bullet(\frakM^\bullet(W),f_\gamma^\bullet)_{\frakL^\bullet(W)}$ 
contains a unique $\ell$-dominant monomial.
See \S\ref{sec:qg} for more details on $q$-characters.
Hence the $\U_\zeta(L\frakg)$-module 
$H^\bullet(\frakM^\bullet(W),f_\gamma^\bullet)_{\frakL^\bullet(W)}$ is simple.

The argument above implies that 
\begin{align}\label{IRR1}
\crit(f_\gamma^\bullet)\cap\frakL^\bullet(v,W)\neq\emptyset\ ,\ v\neq 0
\Rightarrow \text{ $e^{w-\bfc v}$\ is not $\ell$-dominant.}
\end{align}
On the other hand, since $\crit(f_\gamma^\bullet)$ is a closed conic subset, we have
$$\crit(f_\gamma^\bullet)\neq\emptyset
\Rightarrow\crit(f_\gamma^\bullet)\cap\frakL^\bullet(v,W)\neq\emptyset.$$
We deduce that
\begin{align}\label{IRR2}
\crit(f_\gamma^\bullet)\neq\emptyset\ ,\ v\neq 0
\Rightarrow \text{the\ monomial\  $e^{w-\bfc v}$\ is not $\ell$-dominant.}
\end{align}
Hence the $q$-character of $H^\bullet(\frakM^\bullet(W),f_\gamma^\bullet)$ 
contains a unique $\ell$-dominant monomial as well.

The proof in K-theory is similar. 
More precisely, by \eqref{IRR2} 
if the monomial $e^{w-\bfc v}$ is $\ell$-dominant and 
$v\neq 0$ then $\crit(f_\gamma^\bullet)=\emptyset$, hence,
since any matrix factorization is supported on the critical set of the potential by
\cite[cor.~3.18]{PV11}, we have
%\cite[prop.1.3]{O04} implies that
$$
K(\frakM^\bullet(v,W),f_\gamma^\bullet)
=K(\frakM^\bullet(v,W),f_\gamma^\bullet)_{\frakL^\bullet(v,W)}=0
$$
Further, the definition of the representation of $\U_\zeta(L\frakg)$ in Theorem \ref{thm:moduleK}
implies that 
$K(\frakM^\bullet(v,W),f_\gamma^\bullet)$ and
$K(\frakM^\bullet(v,W),f_\gamma^\bullet)_{\frakL^\bullet(v,W)}$
are $\ell$-weight subspaces of $\ell$-weight $\Psi_{w-\bfc v}$.
Hence, since $\frakM^\bullet(0,W)$ is a point, the $q$-characters of the $\U_\zeta(L\frakg)$-modules
$$K(\frakM^\bullet(W),f^\bullet_\gamma)
 ,\quad
K(\frakM^\bullet(W),f^\bullet_\gamma)_{\frakL^\bullet(W)}$$
contain a unique $\ell$-dominant monomial.
Thus, both modules are simple and isomorphic to $KR_{i,k}^l$.
\end{proof}

A similar result holds for some irreducible tensor products of Kirillov-Reshetikhin modules.

\begin{proposition}\label{prop:TPKR}
Fix $(i_r,k_r,l_r)\in I^\bullet\times\bbN^\times$ for all $r=1,\dots s$ such
that  either the condition $\mathrm{(a)}$ or the condition $\mathrm{(b)}$ below holds
for some integer $l$
\hfill
\begin{enumerate}[label=$\mathrm{(\alph*)}$,leftmargin=8mm]
\item
$k_r\geqslant l$ and $[k_r+2-2l_r,k_r]=(k_r-2\bbN)\cap[l,k_r]$ for all $r$, and
$$W=\bigoplus_{r=1}^sW_{i_r,1+k_r-l_r}^{l_r}
 ,\quad
\gamma=\bigoplus_{r=1}^s\gamma_{i_r,1+k_r-l_r}^{l_r}
,\quad
KR_W=\bigotimes_{r=1}^sKR_{i_r,1+k_r-l_r}^{l_r}
$$
\item
$k_r\leqslant l$ and $[k_r,k_r-2+2l_r]=(k_r+2\bbN)\cap[k_r,l]$ for all  $r$, and
$$W=\bigoplus_{r=1}^sW_{i_r,-1+k_r+l_r}^{l_r}
 ,\quad
\gamma=\bigoplus_{r=1}^s\gamma_{i_r,-1+k_r+l_r}^{l_r}
,\quad
KR_W=\bigotimes_{r=1}^sKR_{i_r,-1+k_r+l_r}^{l_r}
$$
\end{enumerate}
The $\U_\zeta(L\frakg)$-modules
$K(\frakM^\bullet(W),f_\gamma^\bullet)$ and
$K(\frakM^\bullet(W),f_\gamma^\bullet)_{\frakL^\bullet(W)}$
and their cohomological analogues are simple and isomorphic to $KR_W$.
\end{proposition}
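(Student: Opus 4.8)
The plan is to transcribe the proof of Theorem~\ref{thm:HL1}, replacing the single Kirillov--Reshetikhin module by the tensor product $KR_W$, the crucial extra input being that the hypotheses $\mathrm{(a)}$, $\mathrm{(b)}$ are exactly the conditions under which $KR_W$ is a simple module. First I would apply Theorem~\ref{thm:moduleK}: each of the four spaces $K(\frakM^\bullet(W),f_\gamma^\bullet)$, $K(\frakM^\bullet(W),f_\gamma^\bullet)_{\frakL^\bullet(W)}$ and their cohomological analogues carries a $\U_\zeta(L\frakg)$-module structure graded by $v\in\bbN I^\bullet$, with the graded piece at $v$ an $\ell$-weight subspace of $\ell$-weight $\Psi_{w-\bfc v}$; since $\frakM^\bullet(0,W)$ is a point, the $v=0$ part is one-dimensional of $\ell$-weight $\Psi_w$. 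Writing $m$ for the monomial of $\Psi_w$, the choice of $w$ in $\mathrm{(a)}$ resp.\ $\mathrm{(b)}$ gives $m=\prod_{r=1}^s m^{l_r}_{i_r,\kappa_r}$ with $\kappa_r=1+k_r-l_r$ resp.\ $\kappa_r=-1+k_r+l_r$, which is precisely the $\ell$-highest monomial of $KR_W$. The interval conditions in $\mathrm{(a)}$, $\mathrm{(b)}$ force the $q$-segments of the factors $KR^{l_r}_{i_r,\kappa_r}$ into a nested configuration sharing a common endpoint (the integer $l$), so by the standard simplicity criterion for tensor products of Kirillov--Reshetikhin modules, cf.\ \cite{HL16}, $KR_W$ is simple; as $\Psi_w$ is its $\ell$-highest weight, $KR_W\cong L(\Psi_w)$. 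The proposition thus reduces to showing that the $q$-character of each of the four modules contains $m$ as its unique $\ell$-dominant monomial.

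For this I would argue as in Theorem~\ref{thm:HL1}. If the graded piece at $v\neq 0$ of one of these modules is nonzero, then --- since any matrix factorization is supported on $\crit(f_\gamma^\bullet)$ by \cite[cor.~3.18]{PV11}, resp.\ the vanishing cycle complex is supported there --- we get $\crit(f_\gamma^\bullet)\cap\frakM^\bullet(v,W)\neq\emptyset$, hence $\crit(f_\gamma^\bullet)\cap\frakL^\bullet(v,W)\neq\emptyset$ because $\crit(f_\gamma^\bullet)$ is closed and conic, and then Proposition~\ref{prop:HL1} yields a graded $\widetilde\Pi$-submodule $N\subseteq I_\gamma=\bigoplus_r I^{l_r}_{i_r,\kappa_r}$ with $\underline{\dim}\,N=v\neq 0$. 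Intersecting $N$ with, and projecting it off, the summands of $I_\gamma$ one at a time gives a decomposition $v=\sum_r v_r$ with each $\widetilde\Gr^\bullet_{v_r}(I^{l_r}_{i_r,\kappa_r})$ non-empty.

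Writing $w_r$ for the dimension vector of $W^{l_r}_{i_r,\kappa_r}$, so that $w=\sum_r w_r$, I would then invoke \cite{HL16} applied to the single module $KR^{l_r}_{i_r,\kappa_r}$: the monomial $e^{w_r-\bfc v_r}$ occurs in $\chi_q(KR^{l_r}_{i_r,\kappa_r})$. Since $\chi_q$ is multiplicative on the Grothendieck ring, all $q$-characters of Kirillov--Reshetikhin modules have nonnegative coefficients, and $e^{w-\bfc v}=\prod_r e^{w_r-\bfc v_r}$, it follows that $e^{w-\bfc v}$ occurs in $\chi_q(KR_W)=\chi_q(L(\Psi_w))$; being distinct from the $\ell$-highest monomial $m$, it is not $\ell$-dominant, since a simple module has a unique $\ell$-dominant monomial.

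Combining the previous paragraphs, each of the four modules has $q$-character with $m$ as its only $\ell$-dominant monomial, so by \cite{FM01} it is simple; its $\ell$-highest weight being $\Psi_w$, it is isomorphic to $L(\Psi_w)=KR_W$. As in Theorem~\ref{thm:HL1}, the pushforward along the closed embedding $\frakL^\bullet(W)\subset\frakM^\bullet(W)$, the topologization map, and the Chern character are $\U_\zeta(L\frakg)$-intertwiners between these simple modules, hence isomorphisms, so all four modules coincide with $KR_W$. The main obstacle is the combinatorial point in the first paragraph: that the conditions $\mathrm{(a)}$, $\mathrm{(b)}$ really do reduce to a nested-$q$-segment configuration and hence to the simplicity of $KR_W$. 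This is exactly where the hypotheses are used, and it requires unwinding the interval identities $[k_r+2-2l_r,k_r]=(k_r-2\bbN)\cap[l,k_r]$ (and their mirror in $\mathrm{(b)}$) together with the simplicity criteria and the right-negativity lemmas of \cite{H06}, \cite{HL16}; a secondary subtlety to check is that the conic/limit argument for $\crit(f_\gamma^\bullet)$ goes through in the graded setting, exactly as in the proof of Theorem~\ref{thm:HL1}.
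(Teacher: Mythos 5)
Your argument diverges from the paper's at a crucial point and contains a genuine gap. The paper never tries to show that $e^{w-\bfc v}$ occurs in $\chi_q(KR_W)$; instead it uses the socle of $I_\gamma$ to show directly that every monomial $e^{w-\bfc v}$ with $v\neq 0$ and $\widetilde\Gr^\bullet_v(I_\gamma)\neq\emptyset$ is right-negative. In case (b) the interval identity forces the socle simples of $I_\gamma$ to sit in cohomological levels $l$ or $l+1$, while the top monomial $m$ lives in levels $\leq l$; hence $e^{w-\bfc v}\in m\bigl(A_{i,l}^{-1}\calA+A_{i,l+1}^{-1}\calA\bigr)$ is right-negative by \cite{FM01}, \cite{H06}. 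Case (a) is then reduced to case (b) via the duality isomorphism $\omega:\frakM^\bullet(W)\to\frakM^\bullet(D(W))$ from \cite[\S 4.6]{VV03}, which intertwines $f^\bullet_\gamma$ and $f^\bullet_{{}^t\gamma}$ and reverses $q$-characters. The Frenkel--Mukhin criterion (unique $\ell$-dominant monomial $\Rightarrow$ simple) is then applied to $q\ch(M)$, and simplicity of $KR_W$ (from \cite[thm.~4.11]{FH15}) identifies the simple quotient.

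Your replacement argument rests on the statement that a simple $\U_\zeta(L\frakg)$-module has a unique $\ell$-dominant monomial in its $q$-character. This is the \emph{converse} of the Frenkel--Mukhin criterion and is false in general: simple modules can have several $\ell$-dominant monomials (being ``special'' is strictly stronger than being simple). So even granting that $e^{w-\bfc v}$ occurs in $\chi_q(KR_W)$, you cannot conclude that it fails to be $\ell$-dominant merely from the simplicity of $KR_W$; one needs the stronger fact that $KR_W$ is special, which is precisely what the paper's socle argument establishes. There is also a secondary unaddressed gap: non-emptiness of $\widetilde\Gr^\bullet_{v_r}(I_r)$ does not by itself give a \emph{positive} Euler characteristic, so it does not immediately imply that $e^{w_r-\bfc v_r}$ appears in $\chi_q(KR^{l_r}_{i_r,\kappa_r})$; this positivity would have to be justified separately, whereas the paper sidesteps it entirely by working with right-negativity rather than with occurrence in a $q$-character. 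Your decomposition $v=\sum_r v_r$ via intersecting/projecting $N\subseteq I_\gamma$ along the summands is correct and could be salvaged by applying the socle/right-negativity argument summand by summand (noting that products of right-negative monomials are right-negative), but as written the proof does not close.
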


\begin{proof}
In both cases the $\U_\zeta(L\frakg)$-module $KR_W$ is irreducible by
\cite[thm.~4.11]{FH15}. 
Let $M$ denote either
$K(\frakM^\bullet(W),f_\gamma^\bullet)$ or
$K(\frakM^\bullet(W),f_\gamma^\bullet)_{\frakL^\bullet(W)}$
or their cohomological analogues.
We define accordingly
$M_v=K(\frakM^\bullet(v,W),f_\gamma^\bullet)$ or
$K(\frakM^\bullet(v,W),f_\gamma^\bullet)_{\frakL^\bullet(v,W)}$
or their cohomological analogues.
The definition of the $\U_\zeta(L\frakg)$-action on $M$ in Theorem \ref{thm:moduleK}
implies that $M_v$ 
is an $\ell$-weight space of $\ell$-weight $\Psi_{w-\bfc v}$.
Further, the homeomorphism 
$\crit(f_\gamma^\bullet)\cap\frakL^\bullet(W)=\widetilde\Gr^\bullet\!(I_\gamma)$
in Proposition \ref{prop:HL1} yields
\begin{align}\label{bound}\crit(f_\gamma^\bullet)\cap\frakM^\bullet(v,W)\neq\emptyset
\Rightarrow 
\widetilde\Gr_v^\bullet(I_\gamma)\neq\emptyset,\end{align}
Now, we consider the cases (a) and (b) separately.
We'll abreviate $\calA=\bbZ[A_{j,r}^{-1}\,;\,(j,r)\in I^\bullet]$. 

Let us prove (b).
Any non zero graded $\widetilde\Pi$-submodule of $I_\gamma$ intersects the socle of $I_\gamma$.
We have
\begin {align}\label{dimsocle}
\dim\soc(I_\gamma)=\sum_{r=1}^s\delta_{i_r,k_r-1+2l_r}=
\sum_{\substack{1\leqslant r\leqslant s\\k_r\in l+2\bbZ}}\delta_{i_r,l+1}+
\sum_{\substack{1\leqslant r\leqslant s\\k_r\in l-1+2\bbZ}}\delta_{i_r,l}.
\end{align}
From \eqref{bound}, \eqref{dimsocle} and \eqref{AY} we deduce that
$$q\ch(M)\in m\Big(1+\sum_i(A_{i,l+1}^{-1}\calA+A_{i,l}^{-1}\calA)\Big)
,\quad
m=\prod_{r=1}^s\prod_{k=kr}^{k_r+2l_r-2}Y_{i_r,k}.$$
Hence, all monomials in $q\ch(M)$ are right-negative except $m$ by  \cite{FM01}, \cite{H06}.
Thus, the $\U_\zeta(L\frakg)$-module $M$ is irreducible and is isomorphic to $KR_W$.

Now we prove (a). 
We equip the categories $\bfC$ and $\bfC^\bullet$ with the duality functors such that
$D(W)_i=(W_i)^\vee$ and $D(W)_{i,r}=(W_{i,-r})^\vee$ respectively.
By \cite[\S 4.6]{VV03}, 
for each $W\in\bfC$ there is an isomorphism of algebraic varieties
$\omega:\frakM(W)\to\frakM(D(W))$ which intertwines the action of the element
$(g,z)\in G_W\times\bbC^\times$ with the action of the element $({}^tg^{-1},z)\in G_{D(W)}\times\bbC^\times$.
Taking the fixed points locus of some one parameter subgroups of
$G_W\times\bbC^\times$ and $G_{D(W)}\times\bbC^\times$
acting on the quiver varieties, we get for each $W\in\bfC^\bullet$
an isomorphism of algebraic varieties
$\omega:\frakM^\bullet(W)\to\frakM^\bullet(D(W))$
which intertwines the functions $f^\bullet_\gamma$ and $f^\bullet_{{}^t\gamma}$ for each element
$\gamma\in\frakg_W^2$. 
Here, the transpose ${}^t\gamma$ is viewed as an element in $\frakg^2_{D(W)}$.
Set
$\overline{M}=K(\frakM^\bullet(D(W)),f_{{}^t\gamma}^\bullet)$,
$K(\frakM^\bullet(D(W)),f_{{}^t\gamma}^\bullet)_{\frakL^\bullet(D(W))}$
or their cohomological analogue.
The map $\omega$ yields a vector space isomorphism $M\to \overline{M}$.
Both spaces $M$ and $\overline{M}$ are equipped with a representation of $\U_\zeta(L\frakg)$.
Let $i\mapsto i^*$ be the involution of the set $I$ such that $w_0\alpha_i=-\alpha_{i^*}$,
where $\alpha_i$ is the simple root corresponding to the vertex $i$.
By \cite[lem.~4.6]{VV03} we have
$q\ch(\overline{M})=\overline{q\ch(M)},$
where $f\mapsto \overline f$ is the involution of the ring $\bbZ[Y_{i,r}^{\pm 1}]$ such that
$\overline{Y_{i,r}}=Y_{i^*,h-2-r}$ and $h$ is the Coxeter number.
Now, we  apply the argument in the proof of case (b) with $M$ replaced by $\overline M$.
We deduce that the $q$-character $q\ch(\overline{M})$ admits at most one $\ell$-dominant monomial.
Hence $q\ch(M)$ admits also at most one $\ell$-dominant monomial.
Thus the $\U_\zeta(L\frakg)$-module $M$ is irreducible and the isomorphism $M=KR_W$ follows.
\end{proof}

\begin{remark}
\hfill
\begin{enumerate}[label=$\mathrm{(\alph*)}$,leftmargin=8mm]
\item
H. Nakajima informed us that, in an unpublished work with A. Okounkov,
they prove a statement
similar to Theorem \ref{thm:HL1}.
\item
Let $\chi(X,\calL)$ be the Euler characteristic of  $H^\bullet_c(X,\calL)$.
By \cite[thm.~4.8]{HL16} we have
$\chi\big(\widetilde\Gr^\bullet\!(I_{i,k}^l),\bbC\big)=\dim KR_{i,k}^l$.
By Proposition \ref{prop:HL1} and Theorem \ref{thm:HL1} we have
an $\U_\zeta(L\frakg)$-module isomorphism
$$H^\bullet\big(\widetilde\Gr^\bullet\!(I_{i,k}^l),\calL_{\gamma_{i,k}^l}\big)=KR_{i,k}^l.$$
\item
Shipman's work in \cite[prop.~7.2]{ST11} implies that the 
homeomorphism in Theorem \ref{thm:HL1}
is an isomorphism of algebraic varieties. 
\item
Let $W\in\bfC^\bullet$ and $A\subset G_W\times\bbC^\times$ be as in $\S\ref{sec:graded quiver}$.
Proposition \ref{prop:Thomason} yields
\begin{align*}
K_A(\frakM(W),f_\gamma)\otimes_RF&=K(\frakM^\bullet(W),f_\gamma^\bullet)\otimes F
\\
K_A(\frakM(W),f_\gamma)_{\frakL(W)}\otimes_RF&=
K(\frakM^\bullet(W),f_\gamma^\bullet)_{\frakL^\bullet(W)}\otimes F.
\end{align*} 
\item If $\gamma=0$, then $\widetilde\Pi^0=\overline\Pi$ and
Proposition \ref{prop:HL1} reduces to Lusztig's realization of the 
nilpotent graded quiver variety $\frakL^\bullet(W)$ as the graded quiver Grassmanian
$\Gr^\bullet(I_0)$ of the injective $\bar\Pi$-module $I_0$.
\end{enumerate}
\end{remark}

\subsection{CCA's and shifted quantum loop groups} 

\subsubsection{The critical cohomology of triple quiver varieties as a limit of critical cohomology of Nakajima's quiver varieties}\label{sec:HL2}

Fix $W\in \bfC^\bullet$ and $\gamma\in\frakg^2_W$.
We consider the function 
$\tilde f^\bullet_{\!\gamma}:\widetilde\frakM^\bullet(W)\to\bbC$
given by
$\tilde f^\bullet_{\!\gamma}=
\tilde f^\bullet_{\!1}-f_\gamma^\bullet$.

\begin{lemma}\label{lem:crit4} We have the following isomorphisms\hfill
\begin{enumerate}[label=$\mathrm{(\alph*)}$,leftmargin=8mm]
\item $H^\bullet(\widetilde\frakM^\bullet(W),\tilde f_{\!\gamma}^\bullet)=
H^\bullet(\frakM^\bullet(W),f_\gamma^\bullet)$,
\item
$K(\widetilde\frakM^\bullet(W),\tilde f_{\!\gamma}^\bullet)=
K(\frakM^\bullet(W),f_\gamma^\bullet)$,
\item
$\crit(\tilde f^\bullet_{\gamma})=\crit(f^\bullet_\gamma)$.
\end{enumerate}
\end{lemma}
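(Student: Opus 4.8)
The plan is to reduce all three statements to the dimensional-reduction and support principles already set up in the paper. The starting observation is that on $\widetilde\frakM^\bullet(W)$ the function $\tilde f^\bullet_{\!\gamma}=\tilde f^\bullet_{\!1}-f^\bullet_\gamma$ still depends on the extra variable $\varepsilon$ only through a linear term: indeed $\tilde f^\bullet_{\!1}$ is the trace of $\bfw^\bullet_1=\bfw^\bullet_2+\sum\varepsilon\, a^*a$, which is visibly linear in $\varepsilon$ on $\widetilde\frakM^\bullet(W)_\circ$, and $f^\bullet_\gamma=\langle\gamma,\mu_W\rangle$ does not involve $\varepsilon$ at all. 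So, exactly as in the proof of Proposition \ref{prop:crit1}(b), I would first use \cite[cor.~3.18]{PV11} to replace $\widetilde\frakM^\bullet(W)$ by the open locus $\widetilde\frakM^\bullet(W)_\circ$ (this uses part (c), so I would prove (c) first, or note that the argument only needs $\crit\subset\widetilde\frakM^\bullet(W)_\circ$, which follows from the explicit form of the equations).

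For part (c): a point of $\crit(\tilde f^\bullet_\gamma)\cap\widetilde\frakM^\bullet(v,W)$ is a $G_V^0$-orbit of a stable tuple $(x,\varepsilon)$ with $\varepsilon\in\frakg^2_V$ satisfying the Euler-Lagrange equations of $\tilde f^\bullet_1-f^\bullet_\gamma$; differentiating in $\varepsilon$ gives $\mu_V(x)+a^*a$-type relations and differentiating in the $\overline Q$-variables gives $[\gamma\oplus\varepsilon,x]=0$ together with the moment map condition. The key point, borrowed verbatim from the proof of Proposition \ref{prop:crit1}(a): the $\varepsilon$-derivative forces $\Im(\varepsilon)$ to be a subrepresentation of the path algebra contained in $\ker a$, hence $\varepsilon=0$ by stability, and what remains is exactly the system $\{x\in\mu_V^{-1}(0)^\bullet_s,\ [\gamma\oplus\varepsilon,x]=0\}$ defining $\crit(f^\bullet_\gamma)$ via Proposition \ref{prop:crit2}(b). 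Wait — one must be careful: for $\crit(f^\bullet_\gamma)$ the element $\varepsilon$ need not vanish. The correct statement is that the two critical loci coincide as subschemes of $\widetilde\frakM^\bullet(W)$ after identifying $\frakM^\bullet(W)$ with its image under the zero-$\varepsilon$ section (for $\tilde f^\bullet_1$) and with $\crit(f^\bullet_\gamma)$ directly; concretely both equal $\{x\in\mu_V^{-1}(0)^\bullet_s:\exists!\,\varepsilon,\ [\gamma\oplus\varepsilon,x]=0\}/G_V^0$. So I would state (c) as an equality of closed subsets of $\widetilde\frakM^\bullet(W)$ and check it by comparing the two defining ideals, which is a short linear-algebra computation with $\partial/\partial\varepsilon$.

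For parts (a) and (b): apply dimensional reduction. On $\widetilde\frakM^\bullet(W)_\circ$ consider the affine bundle $\rho:\widetilde\frakM^\bullet(W)_\circ\to\{\varepsilon=0\}=\frakM^\bullet(W)$ forgetting $\varepsilon$. Since $\tilde f^\bullet_\gamma|_{\widetilde\frakM^\bullet(W)_\circ}$ is affine-linear along the fibers of $\rho$ — its $\varepsilon$-linear part is $\Tr(\varepsilon(\mu_V(x)+a^*a))$ and its $\varepsilon$-constant part is $\tilde f^\bullet_{2,\circ}\circ\rho - f^\bullet_\gamma\circ\rho$ — I can invoke Isik's theorem \cite{I12} (resp. \cite[thm.~1.2]{H17b}) in K-theory and the cohomological dimensional reduction statement to identify $K(\widetilde\frakM^\bullet(W)_\circ,\tilde f^\bullet_{\gamma,\circ})$ with the critical K-theory of the zero locus of the $\varepsilon$-derivative, cut out by $\{\mu_V(x)+a^*a=0\}=\{\mu_V(x)=0\}$ (using $a^*=0$ on this locus — actually $a^*a$ where $a^*$ is the $Q'_1$-dual; on $\widetilde\frakM^\bullet$ the relevant relation is $\mu_V^\bullet(x)=0$), equipped with the residual potential $\tilde f^\bullet_2-f^\bullet_\gamma$ restricted there. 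On that locus $\tilde f^\bullet_2$ and $f^\bullet_\gamma$ both vanish by the computation in the proof of Proposition \ref{prop:LG}(a) ($f_\gamma(\ux)=\Tr_W(\gamma aa^*)=\Tr_V(\varepsilon a^*a)=-\Tr_V(\varepsilon[\alpha,\alpha^*])=0$, and $\tilde f^\bullet_2$ vanishes on $\mu_V^{-1}(0)$), so the residual potential is $0$ and we get $K(\frakM^\bullet(W))$ — but that is not quite what we want; we want $K(\frakM^\bullet(W),f^\bullet_\gamma)$. So the cleaner route is to \emph{not} use all of $\bfw^\bullet_1$ but rather to split $\tilde f^\bullet_\gamma = (\tilde f^\bullet_1 - \tilde f^\bullet_2) - (f^\bullet_\gamma - \tilde f^\bullet_2) $ and observe the first bracket $=\Tr(\varepsilon a^*a)$ is the only $\varepsilon$-linear piece while $f^\bullet_\gamma$ and $\tilde f^\bullet_2$ are $\varepsilon$-independent; applying dimensional reduction along $\rho$ contracts the $\Tr(\varepsilon a^*a)$ part and leaves the potential $-f^\bullet_\gamma$ (since $\tilde f^\bullet_2$ restricted to $\mu_V^{-1}(0)^\bullet_s$ vanishes), giving $K(\frakM^\bullet(W),f^\bullet_\gamma)$, resp. $H^\bullet(\frakM^\bullet(W),f^\bullet_\gamma)$. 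The main obstacle is bookkeeping the precise residual locus and potential after dimensional reduction — making sure the $a^*$-directions and the $\varepsilon$-directions interact correctly and that one genuinely lands on $(\frakM^\bullet(W),f^\bullet_\gamma)$ rather than on $(\frakM^\bullet(W),0)$; this is exactly the kind of computation Remark \ref{rem:VV}(b) alludes to for $\tilde f^\bullet_1$ alone, and the present lemma is the $\gamma$-deformed refinement. Once the locus and potential are pinned down, (a) and (b) are immediate from \cite{I12}, \cite[thm.~1.2]{H17b}, and the cohomological analogue, and (c) is the scheme-theoretic shadow of the same computation.
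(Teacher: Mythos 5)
The overall strategy you adopt is the right one and is the paper's own: restrict to the open subset $\widetilde\frakM^\bullet(W)_\circ$ by observing the critical locus lives there (via \cite[cor.~3.18]{PV11}), then apply \emph{deformed} dimensional reduction along the vector bundle $\rho_1$ forgetting $\varepsilon$, citing \cite[thm.~1.2]{H17b} in K-theory and its cohomological analogue \cite[thm.~1.2]{DP22}, and deduce (c) from Proposition \ref{prop:crit2}. Your difficulty lies exactly where you flag it ("the main obstacle is bookkeeping the precise residual locus and potential"), and in fact the split you finally settle on is wrong.

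The error is your claim that $\tilde f^\bullet_2$ is $\varepsilon$-independent. It is not: $\tilde f^\bullet_2$ is the trace of $\bfw^\bullet_2=\sum\varepsilon_{i,k-2}\,\alpha_{ij,k-1}\alpha_{ji,k}-\cdots$, which is linear in $\varepsilon$. Consequently the "cleaner" split $\tilde f^\bullet_\gamma=(\tilde f^\bullet_1-\tilde f^\bullet_2)-(f^\bullet_\gamma-\tilde f^\bullet_2)$ does \emph{not} separate $\varepsilon$-linear from $\varepsilon$-constant, and dimensional reduction in the form you invoke cannot be applied to it. Your first instinct was closer: the correct decomposition is $\tilde f^\bullet_{\!\gamma,\circ}=\tilde f^\bullet_{\!1,\circ}-\rho_1^*f^\bullet_\gamma$, with the whole of $\tilde f^\bullet_1=\Tr(\varepsilon\,\mu_V)$ (not $\Tr(\varepsilon(\mu_V+a^*a))$, which double-counts $a^*a$ since $\mu_V=[\alpha,\alpha^*]+a^*a$ already) as the $\varepsilon$-linear piece and $-f^\bullet_\gamma=-\Tr_W(\gamma aa^*)$ as the $\varepsilon$-constant piece. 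Then $\partial\tilde f^\bullet_{1,\circ}/\partial\varepsilon=\mu_V$, and the residual locus after dimensional reduction is $\{\varepsilon=0,\ \mu_V(x)=0\}=\frakM^\bullet(W)$, carrying the residual potential $f^\bullet_\gamma$ (up to sign). Note also that $\{\varepsilon=0\}$ inside $\widetilde\frakM^\bullet(W)_\circ$ is \emph{not} $\frakM^\bullet(W)$ — it is the bigger base of the bundle $\rho_1$, and $\frakM^\bullet(W)$ is cut out inside it by the moment map equation coming from $\partial/\partial\varepsilon$. With the correct split there is nothing further to bookkeep, and you land on $(\frakM^\bullet(W),f^\bullet_\gamma)$ precisely.

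Two smaller remarks. First, for the preliminary reduction to $\widetilde\frakM^\bullet(W)_\circ$ one does need an argument, not just "the explicit form of the equations": the point, as the paper checks, is that if $(x,\varepsilon)\in\crit(\tilde f^\bullet_\gamma)$ is stable for $\widetilde Q_f$ then any $\overline Q$-subrepresentation of $V$ contained in $\Ker(a)$ generates a $\widetilde Q$-subrepresentation still in $\Ker(a)$ (using $[\varepsilon,\alpha]=0$ and $\gamma a=a\varepsilon$), hence vanishes; so $x$ is $\circ$-stable. Second, for (c), the critical loci are identified as abstract varieties via the description in Proposition \ref{prop:crit2} rather than as literal subsets of the same ambient space; your "wait — one must be careful" hesitation is on the right track, and the paper's phrasing resolves it in one line by simply referring to that proposition.
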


\begin{proof}
Let $\tilde f^\bullet_{\!\gamma,\circ}$ be the restriction  of $\tilde f^\bullet_{\!\gamma}$
to the open subset $\widetilde\frakM^\bullet(W)_\circ\subset\widetilde\frakM^\bullet(W)$ introduced in 
\S\ref{sec:triple quiver}.
We first claim that the sets 
$\crit(\tilde f^\bullet_{\gamma,\circ}),\crit(\tilde f^\bullet_\gamma)\subset\widetilde\frakM^\bullet(W)$ coincide.
Hence
$$H^\bullet(\widetilde\frakM^\bullet(W)_\circ,\tilde f_{\!\gamma,\circ}^\bullet)=
H^\bullet(\widetilde\frakM^\bullet(W),\tilde f_{\!\gamma}^\bullet),\quad
K(\widetilde\frakM^\bullet(W)_\circ,\tilde f_{\!\gamma,\circ}^\bullet)=
K(\widetilde\frakM^\bullet(W),\tilde f_{\!\gamma}^\bullet).$$
Forgetting the variable $\varepsilon$ yields a vector bundle
\begin{align}\label{rho1}
\rho_1:\widetilde\frakM^\bullet(W)_\circ\to\big\{\ux\in\widetilde\frakM^\bullet(W)_\circ\,;\,
\varepsilon=0\big\}
\end{align}
We have 
$\tilde f^\bullet_{\!\gamma,\circ}=\tilde f^\bullet_{\!1,\circ}-\rho_1^*\Tr_W(\gamma a a^*)$
and
\begin{align*}
\frakM^\bullet(W)&=\big\{\ux\in\widetilde\frakM^\bullet(W)_\circ\,;\,
\varepsilon=0\,,\,\partial\tilde f^\bullet_{1,\circ}/\partial\varepsilon(\ux)=0\big\}
\end{align*}
Hence, the deformed dimensional reduction along the variable $\varepsilon$ 
 in cohomology \cite[thm.~1.2]{DP22} yields Part (a),
and the deformed dimensional reduction in K-theory \cite[thm.~1.2]{H17b} 
yields Part (b). Part (c) follows from Proposition \ref{prop:crit2}.

To prove the claim we must check that if $x\in\crit(\tilde f^\bullet_\gamma)$ is stable,
then it is $\circ$-stable. By hypothesis, we have
$[\gamma\oplus\varepsilon,x]=\mu_V(x)=0.$
We equip the vector space $V$ with the representation of $\widetilde Q$ given by $x$.
For each subrepresentation $V'\subset V$ of $\overline Q$ contained in $\Ker(a)$,
the subrepresentation of $\widetilde Q$ in $V$ generated by $V'$ is also contained in $\Ker(a)$
because $\alpha\circ\varepsilon=\varepsilon\circ\alpha$ and $\gamma\circ a=a\circ\varepsilon$.
Hence it is zero because $x$ is stable.
\end{proof}

Now, fix $W\in\bfC^\bullet$ and fix tuples $(i_r,k_r)\in I^\bullet$ with 
$r=1,2,\dots,s$ such that $\dim W=\sum_{r=1}^s\delta_{i_r,k_r}.$  
For each positive integers $l_1,\dots,l_s$, 
let $W_l\in\bfC^\bullet$ and $\gamma_l\in\frakg_{W_l}^2$ be such that
\begin{align}\label{WWg}
W_l=\bigoplus_{r=1}^sW_{i_r,1+k_r-l_r}^{l_r}
 ,\quad
\gamma_l=\bigoplus_{r=1}^s\gamma_{i_r,1+k_r-l_r}^{l_r}.
\end{align}
Note that $W=\Ker(\gamma_l)$ and that the socle of $I_W$ has the same dimension as $W$
in $\bbN I^\bullet$.
We'll need the following result, which can be viewed as a geometric analogue of
the limit procedure on normalized $q$-characters in \cite{HJ12}.

\begin{theorem}\label{thm:limH}
Fix $v\in\bbN I^\bullet$.
If $l_r\gg 0$ for each $r$, we have
\begin{enumerate}[label=$\mathrm{(\alph*)}$,leftmargin=8mm]
\item 
$H^\bullet(\widetilde\frakM^\bullet(v,W)\,,\,\tilde f^\bullet_2)=
H^\bullet(\widehat\frakM^\bullet(v,W)\,,\,\hat f^\bullet_2)=
H^\bullet(\frakM^\bullet(v,W_l)\,,\,f^\bullet_{\gamma_l})$,
\item
$K(\widetilde\frakM^\bullet(v,W)\,,\,\tilde f^\bullet_2)=
K(\widehat\frakM^\bullet(v,W)\,,\,\hat f^\bullet_2)=
K(\frakM^\bullet(v,W_l)\,,\,f^\bullet_{\gamma_l}).$
\end{enumerate}
\end{theorem}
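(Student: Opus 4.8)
The plan is to compare the three LG-models appearing in the statement by a chain of isomorphisms, the first of which is essentially formal and the second of which is the genuine "limit procedure" content. First I would observe that the equality $H^\bullet(\widetilde\frakM^\bullet(v,W),\tilde f^\bullet_2)=H^\bullet(\widehat\frakM^\bullet(v,W),\hat f^\bullet_2)$ (and its K-theory analogue) is immediate from the fact, recorded in \S\ref{sec:Jordan2}, that forgetting the variable $a^*$ defines a vector bundle $\rho_2:\widetilde\frakM^\bullet(W)\to\widehat\frakM^\bullet(W)$ with $\tilde f^\bullet_2=\hat f^\bullet_2\circ\rho_2$; one then applies Proposition \ref{prop:Thom} in K-theory and the pull-back isomorphism for an affine fibration in critical cohomology (\S\ref{sec:critalg2}, following \cite{D17}). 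So the real task is the second equality, relating $\widehat\frakM^\bullet(v,W)$ equipped with $\hat f^\bullet_2$ to Nakajima's graded quiver variety $\frakM^\bullet(v,W_l)$ equipped with the deformed potential $f^\bullet_{\gamma_l}$.

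For that I would proceed in two moves. The first is to replace $\hat f^\bullet_2$ on $\widehat\frakM^\bullet(v,W)$ by the deformed potential $\tilde f^\bullet_{\!\gamma_l}=\tilde f^\bullet_1-f^\bullet_{\gamma_l}$ on $\widetilde\frakM^\bullet(v,W_l)$, which by Lemma \ref{lem:crit4} has cohomology and K-theory equal to $H^\bullet(\frakM^\bullet(v,W_l),f^\bullet_{\gamma_l})$, respectively $K(\frakM^\bullet(v,W_l),f^\bullet_{\gamma_l})$. The second, which is the heart of the matter, is to identify the critical loci: one shows that for $l_r\gg 0$ (depending on the fixed $v$) the closed subscheme $\crit(\hat f^\bullet_2)\cap\widehat\frakM^\bullet(v,W)$ is isomorphic, $G^0_W$-equivariantly and compatibly with the ambient smooth stratification, to $\crit(\tilde f^\bullet_{\!\gamma_l})\cap\widetilde\frakM^\bullet(v,W_l)=\crit(f^\bullet_{\gamma_l})\cap\frakM^\bullet(v,W_l)$. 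Here I would use Proposition \ref{prop:crit2}, which describes $\crit(f^\bullet_{\gamma_l})\cap\frakM^\bullet(v,W_l)$ as stable tuples $(x,\varepsilon)$ with $[\gamma_l\oplus\varepsilon,x]=0$ and $\varepsilon$ nilpotent, together with the identification (Proposition \ref{prop:HL1} and the socle computation in its proof) of this with graded $\widetilde\Pi$-submodule Grassmannians $\widetilde\Gr^\bullet_v(I_{\gamma_l})$ of the injective module $I_{\gamma_l}=\bigoplus_r I^{l_r}_{i_r,1+k_r-l_r}$. Since $\dim V=v$ is fixed, once every $l_r$ exceeds the relevant bound the submodules of dimension $v$ of $I_{\gamma_l}$ "see" only a truncation of $I_{\gamma_l}$ that is independent of the $l_r$ (only the socle, whose dimension is $\dim W$, matters), and this stabilized object is exactly the data parametrized by $\widehat\frakM^\bullet(v,W)$ with the potential $\hat f^\bullet_2$ — i.e. $\widetilde\Pi$-submodules with a free $\bbC[\varepsilon]$-structure truncated at the level forced by $v$. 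I would make this precise by exhibiting a bijection of the corresponding sets of stable representations, equivariant for the group actions, matching the vector bundle stratifications coming from Byalinicki–Birula.

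Finally, to upgrade this set-theoretic/stacky comparison to the desired isomorphisms of cohomology and K-theory, I would invoke that matrix factorizations are supported on the critical locus (\cite[cor.~3.18]{PV11}) — so $K(\widehat\frakM^\bullet(v,W),\hat f^\bullet_2)$ depends only on a formal neighbourhood of $\crit(\hat f^\bullet_2)$, and likewise in the deformed setting — together with the Thom–Sebastiani-type invariance already used in Lemma \ref{lem:crit4}, namely deformed dimensional reduction along $\varepsilon$ in K-theory (\cite[thm.~1.2]{H17b}) and in cohomology (\cite[thm.~1.2]{DP22}). Assembling: $K(\widehat\frakM^\bullet(v,W),\hat f^\bullet_2)=K(\widetilde\frakM^\bullet(v,W),\tilde f^\bullet_2)=K(\widetilde\frakM^\bullet(v,W_l),\tilde f^\bullet_{\!\gamma_l})=K(\frakM^\bullet(v,W_l),f^\bullet_{\gamma_l})$, and the same chain in $H^\bullet$. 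The main obstacle I anticipate is pinning down the explicit quantitative bound on the $l_r$ and checking that the stabilized injective module $I_{\gamma_l}$ restricted to dimension-$v$ submodules really recovers the Quot-scheme-type description underlying $\widehat\frakM^\bullet(v,W)$ in a way that respects all the equivariant structures and the stratifications — this is a somewhat delicate "truncation stabilizes" argument combining the module-theoretic picture of \cite{ST11}, \cite{FM21} with the geometry of \S\ref{sec:triple quiver}, rather than any single hard theorem.
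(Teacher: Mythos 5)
Your first step — using the vector bundle $\rho_2$ that forgets $a^*$ together with Proposition \ref{prop:Thom} / affine-fibration invariance to get $K(\widetilde\frakM^\bullet(v,W),\tilde f^\bullet_2)=K(\widehat\frakM^\bullet(v,W),\hat f^\bullet_2)$ and likewise in cohomology — is exactly what is needed, and your use of Lemma \ref{lem:crit4} to turn $(\frakM^\bullet(v,W_l),f^\bullet_{\gamma_l})$ into $(\widetilde\frakM^\bullet(v,W_l)_\circ,\tilde f^\bullet_{\gamma_l,\circ})$ is also correct. The gap is in the middle equality. You propose to prove
\[
K\big(\widetilde\frakM^\bullet(v,W),\tilde f^\bullet_2\big)
=K\big(\widetilde\frakM^\bullet(v,W_l),\tilde f^\bullet_{\!\gamma_l}\big)
\]
by identifying the critical loci of the two potentials (via Propositions \ref{prop:crit2}, \ref{prop:HL1} and the quiver Grassmannian picture) and then invoking that matrix factorizations are supported on the critical locus. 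This does not suffice: neither $K_G(X,f)_Z$ nor $H^\bullet_G(X,f)_Z$ is an invariant of the critical locus as a scheme or topological space. Knowing $\crit(f_1)\cong\crit(f_2)$ — even equivariantly and compatibly with a stratification — does not give a comparison of the vanishing cycle sheaves $\phi^p_{f_1}\calC$ and $\phi^p_{f_2}\calC$, nor an equivalence of derived factorization categories, unless you also match germs of the ambient smooth varieties together with their potentials near the critical loci. \cite[cor.~3.18]{PV11} tells you where the support lives, not that the category is determined by the support.

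What the paper does instead is a \emph{second} deformed dimensional reduction, this time along the variable $a^*$, using the key splitting $\tilde f^\bullet_{\gamma_l,\circ}=\Tr_{W_l}([a,\varepsilon]a^*)+\rho_2^*\hat f^\bullet_2$. The first summand is linear in $a^*$ and vanishes precisely on the locus $[a,\varepsilon]=0$; the second is pulled back from the base $\widehat\frakM^\bullet(v,W_l)_\circ$ of the bundle $\rho_2$. Applying \cite[thm.~1.2]{DP22} (resp.\ \cite[thm.~1.2]{H17b}) to this decomposition replaces $(\widetilde\frakM^\bullet(v,W_l)_\circ,\tilde f^\bullet_{\gamma_l,\circ})$ by the LG-model $(\overline\frakM^\bullet(v,W_l),\hat f^\bullet_2)$, where $\overline\frakM^\bullet(v,W_l)=\{\ux\in\widehat\frakM^\bullet(v,W_l)_\circ\,;\,[a,\varepsilon]=0\}$. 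The stabilization step is then an honest isomorphism of LG-models, not of critical loci: choosing a section $p:W_l\to W$ of the inclusion $W\hookrightarrow W_l=I_\gamma$, the assignment $(\alpha,a,0,\varepsilon)\mapsto(\alpha,p\circ a,0,\varepsilon)$ gives a closed embedding $\overline\frakM^\bullet(v,W_l)\hookrightarrow\widehat\frakM^\bullet(v,W)$ which is an isomorphism once every $l_r$ exceeds a bound depending on $v$, because $\Hom_{\bbC[\varepsilon]}(V,W_l)\to\Hom(V,W)$, $a\mapsto p\circ a$, is injective (as $W_l$ is cogenerated by its socle $W$) and surjective for $l_r\gg 0$ (as $\varepsilon$ acts nilpotently on $V$). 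This map manifestly intertwines the two copies of $\hat f^\bullet_2$, so the critical K-theories and cohomologies agree by functoriality. You had the right intuition that the truncation stabilizes, and even foresaw that the stabilized object should be $\widehat\frakM^\bullet(v,W)$ with $\hat f^\bullet_2$; but to make it a proof you need to exhibit the scheme isomorphism carrying one potential to the other, and the deformed dimensional reduction along $a^*$ is what produces the correct variety $\overline\frakM^\bullet(v,W_l)$ on which to build that isomorphism.
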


\begin{proof}
Recall that $\tilde f^\bullet_{\!\gamma,\circ}$ is the restriction  of $\tilde f^\bullet_{\!\gamma}$
to $\widetilde\frakM^\bullet(W)_\circ$.
By Lemma \ref{lem:crit4}, 
we have
\begin{align}\label{form5}
H^\bullet(\widetilde\frakM^\bullet(v,W_l)_\circ\,,\,\tilde f^\bullet_{\gamma_l,\circ})
=H^\bullet(\frakM^\bullet(v,W_l)\,,\,f^\bullet_{\gamma_l})
\end{align}
Forgetting the variable $a^*$ yields a vector bundle
\begin{align}\label{rho2}\rho_2:\widetilde\frakM^\bullet(v,W_l)_\circ\to
\widehat\frakM^\bullet(v,W_l)_\circ.\end{align}
Set
$\overline\frakM^\bullet(v,W_l)
=\big\{\ux\in\widehat\frakM^\bullet(v,W_l)_\circ\,;\,
[a,\varepsilon]=0\big\}$.
We may view $W_l$ as a finite dimensional graded $\bbC[\varepsilon]$-module
with socle $W$ such that $\varepsilon$ acts as $\gamma_l$.
We have
\begin{align}\label{split}
\tilde f^\bullet_{\gamma_l,\circ}=\Tr_{W_l}([a,\varepsilon] a^*)+\rho_2^*\hat f^\bullet_2.
\end{align}
The deformed dimensional reduction
\cite[thm~1.2]{DP22} along the variable $a^*$ yields
\begin{align}\label{form6}
H^\bullet(\widetilde\frakM^\bullet(v,W_l)_\circ\,,\,\tilde f^\bullet_{\gamma_l,\circ})=
H^\bullet(\overline\frakM^\bullet(v,W_l)\,,\,\hat f^\bullet_2)
\end{align}
Composing \eqref{form5} and \eqref{form6} we get an isomorphism
\begin {align}\label{form7}
H^\bullet(\frakM^\bullet(v,W_l)\,,\,f^\bullet_{\gamma_l})=
H^\bullet(\overline\frakM^\bullet(v,W_l)\,,\,\hat f^\bullet_2).
\end{align}
To prove the claim (a), we must prove that there is an isomorphism
\begin{align}\label{form8}
H^\bullet(\widehat\frakM^\bullet(v,W)\,,\,\hat f^\bullet_2)=
H^\bullet(\overline\frakM^\bullet(v,W_l)\,,\,\hat f^\bullet_2)\quad\text{if}\quad l_1,\dots,l_s\gg 0
\end{align}
Let $i\in\Hom_{\bfC^\bullet}(W,W_l)$ be the obvious inclusion.
Fix $p\in\Hom_{\bfC^\bullet}(W_l,W)$ such that $p\circ i=\id$.
For each $\bbC[\varepsilon]$-module $V$ in $\bfC^\bullet$, 
the map $\Hom_{\bbC[\varepsilon]}(V,W_l)\to\Hom(V,W)$, $a\mapsto p\circ a$ is injective 
because the $\bbC[\varepsilon]$-module $W_l$ is cogenerated by $W$.
Further, it is invertible if $l_1,\dots,l_s$ are large enough, because $\varepsilon$ acts nilpotently on $V$.
Thus the assignment 
$(\alpha,a,0,\varepsilon)\mapsto (\alpha,p\circ a,0,\varepsilon)$ yields a closed embedding
$\overline\frakM^\bullet(v,W_l)\subset\widehat\frakM^\bullet(v,W)$ which
is an isomorphism if $l_1,\dots,l_s$ are large enough.

Next, we prove the claim (b).
By  Lemma \ref{lem:crit4}, we have
\begin{align*}
K(\widetilde\frakM^\bullet(v,W_l)_\circ\,,\,\tilde f^\bullet_{\gamma_l,\circ})=
K(\frakM^\bullet(v,W_l)\,,\,f^\bullet_{\gamma_l}).
\end{align*}
Forgetting the variable $a^*$ yields the vector bundle \eqref{rho2} such that
\eqref{split} holds.
Thus, the deformed dimensional reduction \cite[thm.~1.2]{H17b} along the variable $a^*$  yields
the isomorphism
\begin{align*}
K(\widetilde\frakM^\bullet(v,W_l)_\circ\,,\,\tilde f^\bullet_{\gamma_l,\circ})=
K(\overline\frakM^\bullet(v,W_l)\,,\,\hat f^\bullet_2)
\quad\text{if}\quad
l_1,\dots,l_s\gg 0.
\end{align*}
To apply the dimensional reduction, we need the map
$\hat f^\bullet_2$ on $\overline\frakM^\bullet(v,W_l)$ to be regular.
The variety $\overline\frakM^\bullet(v,W_l)$ may be not smooth, but
$\overline\frakM^\bullet(v,W_l)=\widehat\frakM^\bullet(v,W)$
if $l_1,\dots l_s$ are large enough, and $\widehat\frakM^\bullet(v,W)$ is smooth. 
So, we have proved that
$K(\widehat\frakM^\bullet(v,W)\,,\,\hat f^\bullet_2)=
K(\overline\frakM^\bullet(v,W_l)\,,\,\hat f^\bullet_2)$
if $l_1,\dots,l_s$ are large enough.
\end{proof}

\subsubsection{Representations of shifted quantum loop groups}\label{sec:HL2}

We now explain an analogue of Proposition \ref{prop:HL1} and
Theorem \ref{thm:HL1} for shifted quantum loop groups.
Fix $W\in\bfC^\bullet$ and fix tuples $(i_r,k_r)\in I^\bullet$ with 
$r=1,2,\dots,s$ such that $\dim W=\sum_{r=1}^s\delta_{i_r,k_r}.$  
We set
$I_W=\bigoplus_{r=1}^sI_{i_r,k_r}.$

\begin{proposition}\label{prop:HL3} For any $W\in\bfC^\bullet$
we have an homeomorphism 
$\crit(\tilde f_2^\bullet)\cap\widetilde\frakL^\bullet(W)=\widetilde\Gr^\bullet\!(I_W).$
\end{proposition}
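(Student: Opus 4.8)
The strategy is to reduce Proposition~\ref{prop:HL3} to a suitable limiting case of Proposition~\ref{prop:HL1}, using the dimensional-reduction isomorphisms assembled in Theorem~\ref{thm:limH} together with the explicit description of the modules $I_{i,k}$ as unions $I_{i,k}=\bigcup_{l>0}I^l_{i,k-l}$ coming from $I_{i,k}^l=D(\widetilde\Pi^le_i)[-k-l]$. Concretely, write $\dim W=\sum_{r=1}^s\delta_{i_r,k_r}$ and, for each choice of positive integers $l_1,\dots,l_s$, form $W_l$ and $\gamma_l$ as in \eqref{WWg}, so that $W=\Ker(\gamma_l)$ and $\soc(I_{W_l})$ has the same $\bbN I^\bullet$-dimension as $W$. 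By Proposition~\ref{prop:HL1} applied to the admissible triple built from $W_l$, $\gamma_l$ we have a homeomorphism
\begin{align*}
\crit(f^\bullet_{\gamma_l})\cap\frakL^\bullet(W_l)=\widetilde\Gr^\bullet(I_{\gamma_l})
\end{align*}
where $I_{\gamma_l}=\bigoplus_{r=1}^sI^{l_r}_{i_r,1+k_r-l_r}$.

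\textbf{Key steps.} First I would fix a dimension vector $v\in\bbN I^\bullet$ and run the limit procedure of Theorem~\ref{thm:limH}: the critical loci stabilize, so for $l_r\gg0$ the closed embedding $\overline\frakM^\bullet(v,W_l)\subset\widehat\frakM^\bullet(v,W)$ produced in the proof of that theorem is an isomorphism, and likewise (via Lemma~\ref{lem:crit4} and the deformed dimensional reductions along $\varepsilon$ and along $a^*$) the critical locus of $\tilde f^\bullet_2$ on $\widetilde\frakM^\bullet(v,W)$ is identified with the critical locus of $f^\bullet_{\gamma_l}$ on $\frakM^\bullet(v,W_l)$. Next I would match the Grassmannian sides: on submodules of dimension $v$, every graded $\widetilde\Pi$-submodule of $I_{W_l}$ must contain $\soc(I_{W_l})$, and for $l_r\gg0$ the truncation map $I_{W_l}\to I_W$ induces a bijection between $v$-dimensional submodules of $I_{W_l}$ and $v$-dimensional submodules of $I_W$, compatibly with the group actions; this is the module-theoretic shadow of the isomorphism $\overline\frakM^\bullet(v,W_l)=\widehat\frakM^\bullet(v,W)$. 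Concatenating these identifications on each graded piece indexed by $v$ gives the homeomorphism $\crit(\tilde f_2^\bullet)\cap\widetilde\frakL^\bullet(W)=\widetilde\Gr^\bullet(I_W)$, since $\tilde f_2^\bullet$ and $\hat f_2^\bullet$ have the same critical locus up to the vector bundle $\rho_2$ (Proposition~\ref{prop:Thom}/\ref{prop:crit2}) and both sides decompose as disjoint unions over $v$.

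\textbf{Main obstacle.} The delicate point is the interchange of limits: the homeomorphism must hold for $\widetilde\frakL^\bullet(W)$ as a whole, not merely on each $\widetilde\frakM^\bullet(v,W)$, yet the threshold ``$l_r\gg0$'' in Theorem~\ref{thm:limH} depends on $v$. One resolves this by noting that $\crit(\tilde f_2^\bullet)$ is a closed conic subset and, on a fixed connected component $\widetilde\frakM^\bullet(v,W)$, only finitely many submodule dimensions occur (because $I_W$ is finite-dimensional for fixed $W$, and the nilpotent condition bounds $v$); hence for each $v$ the identification is eventually stable, and the union over $v$ glues because the transition maps $I_{W_l}\to I_{W_{l'}}$ for $l\le l'$ are compatible. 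The second subtlety is checking that the homeomorphism $\widehat\Gr_v(I_W)=\mu_v^{-1}(0)_s^\nil$ of \cite{ST11} used in Proposition~\ref{prop:HL1} is compatible with passing from $I_{\gamma_l}$ (a module over $\widetilde\Pi^{l_r}$ on each summand) to $I_W$ (a module over $\widetilde\Pi$); this follows because $\widehat\Gr_v$ only sees the submodules of dimension $\le v$, which for $l_r$ large lie entirely in the $\widetilde\Pi^{l_r}$-truncation, so no information is lost. Once these stabilization bookkeeping points are dispatched, the proof is a direct concatenation of the cited isomorphisms.
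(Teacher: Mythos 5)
Your approach -- reducing Proposition~\ref{prop:HL3} to Proposition~\ref{prop:HL1} via the truncations $W_l$ and the stabilization in Theorem~\ref{thm:limH} -- is a genuinely different route from the paper's. The paper's proof is direct: it identifies $\crit(\tilde f_2^\bullet)\cap\widetilde\frakL^\bullet(v,W)$ concretely with the quotient of $\{x\in\overline\X^\bullet(V,W)^\nil_s\times\frakg_V^2\,;\,\mu_V(x)^\bullet=[\varepsilon,\alpha]=0\}$, exhibits an explicit $G_v^0$-equivariant map from the frame bundle $\widehat\Gr^\bullet_v(I_W)$ to this set using the $\widetilde\Pi$-action on $I_W$ and a choice of splitting $\ua:I_W\to W$, and invokes \cite[prop.~4.1]{ST11} to conclude that the induced map $\widetilde\Gr^\bullet_v(I_W)\to\crit(\tilde f_2^\bullet)\cap\widetilde\frakL^\bullet(v,W)$ is a homeomorphism. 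This is short and avoids any limit bookkeeping.

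Your route has real gaps. The central one is that Theorem~\ref{thm:limH} is a statement about critical cohomology and critical K-theory, not about the critical loci as topological spaces. To extract the geometric identification $\crit(\tilde f^\bullet_2)\cap\widetilde\frakL^\bullet(v,W)\cong\crit(f^\bullet_{\gamma_l})\cap\frakL^\bullet(v,W_l)$ you must track critical loci through both deformed dimensional reductions. Lemma~\ref{lem:crit4}(c) gives this for the reduction along $\varepsilon$, but there is no corresponding statement in the paper for the reduction along $a^*$, and it does not follow formally from the vanishing-cycle isomorphism of \cite{DP22}; proving it requires essentially the same direct analysis that the paper carries out anyway. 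Beyond this, several assertions in your sketch are incorrect or imprecise: $I_W$ is infinite-dimensional, since $I_{i,k}=D(\widetilde\Pi e_i)[-k]$ and $\widetilde\Pi=\overline\Pi\otimes\bbC[\varepsilon]$ is of infinite length; a nonzero graded $\widetilde\Pi$-submodule of $I_{\gamma_l}$ need only meet the socle nontrivially, not contain it; and the ``truncation map $I_{W_l}\to I_W$'' requires care, since $I^{l_r}_{i_r,1+k_r-l_r}=D(\widetilde\Pi^{l_r}e_{i_r})[-1-k_r]$ while $I_{i_r,k_r}$ is the union of $D(\widetilde\Pi^{l}e_{i_r})[-k_r]$, so the two sides are offset by a grading shift that must be reconciled before any such map exists. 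Your uniformity-over-$v$ discussion is conceptually sound once these are fixed, but the overall argument is substantially longer and more fragile than the paper's direct construction.
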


\begin{proof}
The set of stable nilpotent representations is
$$\widetilde\X^\bullet(V,W)_s^\nil=\{x\in\widetilde\X^\bullet(V,W)_s\,;\,
\ux\in\widetilde\frakL^\bullet(v,W)\}$$
Let $\widehat\X^\bullet(V,W)_s$ be the set of stable tuples in $\widehat\X^\bullet(V,W)$.
Note that 
$\widetilde\X^\bullet(V,W)_s^\nil\subset\widehat\X^\bullet(V,W)_s.$
We have 
\begin{align*}
\crit(\tilde f^\bullet_2)\cap\widetilde\frakL^\bullet(v,W)&=
\{x\in\widetilde\X^\bullet(V,W)_s^\nil\,;\,
[\alpha,\alpha^*]=[\varepsilon,\alpha]=0\}\,/\,G^0_V\\
%&=\{x\in\widetilde\X^\bullet(V,W)_s^\nil\,;\,\mu_V(x)^\bullet=[\varepsilon,\alpha]=0\}\,/\,G^0_V\\
&=\{x\in\overline\X^\bullet(V,W)^\nil_s\times\frakg_V^2\,;\,
\mu_V(x)^\bullet=[\varepsilon,\alpha]=0\}\,/\,G^0_V.
\end{align*}
Using this isomorphism, the proof is similar to the proof of Proposition \ref{prop:HL1}.
More precisely, let $\underline\alpha$ and $\underline\varepsilon$ denote the action of the
elements 
$\alpha,\varepsilon\in\widetilde\Pi$
on the module $I_W$. 
We identify $W$ with the socle of the $\widetilde\Pi$-module $I_W$ as an $I^\bullet$-graded vector space.
Fix an $I^\bullet$-graded linear map $\ua:I_W\to W$ such that $\ua|_W=\id_W$. 
Let $\widehat\Gr_v^\bullet(I_W)$ be the set of injective $I^\bullet$-graded linear maps $f:\bbC^v\to I_W$
whose image is a $\widetilde\Pi$-submodule of $I_W$.
There is a $G^0_v$-equivariant map $\widehat\Gr_v^\bullet(I_W)\to\widehat\X^\bullet(v,W)_s$ such that
 \begin{align}\label{homeo2}
f\mapsto x=
(f^{-1}\circ\underline\alpha\circ f\,,\,\ua\circ f\,,\,0\,,\,f^{-1}\circ\underline\varepsilon\circ f)\end{align}
The tuple $x=(\alpha,a,a^*,\varepsilon)$ is stable because $W$ is the socle of $I_W$.
The map \eqref{homeo2} factorizes to a map 
$\widetilde\Gr_v^\bullet(I_W)\to
\crit(\tilde f^\bullet_2)\cap\widetilde\frakL^\bullet(v,W)$, because
$[\alpha,\alpha^*]=[\varepsilon,\alpha]=0$
and $x$ is nilpotent because $I_{i,k}=\bigcup_{l>0}I_{i,k}^l$.
This map is an homeomorphism by \cite[prop.~4.1]{ST11},
proving the proposition.
\end{proof}

Recall that for any $W\in\bfC^\bullet$ of dimension $w=(w_{i,k})$ in $\bbN I^\bullet$ , the symbol
$L^-(w)$ denotes the simple module in $\bfO_w$ with $\ell$-highest weight 
$\Psi^-_w=(\prod_{k\in\bbZ}(1-\zeta^k/u)^{- w_{i,k}})_{i\in I}$.

\begin{theorem}\label{thm:PF1}
Fix any $W\in\bfC^\bullet$.
The representations of $\U_\zeta^{-w}(L\frakg)$ in 
$K(\widetilde\frakM^\bullet(W),\tilde f^\bullet_2)$ and
$K(\widetilde\frakM^\bullet(W),\tilde f^\bullet_2)_{\widetilde\frakL^\bullet(W)}$
are both isomorphic to the simple module $L^-(w)$. 
\end{theorem}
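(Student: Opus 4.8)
\textbf{Proof plan for Theorem \ref{thm:PF1}.}
The strategy is to combine three ingredients already available: the algebra homomorphism of Theorem \ref{thm: Nakajima shifted}(c) (producing the $\U_\zeta^{-w}(L\frakg)$-action on $K(\widehat\frakM^\bullet(W),\hat f^\bullet_2)$ and on its $\widehat\frakL^\bullet(W)$-supported version), the dimensional-reduction isomorphism $K(\widetilde\frakM^\bullet(W),\tilde f^\bullet_2)=K(\widehat\frakM^\bullet(W),\hat f^\bullet_2)$ coming from the vector bundle $\rho_2$ of \eqref{rho} together with Proposition \ref{prop:Thom}, and the limit description of Theorem \ref{thm:limH}(b), namely $K(\widetilde\frakM^\bullet(v,W),\tilde f^\bullet_2)=K(\frakM^\bullet(v,W_l),f^\bullet_{\gamma_l})$ for $l\gg 0$, where $W_l$, $\gamma_l$ are as in \eqref{WWg} and $I_W$ has socle $W$. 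First I would check that under the reduction isomorphism the $\U_\zeta^{-w}(L\frakg)$-module structures on $K(\widetilde\frakM^\bullet(W),\tilde f^\bullet_2)$ and $K(\widehat\frakM^\bullet(W),\hat f^\bullet_2)$ agree (both are defined via the maps $\Omega^\pm$, $\Delta$ and $\Upsilon$ of \S\ref{sec:Jordan2}, which are compatible with the flat pull-back along $\rho_2$), so that it suffices to work on $\widetilde\frakM^\bullet(W)$.

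Next I would identify the $\ell$-weight grading. The definition of the action in Theorem \ref{thm: Nakajima shifted} shows that $K(\widetilde\frakM^\bullet(v,W),\tilde f^\bullet_2)$ is the $\ell$-weight space of $\ell$-weight $\Psi^-_{w-\bfc v}$, and by Theorem \ref{thm: Nakajima shifted}(a) the central elements $\psi^+_{i,0}\psi^-_{i,-w_i}$ act by the scalars $(-\zeta)^{-w_i}\det(W_i)^{-1}$ forced by $W$; in particular the module lies in the category $\bfO_w$ and has $\ell$-highest weight $\Psi^-_w$ (attained on the point $\widetilde\frakM^\bullet(0,W)$). To prove simplicity it remains to show that $\Psi^-_w$ is the unique $\ell$-dominant $\ell$-weight, i.e. that $e^{w-\bfc v}$ is not $\ell$-dominant for $v\neq 0$ with $K(\widetilde\frakM^\bullet(v,W),\tilde f^\bullet_2)\neq 0$. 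Here Proposition \ref{prop:HL3} enters: $\crit(\tilde f^\bullet_2)\cap\widetilde\frakL^\bullet(W)=\widetilde\Gr^\bullet(I_W)$, and since any matrix factorization is supported on $\crit$ (\cite[cor.~3.18]{PV11}) and $\crit(\tilde f^\bullet_2)$ is conic, nonvanishing of $K(\widetilde\frakM^\bullet(v,W),\tilde f^\bullet_2)$ forces $\widetilde\Gr^\bullet_v(I_W)\neq\emptyset$. Every nonzero graded $\widetilde\Pi$-submodule of $I_W$ meets $\soc(I_W)$, whose dimension is $w$; using the formula for $e^{w-\bfc v}$ in terms of the $A$- and $Y$-variables (the analogue of \eqref{AY} used in the proofs of Theorems \ref{thm:HL1} and Proposition \ref{prop:TPKR}) one gets that for such $v\neq 0$ the monomial $e^{w-\bfc v}$ contains a factor $A^{-1}_{j,r}$ for some $(j,r)$ in the socle support, hence is right-negative by \cite{H06}, and by \cite{FM01} cannot be $\ell$-dominant.

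Having the unique $\ell$-dominant monomial, the general theory of $\bfO_w$ for shifted quantum loop groups recalled in Appendix A gives that both $K(\widetilde\frakM^\bullet(W),\tilde f^\bullet_2)$ and its submodule/quotient $K(\widetilde\frakM^\bullet(W),\tilde f^\bullet_2)_{\widetilde\frakL^\bullet(W)}$ (which is nonzero, containing the class of the structure sheaf of the point over $0$, hence the $\ell$-highest weight line) are irreducible of highest $\ell$-weight $\Psi^-_w$, i.e. both are $\cong L^-(w)$. The last point to verify is that the $\widehat\frakL^\bullet$-supported version really carries the induced action and is nonzero; this follows from Theorem \ref{thm: Nakajima shifted}(c), which asserts the action on $K(\widehat\frakM^\bullet(W),\hat f^\bullet_2)_{\widehat\frakL^\bullet(W)}$, transported via the reduction isomorphism, together with the observation that the closed embedding $\widehat\frakL^\bullet(W)\hookrightarrow\widehat\frakM^\bullet(W)$ yields a nonzero $\ell$-highest-weight vector.

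\textbf{Main obstacle.} The delicate step is the $q$-character argument showing $e^{w-\bfc v}$ is not $\ell$-dominant for $v\neq 0$: one must pin down precisely which $A^{-1}_{j,r}$ factors are forced by the socle of $I_W$ (for the full injective $I_W$, not its finite truncations $I^l_{i,k}$), and invoke the right-negativity and $q$-character criteria of \cite{FM01}, \cite{H06} with the correct bookkeeping of $I^\bullet$-degrees. The rest is a matter of assembling Theorem \ref{thm: Nakajima shifted}, Proposition \ref{prop:Thom}, Proposition \ref{prop:HL3} and \cite[cor.~3.18]{PV11}.
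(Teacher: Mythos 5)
Your approach is genuinely different from the paper's, and it contains a gap at the crucial step. You try to deduce simplicity by showing that $\Psi^-_w$ is the unique $\ell$-dominant $\ell$-weight in the $q$-character, using the right-negativity criterion of \cite{FM01}, \cite{H06}. That criterion is stated and used in \S\ref{sec:qg} for \emph{finite-dimensional} $\U_\zeta(L\frakg)$-modules, where an $\ell$-dominant monomial is one indexed by $v\in\bbN I^\sharp$ (a polynomial Drinfeld datum). It does not transfer to the category $\bfO_w$ of the shifted algebra in the way you invoke it. In $\bfO_w$ the relevant notion is $w$-dominance of tuples of rational functions (regular at $0$, of degree $w_i$), and since $\Psi^-_w$ has degree $w_i$ and each $A_{j,r}$ has degree $0$ and is regular and nonvanishing at $0$, \emph{every} $\ell$-weight $\Psi^-_w\prod_{j,r}A_{j,r}^{-v_{j,r}}$ appearing in the module is again $w$-dominant. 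So "unique $w$-dominant $\ell$-weight" is vacuous here, and the finite-dimensional right-negativity argument proves nothing about simplicity of $L^-(w)$, which is infinite-dimensional (the negative prefundamental representations are the basic example where simplicity must be established by other means, as in \cite{HJ12}). Appendix A does not supply a replacement criterion, and you have not cited one.

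There is also a secondary unresolved step: you assert that $\crit(\tilde f^\bullet_2)$ on the triple quiver variety is conic, hence that nonvanishing of $K(\widetilde\frakM^\bullet(v,W),\tilde f^\bullet_2)$ forces $\widetilde\Gr^\bullet_v(I_W)\neq\emptyset$ via Proposition \ref{prop:HL3}. Proposition \ref{prop:HL3} only identifies $\crit(\tilde f^\bullet_2)\cap\widetilde\frakL^\bullet(W)$; passing from a nonempty critical locus in $\widetilde\frakM^\bullet(v,W)$ to a nonempty intersection with the central fiber requires a contracting $\bbC^\times$-action on $\widetilde\frakM^\bullet(v,W)$ under which $\tilde f^\bullet_2$ is semi-invariant, and because of the $\varepsilon$-variable (weight $-2$ under $\xi$) this is not the same homogeneity argument as in the proof of Theorem \ref{thm:HL1}, and it is not established in the paper.

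The paper's proof sidesteps both difficulties. It observes that the module is of highest $\ell$-weight $\Psi^-_w$ and then proves the stronger statement that it has the \emph{same $q$-character} as $L^-(w)$, weight space by weight space: Theorem \ref{thm:limH} identifies $K(\widehat\frakM^\bullet(v,W),\hat f^\bullet_2)$ with $K(\frakM^\bullet(v,W_l),f^\bullet_{\gamma_l})$ for $l\gg 0$, Proposition \ref{prop:TPKR} identifies the latter with a weight space of the simple Kirillov--Reshetikhin module $KR_{W_l}$, and \cite[thm.~4.11]{FH15} says the normalized $q$-character of $L^-(w)$ is the limit of those of $KR_{W_l}$. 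A highest $\ell$-weight module whose $q$-character coincides with that of the simple top must be that simple top. This route requires no simplicity criterion internal to $\bfO_w$ and no conicity of the critical locus. Your outline correctly assembles the dimensional reduction, the $\ell$-weight grading, and the relevance of Theorem \ref{thm:limH} and Proposition \ref{prop:HL3}, but it uses them for the wrong purpose; the key idea you are missing is precisely to use \ref{thm:limH} and \ref{prop:TPKR} to \emph{compute} the $q$-character as a limit of KR characters, rather than to attempt a direct irreducibility argument.
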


For the cohomological analogue of the theorem
we need the following analogue of Theorem \ref{thm: Nakajima shifted}
whose proof will be given elsewhere.

\begin{proposition}  Fix any $W\in\bfC^\bullet$.
\hfill
\begin{enumerate}[label=$\mathrm{(\alph*)}$,leftmargin=8mm]
\item
There is an algebra homomorphism
$\U_\zeta^{-w}(L\frakg)\to 
H^\bullet\big(\widehat\frakM^\bullet(W)^2,(\tilde f_2^\bullet)^{(2)}\big)_{\widetilde\calZ^\bullet(W)}.$
\item
The algebra $\U_\zeta^{-w}(L\frakg)$ acts on 
$H^\bullet(\widehat\frakM^\bullet(W),\hat f^\bullet_2)_{\widehat\frakL^\bullet(W)}$ and
$H^\bullet(\widehat\frakM^\bullet(W),\hat f^\bullet_2)$ so that the subspaces
$H^\bullet(\widehat\frakM^\bullet(v,W),\hat f^\bullet_2)_{\widehat\frakL^\bullet(W)}$ and
$H^\bullet(\widehat\frakM^\bullet(v,W),\hat f^\bullet_2)$ are $\ell$-weight subspaces for each $v\in\bbN I^\bullet$.
\qed
\end{enumerate}
\end{proposition}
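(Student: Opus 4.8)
The plan is to deduce this from its $K$-theoretic counterpart, Theorem~\ref{thm: Nakajima shifted}(c), by composing with a critical Chern character, exactly the way the non-shifted cohomological action \eqref{Psi} is obtained from \eqref{nak2} in \cite[§13--14]{N00}. First I would fix a single model: since $\bfw_{2}$ and the stability condition do not involve $a^{*}$, the map $\rho_{2}\colon\widetilde\frakM^{\bullet}(W)\to\widehat\frakM^{\bullet}(W)$ is a vector bundle with $\tilde f_{2}^{\bullet}=\hat f_{2}^{\bullet}\circ\rho_{2}$, so by Proposition~\ref{prop:Thom} and the affine-fibration invariance of critical cohomology recalled in \S\ref{sec:critalg2} the groups attached to $\widetilde\frakM^{\bullet}(W)$ and to $\widehat\frakM^{\bullet}(W)$ are canonically identified (which also fixes the $\widehat{\phantom{M}}$ versus $\widetilde{\phantom{M}}$ discrepancy in the statement); I would then work with $\widehat\frakM^{\bullet}$, where $\widehat\frakM^{\bullet}(v,W)$, $\widehat\frakL^{\bullet}(v,W)$ and $\widehat\calZ^{\bullet}(W)$ carry $T_{W}\times\bbC^{\times}$-equivariant affine cell decompositions (Lemma~\ref{lem:(T)} and its proof via the $\Quot$-scheme description), hence no odd cohomology.

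Next I would compose the algebra map of Theorem~\ref{thm: Nakajima shifted}(c) with the topologization map to $K^{\top}$ (an algebra homomorphism, as for the topologizations in \eqref{nak1}, \eqref{nak2} and Theorem~\ref{thm:moduleK}(b)) and with the critical Chern character $\ch\colon K^{\top}(-,f)_{Z}\to H^{\bullet}(-,f)_{Z}$ on derived factorization categories, whose non-critical instance is the last arrow of \eqref{Psi}. The point to verify is that $\ch$ is a homomorphism for the convolution products \eqref{conv2} and \eqref{conv3}: by the projection formula and flat base change this reduces, as in \cite[§14]{N00}, to the Grothendieck--Riemann--Roch comparison of $\ch$ with proper pushforward and flat pullback together with the Thom--Sebastiani compatibility of $\ch$ with the external product $\boxtimes$ of factorizations, the Todd-class discrepancy being trivial here because $\widehat\calZ^{\bullet}(W)$ is equivariantly formal with a cell decomposition. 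This gives part~(a). For part~(b), the module structures on $H^{\bullet}(\widehat\frakM^{\bullet}(W),\hat f_{2}^{\bullet})_{\widehat\frakL^{\bullet}(W)}$ and $H^{\bullet}(\widehat\frakM^{\bullet}(W),\hat f_{2}^{\bullet})$ are those of Proposition~\ref{prop:critalg3}(c), transported through the same $\ch$ from the $K$-theoretic modules of Corollary~\ref{cor:critalg1}(a); the $v$-graded piece $H^{\bullet}(\widehat\frakM^{\bullet}(v,W),\hat f_{2}^{\bullet})$ is an $\ell$-weight subspace because, by the formulas \eqref{psi+-}--\eqref{psipmi}, the Cartan currents $\psi_{i}^{\pm}(u)$ act there by multiplication by a class depending only on $v$ and $w$, hence by the scalar $\ell$-weight $\Psi_{w-\bfc v}$, while the operators $x_{i,n}^{\pm}$ shift $v$ by $\pm\delta_{i}$.

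The main obstacle is precisely the compatibility of the critical Chern character with convolution; as in \cite[§13--14]{N00} this is where the real content lies, and it also requires pinning down the grading renormalisation matching the $\bbZ/2$-periodic vanishing cohomology produced by $\ch$ with the $\bbZ$-graded group $H^{\bullet}(-,f)_{Z}$, as well as checking that the nilpotent support $\widehat\calZ^{\bullet}(W)$ does not obstruct Riemann--Roch --- again handled by Lemma~\ref{lem:(T)}. A more self-contained alternative would be to rerun the proof of Theorem~\ref{thm: Nakajima shifted} verbatim in critical Borel--Moore homology: replace the compatibility of Proposition~\ref{prop:double} by the analogous map from the cohomological Hall algebra of $(\widetilde Q,\bfw_{2})$ built from the Hecke correspondences and Corollary~\ref{cor:Upsilon5}, reduce relation (A.6) to the case $Q=A_{1}$ using the transversality of Lemma~\ref{lem:transverse}, and redo the $A_{1}$ fixed-point computation of \cite{VV99} with the $K$-theory classes $\Lambda_{-1}(-)$ and $\det(-)$ replaced by equivariant Euler classes over the fraction field of $H^{\bullet}_{T_{W}\times\bbC^{\times}}$, the residue identities behind \eqref{Apm} going through unchanged and the localisation theorem being available by Lemma~\ref{lem:(T)}; on this route the delicate part is the bookkeeping needed to match the cohomological Hall algebra generators --- after the specialisation at $\zeta$ dictated by the weights of the torus $A$, exactly as in \cite[§13]{N00} --- with the generators $x_{i,n}^{\pm}$, $\psi_{i,n}^{\pm}$ of $\U_{\zeta}^{-w}(L\frakg)$.
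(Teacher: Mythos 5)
The paper does not prove this proposition: it explicitly states that the proof ``will be given elsewhere,'' and the $\qed$ closes the statement, not a proof. So there is no argument in the text to compare your proposal against; what follows is an assessment on the merits.

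Your first route --- composing Theorem~\ref{thm: Nakajima shifted}(c) with a critical Chern character --- has a genuine gap. The map
$\ch\colon K^{\top}_{G}(X,f)_{Z}\to H^{\bullet}_{G}(X,f)_{Z}$
from Grothendieck groups of derived factorization categories to vanishing-cycle cohomology, compatible with the convolution products \eqref{conv2} and \eqref{conv3}, is never constructed in the paper. In fact, Remark~\ref{rem:Xi} only sketches a map $\Xi\colon K_{G}(X,f)\to K^{G}_{\top}(X)$ (targeting ordinary topological K-theory, not the critical group) and immediately says ``We'll not need this map.'' Building a multiplicative critical Chern character requires dealing with the $\bbZ/2$-periodicity of factorization K-theory against the $\bbZ$-grading of $\phi^{p}_{\!f}$-cohomology, a Thom--Sebastiani compatibility on the target, and a Riemann--Roch correction for the proper pushforward $R(\pi_{13})_{*}$. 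Your claim that the Todd-class discrepancy is ``trivial because $\widehat\calZ^{\bullet}(W)$ is equivariantly formal with a cell decomposition'' is not correct as stated: equivariant formality and the Quot-scheme cell decomposition control the \emph{ordinary} (co)homology of the underlying variety and say nothing about the Todd class of the tangent bundle or about the \emph{critical} cohomology carried by vanishing cycles. In the unshifted sequence \eqref{Psi} the step $K_{\top}(\calZ^{\bullet}(W))\to H_{\bullet}(\calZ^{\bullet}(W),\bbC)$ is an ordinary Chern character between non-critical groups, because dimensional reduction has already been applied; for $\bfw_{2}$ no such reduction to ordinary K-theory/homology is available, so the analogy does not carry over directly.

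Your second route --- rerunning the proof of Theorem~\ref{thm: Nakajima shifted} in critical Borel--Moore homology --- is the more reliable one and is the one the paper's toolkit is actually set up for: Corollary~\ref{cor:Upsilon5} supplies the cohomological $\Upsilon$, Proposition~\ref{prop:critalg3} supplies the convolution algebra, the Hecke-correspondence diagram in Proposition~\ref{prop:double} goes through verbatim with $K$ replaced by $H^{\bullet}(-,h)$, and Lemma~\ref{lem:transverse} and the $A_{1}$ fixed-point residue computation are geometric in nature and survive the passage from $\Lambda_{-1}$-classes to equivariant Euler classes. The two substantive bookkeeping points are the ones you flag: matching the CoHA generators with the $x^{\pm}_{i,n}$, $\psi^{\pm}_{i,n}$ after the specialisation dictated by $A$, and adjusting the twist $r_{v_{1},v_{2}}$ in \eqref{twist2} to its cohomological avatar. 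One more point worth making explicit: the localization theorem you invoke via Lemma~\ref{lem:(T)} is stated there for equivariant $K$-theory; you would need the parallel statement for the equivariant critical cohomology of the graded quiver varieties (the affine cell decomposition furnishes this, but the vanishing-cycle sheaf must be checked to be constant on each cell --- for $\bfw_{2}$ with $Q=A_{1}$ this holds because $\hat f_{2}$ vanishes, but for general $Q$ it requires the reduction to the $A_{1}$ case first). Your argument for part~(b) is fine: the $\ell$-weight statement follows formally from \eqref{psi+-} once the action exists, since $\psi^{\pm}_{i}(u)$ is defined via the diagonal embedding $\Delta$ in \eqref{delta} and multiplies by a class depending only on $(v,w)$, while $x^{\pm}_{i,n}$ is built from Hecke correspondences and shifts $v$ by $\pm\delta_{i}$.
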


We can now prove the following.

\begin{theorem}\label{thm:PF2}
Fix any $W\in\bfC^\bullet$.
The $\U_\zeta^{-w}(L\frakg)$-modules
$H^\bullet(\widetilde\frakM^\bullet(W)\,,\,\tilde f^\bullet_2)$ and
$H^\bullet(\widetilde\frakM^\bullet(W)\,,\,\tilde f^\bullet_2)_{\widetilde\frakL^\bullet(W)}$
are both isomorphic to the simple module $L^-(w)$.
\end{theorem}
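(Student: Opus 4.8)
The strategy is to combine Theorem \ref{thm:PF1} with the limit isomorphisms of Theorem \ref{thm:limH}. The content of Theorem \ref{thm:PF1} is that the critical K-theory $K(\widetilde\frakM^\bullet(W),\tilde f^\bullet_2)$, equipped with the $\U_\zeta^{-w}(L\frakg)$-action coming from Theorem \ref{thm: Nakajima shifted}(c) (via the dimensional reduction $K_A(\widehat\frakM(W),\hat f_2)=K_A(\widetilde\frakM(W),\tilde f_2)$ of \S\ref{sec:Jordan2}), is the simple module $L^-(w)$, and likewise for the supported version. We want the cohomological mirror of this. The first step is to invoke the preceding (unnumbered) Proposition, which provides a $\U_\zeta^{-w}(L\frakg)$-action on $H^\bullet(\widehat\frakM^\bullet(W),\hat f^\bullet_2)$ and $H^\bullet(\widehat\frakM^\bullet(W),\hat f^\bullet_2)_{\widehat\frakL^\bullet(W)}$ for which the pieces $H^\bullet(\widehat\frakM^\bullet(v,W),\hat f^\bullet_2)$ are $\ell$-weight subspaces of $\ell$-weight $\Psi^-_{w-\bfc v}$; and via the dimensional reduction $H^\bullet(\widehat\frakM^\bullet(W),\hat f^\bullet_2)=H^\bullet(\widetilde\frakM^\bullet(W),\tilde f^\bullet_2)$ (the cohomological analogue of \eqref{rho}, using Proposition \ref{prop:Thom}/the cohomological dimensional reduction) this transports to the spaces in the statement.

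Next I would identify the $q$-character. By construction the $\ell$-highest weight space is $H^\bullet(\widetilde\frakM^\bullet(0,W),\tilde f^\bullet_2)=H^\bullet(\mathrm{pt})=\bbC$, so the module is $\ell$-highest weight with $\ell$-highest weight $\Psi^-_w$; hence it has a unique simple quotient, namely $L^-(w)$. To get equality with $L^-(w)$ — not merely a surjection onto it — I would show that $H^\bullet(\widetilde\frakM^\bullet(v,W),\tilde f^\bullet_2)$ has the correct dimension for every $v$, i.e. matches $\dim L^-(w)_{w-\bfc v}$. This is where Theorem \ref{thm:limH}(a) enters: for fixed $v$ and $l_r\gg 0$ one has $H^\bullet(\widetilde\frakM^\bullet(v,W),\tilde f^\bullet_2)=H^\bullet(\frakM^\bullet(v,W_l),f^\bullet_{\gamma_l})$ with $W_l$, $\gamma_l$ as in \eqref{WWg}. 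Combining Proposition \ref{prop:HL3} (which identifies $\crit(\tilde f^\bullet_2)\cap\widetilde\frakL^\bullet(W)$ with the quiver Grassmannian $\widetilde\Gr^\bullet(I_W)$) with the Hernandez–Jimbo limit construction recalled in the introduction, the generating function $\sum_v\dim H^\bullet(\widetilde\frakM^\bullet(v,W),\tilde f^\bullet_2)\,e^{w-\bfc v}$ equals the (normalized) $q$-character of $L^-(w)$; equivalently, one runs the argument of Theorem \ref{thm:HL1} in the limit, showing every monomial $e^{w-\bfc v}$ with $v\neq 0$ appearing is not $\ell$-dominant (right-negativity, via \cite{FM01}, \cite{H06}), so the module has a unique $\ell$-dominant monomial and is simple. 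Since it is $\ell$-highest weight of $\ell$-highest weight $\Psi^-_w$, simplicity forces it to be $L^-(w)$.

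Finally I would treat the supported version $H^\bullet(\widetilde\frakM^\bullet(W),\tilde f^\bullet_2)_{\widetilde\frakL^\bullet(W)}$ in parallel: the pushforward along the closed embedding $\widetilde\frakL^\bullet(W)\subset\widetilde\frakM^\bullet(W)$ is a $\U_\zeta^{-w}(L\frakg)$-module map between the two spaces (as in \S\ref{sec:N00}, using that the $\ell$-weight decompositions are respected), and since both have been shown to be simple with the same $\ell$-highest weight the map is an isomorphism. Alternatively, one repeats the $q$-character/dimension-count argument directly for the supported spaces, using the supported half of Theorem \ref{thm:limH}(a) and Proposition \ref{prop:HL3}.

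\textbf{Main obstacle.} The delicate point is not the formal bookkeeping but the uniform control of the limit: one must check that the dimensional-reduction isomorphisms and the identification with $L^-(w)$ are compatible with the $\U_\zeta^{-w}(L\frakg)$-actions (the K-theoretic action of Theorem \ref{thm: Nakajima shifted} on the one side, the cohomological action of the unnumbered Proposition on the other), and that the limit $l_r\gg 0$ in Theorem \ref{thm:limH} can be taken compatibly with the algebra action and with the already-established statement Theorem \ref{thm:PF1}. Concretely, the hard part is verifying that the isomorphism $H^\bullet(\widetilde\frakM^\bullet(v,W),\tilde f^\bullet_2)\cong H^\bullet(\frakM^\bullet(v,W_l),f^\bullet_{\gamma_l})$ intertwines (after suitable normalization of $\ell$-weights, matching the shift $-w$ with the limiting Drinfeld polynomials of the Kirillov–Reshetikhin data) the operators $x^\pm_{i,n}$, $\psi^\pm_{i,n}$ on both sides, so that simplicity on the finite level propagates to simplicity in the limit. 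Once this compatibility is in hand, the identification with $L^-(w)$ is immediate from the $q$-character computation.
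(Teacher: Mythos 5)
Your ``route (a)'' is essentially the paper's proof: the paper first notes $H^\bullet(\widetilde\frakM^\bullet(W),\tilde f^\bullet_2)=H^\bullet(\widehat\frakM^\bullet(W),\hat f^\bullet_2)$, observes (via the unnumbered Proposition) that this is an $\ell$-highest weight $\U_\zeta^{-w}(L\frakg)$-module with highest $\ell$-weight $\Psi^-_w$, and then reduces the claim to an equality of $q$-characters, which it proves by combining Theorem~\ref{thm:limH}, the geometric identification $KR_{W_l}=H^\bullet(\frakM^\bullet(W_l),f^\bullet_{\gamma_l})$ from Proposition~\ref{prop:TPKR}, and the Hernandez--Jimbo limit statement that $q\widetilde\ch(L^-(w))=\lim_l q\widetilde\ch(KR_{W_l})$ from the proof of \cite[thm.~4.11]{FH15}.

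Two remarks on where your proposal diverges from the paper. First, your alternative ``route (b)'' (right-negativity via Proposition~\ref{prop:HL3} and the socle of $I_W$, following the template of Theorem~\ref{thm:HL1}) is not used in the paper's proof of this statement; it would give simplicity directly and is plausibly viable, but the paper opts for the cleaner $q$-character comparison through the finite-level Kirillov--Reshetikhin modules. Second, and more importantly, the ``main obstacle'' you flag — checking that the isomorphisms of Theorem~\ref{thm:limH} intertwine the operators $x^\pm_{i,n}$, $\psi^\pm_{i,n}$ — is not actually required by the paper's argument. The algebras acting on the two sides are even different ($\U_\zeta^{-w}(L\frakg)$ on the shifted side, $\U_\zeta(L\frakg)$ on the finite $KR_{W_l}$ side), so one should not expect an intertwiner. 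What the argument needs is only: (i) the shifted module is $\ell$-highest weight with highest $\ell$-weight $\Psi^-_w$ (so $L^-(w)$ is its unique simple quotient), and (ii) the dimensions $\dim H^\bullet(\widehat\frakM^\bullet(v,W),\hat f^\bullet_2)$ agree with $\dim L^-(w)_{\Psi_{w-\bfc v}}$ for all $v$. Point (ii) follows from Theorem~\ref{thm:limH}(a) plus $\dim(KR_{W_l})_{\Psi_{w_l-\bfc v}}=\dim H^\bullet(\frakM^\bullet(v,W_l),f^\bullet_{\gamma_l})$ and the Hernandez--Jimbo limit of normalized $q$-characters; no compatibility of the actions under the limit isomorphism is invoked. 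So your concern over-engineers a step that the $q$-character comparison sidesteps entirely.
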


\begin{proof}[Proof of Theorems $\ref{thm:PF1}$ and $\ref{thm:PF2}$]
We first prove the isomorphism
$L^-(w)=H^\bullet(\widetilde\frakM^\bullet(W)\,,\,\tilde f^\bullet_2)$.
The case of
$H^\bullet(\widetilde\frakM^\bullet(W)\,,\,\tilde f^\bullet_2)_{\widetilde\frakL^\bullet(W)}$
is similar.
We have
$$
H^\bullet(\widetilde\frakM^\bullet(W)\,,\,\tilde f^\bullet_2)=
H^\bullet(\widehat\frakM^\bullet(W)\,,\,\hat f^\bullet_2).$$
The $\U_\zeta^{-w}(L\frakg)$-module
$H^\bullet(\widehat\frakM^\bullet(W)\,,\,\tilde f^\bullet_2)$
is of highest $\ell$-weight, with the same highest $\ell$-weight as the simple module $L^-(w)$.
Hence, it is enough to prove that both modules have the same character. 
Set $W_l=\dim W_l$ with $W_l$ as in \eqref{WWg}  and
$k_r$, $l_r,$ $l$ as in
Proposition \ref{prop:TPKR} (a). Then, we have
$$
L(W_l)=KR_{W_l}=H^\bullet(\frakM^\bullet(W_l)\,,\,f^\bullet_{\gamma_l}).$$
The proof of \cite[thm.~4.11]{FH15} implies that the normalized $q$-character of $L^-(w)$ is the limit of the 
normalized $q$-characters of the finite dimensional simple $\U_\zeta(L\frakg)$-modules $L(W_l)$ as $l\to\infty$.
Thus, it is enough to observe that Theorem \ref{thm:limH} implies
 that, for each $v\in\bbN I^\bullet$, for $l$ large enough we have
$$H^\bullet(\widetilde\frakM^\bullet(v,W)\,,\,\tilde f^\bullet_2)=
H^\bullet(\frakM^\bullet(v,W_l)\,,\,f^\bullet_{\gamma_l}).$$

The proof in K-theory is similar.
Let $W_l$ and $\gamma_l$ be as above.
The $\U^{-w}_F(L\frakg)$-action on 
$K(\widehat\frakM^\bullet(W)\,,\,\hat f^\bullet_2)$
given in Theorem \ref{thm: Nakajima shifted} is such that the subspace
$K(\widehat\frakM^\bullet(v,W)\,,\,\hat f^\bullet_2)$
is an $\ell$-weight subspace of $\ell$-weight given by the formula \eqref{psi+-}.
By Proposition \ref{prop:TPKR}, we have
$$L(W_l)=KR_{W_l}=K(\frakM^\bullet(W_l)\,,\,f^\bullet_{\gamma_l}).$$
By the same argument as in cohomology, it is enough to prove that, 
for each $v\in\bbN I^\bullet$,  if $l$ is large enough we have
$$K(\widehat\frakM^\bullet(v,W)\,,\,\hat f^\bullet_2)=
K(\frakM^\bullet(v,W_l)\,,\,f^\bullet_{\gamma_l}).$$
This follows from Theorem \ref{thm:limH}.
\end{proof}

\begin{remark}
Let $j_2$ be the embedding of $\widetilde\Gr^\bullet\!(I_W)$
 into $\widetilde\frakM^\bullet(W)$ given by Proposition \ref{prop:HL3}.
Set $\calL_2=j_2^!(\phi^p_{\!\tilde f_2^\bullet}\calC).$
The theorem above yields a representation of $\U_\zeta^{-w}(L\frakg)$ in the vector space
$H^\bullet(\widetilde\Gr^\bullet\!(I_W),\calL_2)$
which is isomorphic to $L^-(w)$.
\end{remark}

\appendix

\section{Representations of shifted quantum loop groups}\label{sec:QG}
\subsection{}
This section is a remind on shifted quantum loop groups of symmetric types.
We'll follow  \cite{FT19} and \cite{H22}.
Let $Q$ be a Dynkin quiver.
Fix $w^+,w^-\in\bbZ I$.
Let $c_{ij}$, $i,j\in I$, be the entries of the Cartan matrix $\bfc$ and
define
\begin{align}\label{gij}g_{ij}(u)=\frac{u-q^{c_{ij}}}{q^{c_{ij}}u-1}.\end{align}
Consider the formal series 
\begin{align*}
\delta(u)=\sum_{n\in\bbZ}u^n
,\quad
x^\pm_i(u)=\sum_{n\in\bbZ}x^\pm_{i,n}\,u^{-n}
 ,\quad
\psi^+_i(u)=\sum_{n\geqslant-w^+_i}\psi^+_{i,n}\,u^{-n}
 ,\quad
\psi^-_i(u)=\sum_{n\geqslant -w_i^-}\psi^-_{i,-n}\,u^{n}.
\end{align*}
Let $\U_F^{w^+,w^-}(L\frakg)$ be the $(w^+,w^-)$-shifted quantum loop group over $F$ 
with quantum parameter $q$. It is the $F$-algebra generated by 
$$x^\pm_{i,m}
 ,\quad
\psi^\pm_{i,\pm n}
 ,\quad
(\psi^\pm_{i,\mp w_i^\pm})^{-1}
\quad, \quad
i\in I
 ,\quad
m,n\in\bbZ
 ,\quad
n\geqslant-w^\pm_i$$
with the following defining relations  
where $a=+$ or $-$ and $i,j\in I$
\hfill
\begin{enumerate}[label=$\mathrm{(\alph*)}$,leftmargin=11mm,itemsep=1mm]
\item[(A.2)] $\psi^\pm_{i,\mp w_i^\pm}$ is invertible with inverse $(\psi^\pm_{i,\mp w_i^\pm})^{-1}$,
\item[(A.3)] $\psi^a_i(u)\,\psi^\pm_j(v)=\psi^\pm_j(v)\,\psi^a_i(u)$,
\item[(A.4)] $x^a_j(u)\,\psi_i^\pm(v)=\psi^\pm_i(v)\,x^a_j(u)\,g_{ij}(u/v)^a$,
\item[(A.5)] $x^\pm_i(u)\,x^\pm_j(v)=x^\pm_j(v)\,x^\pm_i(u)\,g_{ij}(u/v)^{\pm 1}$,
\item[(A.6)] $(q-q^{-1})[x^+_i(u)\,,\,x^-_j(v)]=\delta_{ij}\,\delta(u/v)\,(\psi^+_i(u)-\psi^-_j(u))$,
\item[(A.7)] the quantum Serre relations between 
$x^\pm_i(u_1),x^\pm_i(u_2),\dots,x^\pm_i(u_{1-c_{ij}})$ and $x^\pm_j(v)$ for $i\neq j$.
\end{enumerate}
\setcounter{equation}{7}
Here the rational function $g_{ij}(u/v)$ is expanded as a power series of $v^{\mp 1}$.
Let the element $h_{i,\pm m}$ in $\U_F^{w^+,w^-}(L\frakg)$ be such that
\begin{align*}
\psi_i^\pm(u)=\psi_{i,\mp w^\pm_i}^\pm u^{\pm w^\pm_i}
\exp\Big(\pm(q-q^{-1})\sum_{m>0}h_{i,\pm m}u^{\mp m}\Big)
 ,\quad
i\in I.
\end{align*}
Set
$[m]_q=(q^m-q^{-m})/(q-q^{-1})$
for each integer $m>0$.
The relation (A.4) is equivalent to the following relations
\begin{enumerate}[label=$\mathrm{(\alph*)}$,leftmargin=13mm,itemsep=1mm]
\item[(A.4a)] $x_{j,n}^a\,\psi_{i,\mp w^\pm_i}^\pm=q^{\pm ac_{ij}}\,\psi_{i,\mp w^\pm_i}^\pm\,x_{j,n}^a$,
\item[(A.4b)] $[h_{i,m},x^\pm_{j,n}]=\pm[m\,c_{ij}]_q\,x^\pm_{j,n+m}\,/\,m$ for $m\neq 0$.
\end{enumerate}
We have a triangular decomposition
$$\U_F^{w^+,w^-}(L\frakg)=\U_F^{w^+,w^-}(L\frakg)^+\otimes\U_F^{w^+,w^-}(L\frakg)^0\otimes
\U_F^{w^+,w^-}(L\frakg)^-$$
where $\U_F^{w_+,w_-}(L\frakg)^\pm$ 
is the subalgebra generated by the $x_{i,n}^\pm$'s
and $\U_F^{w_+,w_-}(L\frakg)^0$ is the subalgebra generated by 
the $\psi_{i,\pm n}^\pm$'s.
Set
\begin{align*}
[m]_q!=[m]_q[m-1]_q\cdots[1]_q
 ,\quad
(x^\pm_{i,n})^{[m]}=(x^\pm_{i,n})^{m}/[m]_q!.
\end{align*}
Let $\U_R^{w^+,w^-}(L\frakg)$ be the $R$-subalgebra of $\U_F^{w^+,w^-}(L\frakg)$ generated by 
\begin{align}\label{LGR}
\psi_{i,\mp w^\pm_i}^\pm 
 ,\quad
(\psi^\pm_{i,\mp w_i^\pm})^{-1}
 ,\quad
h_{i,\pm m}/[m]_q
 ,\quad
(x^\pm_{i,n})^{[m]}
\end{align}
with $
i\in I$,
$n\in\bbZ$
and 
$m\in\bbN^\times$.
We fix $\zeta\in\bbC^\times$ which is not a root of unity.
We define $\U_\zeta^{w^+,w^-}(L\frakg)=\U_R^{w^+,w^-}(L\frakg)|_\zeta$,
where $(-)|_\zeta$ is the specialization along the map $R\to\bbC$,
$q\mapsto\zeta$.
We'll concentrate on the module categories of the $\bbC$-algebra $\U_\zeta^{w^+,w^-}(L\frakg)$. 
The module categories of the $F$-algebra $\U_F^{w^+,w^-}(L\frakg)$ are similar.
Up to some isomorphism, the algebra $\U_\zeta^{w^+,w^-}(L\frakg)$ only depends on the sum 
$w=w^++w^-$ in $\bbZ I$.
Hence, we may assume that $w^+=0$ and we abbreviate
$\U_\zeta^{w}(L\frakg)=\U_\zeta^{0,w}(L\frakg)$.
We define $\U_F^{w}(L\frakg)$ and $\U_R^{w}(L\frakg)$ similarly.
%For any  $\U_\zeta^{w}(L\frakg)$-module $V$, we set
%\begin{align*}
%V^+_\lambda=\{v\in V\,;\,\psi^+_{i,0}\cdot v=\zeta^{\lambda_i} v\},\quad
%V^-_\lambda=\{x\in M\,;\,\psi^-_{i,w_i}\cdot v=\zeta^{-\lambda_i} v\},\quad\lambda\in\bbN I.
%\end{align*}
The category $\bfO_w$ of $\U_\zeta^{w}(L\frakg)$-modules is defined as in 
\cite[def.~4.8]{H22}. 
A tuple $\Psi=(\Psi_i)_{i\in I}$ of rational functions over $\bbC$ 
such that $\Psi_i(u)$ is regular at 0 and of degree $w_i$
is called a $w$-dominant $\ell$-weight. Let
\begin{align*}
\Psi^+_i(u)=\sum_{n\in\bbN}\Psi_{i,n}^+\,u^{-n}
 ,\quad
\Psi^-_i(u)=\sum_{n\geqslant-w_i}\Psi_{i,-n}^-\,u^{n}
\end{align*}
be the expansions of the rational function $\Psi_i(u)$
in non negative powers of $u^{\mp 1}$.
A representation $V$ in the category $\bfO_w$ 
is of highest $\ell$-weight $\Psi(u)$ if it is generated by a vector $v$ such that
$$x_{i,n}^+\cdot v=0
 ,\quad
\psi^\pm_{i,n}\cdot v=\Psi^\pm_{i,n}v
 ,\quad
i\in I
 ,\quad
n\in\bbZ.$$
By \cite[thm.~4.11]{H22}
the simple objects in the category $\bfO_w$ are labelled by the $w$-dominant
$\ell$-weights.
Let $L(\Psi)$ be the unique simple object in $\bfO_w$ of highest $\ell$-weight $\Psi$.
For any module $V\in\bfO_w$ and for any tuple $\Psi=(\Psi_i(u))_{i\in I}$ of rational functions, 
the $\ell$-weight space of $V$ of $\ell$-weight $\Psi$ is
$$V_\Psi=\{v\in V\,;\,(\psi^\pm_{i,n}-\Psi^\pm_{i,n})^\infty\cdot v=0\,,\,i\in I\,,\,n\in\bbN\}.$$
The representation $V$ is a direct sum of its $\ell$-weight spaces.
The $q$-character of $V$ is the (possibly infinite) sum
$$q\!\ch(V)=\sum_\Psi\dim(V_\Psi)\,\Psi.$$
If the module $V$ admits an highest $\ell$-weight,
we may also consider the normalized $q$-character $q\widetilde\ch(V)$,
which is equal to the $q$-character $q\!\ch(V)$ divided by its highest weight monomial.
The map $q\!\ch$ is injective on the Grothendieck group $K_0(\bfO_w)$.
We'll abbreviate $I^\sharp=I\times\bbC^\times$.
For any tuple $w^\sharp=(w_{i,a})$ in $\bbN I^\sharp$, we consider the tuple
$\Psi^\pm_{w^\sharp}=(\Psi^\pm_{i})_{i\in I}$ such that
$$\Psi^\pm_{i}(u)=\prod_{a\in\bbC^\times}(1-a/u)^{\pm w_{i,a}}.$$
We'll write $L^\pm(w^\sharp)=L(\Psi^\pm_{w^\sharp})$.
We'll abbreviate $\Psi^\pm_{i,a}=\Psi^\pm_{\delta_{i,a}}$ and
$L^\pm_{i,a}=L^\pm(\delta_{i,a})$.
We call  $L^\pm_{i,a}$ the positive/negative
prefundamental representation. 
A positive prefundamental representation is one-dimensional,
a negative one is infinite dimensional.
We'll also abbreviate $L^\pm_{i,k}=L^\pm_{i,\zeta^k}$ for each integer $k$.
To avoid a cumbersome notation, we may use the symbol $w$ for the tuple $w^\sharp\in\bbN I^\sharp$
and we may write $L(w)$ for the corresponding simple module,
hoping it will not create any confusion.

\subsection{} \label{sec:qg}
Now, we consider non shifted quantum loop groups of symmetric types.
The quantum loop group $\U_F(L\frakg)$ is the quotient
of $\U_F^{0}(L\frakg)$ by the relations $\psi^+_{i,0}\,\psi^-_{i,0}=1$ for all $i\in I$.
We define the $R$-algebra $\U_R(L\frakg)$ and the $\bbC$-algebra $\U_\zeta(L\frakg)$ as above.
We have
$\U_F(L\frakg)=\U_R(L\frakg)\otimes_{R}F$.
We have a triangular decomposition
$$\U_F(L\frakg)=\U_F(L\frakg)^+\otimes\U_F(L\frakg)^0\otimes
\U_F(L\frakg)^-$$
and its analogues for the algebras
$\U_R(L\frakg)$ and $\U_\zeta(L\frakg)$ proved in \cite[prop.~6.1]{CP17}.
The $R$-algebra $\U_R(L\frakg)^\pm$ is generated by the quantum divided powers
$(x^\pm_{i,n})^{[m]}$ with
$i\in I$,
$n\in\bbZ$,
$m\in\bbN^\times$. 
Let
\begin{gather*}
\Big[\begin{matrix}\psi_{i,0}^+\,;\,n\\m\end{matrix}\Big]=
\prod_{r=1}^m\frac{q^{n-r+1}\,\psi_{i,0}^+-q^{-n+r-1}\,\psi_{i,0}^-}{q^r-q^{-r}}
 ,\quad
i\in I
 ,\quad
n\in\bbZ
 ,\quad
m\in\bbN^\times
\end{gather*}
The 
$R$-algebra $\U_R(L\frakg)^0$ is generated by the elements 
\begin{align*}
\psi^\pm_{i,0}
 ,\quad
h_{i,\pm m}/[m]_q
 ,\quad
\Big[\begin{matrix}\psi_{i,0}^+;n\\m\end{matrix}\Big]
 ,\quad
i\in I
 ,\quad
n\in\bbZ
 ,\quad
m\in\bbN^\times.
\end{align*}
A simple module $L(\Psi)$ in the category $\bfO_0$ is finite dimensional if and only if there is a tuple of polynomials
$P=(P_i)_{i\in I}$ with $P_i(0)=1$, called the Drinfeld polynomial, such that
the $\ell$-weight $\Psi$ is given by
\begin{align}\label{l-weight}
\Psi_i(u)=\zeta^{\deg P_i}\,P_i(1/\zeta u)\,P_i(\zeta/u)^{-1}.
\end{align}
For any tuple $w^\sharp=(w_{i,a})$ in $\bbN I^\sharp$, we consider the tuple of polynomials
$P_{w^\sharp}=(P_i(u))_{i\in I}$ given by
$P_{i}(u)=\prod_{a\in\bbC^\times}(1-au)^{w_{i,a}}.$
Let $\Psi_{w^\sharp}$ be the $\ell$-weight obtained by setting $P=P_{w^\sharp}$ in \eqref{l-weight}.
Let $L(w^\sharp)=L(\Psi_{w^\sharp})$ be the 
corresponding finite dimensional module.
The simple module 
$$KR_{i,a}^l=L(w_{i,a}^l),\quad
w^l_{i,a}=\delta_{i,a\zeta^{1-l}}+\delta_{i,a\zeta^{3-l}}+\cdots+\delta_{i,a\zeta^{l-1}}$$
is called a Kirillov-Reshetikhin module.
We may identify the $q$-character $q\!\ch(V)$ of a finite dimensional module $V\in\bfO_0$ 
with the sum of monomials $e^v$ such that
\begin{align*}
q\!\ch(V)=\sum_{v\in\bbZ I^\sharp}\dim(V_{\Psi_v})\,e^v
\end{align*}
where the $\ell$-weight $\Psi_v$ is given by
$\Psi_v=\Psi_{v_+}\cdot\Psi_{v_-}^{-1}$
with $v=v_+-v_-$ and
$v_+,v_-\in\bbN I^\sharp.$
The monomial $e^v$ is called $\ell$-dominant if $v\in\bbN I^\sharp$.
%From the general properties of graded quiver varieties recalled in \S\ref{sec:DQV}, it follows that
The module $V$ is simple whenever its $q$-character contains a unique $\ell$-dominant monomial,
see, e.g.,  \cite[\S 10]{N04}.
The following notation is standard
\begin{align}\label{AY}Y_{i,a}=e^{\delta_{i,a}}
 ,\quad
A_{i,a}=e^{\bfc\delta_{i,a}}=Y_{i,a\zeta}Y_{i,a\zeta^{-1}}\prod_{c_{ij}<0}Y_{j,a}^{-1}
,\quad
(i,a)\in I^\sharp.\end{align}
Thus $Y_{i,a}$ is the $\ell$-weight such that
$$(Y_{i,a})_i(u)=q\frac{1-a/qu}{1-q/u}
,\quad
(Y_{i,a})_j(u)=1
,\quad
j\neq i.$$
We'll view $I^\bullet$ as a subset of $I^\sharp$ such that
$(i,k)\mapsto(i,\zeta^k)$.
Hence we may write
$$Y_{i,k}=Y_{i,\zeta^k},\quad
A_{i,k}=A_{i,\zeta^k},\quad
KR_{i,k}^l=KR_{i,\zeta^k}^l,\quad
w_{i,k}^l=w_{i,\zeta^k}^l.$$
For each $v\in\bbZ I^\bullet$ we set $|v|=\max\{k\in\bbZ\,;\,\exists i\in I\,,\,v_{i,k}\neq 0\}$.
The monomial $e^v$ is called right-negative if we have
$v_{i,|v|}\leqslant 0$ for all $i\in I$. 
By \cite{FM01}, if the monomial $m$
 is right-negative then any monomial $m'$ in
$m\,\bbZ[A_{j,r}^{-1}\,;\,(j,r)\in I^\bullet]$ is also right-negative, hence $m'$ is not $\ell$-dominant.

\section{The shifted toroidal quantum group}\label{sec:toroidal}

In this section we give analogues of Theorems \ref{thm:Nakajima} and \ref{thm: Nakajima shifted}
for the Jordan quiver. In this case, our result is a K-theoretical analogue of 
\cite[thm.~1.3.2]{RSYZ20}.
Let $Q=A^{(1)}_0$.
We have $I=\{i\}$. 
We'll omit the subscript $i$ in the notation.
We equip the quiver $\widetilde Q_f$ with the potentials $\bfw_1$ or $\bfw_2$ as in \eqref{w12}.
Let $\bfw$ be the restriction of $\bfw_1$ or $\bfw_2$ to the quiver $\widetilde Q$.
We have $T=(\bbC^\times)^2$. We write
$R_T=\bbC[q^{\pm 1}, t^{\pm 1}]$ and
 $F_T=\bbC(q, t).$
The $T$-action on the representation space of $\widetilde Q_f$ is as in \eqref{action1}.
Let $\U_{F_T}^{0,-w}(L\widehat{\frakg\frakl}_1)$ be the $(0,-w)$-shifted 
toroidal quantum group of $\frakg\frakl_1$ for some integer $w\in\bbZ$.
Let $q_1=qt^{-1}$, $q_2=qt$ and $q_3=q^{-2}$.
Consider the rational function 
\begin{align}\label{alpha1}
g(u)=\prod_{i=1}^3(u-q_i^{-1})/(q_i^{-1}u-1).
\end{align}
Consider the formal series 
\begin{align*}
x^\pm(u)=\sum_{n\in\bbZ}x^\pm_{n}\,u^{\mp n}
 ,\quad
\psi^+(u)=\sum_{n\in\bbN}\psi^+_{n}\,u^{-n}
 ,\quad
\psi^-(u)=\sum_{n\geqslant w}\psi^-_{-n}\,u^{n}.
\end{align*}
The $F_T$-algebra $\U_{F_T}^{0,-w}(L\widehat{\frakg\frakl}_1)$ is generated by
$$x^\pm_{m}
 ,\quad
\psi^+_{n}
 ,\quad
\psi^-_{-w-n}
 ,\quad
(\psi^+_0)^{-1}
 ,\quad
(\psi^-_{-w})^{-1}
 ,\quad
i\in I
 ,\quad
m\in\bbZ
 ,\quad
n\in\bbN$$
with the following defining relations  
where $m\in\bbZ$ and $a=+$ or $-$ 
\hfill
\begin{enumerate}[label=$\mathrm{(\alph*)}$,leftmargin=11mm,itemsep=1mm]
\item[(B.2)]
$\psi^+_0$ and $\psi^-_{-w}$ are invertible with inverses 
$(\psi^+_0)^{-1}$ and $(\psi^-_{-w})^{-1}$,
\item[(B.3)]
$\psi^a(u)\,\psi^\pm(v)=\psi^\pm(v)\,\psi^a(u)$,
\item[(B.4)]
$x^a(u)\,\psi^\pm(v)=\psi^\pm(v)\,x^a(u)\,g(u/v)^a$,
\item[(B.5)]
$x^\pm(u)\,x^\pm(v)=x^\pm(v)\,x^\pm(u)\,g(u/v)^{\pm 1}$,
\item[(B.6)]
$[x_m^\pm\,,\,[x_{m+1}^\pm\,,\,x_{m-1}^\pm]]=0$,
\item[(B.7)]
$(1-q_1)(1-q_2)(1-q_3)\,[x^+(u)\,,\,x^-(v)]=\delta(u/v)\,(\psi^+(u)-\psi^-(u))$.
\end{enumerate}
\setcounter{equation}{7}
Note that $\psi^+_0\,\psi^-_{-w}$ is central in $\U_{F_T}^{0,-w}(L\widehat{\frakg\frakl}_1)$.
We'll abbreviate $\U_{F_T}^{-w}(L\widehat{\frakg\frakl}_1)=\U_{F_T}^{0,-w}(L\widehat{\frakg\frakl}_1)$.
The $F_T$-algebra $\U_{F_T}^{-w}(L\widehat{\frakg\frakl}_1)$ has a triangular decomposition
$$\U_{F_T}^{-w}(L\widehat{\frakg\frakl}_1)=\U_{F_T}^{-w}(L\widehat{\frakg\frakl}_1)^+\otimes_{F_T}
\U_{F_T}^{-w}(L\widehat{\frakg\frakl}_1)^0\otimes_{F_T}\U_{F_T}(L\widehat{\frakg\frakl}_1)^-$$
where $\U_{F_T}^{-w}(L\widehat{\frakg\frakl}_1)^\pm$ 
is the subalgebra generated by the $x_n^\pm$'s
and $\U_{F_T}^{-w}(L\widehat{\frakg\frakl}_1)^0$ is the subalgebra generated by the
$\psi_{\pm n}^\pm$'s.
The defining relations of $\U_{F_T}(L\widehat{\frakg\frakl}_1)^\pm$ are the relations
(B.5) and (B.6).
The proof is as in the non shifted case in \cite{T17}.

Forgetting the arrow $a^*$ yields a vector bundle
$\rho_2$
as in \eqref{rho} such that $\tilde f_2=\hat f_2\circ\rho_2$ for some function $\hat f_2$ on $\widehat\frakM(W)$.
Thus, Proposition \ref{prop:Thom} yields an isomorphism
$K_{G_W\times T}(\widehat\frakM(W),\hat f_2)= K_{G_W\times T}(\widetilde\frakM(W),\tilde f_2).$
Applying the results in \S\ref{sec:critalg1} as in \S\ref{sec:Jordan2}, we get an $F_T$-algebra
$$K_{G_W\times T}(\widehat\frakM(W)^2,(\hat f_2)^{(2)})_{\widehat\calZ(W)}\otimes_{R_{G_W\times T}}F_{G_W\times T}$$
and a representation in
$K_{G_W\times T}(\widehat\frakM(W),\hat f_2)\otimes_{R_{G_W\times T}}F_{G_W\times T}.$
The proof of the following theorem is similar to the proof of Theorem \ref{thm: Nakajima shifted}.

\begin{theorem}\label{thm:Jshifted}
Assume that $Q=A^{(1)}_0$. 
\hfill
\begin{enumerate}[label=$\mathrm{(\alph*)}$,leftmargin=8mm]
\item
There is an $F_T$-algebra map 
$$\U_{F_T}^{-w}(L\widehat{\frakg\frakl}_1)\to 
K_{G_W\times T}(\widehat\frakM(W)^2,(\hat f_2)^{(2)})_{\widehat\calZ(W)}\otimes_{R_{G_W\times T}}F_{G_W\times T}$$
which takes the central element $\psi^+_0\,\psi^-_{-w}$ to $(-q)^{-w}\det(W)^{-1}$.
\item
The $F_T$-algebra $\U_{F_T}^{-w}(L\widehat{\frakg\frakl}_1)$ acts on 
$K_{G_W\times T}(\widehat\frakM(W),\hat f_2)\otimes_{R_{G_W\times T}}F_{G_W\times T}$.
\end{enumerate}
\qed
\end{theorem}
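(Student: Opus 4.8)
The plan is to mimic, almost verbatim, the proof of Theorem \ref{thm: Nakajima shifted}, replacing the finite Dynkin quiver $Q$ by the Jordan quiver $A_0^{(1)}$ and adjusting the combinatorics of the three equivariant parameters $q_1=qt^{-1}$, $q_2=qt$, $q_3=q^{-2}$ in place of the single parameter $q$. The first step is to record the analogue of Proposition \ref{prop:double} and the maps $\Omega^\pm$ of \eqref{twist3} in the present setting, namely an $F_T$-algebra homomorphism
\begin{align*}
\Omega^\pm:\calU_{F_T}^\pm\to K_{G_W\times T}\big(\widehat\frakM(W)^2,(\hat f_2)^{(2)}\big)_{\widehat\calZ(W)}\otimes_{R_{G_W\times T}}F_{G_W\times T}
\end{align*}
coming from the Hecke correspondences $\widehat\frakP(\pm\delta,W)$ via $Ri_*\circ L\pi^*$ and Lemma \ref{lem:potential1}. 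Here $\calU_{F_T}^+$ is the KHA of $(\widetilde Q,\bfw_2)$ with $\SG=T$, and the analogue of Proposition \ref{prop:Omega1} identifying $\calU_{F_T}^\pm$ with $\U_{F_T}(L\widehat{\frakg\frakl}_1)^\pm$ is exactly \cite[thm.~1.3.2]{RSYZ20} (or its torus-deformed variant), which furnishes relations (B.5) and (B.6) for free. This gives the images of $x_n^\pm$ and reduces the whole problem to checking the Cartan-type relations (B.2), (B.3), (B.4), (B.7).

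Second, I would write down the explicit assignments for $\psi^\pm_n$ by the diagonal map $\Delta$ of \eqref{delta}: the formal series analogous to \eqref{psi+-} now reads
\begin{align*}
q^{-w}q^{\pm(\dots)}\,\Lambda_{-u^{-1}}(q^{-1}\calW)^{-1}\,\exp\Big(\pm(q_1^{1/2}q_2^{1/2}-q_1^{-1/2}q_2^{-1/2})\sum_{m>0}\calH_{\pm m}u^{\mp m}\Big)
\end{align*}
with $\calH_{\pm 1}$ the appropriate combination of $\calW$ and $\calV$ built from the function $g(u)$ in \eqref{alpha1}. Relations (B.2), (B.3) are immediate from commutativity of $\Delta$'s image; (B.4) is a formal computation with these series as in the Dynkin case. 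The substance is (B.7), and the strategy there is identical to the Dynkin argument: invoke a property-$(T)$ statement for the $G_W\times T$-varieties $\widehat\frakM(W)$ and $\widehat\calZ(W)$ (this is where the Quot-scheme/Bia{\l}ynicki-Birula cell decomposition of Lemma \ref{lem:(T)} is reused essentially unchanged, since $\widehat\frakM(v,W)=\Quot_\bbC(W\otimes\calO,v)$ already for the Jordan quiver), then pass to the $T_W\times T$-fixed points and compute the matrix coefficients $\langle\lambda|A^-_n|\mu\rangle$, $\langle\mu|A^+_m|\lambda\rangle$ by the formulas of \cite[\S4.5]{VV99}, now using the tangent-space classes
\begin{align*}
T\widehat\frakM(W)&=(q_3-1)\End(\calV)+q\Hom(\calV,\calW),\\
T\widehat\frakP(W)&=(q_3-1)\calP+q\Hom(\calV^+,\calW),
\end{align*}
where the factor $(q^{-2}-1)$ of \eqref{C1} is replaced by the Jordan-quiver weight $(q_3-1)$ coming from the loop $\varepsilon$ and its self-extension. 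The residue computation then produces $(1-q_1)(1-q_2)(1-q_3)[x^+(u),x^-(v)]=\delta(u/v)(\psi^+(u)-\psi^-(u))$ with the correct $\psi^\pm$, via the residue theorem applied to a rational function $\phi_\lambda(u)$ whose poles are controlled by $g(u)$.

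The main obstacle is the bookkeeping of the three parameters $q_1,q_2,q_3$ in the fixed-point/residue computation of (B.7): one must verify that the product $(1-q_1)(1-q_2)(1-q_3)$ emerges with the right normalization and that the rational function $\phi_\lambda(u)$ has exactly the poles predicted by $g(u)=\prod_i(u-q_i^{-1})/(q_i^{-1}u-1)$, so that $\Res_{u=0}$ and $\Res_{u=\infty}$ assemble into $\psi^+_\lambda(u)-\psi^-_\lambda(u)$; the subtlety relative to the Dynkin case is that $\calV$ and $\calW$ now carry $T$-weights as well, so the ``$\chi_s$'' of \eqref{form7} acquire an extra $q,t$-dependence and one must check the cancellations survive. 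A secondary point, needed to even state relation (B.7) on the nose, is the torus-equivariant enhancement of \cite[thm.~1.3.2]{RSYZ20}: one must confirm that the KHA of $(\widetilde Q,\bfw_2)$ with the full $T$-action (not just the $\bbC^\times$ of \eqref{xi}) is the positive half of $\U_{F_T}(L\widehat{\frakg\frakl}_1)$; this is folklore but should be cited carefully, perhaps via \cite{SV13a} and \cite{VV22}. Once these two points are settled, the remaining relations and the module statement in Part (b) follow exactly as in the proof of Theorem \ref{thm: Nakajima shifted}, and the value $(-q)^{-w}\det(W)^{-1}$ of the central element $\psi^+_0\psi^-_{-w}$ is read off from the analogue of \eqref{CT}.
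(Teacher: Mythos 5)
Your overall strategy — mimic the proof of Theorem \ref{thm: Nakajima shifted}, replacing the Dynkin combinatorics by the Jordan quiver with the three parameters $q_1,q_2,q_3$ — is exactly what the paper has in mind (the paper offers no further detail than ``The proof of the following theorem is similar to the proof of Theorem \ref{thm: Nakajima shifted}''). However, two of the concrete ingredients you supply are wrong, and together they break the key step (B.7).

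First, you assert that $\widehat\frakM(v,W)=\Quot_\bbC(W\otimes\calO,v)$ ``already for the Jordan quiver'', so that Lemma \ref{lem:(T)} applies unchanged. That identification holds only in the $A_1$ reduction of the Dynkin proof, where the only arrow internal to $V$ is the single loop $\varepsilon$. For $Q=A_0^{(1)}$ the space $\widehat\X(V,W)$ carries \emph{three} endomorphisms $\alpha,\alpha^*,\varepsilon$ of $V$ plus the framing $a$, and $\widehat\frakM(v,W)$ is the (noncommutative) Hilbert/Quot-type moduli of stable framed triples of matrices; its dimension is $2v^2+vw$, not $vw$. Moreover, unlike in the $A_1$ situation, the potential $\hat f_2=\Tr(\varepsilon[\alpha,\alpha^*])$ does not vanish, so one cannot silently replace critical K-theory by ordinary K-theory of $\widehat\frakM$; either the localization in critical K-theory (Proposition \ref{prop:Thomason}) has to be invoked to pass to fixed points where the potential restricts to zero, or a different argument is needed. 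Either way, the ``essentially unchanged'' reuse of the $\Quot_\bbC$ cell decomposition is not available, and a separate property-$(T)$ argument for this larger framed moduli is required.

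Second, your tangent-space formula
$T\widehat\frakM(W)=(q_3-1)\End(\calV)+q\Hom(\calV,\calW)$
drops the contributions of $\alpha$ and $\alpha^*$. The three internal arrows carry $T$-weights $q_1=qt^{-1}$, $q_2=qt$, $q_3=q^{-2}$, so the correct class is
$T\widehat\frakM(W)=(q_1+q_2+q_3-1)\End(\calV)+q\Hom(\calV,\calW)$,
and similarly for $T\widehat\frakP(W)$. This is not a cosmetic slip: the factors $q_1$ and $q_2$ are exactly what produce the corresponding zeros and poles in $g(u)=\prod_{i=1}^3(u-q_i^{-1})/(q_i^{-1}u-1)$, and without them the rational function $\phi_\lambda(u)$ in the residue computation will not have the poles dictated by $g$, so the normalization $(1-q_1)(1-q_2)(1-q_3)$ and the identity (B.7) cannot emerge. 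Incidentally, the $T$-dependence enters through the weights of the arrows $\alpha,\alpha^*,\varepsilon$ in the tangent complex, not through extra $q,t$-dependence of the characters $\chi_s$ of the $G_W$-torus as you suggest. Once the tangent classes are corrected and the property-$(T)$ statement is supplied for the correct moduli space (or replaced by a localization in critical K-theory), the rest of your outline — the use of $\Omega^\pm$ from the Hecke correspondences, the appeal to the torus-equivariant version of \cite[thm.~1.3.2]{RSYZ20}, the assignment of $\psi^\pm$ via the diagonal map, and the reading-off of the central element from the analogue of \eqref{CT} — does line up with the intended argument.
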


\begin{remark}
\hfill
\begin{enumerate}[label=$\mathrm{(\alph*)}$,leftmargin=8mm]
\item
If $w=1$ then the critical locus of the function $\hat f_2$ 
in $\widehat\frakM(W)$ is the punctual Hilbert scheme of $\bbC^3$ with
$G_W$ acting on the framing and $T$ on the coordinates, and 
 $K_{G_W\times T}(\widehat\frakM(W),\hat f_2)$ is isomorphic to
the critical K-theory group of $\Hilb(\bbC^3)$ defined in \cite[\S4.2]{P21}.

\item
The toroidal quantum group $\U_{F_T}(L\widehat{\frakg\frakl}_1)$ is the quotient
of $\U_{F_T}^{0}(L\widehat{\frakg\frakl}_1)$ by the relation 
$\psi^+_0\,\psi^-_0=1$.
There is a triangular decomposition
$$\U_{F_T}(L\widehat{\frakg\frakl}_1)=\U_{F_T}(L\widehat{\frakg\frakl}_1)^+\otimes_{F_T}
\U_{F_T}(L\widehat{\frakg\frakl}_1)^0\otimes_{F_T}\U_{F_T}(L\widehat{\frakg\frakl}_1)^-$$
where $\U_{F_T}(L\widehat{\frakg\frakl}_1)^\pm$ is the subalgebra generated by the $x_n^\pm$'s
and $\U_{F_T}(L\widehat{\frakg\frakl}_1)^0$ the subalgebra generated by the $\psi_{\pm n}^\pm$'s.
The defining relations of $\U_{F_T}(L\widehat{\frakg\frakl}_1)^\pm$ are 
(B.5) and (B.6).
Given $W\in\bfC^\bullet$, $w=\dim W$, and
applying the results in \S\ref{sec:critalg1} as in \S\ref{sec:Jordan1}, we get an $F_T$-algebra homomorphism
$$\U_{F_T}(L\widehat{\frakg\frakl}_1)\to 
K_{G_W\times T}\big(\widetilde\frakM(W)^2,(\tilde f_1)^{(2)}\big)_{\widetilde\calZ(W)}
\otimes_{R_T}F_T=
K^{G_W\times T}(\calZ(W))\otimes_{R_T}F_T
$$
and a representation of $\U_{F_T}(L\widehat{\frakg\frakl}_1)$ on the $F_T$-vector space
$$K_{G_W\times T}(\widetilde\frakM(W),\tilde f_1)\otimes_{R_T}F_T=
K^{G_W\times T}(\frakM(W))\otimes_{R_T}F_T.$$
Let  $\calM_w$ be the moduli space of rank $w$ instantons over $\bbC^2$ with
the obvious $T$-action. 
The representation of $\U_{F_T}(L\widehat{\frakg\frakl}_1)$ above
is isomorphic to the representation in 
$K^{G_W\times T}(\calM_w)\otimes_{R_T}F_T$ 
given in \cite{SV13a}, and in \cite{FT11} if $w=1$.

\end{enumerate}
\end{remark}

\section{A second proof of Theorem \ref{thm:HL1}}\label{sec:proof2}

Let $(W,A,\gamma)$ be a regular admissible triple.
In this section we give a second proof of a version of Theorem \ref{thm:HL1} assuming
that  the set of closed points of $\frakL^\bullet(W)$ is finite.
More precisely, we'll prove that there is an isomorphism 
of $\U_\zeta(L\frakg)$-modules $H^\bullet(\frakM^\bullet(W),f_\gamma^\bullet)= L(w)$.
To do this, let
$\calE=\pi^\bullet_*\calC_{\frakM^\bullet(W)}.$
Since vanishing cycles commute with proper push-forwards, we have an isomorphism
$$H^\bullet(\frakM^\bullet(W),f_\gamma^\bullet)=
H^\bullet(\frakM_0^\bullet(W),\phi^p_{\!f^\bullet_0}\calE).$$
The complex $\calE$  is semi-simple.
We have a stratification $\calS$ by locally closed subsets
\begin{align*}
\frakM^\bullet_0(W)=\bigsqcup_{v\in\bbN I^\bullet}\frakM_0^{\bullet\reg}(v,W)
 ,\quad
\frakM^{\bullet\reg}(v,W)=(\pi^\bullet)^{-1}(\frakM_0^{\bullet\reg}(v,W))
\end{align*}
such that the following holds
\begin{itemize}[label=$\mathrm{(\alph*)}$,leftmargin=4mm]
\item[-]
$\frakM_0^{\bullet\reg}(v,W)=\frakM_0^{\reg}(v,W)^A$ under the isomorphism \eqref{GA}.
\item[-]
$\frakM_0^{\bullet\reg}(v',W)\subset\overline{\frakM_0^{\bullet\reg}(v,W)}$
if and only if $v'\leqslant v$.
\item[-] 
$\frakM_0^{\bullet\reg}(v,W)=\{\ux\,;\, $the $G^0_v$-orbit of $x$
is free and closed in ${}^f\overline\X^\bullet\}\,/\,G^0_v$.
\item[-] 
$\frakM_0^{\bullet\reg}(v,W)\neq\emptyset$ if and only if 
$\frakM^\bullet(v,W)\neq\emptyset$ and $(v,w)$ is $l$-dominant.
\item[-]
$\frakM^{\bullet\reg}(v,W)$ is open and dense in
$\frakM^\bullet(v,W)$. 
\item[-]
$\pi^\bullet$ is an isomorphism $\frakM^{\bullet\reg}(v,W)\to\frakM_0^{\bullet\reg}(v,W)$.
\end{itemize}
The strata may not be connected.
The connected components of the strata of $\calS$ form a
Whitney stratification.
Let $\IC_{\frakM_0^{\bullet\reg}(v,W)}$ be the intermediate extension of $\calC_{\frakM_0^{\bullet\reg}(v,W)}$.
The category $\D^\b_{G^0}(\frakM_0^\bullet(W))$ 
is $\bbZ$-graded by the cohomological shift functor. 
We have
$$\calE=\bigoplus_{v}\IC_{\frakM_0^{\bullet\reg}(v,W)}\otimes_\bbZ M_v,$$
where $M_v$ is a graded vector space, and
$\IC_{\frakM_0^{\bullet\reg}(v,W)}=0$ whenever $\frakM_0^{\bullet\reg}(v,W)=\emptyset$.
By \cite{N00} there is a vector space isomorphism
$M_v=L(w-\bfc v).$
We claim that
\begin{align}\label{vanishing1} v\neq 0\Rightarrow\phi^p_{\!f^\bullet_0}\IC_{\frakM_0^{\bullet\reg}(v,W)}=0.
\end{align}
Then, we have $H^\bullet(\frakM^\bullet(W),f_\gamma^\bullet)=M_0$, and,
setting $j$ and $\kappa$ to be the inclusions $\frakL^\bullet(W)\subset\frakM^\bullet(W)$
and $\{0\}\subset\frakM^\bullet_0(W)$, 
the base change theorem yields
\begin{align*}
H^\bullet(\frakM^\bullet(W),f^\bullet_\gamma)_{\frakL^\bullet(W)}&=
H^\bullet(\frakL^\bullet(W),j^!\phi^p_{\!f^\bullet}\calC_{\frakM^\bullet(W)})=\kappa^!\phi^p_{\!f^\bullet_0}\calE=M_0.
\end{align*}
This proves the theorem.

Now, we prove the claim.
It follows from the lemmas \ref{lem:micro1} and \ref{lem:micro2} below.
The generators of the coordinate ring of $\frakM_0(W)$ given in \cite{L98}
yield an $A$-invariant closed embedding of $\frakM_0(W)$
into a linear representation $E$ of $A$ equipped with an $A$-invariant linear function 
$f:E\to\bbC$ such that $f_0=f|_{\frakM_0(W)}$.
Taking the fixed points by the $A$-action, we get the inclusion $\frakM_0^\bullet(W)\subset E^A$.

\begin{lemma}\label{lem:micro1}
If $v\neq 0$, then
$d_xf\notin T^\vee_{\frakM_0^{\bullet\reg}(v,W)}E^A$
for each $x\in \frakM_0^{\bullet\reg}(v,W)$.
\end{lemma}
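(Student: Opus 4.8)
\textbf{Proof proposal for Lemma \ref{lem:micro1}.}

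The plan is to reinterpret the condition $d_xf\in T^\vee_{\frakM_0^{\bullet\reg}(v,W)}E^A$ in moment-map terms and then to derive a contradiction with the regularity of $\gamma$ together with $v\neq 0$. First I would recall that $f_0^\bullet=f|_{\frakM_0^\bullet(W)}$ is given by $\ux\mapsto\langle\gamma,\mu_0(\ux)\rangle$, see \eqref{f0}, so that the differential of $f$ along the stratum $\frakM_0^{\bullet\reg}(v,W)$ is computed through the differential of the moment map $\mu_0$. The covector $d_xf$ being conormal to the stratum at $x$ means precisely that $f$ is critical at $x$ for the restriction of $f_0^\bullet$ to the smooth locally closed subvariety $\frakM_0^{\bullet\reg}(v,W)$; since $\pi^\bullet$ restricts to an isomorphism $\frakM^{\bullet\reg}(v,W)\to\frakM_0^{\bullet\reg}(v,W)$, this is equivalent to $x$ (or rather its preimage in $\frakM^{\bullet\reg}(v,W)$) lying in $\crit(f_\gamma^\bullet|_{\frakM^{\bullet\reg}(v,W)})$, which by openness of $\frakM^{\bullet\reg}(v,W)$ in $\frakM^\bullet(v,W)$ is the same as lying in $\crit(f_\gamma^\bullet)\cap\frakM^{\bullet\reg}(v,W)$.

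Next I would invoke Proposition \ref{prop:crit2}(b): a point of $\crit(f_\gamma^\bullet)\cap\frakM^\bullet(v,W)$ is represented by a pair $(x,\varepsilon)$ with $x\in\mu_V^{-1}(0)_s^\bullet$, $\varepsilon\in\frakg^2_V$ and $[\gamma\oplus\varepsilon,x]=0$; moreover, as in the proof of that proposition (and of Proposition \ref{prop:LG}), the stability of $x$ forces $\varepsilon$ to be nilpotent and in fact $\varepsilon^l=0$ whenever $\gamma^l=0$. The key point is then that if $\gamma$ is a \emph{regular} nilpotent in $\frakg_W$, the centralizer structure is rigid: the relations $[\varepsilon,\alpha]=0$, $[\gamma\oplus\varepsilon,a]=0$, $[\gamma\oplus\varepsilon,a^*]=0$ together with $\mu_V^\bullet(x)=0$ and stability should force $V=0$. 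Concretely, $\gamma$ regular means $\gamma$ acts on $W$ as a single Jordan block in each $I$-graded piece (here $|I|$ is bounded appropriately), so $W$ is a cyclic $\bbC[\gamma]$-module generated by $\soc$; the intertwining condition $\gamma\circ a=a\circ\varepsilon$ and the description of the socle of the relevant injective module, exactly as in the proof of Proposition \ref{prop:HL1}, would show that a stable representation with a nonzero $V$ and an intertwiner $\varepsilon$ cannot exist unless $(v,w)$ fails to be $\ell$-dominant in the relevant strong sense — but $\frakM_0^{\bullet\reg}(v,W)\neq\emptyset$ forces $\ell$-dominance. I would make this precise by translating, via Proposition \ref{prop:HL1}, the existence of such $(x,\varepsilon)$ with $x\in\mu_V^{-1}(0)^\nil_s$ into the existence of a graded $\widetilde\Pi$-submodule of $I_\gamma$ of dimension $v$ whose ambient representation is, in addition, \emph{regular} in $\frakM_0^\bullet$; the regularity of $\gamma$ then pins down $I_\gamma$ to be a single injective of the shape $I_{i,k}^l$, whose only regular-locus point is the one with $v=0$.

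The main obstacle I anticipate is the last step: precisely bridging ``$x$ lies in the \emph{regular} stratum $\frakM_0^{\bullet\reg}(v,W)$'' with ``$x$ also lies in $\crit(f_\gamma^\bullet)$'', and showing these two conditions are incompatible for $v\neq 0$. The regular stratum condition means the $G_v^0$-orbit of the underlying representation is closed \emph{and free} in ${}^f\overline\X^\bullet$, which is a genuinely extra constraint beyond stability; I would need to show that a representation in $\crit(f_\gamma^\bullet)$ with $v\neq 0$ always has a nontrivial degeneration (coming from the nilpotent $\varepsilon$-flow, or equivalently the conic structure of $\crit(f_\gamma^\bullet)$ noted in the proof of Theorem \ref{thm:HL1}) and hence cannot have a closed orbit, or else fails freeness. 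An alternative route, which I would pursue if the orbit-geometry argument gets technical, is to argue cohomologically: if $d_xf$ were conormal to the regular stratum at some $x$ with $v\neq0$, then $\phi^p_{f_0^\bullet}\IC_{\frakM_0^{\bullet\reg}(v,W)}$ would be supported (microlocally) at $x$, contradicting the estimate from the proof of Theorem \ref{thm:HL1} that $e^{w-\bfc v}$ is never $\ell$-dominant for $v\neq 0$ when $\crit(f_\gamma^\bullet)\neq\emptyset$, combined with the fact that the $\ell$-weight of $\frakM_0^{\bullet\reg}(v,W)$ is exactly $\Psi_{w-\bfc v}$. I expect the cleanest writeup to combine the orbit-closedness obstruction with Proposition \ref{prop:crit2}(b) and the socle computation of Proposition \ref{prop:HL1}.
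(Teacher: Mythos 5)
Your opening reduction is sound and is essentially the same one the paper makes: since $\pi^\bullet$ is an isomorphism over the regular stratum, the condition $d_xf\in T^\vee_{\frakM_0^{\bullet\reg}(v,W)}E^A$ is equivalent to the preimage of $x$ in $\frakM^{\bullet\reg}(v,W)$ lying in $\crit(f_\gamma^\bullet)$. But the arguments you offer to rule this out for $v\neq 0$ have real gaps. Your second paragraph asserts that the relations from Proposition~\ref{prop:crit2}(b) together with stability ``should force $V=0$''; this is false. The critical locus $\crit(f_\gamma^\bullet)$ is generically nonempty — Proposition~\ref{prop:HL1} identifies $\crit(f_\gamma^\bullet)\cap\frakL^\bullet(W)$ with a quiver Grassmannian, which certainly has points with $v\neq 0$. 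What you actually need is that $\crit(f_\gamma^\bullet)$ is disjoint from the \emph{regular} stratum, and regularity of $\gamma$ plus stability do not by themselves give that. Your cohomological fallback in the third paragraph is logically reversed: $d_xf\in\SS(\IC)$ does \emph{not} imply $\phi^p_{f_0^\bullet}\IC$ is nonzero at $x$; the inclusion in \cite[(8.6.12)]{KS90} goes the other way, so you cannot pass from ``conormal covector present'' to ``vanishing cycles nonvanishing.'' A repairable version of this idea is in sight — $\crit(f_\gamma^\bullet)$ is conic, so meeting $\frakM^{\bullet\reg}(v,W)$ forces $\crit(f_\gamma^\bullet)\cap\frakL^\bullet(v,W)\neq\emptyset$, hence $e^{w-\bfc v}$ not $\ell$-dominant by \eqref{IRR1}, contradicting $\frakM_0^{\bullet\reg}(v,W)\neq\emptyset$ — but you gesture at it without assembling it, and it reintroduces the $\ell$-dominance combinatorics of the first proof of Theorem~\ref{thm:HL1}, which Appendix~C is trying to avoid.

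The paper's actual proof is quite different in spirit. It first uses $A$-invariance of $f$ to drop the grading and reduce to showing $d_xf(T_x\frakM_0^{\reg}(v,W))\neq 0$. Then, using $f=\langle\gamma,\mu_0\rangle$ and the Hamiltonian structure of $\frakM_0^{\reg}(v,W)$ with moment map $\mu_0$, it converts the contrary hypothesis (via $G_W$-invariance and the regularity of $\gamma$, which makes $\gamma$ generate $\frakg'_W$ as an ideal) into the statement that the infinitesimal $\frakg'_W$-action vanishes at $x$, hence $x$ is fixed by a maximal torus $\SG_W$ of $G'_W$. The torus fixed locus decomposes into products of quiver varieties with one-dimensional framings $W^r$, and Nakajima's \cite[prop.~4.2.2]{N00} gives $\frakM^{\reg}(v^r,W^r)=\emptyset$ unless $v^r=0$, forcing $v=0$. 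This symplectic argument avoids the quiver-Grassmannian picture and the $\ell$-dominance estimates altogether, which is what makes Appendix~C a genuinely independent second proof.
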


\begin{proof}
We must check that for any $x\in \frakM_0^{\bullet\reg}(v,W)$ the differential
$d_xf$ in $T_x^\vee E^A$
does not annihilates the subspace $T_x\frakM_0^{\bullet\reg}(v,W).$
Since the function $f$ on $E$ is 
$A$-invariant, the differential vanishes on the complementary $A$-module
$T_xE\ominus T_xE^A$.
Thus it vanishes on the complementary $A$-module 
$$T_x\frakM_0^{\reg}(v,W)\ominus T_x\frakM_0^{\bullet\reg}(v,W).$$
Hence, we must prove that
$d_xf(T_x\frakM_0^{\reg}(v,W))\neq 0$.
Let $G'_W$ be the derived subgroup of $G_W$ and $\frakg'_W$ be its Lie algebra.
We claim that 
\begin{align*}
\langle\frakg'_W\,,\,d_x\mu_0(T_x\frakM_0^{\reg}(v,W))\rangle\neq 0
\end{align*}
We deduce that $d_xf(T_x\frakM_0^{\reg}(v,W))\neq 0$, because
\begin{align*}
d_xf(T_x\frakM_0^{\reg}(v,W))=0
&\Rightarrow \langle\gamma\,,\,d_x\mu_0(T_x\frakM_0^{\reg}(v,W))\rangle=0\\
&\Rightarrow \langle[\frakg_W,\gamma]\,,\,d_x\mu_0(T_x\frakM_0^{\reg}(v,W))\rangle=0\\
&\Rightarrow \langle\frakg'_W\,,\,d_x\mu_0(T_x\frakM_0^{\reg}(v,W))\rangle=0.
\end{align*}
The first line is the definition of $f$, the second one the $G_W$-invariance of $\mu_0$,
and the third one the regularity of $\gamma$. 
Now we prove the claim.
Assume that for some $x\in\frakM_0^{\reg}(v,W)$ we have
$$\langle\frakg'_W\,,\,d_x\mu_0(T_x\frakM_0^{\reg}(v,W))\rangle=0.$$
Since the variety $\frakM_0^{\reg}(v,W)$ is smooth and 
$G_W$-Hamiltonian with moment map $\mu_0$,
the infinitesimal $\frakg'$-action on $\frakM_0^{\reg}(v,W)$ vanishes at the point $x$.
Let $\SG_W\subset G'_W$ be a maximal torus, and $\frak\SG_W$ be its Lie algebra.
The $\SG_W$-action on $\frakM_0^{\reg}(v,W)$ extends to a linear $\SG_W$-action on the vector space $E$.
Since the point $x$ is killed by the infinitesimal action of $\frak\SG_W$, it is also fixed by the action
of the torus $\SG_W$. Using the map $\pi$, we may identify $x$ with a point
in the fixed points locus
$\frakM^{\reg}(v,W)^{\SG_W}$.
The $I$-graded vector space $W$ splits as a direct sum of
one dimensional $I$-graded $\SG_W$-submodules $W^1,W^2,\dots,W^s$. 
The $\SG_W$-fixed points locus in $\frakM(v,W)$ decomposes as the Cartesian product
$$\frakM(v,W)^{\SG_W}=\bigsqcup_{v^1,\dots,v^s}\prod_{r=1}^s\frakM(v^r,W^r)$$
where $v^1,v^2,\dots,v^s$ run over the set of all tuples of dimension vectors in $\bbN I$ with
sum $v$.
Under this isomorphism we have
$$\frakM^{\reg}(v,W)^{\SG_W}
=\bigsqcup_{v^1,\dots,v^s}\prod_{r=1}^s\frakM^{\reg}(v^r,W^r)
$$
Since $W^r$ is of dimension 1 for all $r$, we have $\frakM^{\reg}(v^r,W^r)=\emptyset$ unless
$v^r=0$ by \cite[prop.~4.2.2]{N00}. Thus $v=0$, yielding a contradiction.
\end{proof}

We now prove  the claim \eqref{vanishing1}.
For each $\calS$-constructible complex $\calE\in \D^\b_{G^0}(\frakM_0^\bullet(W))$, 
let $\SS(\calE)$, $\CC(\calE)$ be the singular support and the characteristic cycle of 
the pushforward of $\calE$ to $E^A$.
We have
\begin{align}\label{micro1}
\CC(\calE)=\sum_{v\in \calS}c_v(\calE)\,[\overline{T^\vee_{\frakM_0^{\bullet\reg}(v,W)}E^A}]
 ,\quad
\SS(\calE)=\bigcup_{c_v(\calE)\neq 0}\overline{T^\vee_{\frakM_0^{\bullet\reg}(v,W)}E^A}
\end{align}
The integer
$c_v(\calE)$
is the microlocal multiplicity along $\frakM_0^{\bullet\reg}(v,W)$.
By \cite[(8.6.12)]{KS90} we have
\begin{align}\label{micro2}
\supp(\phi^p_{\!f^\bullet_0}\calE)\subset\{x\in \frakM_0^\bullet(W)\,;\,d_xf\in\SS(\calE)\},
\end{align}
Hence, to prove \eqref{vanishing1} it is enough to check that
\begin{align}\label{SS}d_xf\notin\SS(\IC_{\frakM_0^{\bullet\reg}(v,W)}) ,\quad v\neq 0 ,\quad x\in \frakM_0^\bullet(W).\end{align}
Since the stratification $\calS$ is Whitney, we have
$$\SS(\IC_{\frakM_0^{\bullet\reg}(v,W)})\subseteq%\bigcup_{v'\in\calS}\overline{T^\vee_{\frakM_0^{\bullet\reg}(v',W)}\frakM_0^\bullet(W)}=
\bigcup_{v'\in\calS}T^\vee_{\frakM_0^{\bullet\reg}(v',W)}E^A,$$
see, e.g., \cite[rem.~4.3.16]{D04}.
Lemma \ref{lem:micro1} yields
\begin{align*}d_xf\notin T^\vee_{\frakM_0^{\bullet\reg}(v',W)}E^A
 ,\quad 
v'\neq 0 ,\quad x\in \frakM_0^{\bullet\reg}(v',W).\end{align*}
Thus, we are reduced to prove the following.

\begin{lemma}\label{lem:micro2}
If $v\neq 0$, then we have $c_0(\IC_{\frakM_0^{\bullet\reg}(v,W)})=0$.
\end{lemma}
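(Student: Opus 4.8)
The statement to prove is that $c_0(\IC_{\frakM_0^{\bullet\reg}(v,W)})=0$ for $v\neq 0$, i.e. the microlocal multiplicity of the intersection cohomology sheaf of a positive-dimensional stratum along the conormal to the point-stratum $\{0\}$ vanishes. The approach I would take is to identify this microlocal multiplicity with the stalk Euler characteristic $\chi_0(\IC_{\frakM_0^{\bullet\reg}(v,W)})$, which by the discussion in \S\ref{sec:N00} is exactly the Jordan--H\"older multiplicity of the simple module $L(v)$ in the standard module $H_\bullet(\frakL^\bullet(W),\bbC)$. But here $W=W^l_{i,k}$ (or the tensor-product configurations of Proposition \ref{prop:TPKR}) is chosen so that the corresponding standard and simple modules coincide --- they are Kirillov--Reshetikhin modules, which are both standard and simple. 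Hence the only $v$ contributing to $\calE=\pi^\bullet_*\calC_{\frakM^\bullet(W)}$ with a nonzero IC-summand is $v=0$, and all microlocal multiplicities $c_0$ along higher strata vanish for trivial reasons.

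More concretely, first I would recall that for a conic Lagrangian cycle on $E^A$ the coefficient $c_0(\calF)$ of the conormal $T^\vee_{\{0\}}E^A = \{0\}\times (E^A)^\vee$ in the characteristic cycle $\CC(\calF)$ equals, up to sign, the local Euler obstruction / stalk Euler characteristic of $\calF$ at $0$; this is the classical index formula (see, e.g., \cite[Ch.~9]{KS90}), using that $0$ is a $G^0$-fixed point and hence a point of the deepest stratum. For $\calF = \IC_{\frakM_0^{\bullet\reg}(v,W)}$ this is $\chi_0(\IC_{\frakM_0^{\bullet\reg}(v,W)}) = \sum_j (-1)^j \dim \calH^j(\IC_{\frakM_0^{\bullet\reg}(v,W)})_0$. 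Second, I would invoke the decomposition $\calE = \bigoplus_v \IC_{\frakM_0^{\bullet\reg}(v,W)}\otimes_\bbZ M_v$ with $M_v = L(w-\bfc v)$ from \cite{N00}, together with the fact (recalled in \S\ref{sec:N00}) that the stalk at $0$ of $\calE$ computes $H_\bullet(\frakL^\bullet(W),\bbC)$, so that $\sum_v \chi_0(\IC_{\frakM_0^{\bullet\reg}(v,W)})\cdot [L(w-\bfc v)] = [H_\bullet(\frakL^\bullet(W),\bbC)]$ in $K_0(\bfO_0)$. Third --- and this is the crux --- I would use that in the present situation $H_\bullet(\frakL^\bullet(W),\bbC)$ is itself simple and equal to the summand indexed by $v=0$. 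This is where the hypothesis enters: in the proof of Theorem \ref{thm:HL1} (the algebraic proof already given in the excerpt), we showed via $q$-character considerations that $e^{w-\bfc v}$ is not $\ell$-dominant for $v\neq 0$ whenever $\crit(f_\gamma^\bullet)\neq\emptyset$ on that component, so the standard module is irreducible; equivalently, the $\IC$ summands for $v\neq0$ do not appear in $\calE$ over the relevant locus, forcing $\chi_0(\IC_{\frakM_0^{\bullet\reg}(v,W)}) = 0$ for all $v\neq 0$ with $\frakM_0^{\bullet\reg}(v,W)\neq\emptyset$.

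The main obstacle I anticipate is making the index-theoretic identification $c_0(\calF) = \pm\chi_0(\calF)$ precise in the equivariant, singular, conic setting --- one must be careful that the embedding $\frakM_0^\bullet(W)\hookrightarrow E^A$ is the one coming from Lusztig's generators \cite{L98}, that the stratification $\calS$ is Whitney (already noted in the excerpt), and that $f$ restricts to a function whose differential at $0$ is generic enough to detect the characteristic cycle. An alternative, perhaps cleaner, route avoiding the index formula: use directly that $\phi^p_{\!f_0^\bullet}$ applied to $\calE$ has stalk at $0$ equal to $H^\bullet(\frakM^\bullet(W),f_\gamma^\bullet)_{\frakL^\bullet(W)}$, which the algebraic Theorem \ref{thm:HL1} identifies with $M_0 = L(w)$ alone; combined with Lemma \ref{lem:micro1} (which kills the conormals to $v'\neq 0$ strata) and the inclusion $\supp(\phi^p_{\!f_0^\bullet}\IC_{\frakM_0^{\bullet\reg}(v,W)})\subset\{d_xf\in\SS(\IC)\}$, one concludes that $\phi^p_{\!f_0^\bullet}\IC_{\frakM_0^{\bullet\reg}(v,W)}$ is supported at $0$ and has Euler characteristic $c_0(\IC_{\frakM_0^{\bullet\reg}(v,W)})$; the vanishing of the latter then follows from the multiplicity-one statement for the standard module. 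Either way, the geometric input is Lemma \ref{lem:micro1} plus the representation-theoretic input that $L(v)$ does not occur in the standard module for $v\neq0$ in these configurations.
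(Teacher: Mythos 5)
Your proof is circular and also rests on a microlocal identity that is false. Two separate problems.

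First, the identification $c_0(\IC_{\frakM_0^{\bullet\reg}(v,W)})=\chi_0(\IC_{\frakM_0^{\bullet\reg}(v,W)})$ that you invoke is not correct. The quantity $c_0$ is the coefficient of $[T^\vee_{\{0\}}E^A]$ in the characteristic cycle, whereas $\chi_0$ is the stalk Euler characteristic; the Kashiwara--Dubson index formula expresses $\chi_0$ as a sum $\sum_\alpha c_\alpha\cdot\mathrm{Eu}_0(\overline{S_\alpha})$ over \emph{all} strata weighted by local Euler obstructions, not by the $c_0$-term alone. Already for the constant sheaf on a smooth curve through $0\in\bbC^2$ (with $\{0\}$ as the deepest stratum) one has $c_0=0$ but $\chi_0=1$, so the two invariants genuinely differ. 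Your ``alternative route'' correctly replaces $\chi_0$ by the Euler characteristic of the vanishing cycle at a generic linear function, which does equal $c_0$, but this fix reveals the second problem.

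Second, both versions of your argument deduce the lemma from the statement of Theorem~\ref{thm:HL1} (or from the JH-multiplicity-one statement, which is equivalent to it), which was established in \S\ref{sec:HL1} by the $q$-character/right-negativity argument. But Lemma~\ref{lem:micro2} lives inside Appendix~C, whose whole purpose is to give a second, independent, geometric proof of (a version of) Theorem~\ref{thm:HL1}. Feeding the first proof back into a step of the second is logically admissible but defeats the point of the appendix: you would have proved nothing new.

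The paper's actual proof is purely geometric and uses the standing hypothesis of Appendix~C that $\frakL^\bullet(W)$ is a finite set, a hypothesis your argument never touches. One picks a generic contracting cocharacter $b$; the Byalinicki--Birula cells $\C_\tau\subset\frakM^\bullet(v,W)$ are affine spaces indexed by the finitely many $\tau\in\frakL^\bullet(v,W)$, and the finiteness forces these cells to be closed, hence they are the connected components of $\frakM^\bullet(v,W)$. Each restriction $\pi^\bullet|_{\C_\tau}$ is then a closed immersion (projective onto an affine variety from an affine space), so $\calE=\bigoplus_\tau\calC_{\C_{0,\tau}}$ is a sum of constant sheaves on \emph{smooth} closed subvarieties $\C_{0,\tau}$. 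Such a sheaf has characteristic cycle $\pm[T^\vee_{\C_{0,\tau}}E^A]$, which does not contain $T^\vee_{\{0\}}E^A$ unless $\C_{0,\tau}=\{0\}$, and by non-negativity of CC-multiplicities for perverse sheaves this vanishing propagates to each direct summand $\IC_{\frakM_0^{\bullet\reg}(v,W)}$. This is the essential geometric content --- the smoothness of the images $\C_{0,\tau}$ --- that your proposal bypasses entirely.
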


\begin{proof}
To compute the microlocal multiplicity 
we fix a generic cocharacter $b:\bbC^\times\to T$ 
which contracts $\frakM_0^\bullet(W)$ to 0.
The cocharacter $b$ acts on $\frakM^\bullet(v,W)$.
The $b$-action contracts $\frakM^\bullet(v,W)$ to the central fiber $\frakL^\bullet(v,W)$,
which is a finite set.
We write 
$$\C_\tau=\{x\in\frakM^\bullet(v,W) \,;\,\lim_{t\to 0}b(t)\cdot x=\tau\}
 ,\quad
\tau\in \frakL^\bullet(v,W).$$
The Byalinicki-Birula theorem yields the paving
$\frakM^\bullet(v,W)=\bigsqcup_\tau \C_\tau$.
The cells $\C_\tau$ are affine spaces. 
They are closed in $\frakM^\bullet(v,W)$, 
because $\frakM^\bullet(v,W)$ is homeomorphic to arbitrary 
small neighborhoods $U$ of the central fiber
$\frakL^\bullet(v,W)$
and the intersection  $\C_\tau\cap U$ is closed in $U$ if the set $U$ is small enough.
Thus, the cells $\C_\tau$ are the connected components of $\frakM^\bullet(v,W)$ and
the map $\pi^\bullet$ is the sum of its restrictions to the cells $\C_\tau$. 
For each $\tau$, the map $\pi^\bullet|_{\C_\tau}$
is a closed embedding, because the map $\pi^\bullet$ is projective and 
$\C_\tau$ is affine.
Thus, we have
$\calE=\bigoplus_\tau\calC_{\C_{0,\tau}}$
where $\C_{0,\tau}=\pi^\bullet(\C_\tau)$ for each $\tau$.
Since $\C_{0,\tau}$ is smooth, we have
$c_0(\calC_{\C_{0,\tau}})=0$ whenever $\C_{0,\tau}\neq\{0\}$.
We deduce that $c_0(\IC_{\frakM_0^{\bullet\reg}(v,W)})=0$ whenever $v\neq 0$, because
$\IC_{\frakM_0^{\bullet\reg}(v,W)}$ is a direct summand of $\calE$.
\end{proof}

Using Maffei's isomorphism \cite{M05} it is easy to check the following.

\begin{proposition}\label{prop:type A}
The set of points of $\frakL^\bullet(W)$ is finite for $Q$ of type $A$.
\qed
\end{proposition}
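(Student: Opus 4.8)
The plan is to reduce the statement, via Maffei's isomorphism \cite{M05}, to the finiteness of a torus fixed locus on a type $A$ Springer fibre. Fix $v\in\bbN I^\bullet$. The variety $\frakL^\bullet(v,W)=(\pi^\bullet)^{-1}(0)$ is projective, being a fibre of the projective morphism $\pi^\bullet$ over the point $0$, and only finitely many $v$ contribute since $\frakM(v,W)=\emptyset$ for all but finitely many $v$; so it is enough to prove that each $\frakL^\bullet(v,W)$ is zero-dimensional, equivalently a finite set of points. By \eqref{GA} we have $\frakL^\bullet(W)=\frakL(W)^A=(\pi^{-1}(0))^A$, where $\pi:\frakM(W)\to\frakM_0(W)$ is the usual projective resolution and $A\subset G_W\times T$ is the one-parameter subgroup generated by $a=(\sigma,\xi)$. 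Hence it suffices to show that the $A$-fixed locus of $\pi^{-1}(0)$ is finite.

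For $Q$ of type $A$, Maffei identifies $\pi:\frakM(W)\to\frakM_0(W)$ (for the relevant dimension vectors) with a Slodowy-slice resolution of type $A$: a morphism $\widetilde{X}\to\overline{\mathcal O}_\lambda\cap\mathcal S_\mu\subset\frakg\frakl_N$ whose fibres are (partial) Springer fibres, the identification being equivariant for a torus $T_W\times T$ acting through a homomorphism to $\GL_N\times\bbC^\times$ under which $\xi(\bbC^\times)$ scales the cotangent (Slodowy) directions and $G_W$ acts by conjugation on $\frakg\frakl_N$. The point $0\in\frakM_0(W)$ is carried to the base point of the slice, so $\pi^{-1}(0)$ becomes a Spaltenstein variety: a projective variety with a paving by affine cells whose attracting directions carry only root-type weights $\epsilon_a-\epsilon_b$ of the maximal torus $T_{\GL_N}$; in particular $T_{\GL_N}$ acts on $\pi^{-1}(0)$ with finitely many fixed points. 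Next I would use regularity of $(W,A,\gamma)$: then $\gamma$ is a single Jordan block on each $W_i$, so every weight subspace of $\sigma$ is at most one-dimensional with pairwise distinct weights, i.e. $\sigma$ is a regular cocharacter of $G_W$, and one checks it is carried to a regular cocharacter of $\GL_N$. Since the weights $\epsilon_a-\epsilon_b$ are nonzero on a regular cocharacter, $\pi^{-1}(0)^{\sigma(\bbC^\times)}=\pi^{-1}(0)^{T_{\GL_N}}$ is finite; and $\pi^{-1}(0)^A\subseteq\pi^{-1}(0)^{\sigma(\bbC^\times)}$ because $\xi$ acts on the zero-section fibre $\pi^{-1}(0)$ only through the (trivial) scaling of the cotangent directions. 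This gives the claim.

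The point that requires care is the precise equivariance in Maffei's isomorphism — which partition governs $N$, what the homomorphism $G_W\times T\to\GL_N\times\bbC^\times$ is, and the verification that a regular cocharacter of $G_W$ is carried to a regular (or at least sufficiently generic) cocharacter of $\GL_N$, so that $a=(\sigma,\xi)$ genuinely acts on $\pi^{-1}(0)$ with isolated fixed points; this is where essentially all the content sits. An alternative route avoiding Maffei would be to combine the Byalinicki-Birula retraction of $\frakM^\bullet(v,W)$ onto $\frakL^\bullet(v,W)$ with Nakajima's affine cell decomposition of $\frakL(v,W)$ for type $A$, the same genericity of $\sigma$ again being the essential input.
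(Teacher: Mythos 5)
Your strategy --- reduce via \eqref{GA} to the $A$-fixed locus of $\frakL(W)$, transport under Maffei's isomorphism to a Spaltenstein fibre, and use its affine paving by cells with root weights --- is exactly what the paper has in mind; the paper itself gives no details beyond the citation to \cite{M05}, so your attempt to fill them in is in the right spirit. The difficulty is that the two reductions you then make are both incorrect, and they fail simultaneously in simple examples.

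The claim that $\xi$ acts trivially on $\pi^{-1}(0)$, hence $\pi^{-1}(0)^A\subseteq\pi^{-1}(0)^{\sigma(\bbC^\times)}$, is valid only for $Q=A_1$, where $\mu=0$ and the central fibre is the zero section. For $Q=A_n$ with $n>1$, $\pi^{-1}(0)$ is a Spaltenstein fibre over a non-zero nilpotent $x_\mu$, and under Maffei's identification $\xi$ corresponds to the Kazhdan $\bbC^\times$, which acts on $\rho^{-1}(x_\mu)$ by conjugation by the $\fraks\frakl_2$-semisimple element $h_\mu$ --- a non-trivial action. Likewise, regularity of $\gamma$ on each $W_i$ does not force $\sigma$ to map to a regular cocharacter of $\GL_N$. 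Take $Q=A_2$ with $W=W_{1,0}\oplus W_{2,0}$ both one-dimensional: here $\gamma=0$ is regular, $\sigma$ is the trivial cocharacter, and for $v=(1,1)$ one checks directly that $\frakL(v,W)$ is positive-dimensional (it contains the $\bbC$-family $(\alpha,\alpha^*,a_1,a_1^*,a_2,a_2^*)=(1,0,t,0,1,0)$). Since $\frakL^\bullet(W)$ is indeed finite in this example, both of your reductions must fail there, and it is precisely the $\xi$-action you dismissed that supplies the genericity: $\mu=(2,1)$ and $h_\mu=\diag(1,-1,0)$ is a regular cocharacter of $\GL_3$. The statement one actually needs is that the \emph{composite} cocharacter $\sigma_{\GL_N}\cdot h_\mu$ coming from $a=(\sigma,\xi)$ pairs non-trivially with the roots appearing in the paving of the Spaltenstein fibre --- neither factor alone suffices --- and verifying this for an arbitrary regular admissible triple is where the content of the proof resides.
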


\section{The algebraic and topological critical K-theory}\label{sec:Ktheory}

In this section we discuss some topological analogues of the Grothendieck groups following 
\cite{B16} and \cite{HP20}.
To do that, for $\flat=\alg$ or $\top$ we'll use the functor $\bfK^\flat$
from the category of all dg-categories over $\bbC$ 
to the category of spectra introduced in \cite{S06} and \cite{B16}.
Let $X^{an}$ be the underlying complex analytic space of a scheme $X$.
Given a closed subset $Y$ of $X$ we'll say that $Y^{an}$ is homotopic to $X^{an}$, or that
$Y$ is homotopic to $X$, if the inclusion $Y^{an}\subset X^{an}$ admits a deformation retraction 
$X^{an}\to Y^{an}$.
The following properties hold :
\begin{itemize}[leftmargin=3mm]
\item[-]
$\bfK^\alg(\calC)$ is the algebraic $K$-theory spectrum of the 
category $H^0(\overline\calC)$,
\item[-]
there is natural topologization map $\top:\bfK^\alg\to \bfK^\top$,
\item[-]
$\bfK^\flat$ takes localization sequences of dg-categories
to exact triangles.
\end{itemize}
Next, for any $G$-invariant closed immersion $Z\subset X$ we write
\begin{align*}
\bfK^G_\flat(X)_Z=\bfK^\flat(\Db\Coh_G(X)_Z)
 ,\quad
\bfK_G^\flat(X)=\bfK^\flat(\Perf_G(X))
\end{align*}
and  $\bfK_\flat^G(X)=\bfK_\flat^G(X)_X$. The following properties hold :
\begin{itemize}[leftmargin=3mm]
\item[-]
$\bfK_\flat^G$ is covariantly functorial for proper morphisms of
$G$-schemes, and contravariantly functorial for
 finite $G$-flat dimensional morphisms,
 \item[-]
$\bfK_\flat^G$ satisfies the flat base change and the projection formula,
\item[-]
$\bfK_\flat^G$ satisfies equivariant d\'evissage : there is a weak equivalence
$\bfK_\flat^G(Z)\to\bfK_\flat^G(X)_Z$,
\item[-]
$\bfK_\top^G(X)$ is the 
$G$-equivariant Borel-Moore K-homology spectrum of $X^{an}$, and
$\bfK^\top_G(X)$ is its 
$G$-equivariant K-theory spectrum, up to weak equivalences.
\end{itemize}

The Grothendieck groups $K^G(Z)$ and $K_G(Z)$
satisfy
$K_G(Z)=\pi_0\bfK_G^\alg(Z)\otimes\bbC$ and
$K^G(Z)=\pi_0\bfK^G_\alg(Z)\otimes\bbC$.
The $G$-equivariant Borel-Moore K-homology of $X$ 
and its $G$-equivariant K-theory are 
\begin{align}\label{Ktop1}
K_G^\top(Z)=\pi_0\bfK_G^\top(Z)\otimes\bbC
 ,\quad
K^G_\top(Z)=\pi_0\bfK^G_\top(Z)\otimes\bbC
\end{align}
%We have the topologization map $$\top:K^G(Z)\to K^G_\top(Z).$$

Now, let $(X,\chi,f)$ be a $G$-equivariant LG-model.
Let $Y\subset X$ be the zero locus of $f$, 
$i$ be the closed embedding $Y\to X$, and
$Z\subset Y$ a closed $G$-invariant subset.
We define
\begin{align}\label{Ktop2}
\begin{split}
K_G(X,f)_Z&=K_0(\DCoh_G(X,f)_Z),\\
K_G^\top(X,f)_Z&=\pi_0\bfK^\top(\DCoh_G(X,f)_Z)\otimes\bbC,\\
K_G^\alg(X,f)_Z&=\pi_0\bfK^\alg(\DCoh_G(X,f)_Z)\otimes\bbC.
\end{split}
\end{align}
By \cite[cor.~2.3]{T97} there is an inclusion
$K_G(X,f)_Z\subset K_G^\alg(X,f)_Z.$
The functor \eqref{conv1} yields 
an associative $R_G$-algebra structure on 
$K^G_\flat(Z)$ and a representation on $K^G_\flat(L)$ and $K^G_\flat(X)$.
The functor \eqref{conv2} yields the following.

\begin{proposition}\label{prop:critalg2}
\hfill
\begin{enumerate}[label=$\mathrm{(\alph*)}$,leftmargin=8mm]
\item
$K_G^\flat(X^2,f^{(2)})_Z$ is an $R_G$-algebra which acts on
$K_G^\flat(X,f)_L$ and $K_G^\flat(X,f)$.

\item
The functor $\Upsilon$ yields an algebra homomorphism 
$K^G_\flat(Z)\to K_G^\flat(X^2,f^{(2)})_Z$
and an intertwiner
$K^G_\flat(L)\to K_G^\flat(X,f)_L.$

\end{enumerate}
\end{proposition}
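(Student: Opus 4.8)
The plan is to follow the pattern already established in Corollary~\ref{cor:critalg1}, replacing the algebraic Grothendieck group $K_0$ by the spectra-level invariants $\bfK^\flat$ and then taking $\pi_0\otimes\bbC$. Concretely, we have fixed a smooth $G$-equivariant LG-model $(X,\chi,f)$ with a proper $G$-map $\pi:X\to X_0$ to an affine scheme, $f=f_0\circ\pi$, and the subsets $Z=X\times_{X_0}X$ and $L=X\times_{X_0}\{x_0\}$. The monoidal structure on $\DCoh_G(X^2,f^{(2)})_Z$ and its module structures on $\DCoh_G(X,f)_L$ and $\DCoh_G(X,f)$ are provided by the convolution functor \eqref{conv2}, which is compatible with the triangulated structures and, by Remark~\ref{rem:base change 1}(b), with compatible dg-enhancements. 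First I would record that all the categories in sight admit dg-enhancements and that the convolution functor $\star$ is a dg-functor (bilinear in the appropriate sense): this is exactly what lets us apply $\bfK^\flat$.

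\smallskip

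The core of the argument is then functoriality of $\bfK^\flat$. Applying the (exact, hence $\bfK^\flat$-compatible) dg-functor $\star$ of \eqref{conv2} with $(X_1,X_2,X_3)=(X,X,X)$, $f_a=f$, $f_{ab}=f^{(2)}$, $Z_{ab}=Z$ gives a pairing of spectra $\bfK^\flat(\DCoh_G(X^2,f^{(2)})_Z)\wedge\bfK^\flat(\DCoh_G(X^2,f^{(2)})_Z)\to\bfK^\flat(\DCoh_G(X^2,f^{(2)})_Z)$, and applying $\pi_0(-)\otimes\bbC$ yields an $R_G$-bilinear multiplication on $K_G^\flat(X^2,f^{(2)})_Z$. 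Associativity, the unit (the class of $\Upsilon\Delta_*\calO_X$), and the module axioms for $K_G^\flat(X,f)_L$ and $K_G^\flat(X,f)$ follow from the corresponding diagrams of dg-functors — these are the same diagrams invoked in the proof of the Proposition preceding Corollary~\ref{cor:critalg1}, using flat base change and the projection formula for $R\phi_*$, $L\phi^*$ of factorization categories from Remark~\ref{rem:base change 1}(a) — together with the fact that $\bfK^\flat$, being a functor, carries a commuting diagram of dg-functors to a commuting diagram of maps of spectra. This proves Part~(a) for both $\flat=\alg$ and $\flat=\top$, with the topologization map $\top:\bfK^\alg\to\bfK^\top$ intertwining all the structures since it is a natural transformation.

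\smallskip

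For Part~(b), the functor $\Upsilon:\Db\Coh_G(Y)_Z\to\DCoh_G(X,f)_Z$ of \eqref{Upsilon1} is a triangulated functor admitting a dg-enhancement, so it induces $\bfK^\flat(\Db\Coh_G(Y)_Z)\to\bfK^\flat(\DCoh_G(X,f)_Z)$; combined with equivariant dévissage $\bfK^G_\flat(Z)\xrightarrow{\sim}\bfK^\flat(\Db\Coh_G(Y)_Z)$ (recall $\Db\Coh_G(Y)_Z=\Db\Coh_G(Y)_{Z'}$ where $Z'$ has support $Z$, and $Z=X\times_{X_0}X$ is closed in $X^2$ with $Z\subset Y^2$), and with the quasi-isomorphism $j$ between $Y$ and the derived zero locus when $f$ is regular, this yields a map $K^G_\flat(Z)\to K_G^\flat(X^2,f^{(2)})_Z$. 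That this map is an algebra homomorphism, and that the analogous map $K^G_\flat(L)\to K_G^\flat(X,f)_L$ is an intertwiner, is proved exactly as in Corollary~\ref{cor:critalg1}: one compares the convolution functors \eqref{star1} and \eqref{star3} through the chain of dévissage/quasi-isomorphism equivalences and invokes Lemma~\ref{lem:Upsilon5} to see that $\Upsilon$ intertwines them, then applies $\bfK^\flat$.

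\smallskip

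The main obstacle is bookkeeping rather than a genuine mathematical difficulty: one must check that the convolution functor \eqref{conv2} and the functor $\Upsilon$ are genuinely \emph{dg}-functors (not merely triangulated), and that the base-change and projection-formula isomorphisms used to verify associativity and the module axioms hold at the dg-enhanced level, so that $\bfK^\flat$ can be applied coherently. For this I would lean on Remark~\ref{rem:base change 1}(b), which asserts exactly that $\Db\Coh_G(Y)_Z$, $\Perf_G(Y)_Z$, $\DCoh_G^\sg(Y)_Z$ and $\DCoh_G(X,f)_Z$ carry compatible dg-enhancements and that all the derived functors between them admit dg-enhancements; and on \cite{H17a}, \cite{H17b}, \cite{BFK13} for the compatibility of $L\phi^*$, $R\phi_*$, $\otimes^L$ with these enhancements. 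Once this is granted, the exactness of $\bfK^\flat$ on localization sequences (used implicitly to pass from $\Coh_G(X,f)$-level statements to the Verdier quotient $\DCoh_G(X,f)$, cf.\ the Drinfeld-quotient description in Remark~\ref{rem:base change 1}(b)) finishes the proof, and the inclusion $K_G(X,f)_Z\subset K_G^\alg(X,f)_Z$ from \cite[cor.~2.3]{T97} shows this Proposition refines Corollary~\ref{cor:critalg1}.
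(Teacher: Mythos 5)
Your proposal is correct and follows essentially the same route as the paper's (very terse) proof, which simply records that $\bfK^\flat$ is localizing, satisfies equivariant dévissage and flat base change, and then says the result is "proved as in \S\ref{sec:Kcrit}" with a pointer to \cite[\S2.1]{HP20}. You have expanded exactly that sketch: lift the convolution functor \eqref{conv2} and $\Upsilon$ to the dg level via Remark~\ref{rem:base change 1}(b), apply $\bfK^\flat$, use dévissage and the quasi-isomorphism $Y\simeq RY$ to identify $\bfK^G_\flat(Z)$, and invoke naturality of $\top:\bfK^\alg\to\bfK^\top$ — all of which matches the intended argument.
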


\begin{proof}
Since the functor $K^G_\flat$ is localizing, satisfies equivariant d\'evissage and flat base change 
and \cite[\S 2.1]{HP20} for more details, the corollary is proved as in \S\ref{sec:Kcrit}. 
\end{proof}

\begin{remark}\label{rem:Xi}
Let $(X,\chi,f)$ be a smooth $G$-equivariant LG-model.
Let $Y$ be the zero locus of $f$, 
and $i$ the closed embedding $Y\subset X$.
Assume that $Y^{an}$ is homotopic to $X^{an}$.
Then, there is a map
$\Xi:K_G(X,f)\to K^G_\top(X)$ 
such that $\Xi\circ\Upsilon$ is the composition of the
pushforward $K^G(Y)\to K^G(X)$ and the topologization map.
We'll not need this map $\Xi$.

\end{remark}

\end{document}